\documentclass[11pt]{amsart}
\linespread{1}
\usepackage{amsmath,amsfonts,amsbsy,amsgen,amscd,mathrsfs,amssymb,amsthm}
\usepackage{enumerate,mathtools}
\usepackage{tabularx}
\usepackage{microtype}
\usepackage{array}
\newcolumntype{P}[1]{>{\centering\arraybackslash}p{#1}}
\usepackage{makecell}

\usepackage{stmaryrd}

\usepackage{fullpage}
\usepackage{url}
\usepackage{mathrsfs}
\usepackage{float}
\usepackage{bbm}
\usepackage{quiver}

\usepackage[colorlinks=true,allcolors=blue]{hyperref}

\usepackage[
backend=biber,
style=alphabetic,
maxalphanames=5,
maxnames=5
]{biblatex}
\addbibresource{citations.bib}

\makeatletter
\renewcommand*{\eqref}[1]{%
  \hyperref[{#1}]{\textup{\tagform@{\ref*{#1}}}}%
}
\makeatother


\usepackage{tikz}
\usetikzlibrary{shadings}

\numberwithin{equation}{section}

\newtheorem{theorem}{Theorem}[section]
\newtheorem{proposition}[theorem]{Proposition}
\newtheorem{question}[theorem]{Question}
\newtheorem{conjecture}[theorem]{Conjecture}
\newtheorem{corollary}[theorem]{Corollary}
\newtheorem{lemma}[theorem]{Lemma}
\theoremstyle{definition}
\newtheorem{definition}[theorem]{Definition}

\newtheorem{example}[theorem]{Example}
\newtheorem{remark}[theorem]{Remark}

\newtheorem*{claim*}{Claim}

\newcommand{\A}{\mathbb{A}}
\newcommand{\C}{\mathbb{C}}
\newcommand{\E}{\mathbb{E}}
\newcommand{\F}{\mathbb{F}}
\newcommand{\G}{\mathbb{G}}

\newcommand{\Q}{\mathbb{Q}}
\newcommand{\R}{\mathbb{R}}

\newcommand{\Z}{\mathbb{Z}}

\newcommand{\calA}{\mathcal{A}}

\newcommand{\calM}{\mathcal{M}}

\DeclareMathOperator{\aff}{aff}

\DeclareMathOperator{\Aut}{Aut}

\DeclareMathOperator{\Char}{char}

\DeclareMathOperator{\diag}{diag}
\DeclareMathOperator{\disc}{disc}

\DeclareMathOperator{\End}{End}

\DeclareMathOperator{\Frac}{Frac}
\DeclareMathOperator{\Frob}{Frob}
\DeclareMathOperator{\Gal}{Gal}

\DeclareMathOperator{\Hom}{Hom}

\DeclareMathOperator{\id}{id}

\DeclareMathOperator{\inv}{inv}
\DeclareMathOperator{\Jac}{Jac}

\DeclareMathOperator{\Li}{Li}

\DeclareMathOperator{\Mat}{Mat}
\DeclareMathOperator{\mult}{mult}

\DeclareMathOperator{\ord}{ord}

\DeclareMathOperator{\rk}{rk}

\DeclareMathOperator{\Spec}{Spec}

\DeclareMathOperator{\tr}{tr}
\DeclareMathOperator{\Tr}{Tr}

\DeclareMathOperator{\Vol}{Vol}


\newcommand{\et}{{\operatorname{et}}}

\newcommand{\vphi}{\varphi}


\newcommand{\GL}{\operatorname{GL}}

\newcommand{\GSp}{\operatorname{GSp}}

\newcommand{\Sp}{\operatorname{Sp}}

\newcommand{\ST}{\operatorname{ST}}

\newcommand{\USp}{\operatorname{USp}}
\usepackage[shortlabels]{enumitem}

\newcommand{\injects}{\hookrightarrow}

\newcommand{\tensor}{\otimes} 


\usepackage[OT2,T1]{fontenc}
\DeclareSymbolFont{cyrletters}{OT2}{wncyr}{m}{n}
\DeclareMathSymbol{\Sha}{\mathalpha}{cyrletters}{"58}

\newcommand{\bb}{\mathbb}
\newcommand{\ovl}{\overline}
\usepackage{dsfont}
\usepackage[all]{xy}

\newcommand{\val}{\operatorname{val}}



\newcommand{\cO}{\mathcal{O}}

\newcommand{\fg}{{\mathfrak g}}

\newcommand{\cM}{\mathcal {M}}

\newcommand{\Gm}{{\mathbb{G}}_m}

\newcommand{\stab}{\mathrm{stable}}


\def\aff{{\mathbb A}}
\def\ff{{\mathbb F}}
\def\rat{{\mathbb Q}}

\def\gp{{\mathbb G}}

\def\inv{^{-1}}

\def\tensor{\otimes}
\def\iso{\cong}

\newcommand{\half}[1]{\frac{#1}{2}}
\newcommand{\abs}[1]{{\left|#1\right|}}
\newcommand{\st}[1]{\left\{#1\right\}}
\def\integ{{\mathbb Z}}
\def\ra{\rightarrow}

\DeclareMathOperator{\vol}{vol}
\DeclareMathOperator{\mat}{Mat}
\DeclareMathOperator{\gl}{GL}
\DeclareMathOperator{\gsp}{GSp}

\DeclareMathOperator{\gal}{Gal}

\def\der{\textrm{der}}
	
\def\geom{\mathrm{geom}}

\newcommand{\fc}{{\mathfrak c}}

\numberwithin{figure}{section}

\title{On the pointwise convergence of the number of abelian varieties over $\bb{F}_p$ with fixed trace}

\author{Zhao Yu Ma and Jit Wu Yap}
\address{Princeton University, Princeton, NJ, USA}
\email{zm5336@princeton.edu}
\address{Massachusetts Institute of Technology, Cambridge, MA, USA}
\email{jitwuyap@mit.edu}

\makeatletter
\let\@wraptoccontribs\wraptoccontribs
\makeatother
\contrib[with an appendix by]{Jeff Achter and Julia Gordon}
\begin{document}
\begin{abstract}
Extending Katz--Sarnak heuristics, Ballini--Lombardo--Verzobio \cite{BLV} conjectures a limiting distribution as $p \to \infty$ for $\# \calA_g(\bb{F}_p,t)$, the number of $g$-dimensional PPAVs over $\bb{F}_p$ with trace $t$, as a product of natural local factors $v_{l}(t)$ for non-archimedean places $l$ and the Sato-Tate measure $\ST_g$ corresponding to $\infty$. We prove that their conjecture is true for all $g$, i.e. 
$$\lim_{p \to \infty} \sum_{t \in \bb{Z}}\left|\frac{\# \calA_{g}(\bb{F}_p,t)}{\# \calA_g(\bb{F}_p)} - \frac{1}{\sqrt{p}} \ST_g(t/\sqrt p) \prod_{l \text{ prime }} v_{l}(t) \right| = 0.$$
As a consequence, we obtain analogous results on the distribution of curves of genus $2$ and $3$, answering questions of Bergström--Howe--García--Ritzenthaler \cite{refinements} and \cite{BLV}. 
\end{abstract}

\maketitle


\hypersetup{linktocpage=true}
\tableofcontents
\section{Introduction}
Let $C$ be a genus $g \geq 2$ curve over the prime field $\bb{F}_p$. As $C$ varies over the family of genus $g$ curves, Katz--Sarnak \cite{KS99} proved a limiting law for the distribution of $\#C(\bb{F}_p)$ as $p \to \infty$. More precisely, let $\mathcal M_g(\F_p)$ be the set of genus $g$ curves over $\F_p$, where we count curves in $\mathcal M_g(\F_p)$ with weight $\frac{1}{\# \Aut_{\bb{F}_p}(C)}$. Let $t_p(C) = \# C(\bb{F}_p) - (p+1)$ be the trace of a curve and let its normalization be $x_p(C) = t_p(C)/\sqrt{p}$. By the Hasse--Weil bound, we have  $|x_p(C)| \leq 2g$. Let $\mu_{\calM_g(\bb{F}_p)}$ be the probability distribution of $x_p(C)$ for a randomly chosen $C \in \calM_g(\bb{F}_p)$. 

\begin{theorem}[{Katz-Sarnak for $\mathcal M_g$ \cite[Theorem 10.7.12, 10.8.2]{KS99}}]\label{thm: katz-sarnak}
Let $g\ge 2$ be an integer and let $\USp_{2g} = \Sp_{2g}(\C) \cap U(2g)$. Let the trace map $\tr\colon \USp_{2g} \rightarrow [-2g,2g]$ send a matrix in the unitary symplectic group to its trace. Define the Sato-Tate measure $\mu_{\ST_g}=\tr_* \mu_{\USp_{2g}}$ to be the pushforward of the Haar measure on $\USp_{2g}$ along the trace map. Then, $$\mu_{\mathcal M_g(\F_p)} \xlongrightarrow{d} \mu_{\ST_g} $$ converges in distribution as $p\rightarrow \infty$. 
\end{theorem}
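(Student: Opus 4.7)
The plan is to deduce Theorem \ref{thm: katz-sarnak} from Deligne's equidistribution theorem applied to the $\ell$-adic local system on $\calM_g$ coming from the universal family of curves. Concretely, fix an auxiliary prime $\ell \neq p$, let $\pi\colon \calC_g \to \calM_g$ be the universal curve, and set $\FF = R^1\pi_*\Q_\ell$. Working over a fine moduli cover obtained by adjoining a suitable level structure (so that $\calM_g$ is a scheme over $\Spec \Z[1/N\ell]$), $\FF$ is a lisse sheaf of rank $2g$, pure of weight $1$, carrying a symplectic autoduality induced by the Weil pairing. For each $x \in \calM_g(\F_p)$ corresponding to a curve $C$, the Lefschetz trace formula gives $\tr(\Frob_x \mid \FF) = (p+1) - \#C(\F_p) = -t_p(C)$, so $x_p(C)$ is (up to a harmless sign symmetry of $\mu_{\ST_g}$) the trace of the normalized Frobenius class $\theta(x) = p^{-1/2} \Frob_x$, which lives in the conjugacy classes of $\USp_{2g}$.

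Next I would identify the geometric and arithmetic monodromy groups of $\FF$. The symplectic form and weight purity confine the arithmetic monodromy $G_{\mathrm{arith}}$ to $\GSp_{2g}$, and the geometric monodromy $G_{\mathrm{geom}}$ to $\Sp_{2g}$. The crucial input is the \emph{big monodromy} statement $G_{\mathrm{geom}} = \Sp_{2g}$, which in characteristic zero follows from the classical theorem that the Torelli representation $\pi_1(\calM_g^{\mathrm{an}}) \to \Sp_{2g}(\Z)$ has Zariski-dense image; the positive characteristic version follows by a specialization argument from the characteristic zero one, which is how it is handled in Katz--Sarnak. Together with the fact that the determinant of $\FF$ is the Tate twist, this pins down $G_{\mathrm{arith}} = \GSp_{2g}$ with the similitude character being the cyclotomic character.

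With the monodromy computed, I would invoke Deligne's equidistribution theorem in the form packaged by Katz--Sarnak (their Theorem 9.2.6 / 9.7.10): for a geometrically connected normal scheme $U/\Z[1/N\ell]$ carrying a pure lisse sheaf whose geometric monodromy coincides with its arithmetic monodromy up to the cyclotomic twist, the Frobenius conjugacy classes $\{\theta(x)\}_{x \in U(\F_p)}$ become equidistributed in $\USp_{2g}^{\#}$ with respect to the pushforward of Haar measure as $p \to \infty$, with an explicit error governed by sums of Betti numbers and thus independent of $p$. Pushing forward under the trace map $\tr\colon \USp_{2g} \to [-2g,2g]$ yields weak-$*$ convergence of $\mu_{\calM_g(\F_p)}$ to $\mu_{\ST_g}$, where the weighting of a curve $C$ by $1/\#\Aut_{\F_p}(C)$ arises naturally from the groupoid cardinality of $\calM_g(\F_p)$.

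The main obstacle is the big monodromy step, and in particular descending it to characteristic $p$. The topological input (density of the image of the Torelli representation) is classical, but transferring this to $G_{\mathrm{geom}}$ of the $\ell$-adic sheaf over $\calM_g \otimes \Fbar_p$ requires a specialization argument together with control of the component group of the mod-$\ell$ reductions; this is exactly the content that Katz--Sarnak must establish before invoking equidistribution. Everything else --- the translation between point counts and Frobenius traces, and the application of Deligne's equidistribution --- is then essentially formal.
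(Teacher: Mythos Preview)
Your sketch is a faithful outline of the Katz--Sarnak argument, and there is nothing to compare it against: the paper does not prove this theorem at all. Theorem~\ref{thm: katz-sarnak} is stated with a citation to \cite[Theorem 10.7.12, 10.8.2]{KS99} and used only as background motivation; the paper's own contributions begin with Theorem~\ref{thm: A_g}. So your proposal is essentially a summary of the proof in the cited source rather than an alternative to anything in the present paper.
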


The integer $t_p(C)$ is the trace of the Frobenius action on the first etale cohomology group $H^1_{\et}(C_{\ovl{\bb{F}}_p}, \ovl{\bb{Q}}_l)$ for any $l \not = p$. Due to the Weil pairing, the Frobenius acts by a matrix $M \in \GSp_{2g}(\ovl{\bb{Q}}_l)$ with multiplier $p$. The underlying heuristic is then that $M$ should be randomly distributed according to the Haar measure. The closely related Sato-Tate problem, where we instead fix $C$ and vary $p$, has very similar heuristics and one can refer to \cite{Sutherland_2019} for more details.
\par 
There is an analogous theorem for $\mathcal A_g(\bb{F}_p)$, the set of $g$-dimensional principally polarized abelian varieties (PPAVs) over $\bb{F}_p$ with trace $t$, where we again count with weight $\frac{1}{\# \Aut_{\bb{F}_p}(A)}$. Define the normalized trace $x_p(A) = t_p(A)/\sqrt{p}$ where $t_p(A)$ is the trace of the Frobenius acting on $A$ and let $\mu_{\calA_g(\bb{F}_p)}$ be the distribution of $x_p(A)$ for a randomly chosen $A \in \calA_g(\bb{F}_p)$.

\begin{theorem}[{Katz-Sarnak for $\mathcal A_g$ \cite[Theorem 11.3.10]{KS99}}]\label{thm: katz-sarnak A_g}
Let $g\ge 1$ be an integer. Then, $$\mu_{\mathcal A_g(\F_p)} \xlongrightarrow{d} \mu_{\ST_g} $$ converges in distribution as $p\rightarrow \infty$. 
\end{theorem}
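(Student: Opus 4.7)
The plan is to mimic the proof of Theorem \ref{thm: katz-sarnak} for $\mathcal{M}_g$, with the moduli of curves replaced by the moduli stack $\calA_g$ of principally polarized abelian varieties. The core ingredients are Deligne's equidistribution theorem together with a big monodromy input, applied to the universal abelian scheme.

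First, I would realize $\calA_g$ as a smooth Deligne--Mumford stack of finite type over $\Spec \bb{Z}$, fix an auxiliary prime $l\ne p$, and consider the lisse $\ovl{\bb{Q}}_l$-sheaf $\calF_l = R^1\pi_* \ovl{\bb{Q}}_l$ on $\calA_g\otimes \bb{F}_p$, where $\pi\colon \calX\to \calA_g$ denotes the universal PPAV. The principal polarization endows $\calF_l$ with a non-degenerate symplectic pairing $\calF_l\otimes \calF_l\to \ovl{\bb{Q}}_l(-1)$, so that Frobenius at a closed point $A\in \calA_g(\bb{F}_p)$ acts on the stalk as an element $M_A\in \GSp_{2g}(\ovl{\bb{Q}}_l)$ of similitude factor $p$. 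After the Weil normalization $\theta_A := p^{-1/2}M_A$, all such conjugacy classes lie in the maximal compact subgroup $\USp_{2g}$.

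Next, I would invoke the classical big monodromy input: the geometric monodromy of $\calF_l$ on $\calA_g\otimes \ovl{\bb{F}}_p$ is the full symplectic group $\Sp_{2g}$. This is the content of \cite[Chapter 11]{KS99} and ultimately rests on the fact that the generic Mumford--Tate group of a PPAV of dimension $g$ is $\GSp_{2g}$, together with geometric irreducibility of $\calA_g\otimes \ovl{\bb{F}}_p$. With big monodromy in hand, Deligne's equidistribution theorem---in its form for Deligne--Mumford stacks, where points are counted with weight $1/\#\Aut_{\bb{F}_p}(A)$ and one uses the Grothendieck--Lefschetz trace formula for stacks---produces that the conjugacy classes $\{\theta_A\}_{A\in \calA_g(\bb{F}_p)}$ equidistribute, as $p\to\infty$, with respect to the pushforward of the Haar measure on $\USp_{2g}$ to the space of conjugacy classes. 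Pushing this equidistribution forward along $\tr\colon \USp_{2g}\to [-2g,2g]$ and recalling that $x_p(A) = \tr(\theta_A)$ yields $\mu_{\calA_g(\bb{F}_p)}\xrightarrow{d}\mu_{\ST_g}$.

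The main obstacle is the big monodromy step: everything else is formal once the correct framework (lisse sheaf with symplectic structure, stacky equidistribution) is in place. A secondary technical point is the extension of Deligne's equidistribution to stacks, which requires carefully incorporating the automorphism weights into the trace formula and controlling loci of extra automorphisms, but this is standard and handled in \cite{KS99}.
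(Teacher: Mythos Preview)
The paper does not give its own proof of this theorem: it is stated as background and attributed to \cite[Theorem 11.3.10]{KS99}. Your sketch is essentially the standard Katz--Sarnak argument from that reference (universal family over $\calA_g$, symplectic local system $R^1\pi_*\ovl{\bb{Q}}_l$, big geometric monodromy $\Sp_{2g}$, Deligne's equidistribution applied in the stacky setting with points weighted by $1/\#\Aut$), so there is nothing substantive to compare.
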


Recently, there has been interest \cite{ refinements, BLV} in going beyond Katz--Sarnak to predict the pointwise behavior of the distribution $\mu_{\calM_g}(t/\sqrt{p})$ and $\mu_{\calA_g}(t/\sqrt{p})$ where $t$ is an integer. In other words, we want to know the weighted cardinality of $\mathcal M_g(\F_p,t)$ and $\mathcal A_g(\F_p,t)$, which we define to be the set of genus $g$ curves and $g$-dimensional PPAVs with trace exactly equal to $t$. We take this as the central motivating question of our paper. 

In \cite{refinements}, Bergström-Howe-Lorenzo-Ritzenthaler asks whether the following is true:
\begin{question}[{Naïve pointwise Katz-Sarnak for $\mathcal M_g$ \cite[Conjecture 5.1]{refinements}}]\label{question: naive}
Let $g\ge 2$ be an integer. Is it true that 
$$\sup_{t\in \Z}\left|\frac{\#\mathcal M_g(\F_p,t)}{\# \mathcal M_g(\F_p)}\sqrt p- \ST_g(t/\sqrt p)\right| \rightarrow 0$$
as $p\rightarrow \infty$?
\end{question}

This is a much stronger statement than Theorem \ref{thm: katz-sarnak} and appears to be beyond the usual point counting methods over $\bb{F}_p$ via the Lefschetz fixed point formula. In \cite{BLV}, Ballini-Lombardo-Verzobio proved that the above conjecture was false for all $g\ge 2$ and gave a corrected version. The reason for this is because Question \ref{question: naive} fails to take into account the distribution of matrices on non-archimedean places.
\par 
For each prime $l$, the analogous random matrix heuristic predict that the distribution of the trace of $C$ in $\Z_l$ should be given by the trace of a random matrix in $\GSp_{2g}(\Z_l)$. This was proven in \cite[Theorem 2.1]{BLV}. As a result, if we want to predict the ``$\bb{Z}$''-distribution of the trace, we should combine the local factors over all places of $\bb{Q}$. For $l\neq p$, we have
$$v_l(t)=v_l^{g,p}(t)\coloneqq \lim_{k\rightarrow \infty}\frac{\# \{\gamma\in \GSp_{2g}(\Z/l^k\Z)\mid \tr(\gamma)=t, \text{mult}(\gamma)=p\}}{\#\GSp_{2g}(\Z/l^k\Z)/(l^{k}\phi(l^k))}$$
where we omit the dependence on $g,p$ when it is clear from context. For $l=p$ we define $v_p(t)=1$, see Remark \ref{rmk: vp choice} for more details. It is important to note that the local factors we give here are different from the ones proposed in \cite{BLV}, which counts projections from $\Z_l$ instead, and we explain this discrepancy in Remark \ref{rmk: AAGG problem}. After accounting for these local factors, \cite{BLV} conjectures that there is an $L^1$ convergence of $\mu_{\mathcal M_g(\F_p)}$ and $\mu_{\mathcal A_g(\bb{F}_p)}$ to the expected distribution. We state the conjectures in the case of prime fields as follows.
\begin{conjecture}[{$L^1$ Katz-Sarnak for $\mathcal M_g$ \cite[Conjecture 3.4] {BLV}}]\label{conj: L1 Katz-Sarnak}
Let $g\ge 2$ be an integer. Then,
$$\sum_{t\in \Z}\left|\frac{\#\mathcal M_g(\F_p,t)}{\# \mathcal M_g(\F_p)}-\frac{1}{\sqrt p} \ST_g(t/\sqrt p)\prod_{l}v_l(t)\right| \rightarrow 0$$
as $p\rightarrow \infty$.
\end{conjecture}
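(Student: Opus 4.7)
The plan is to reduce Conjecture \ref{conj: L1 Katz-Sarnak} to an analogous $L^1$ statement for $\calA_g$ (the central new result of this paper, announced in the abstract), and then deduce the $\calM_g$ version via the Torelli morphism in the only genera where that reduction is viable, namely $g = 2$ and $g = 3$. For $g \geq 4$ I do not expect to reach the conjecture, because of the Schottky problem.

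\textbf{The $\calA_g$ input.} I would prove the $L^1$ statement for PPAVs by combining Honda--Tate theory with a Langlands--Kottwitz style decomposition. The weighted count $\#\calA_g(\F_p, t)$, broken up according to isogeny class, becomes a sum over Weil $p$-numbers of trace $t$, and each isogeny class contributes a product of local orbital integrals of the unit element of the Hecke algebra at a hyperspecial maximal compact $K_v \subset \GSp_{2g}(\Q_v)$. This decomposition is naturally an Euler product. For $l \neq p$ the local factor should agree, up to a uniformly controllable error in the depth of approximation, with the counting ratio defining $v_l(t)$ — this is an integrated form of \cite[Thm.~2.1]{BLV} refined to keep track of the multiplier — while summing over isogeny classes and combining with the archimedean factor recovers $\ST_g(t/\sqrt p)/\sqrt p$ by Weyl integration on $\USp_{2g}$. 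Pointwise convergence at each $t$ is immediate from this structure. The $L^1$ upgrade requires uniform-in-$t$ bounds on the local orbital integrals, especially near the boundary $|t| \to 2g\sqrt p$ where $\ST_g$ vanishes but the orbital integrals can concentrate; I would seek these via Shalika germ expansions and standard Harish-Chandra asymptotic bounds, which is plausibly the role of the Achter--Gordon appendix.

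\textbf{From $\calA_g$ to $\calM_g$ in low genus.} The Torelli morphism $\calM_g \to \calA_g$ is a closed immersion onto the locus of indecomposable PPAVs and preserves Frobenius trace. For $g = 2$ every indecomposable PPAV is a (necessarily hyperelliptic) Jacobian, and for $g = 3$ the same holds by Oort--Ueno. The automorphism groups $\Aut(C)$ and $\Aut(J(C), \Theta)$ differ only by the factor $[-1]$ in the non-hyperelliptic case, which is $t$-independent on each locus, so the weighted counts transfer cleanly across Torelli. The contribution of decomposable PPAVs $A_1 \times A_2$ with $t = t_1 + t_2$ is negligible in $L^1$ by applying Theorem \ref{thm: katz-sarnak A_g} to each factor and summing the convolution over a bounded number of decomposition types. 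Together with the $\calA_g$ input, this yields Conjecture \ref{conj: L1 Katz-Sarnak} for $g \in \{2, 3\}$.

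\textbf{Main obstacle.} Technically, the hard step is the $L^1$ upgrade inside the $\calA_g$ argument: pointwise convergence is essentially formal from Honda--Tate plus equidistribution, but tail and boundary control require genuine analytic input on orbital integrals that is uniform in $t$. Conceptually, the obstacle beyond $g = 3$ is the Schottky problem: the Torelli locus has codimension $\binom{g-2}{2}$ in $\calA_g$, and locating Jacobians inside PPAVs with Frobenius-trace precision is out of reach of current techniques. A direct attack on $\calM_g$ would require a Langlands--Kottwitz-style formula for the moduli of curves, which does not currently exist; accordingly, the present plan does not settle the conjecture for $g \geq 4$.
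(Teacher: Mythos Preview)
Your plan matches the paper's in broad architecture—prove the $\calA_g$ statement first, then transfer via Torelli—but there is a genuine gap in the $g=3$ reduction, and your $\calA_g$ sketch is far from the analytic machinery the paper actually needs.

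\textbf{The $g=3$ gap.} Oort--Ueno is a statement over algebraically closed fields. Over $\F_p$ it is \emph{false} that every indecomposable $3$-dimensional PPAV is a Jacobian: if $C/\F_p$ is non-hyperelliptic of genus $3$, then the quadratic twist $\Jac(C)_{\chi_2}$ is an indecomposable PPAV which is \emph{not} the Jacobian of any $\F_p$-curve (Serre's obstruction, \cite[Appendix]{Serre_Torelli}), and its trace is $-t_p(C)$. Thus the Torelli image accounts for only about half of $\calA_3(\F_p)$, and the missing half sits at the \emph{negated} trace. Your sentence ``the weighted counts transfer cleanly across Torelli'' is exactly where this breaks. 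The paper can therefore only deduce the $L^1$ statement for the symmetrized distribution $\calM_3^{\mathrm{sym}}(\F_p,t)=\tfrac12(\calM_3(\F_p,t)+\calM_3(\F_p,-t))$; see Theorem~\ref{thm: M_2, M_3 sym}. Conjecture~\ref{conj: L1 Katz-Sarnak} for $g=3$ (unsymmetrized) is not established in the paper, and your argument does not establish it either.

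\textbf{The $\calA_g$ input.} The starting point (Honda--Tate plus Kottwitz's orbital-integral formula, rewritten as an Euler product following \cite{Achter_2023}) is right, but the $L^1$ upgrade is not obtained via Shalika germs or Harish-Chandra asymptotics, and the Achter--Gordon appendix is not about tail estimates: it corrects an error in \cite{Achter_2023} and identifies \emph{stable} orbital integrals with the Gekeler-style matrix-count ratios $v_l(f)$ (Lemma~\ref{lem: rational orbits to stable orbits}). The actual analytic work is quite different from what you propose: at good primes $v_l(f_A)=\zeta_{K,l}(1)/\zeta_{K^+,l}(1)$ with $K=\Q[x]/f_A(x)$, and truncating the infinite product $\prod_l v_l(f_A)$ requires an effective Chebotarev theorem \emph{in families} of number fields (\cite{PTW20}, \cite{jessethorner}) together with a quantitative Hilbert irreducibility theorem (\cite{effectivehilbertsirreducibilitytheorem}) to bound the number of exceptional fields; the truncated product is then averaged over boxes of characteristic polynomials via a Taylor-expansion argument (Proposition~\ref{prop: averaging out}). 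None of this is visible in ``standard Harish-Chandra asymptotic bounds,'' and the boundary behaviour of $\ST_g$ plays no special role.
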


\begin{conjecture}[{$L^1$ Katz-Sarnak for $\mathcal A_g$ \cite[Conjecture 3.7] {BLV}}]\label{conj: L1 Katz-Sarnak Ag}
Let $g\ge 2$ be an integer. Then,
$$\sum_{t\in \Z}\left|\frac{\#\mathcal A_g(\F_p,t)}{\# \mathcal A_g(\F_p)}-\frac{1}{\sqrt p} \ST_g(t/\sqrt p)\prod_{l}v_l(t)\right| \rightarrow 0$$
as $p\rightarrow \infty$.
\end{conjecture}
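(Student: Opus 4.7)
My plan is to express $\#\calA_g(\F_p,t)$ as an adelic orbital integral via a Langlands--Kottwitz-style mass formula, factor it into local pieces that manifestly match $v_l(t)$ at each prime $l$, and then show that the remaining ``archimedean'' piece, after dividing by $\#\calA_g(\F_p)$, equidistributes to the Sato--Tate density via Theorem \ref{thm: katz-sarnak A_g}. The $L^1$ summation is then controlled using the Hasse--Weil bound $|t|\le 2g\sqrt p$.

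\emph{Mass formula and local factorization.} First, I would invoke the Langlands--Kottwitz formula for the Siegel moduli stack $\calA_g$ at the prime $p$ of good reduction, giving a formula of the shape
$$
\#\calA_g(\F_p,t)\;=\;\sum_{[\gamma_0]}\vol\bigl(I_{\gamma_0}(\Q)\backslash I_{\gamma_0}(\A_f)\bigr)\cdot O^{(p)}_{\gamma}(\mathbf{1}_{K^p})\cdot TO_\delta(\phi_p),
$$
where $[\gamma_0]$ ranges over stable $\Q$-conjugacy classes in $\GSp_{2g}$ with trace $t$ and multiplier $p$, the factor $O^{(p)}_{\gamma}$ is the prime-to-$p$ orbital integral at the hyperspecial compact $K^p=\GSp_{2g}(\Zhat^{(p)})$, and $TO_\delta(\phi_p)$ is the twisted orbital integral of the Kottwitz function at $p$. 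I would expect this step to be essentially supplied by the appendix of Achter--Gordon. Next, the prime-to-$p$ orbital integral, after summing over stable classes of trace $t$ and normalizing by the Tamagawa-type volume, factors as a product over $l\ne p$ of local counts of elements in $\GSp_{2g}(\Z_l)$ with trace $t$ and multiplier $p$; by the very definition of $v_l(t)$, this factor is $\prod_{l\ne p}v_l(t)$. The twisted orbital integral at $p$, once normalized against $\#\calA_g(\F_p)$, contributes the trivial factor $v_p(t)=1$.

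\emph{Archimedean factor and summation over $t$.} To recover $\frac{1}{\sqrt p}\ST_g(t/\sqrt p)$, I would combine the previous step with Theorem \ref{thm: katz-sarnak A_g}: after dividing by $\#\calA_g(\F_p)$, the residual dependence of the main term on $t$ (through the coefficients of the characteristic polynomial other than the trace) equidistributes to the Sato--Tate measure on $[-2g\sqrt p,\, 2g\sqrt p]$, and summing integer $t$'s in a window of length $\sqrt p$ produces the factor $\frac{1}{\sqrt p}$. Once a pointwise asymptotic of the form
$$
\frac{\#\calA_g(\F_p,t)}{\#\calA_g(\F_p)}\;=\;\frac{1}{\sqrt p}\ST_g(t/\sqrt p)\prod_{l}v_l(t)\;+\;\text{(error)}
$$
is established with a uniform error $o(1/\sqrt p)$, the $L^1$-convergence follows because there are only $O(\sqrt p)$ values of $t$ in the Hasse--Weil range.

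\emph{Main obstacle.} The hard part is uniformity in $t$. The most delicate $t$ correspond to non-ordinary or non-semisimple Frobenii, or to $t$ near the Weil boundary $\pm 2g\sqrt p$, where $\ST_g$ vanishes and the orbital integrals both at $p$ and at ramified primes degenerate. For generic $t$ the Sato--Tate density is bounded below and an explicit fundamental-lemma computation for $\GSp_{2g}$ should suffice to pin down the $p$-adic twisted orbital integral; but for the exceptional $t$, one needs to bound the orbital contributions via Shalika germ expansions and to invoke smooth transfer to rewrite the twisted $p$-adic orbital integral on an endoscopic centralizer. Combining a power-saving pointwise bound over generic $t$ with a direct count showing there are only $O(p^{1/2-\delta})$ exceptional $t$ should then yield the required $L^1$-convergence.
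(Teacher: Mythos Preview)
Your factorization step contains the central error. The Langlands--Kottwitz formula indeed expresses $\#\calA_g(\F_p,t)$ as a sum over stable classes $[\gamma_0]$ with $\tr(\gamma_0)=t$ and $\mult(\gamma_0)=p$, but each such class is determined by its \emph{characteristic polynomial} $f=f_{\gamma_0}$, not just by $t$. The local orbital integral at $l\ne p$ is (after the right normalization) $v_l(f)$, the density of matrices in $\GSp_{2g}(\Z_l)$ with characteristic polynomial congruent to $f$, \emph{not} $v_l(t)$. Thus what the mass formula hands you is
\[
\#\calA_g(\F_p,t)\;=\;p^{\dim(\calA_g)/2}\sum_{f:\,a_1(f)=t} v_\infty(f)\prod_l v_l(f)
\]
(this is Theorem~\ref{thm: PPAV counts for char poly} of the paper, valid only for generic $f$). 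The quantity $v_l(t)$ is the \emph{average} of $v_l(f)$ over $f$ with trace $t$, and the entire difficulty of the conjecture is that a sum of infinite products $\sum_f\prod_l v_l(f)$ is \emph{not} a product of sums $\prod_l\bigl(\sum_f v_l(f)\bigr)$. Bridging this gap is the content of Sections~\ref{sec: chebotarev}--\ref{sec: summing euler products}: one must truncate $\prod_l v_l(f)$ to $\prod_{l\le l_0} v_l(f)$ via an effective Chebotarev theorem for the family $\{\widetilde K_f\}$ (requiring Hilbert irreducibility to bound exceptional fields), then use a Taylor-expansion/CRT argument to average the truncated product in boxes of characteristic polynomials.

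Secondly, invoking Theorem~\ref{thm: katz-sarnak A_g} to extract the archimedean factor is circular. That theorem is convergence in distribution, which is strictly weaker than the pointwise statement you need; indeed the whole point of Conjecture~\ref{conj: L1 Katz-Sarnak Ag} is that it goes \emph{beyond} Theorem~\ref{thm: katz-sarnak A_g}. You cannot deduce a uniform $o(1/\sqrt p)$ error from weak convergence. Finally, your ``main obstacle'' section misidentifies the difficulty: the issue is not Shalika germs or the $p$-adic twisted orbital integral (the paper handles $l=p$ rather easily in the ordinary case), but rather controlling the infinite product over \emph{all} primes $l$ simultaneously and uniformly over the $\Theta(p^{g(g+1)/4})$ characteristic polynomials with a given trace.
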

\begin{remark} We make an elementary but important observation that while $L^1$ convergence is weaker than pointwise convergence, it is not too far off as $L^1$ convergence implies pointwise convergence for \textit{almost all} $t\in [-2g\sqrt p,2g\sqrt p]$. More precisely, for any $\epsilon,\delta>0$, we have for all sufficiently large $p$ that all $t\in [-2g\sqrt p,2g\sqrt p]$ with at most $\delta \sqrt p$ possible exceptions satisfy $$\left|\frac{\#\mathcal A_g(\F_p,t)}{\# \mathcal A_g(\F_p)}\sqrt p-\ST_g(t/\sqrt p)\prod_{l}v_l(t)\right|<\epsilon.$$
This is a simple consequence of Chebyshev's inequality.
\end{remark}

Our main theorem is that Conjecture \ref{conj: L1 Katz-Sarnak Ag} is true.

\begin{theorem}\label{thm: A_g}
Conjecture \ref{conj: L1 Katz-Sarnak Ag} is true for all $g \geq 2$. In fact, for any $\epsilon>0$, we have
$$\sum_{t\in \Z}\left|\frac{\#\mathcal A_g(\F_p,t)}{\# \mathcal A_g(\F_p)}-\frac{1}{\sqrt p} \ST_g(t/\sqrt p)\prod_{l}v_l(t)\right| = O_g\left(\log(p)^{-1+\epsilon}\right).$$
\end{theorem}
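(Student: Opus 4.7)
The plan is to prove Theorem \ref{thm: A_g} by combining three ingredients: a Langlands-Kottwitz / Honda-Tate mass formula for $\#\calA_g(\F_p,t)$, effective local equidistribution at each finite prime extending [BLV, Theorem 2.1], and the Katz-Sarnak equidistribution at infinity (Theorem \ref{thm: katz-sarnak A_g}) with an explicit rate.

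The first step is to convert the weighted count into a sum of products of local orbital integrals. By the Langlands-Kottwitz formula (made explicit in the appendix by Achter-Gordon), for each characteristic polynomial $f$ of Weil $p$-type with trace $t$, the weighted number of PPAVs with Frobenius polynomial $f$ equals a global volume factor times a product $\prod_{v} O_v(f)$ of local orbital integrals on $\GSp_{2g}$. Summing over $f$ yields $\#\calA_g(\F_p,t)$, and normalizing by $\#\calA_g(\F_p)$ turns the ratio into a product of local probabilities at each place, up to a controlled global error.

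The second step is place-by-place estimation. Pick a truncation parameter $L = (\log p)^{c}$ for a suitable constant $c>0$. For finite primes $l \leq L$ with $l \ne p$, apply an effective form of [BLV, Theorem 2.1], obtained by tracking error terms in the equidistribution argument on $\GSp_{2g}(\Z/l^k\Z)$, to show the local factor matches $v_l(t)$ with an error summable over $t$. For $l > L$ with $l \ne p$, one needs a uniform tail bound, of the form $(1/l^k)\sum_{t \bmod l^k}(v_l(t)-1)^2 = O_g(l^{-\alpha})$ for some $\alpha > 0$, so that the truncated Euler product over primes $l > L$ differs from $1$ by $O(L^{-\alpha/2})$ in mean-square over $t$. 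The $p$-adic contribution is absorbed into the normalization corresponding to $v_p(t)=1$. At infinity, an effective Weyl integration formula on $\USp_{2g}$ gives a quantitative Katz-Sarnak matching the archimedean density with $\frac{1}{\sqrt p}\ST_g(t/\sqrt p)$.

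The final step combines the local errors, sums over the $O(\sqrt p)$ integers $t\in[-2g\sqrt p,2g\sqrt p]$, and tunes $c$ so that the effective error at small primes balances the tail from primes above $L$, producing the $O(\log(p)^{-1+\epsilon})$ rate. \textbf{The main obstacle} is the tail control: the local densities $v_l(t)$ are not individually bounded as functions of $t$, so showing that $\prod_{l>L}v_l(t)\to 1$ in $L^1$ over $t$ requires nontrivial second-moment estimates. This reduces to counting conjugacy classes in $\GSp_{2g}(\F_l)$ (and in higher principal congruence quotients) with prescribed trace and multiplier $p\pmod l$, uniformly in $l$. The argument will also rely on the appendix by Achter-Gordon to identify the local orbital integrals appearing in the mass formula with the densities $v_l(t)$ in a form compatible with the truncation above.
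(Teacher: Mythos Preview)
The proposal has the right overall shape but misidentifies the central object and leaves out exactly the step that carries the weight of the argument.

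The mass formula (Theorem \ref{thm: PPAV counts for char poly}, via Kottwitz and the appendix) expresses $\#\calA_g(\F_p,f)$ for a fixed \emph{characteristic polynomial} $f$ as $p^{\dim\calA_g/2}\,v_\infty(f)\prod_l v_l(f)$. The appendix identifies the stable orbital integrals with the Gekeler-style densities $v_l(f)$, \emph{not} $v_l(t)$: it is a density on the fiber over $f$ in the Steinberg base, not over the trace. To reach $\#\calA_g(\F_p,t)$ you must evaluate $\sum_{f:\,a_1(f)=t} v_\infty(f)\prod_l v_l(f)$, and the entire difficulty is controlling this sum of infinite Euler products. Your sentence ``normalizing by $\#\calA_g(\F_p)$ turns the ratio into a product of local probabilities at each place'' is precisely the statement to be proved, not an input.

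Your truncation is aimed at the wrong tail. The tail $\prod_{l>L} v_l(t)$ is easy: Proposition \ref{prop: vl(t) = 1 + O(l^-2)} gives $v_l(t)=1+O(l^{-2})$ uniformly, so no second-moment argument is needed. The hard tail is $\prod_{l>L} v_l(f)$ for a \emph{fixed} $f$: here $v_l(f)=\zeta_{K,l}(1)/\zeta_{K^+,l}(1)=1+a_{K,l}/l+O(l^{-2})$ with $K=\Q[x]/f$, and $a_{K,l}$ is governed by prime splitting in $\widetilde K$. For a single $f$ this product is essentially a partial Artin $L$-value at $s=1$ and is \emph{not} close to $1$ without effective control on the Chebotarev error for $\widetilde K/\Q$. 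The paper obtains this via the effective Chebotarev theorem in families (Theorem \ref{thm: eff chebotarev}), which requires bounding the number of exceptional $\widetilde K_f$; that bound (Proposition \ref{prop: bounding exceptional fields}) is obtained by a quantitative Hilbert irreducibility theorem with only logarithmic height dependence. None of this machinery appears in your plan, and a second-moment bound on $v_l(t)$ cannot replace it.

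Even after truncating to $\prod_{l\le L} v_l(f)$, you still have to average a product of $\sim L/\log L$ factors, each periodic in $f$ with period a prime power, over the $\Theta(p^{(g^2+g-2)/4})$ polynomials $f$ with trace $t$. The period of the product is enormous compared to the range, so completing residue classes fails. The paper handles this with the Taylor-expansion averaging lemma (Proposition \ref{prop: averaging out}), an auxiliary factor $v_l'(f)$ to compensate for the lack of good upper bounds on $v_{l,k}(f)$, and a careful choice of $k(l)$ and the two truncation parameters $l_1<l_0$. An ``effective form of [BLV, Theorem 2.1]'' treats each prime separately and says nothing about the correlations across primes inside the product; this is the step that actually produces the $\log(p)^{-1+\epsilon}$ rate.
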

When $g = 1$, the conjecture is true due to Gekeler's formula \cite{Gekeler2003FrobeniusDO}, derived from the work of \cite{Deuring_1941} and the analytic class number formula. In this simple case we actually have equality between the two terms for all $t,p$. Other historical work for counting elliptic curves over $\F_p$ are \cite{Birch1968HowTN}, \cite{Lenstra_1987}, \cite{Galbraith_McKee_2000}. 

Despite the work done in the elliptic curve case, there has been no significant progress on the case of genus $g\ge 2$. In our paper, we deduce Conjecture \ref{conj: L1 Katz-Sarnak} for $g = 2$ and a variant for $g = 3$ using Theorem \ref{thm: A_g} and the Torelli map.
\begin{theorem}[$L^1$ Katz-Sarnak for $\mathcal M_2$ and $\mathcal M_3^{sym}$]\label{thm: M_2, M_3 sym}
Let $\mathcal M_3^{sym}(\F_p,t)=\frac{\mathcal M_3(\F_p,t)+\mathcal M_3(\F_p,-t)}{2}$ be the symmetrized distribution of $\mathcal M_3(\F_p,t)$. For any $\epsilon>0$, we have
$$\sum_{t\in \Z}\left|\frac{\#\mathcal M_2(\F_p,t)}{\# \mathcal M_2(\F_p)}- \frac{1}{\sqrt p}\ST_2(t/\sqrt p)\prod_{l}v_l(t)\right| =O\left(\log(p)^{-1+\epsilon}\right),$$
$$\sum_{t\in \Z}\left|\frac{\#\mathcal M_3^{sym}(\F_p,t)}{\# \mathcal M_3(\F_p)}- \frac{1}{\sqrt p}\ST_3(t/\sqrt p)\prod_{l}v_l(t)\right| =O\left(\log(p)^{-1+\epsilon}\right).$$
\end{theorem}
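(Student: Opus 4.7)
The plan is to use the Torelli map to reduce Theorem~\ref{thm: M_2, M_3 sym} to Theorem~\ref{thm: A_g}. The three key steps are: (i) identify the Jacobian stratum $\mathcal A_g^{\text{Jac}} \subset \mathcal A_g$ with (possibly a symmetrization of) $\mathcal M_g$ via Torelli; (ii) bound the weighted count of the ``decomposable'' PPAVs $\mathcal A_g^{\text{Dec}} := \mathcal A_g \setminus \mathcal A_g^{\text{Jac}}$ in $L^1$, using Gekeler's formula for $g=1$ counts and Theorem~\ref{thm: A_g} itself for the $g=2$ counts appearing in the $g=3$ analysis; and (iii) compare the normalized distributions. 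Crucially, the conjectural distribution $\frac{1}{\sqrt p}\ST_g(t/\sqrt p)\prod_l v_l(t)$ is identical on the curve and PPAV sides of Conjectures~\ref{conj: L1 Katz-Sarnak} and \ref{conj: L1 Katz-Sarnak Ag}, so the argument amounts to showing that $\mathcal M_g$ and $\mathcal A_g$ have essentially the same normalized trace distribution.

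For $g=2$, every PPAV is either the Jacobian of a smooth (automatically hyperelliptic) genus-$2$ curve, in which case $\Aut(C) = \Aut(J(C),\Theta)$ so the Torelli morphism is a weighted bijection $\mathcal M_2 \isomto \mathcal A_2^{\text{Jac}}$, or it is a polarized product $(E_1,\lambda_1) \times (E_2,\lambda_2)$. This yields
$$\mathcal M_2(\F_p,t) = \mathcal A_2(\F_p,t) - \mathcal A_2^{\text{Dec}}(\F_p,t),$$
where $\mathcal A_2^{\text{Dec}}(\F_p,t)$ is a convolution of $\mathcal A_1$-counts (with an obvious correction when the two elliptic factors coincide). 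Summing over $t$, one gets $\mathcal A_2^{\text{Dec}}(\F_p) = O(p^2)$ against $|\mathcal A_2(\F_p)| \sim p^3$, so $\mathcal A_2^{\text{Dec}}(\F_p)/\mathcal A_2(\F_p) = O(1/p)$. Taylor-expanding
$$\frac{\mathcal M_2(\F_p,t)}{\mathcal M_2(\F_p)} = \frac{\mathcal A_2(\F_p,t) - \mathcal A_2^{\text{Dec}}(\F_p,t)}{\mathcal A_2(\F_p) - \mathcal A_2^{\text{Dec}}(\F_p)}$$
gives an $L^1$ discrepancy of $O(1/p)$ between the two probability distributions, which is absorbed into the $O(\log(p)^{-1+\epsilon})$ error of Theorem~\ref{thm: A_g} for $\mathcal A_2$.

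For $g=3$, the essentially new ingredient is Serre's observation that for a non-hyperelliptic genus-$3$ curve $C$ one has $\Aut(J(C),\Theta) = \Aut(C) \times \{\pm 1\}$, where the $-1$ factor does not come from $\Aut(C)$. On $\F_p$-forms, the induced map $H^1(\F_p,\Aut(C)) \to H^1(\F_p,\Aut(C)\times\{\pm 1\})$ is injective with cokernel $H^1(\F_p,\{\pm 1\}) = \Z/2$; concretely, every non-hyperelliptic Jacobian PPAV of trace $t$ is either the Jacobian of a unique $\F_p$-curve of trace $t$ (the Torelli image) or is the Serre twist of the Jacobian of a unique $\F_p$-curve of trace $-t$ (the $-1$ twist flips the sign of the Frobenius trace). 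An automorphism-weight calculation then gives
$$\mathcal A_3^{\text{Jac},\text{non-hyp}}(\F_p,t) = \tfrac{1}{2}\bigl(\mathcal M_3^{\text{non-hyp}}(\F_p,t) + \mathcal M_3^{\text{non-hyp}}(\F_p,-t)\bigr),$$
while for the hyperelliptic part $\mathcal A_3^{\text{Jac},\text{hyp}}(\F_p,t) = \mathcal M_3^{\text{hyp}}(\F_p,t)$ is itself $\pm t$-symmetric by twisting with the hyperelliptic involution. Summing these yields the clean identity $\mathcal A_3^{\text{Jac}}(\F_p,t) = \mathcal M_3^{\text{sym}}(\F_p,t)$. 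The decomposable correction $\mathcal A_3^{\text{Dec}}$ is a sum of convolutions involving $\mathcal A_1$ and $\mathcal A_2$, so $\mathcal A_3^{\text{Dec}}(\F_p) = O(p^4)$ against $|\mathcal A_3(\F_p)| \sim p^6$, and the same Taylor-expansion argument as in the $g=2$ case concludes.

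The main obstacle will be the careful bookkeeping of the $g=3$ identity: verifying via Galois cohomology that Serre's $-1$ twist precisely identifies the non-hyperelliptic Jacobian PPAVs of trace $t$ with the symmetrized count of curves of trace $\pm t$, and that the automorphism weights conspire to give the factor $\tfrac{1}{2}$ required to recover $\mathcal M_3^{\text{sym}}$ rather than $\mathcal M_3$ on the nose. Once the two identities $\mathcal M_2 = \mathcal A_2 - \mathcal A_2^{\text{Dec}}$ and $\mathcal M_3^{\text{sym}} = \mathcal A_3 - \mathcal A_3^{\text{Dec}}$ are established, the $O(\log(p)^{-1+\epsilon})$ bound of Theorem~\ref{thm: A_g} propagates transparently, since the decomposable corrections and the renormalization from $|\mathcal A_g(\F_p)|$ to $|\mathcal M_g(\F_p)|$ each contribute only $O(1/p^c)$ for some $c > 0$, which is dominated by the $\log(p)^{-1+\epsilon}$ error.
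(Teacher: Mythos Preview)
Your proposal is correct and follows essentially the same route as the paper: reduce to Theorem~\ref{thm: A_g} via Torelli, use that genus-$2$ curves are hyperelliptic so automorphism weights match, and for $g=3$ invoke Serre's observation that the quadratic twist of a non-hyperelliptic Jacobian has trace $-t$ and is not itself a Jacobian. The paper's proof is only a one-paragraph sketch relying on the Weil bounds $\#\mathcal M_g(\F_p),\#\mathcal A_g(\F_p)=p^{\dim}+O(p^{\dim-1/2})$ to absorb the complement; your version is more explicit, spelling out the Galois-cohomology bookkeeping $H^1(\F_p,\Aut(C)\times\{\pm1\})$ and the resulting exact identity $\mathcal A_3^{\text{Jac}}(\F_p,t)=\mathcal M_3^{\text{sym}}(\F_p,t)$, which is a nice sharpening. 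One small caveat: your claim that $\mathcal A_3^{\text{Dec}}$ is literally ``a sum of convolutions'' only covers the $\F_p$-decomposable PPAVs, not those that split only geometrically (e.g.\ Weil restrictions); but since the paper's cruder bound $\mathcal A_3^{\text{Dec}}(\F_p)=\#\mathcal A_3(\F_p)-\#\mathcal M_3(\F_p)=O(p^{5.5})$ already suffices, this does not affect the argument.
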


Now we explain how to deduce Theorem \ref{thm: M_2, M_3 sym} from Theorem \ref{thm: A_g}. Recall that the Torelli map $\mathcal M_g \injects \mathcal A_g$ that sends a curve to its Jacobian is injective, and that we have $\dim(\mathcal M_g)=3(g-1)$ and $\dim(\mathcal A_g)=g(g+1)/2$. Furthermore, the Torelli theorem tells us that 
$$\Aut(\Jac(C)) \cong \begin{cases}
\Aut(C) & \text{if } C \text{ is hyperelliptic}\\
\Aut(C)\times \Z/2\Z &\text{otherwise}.\\
\end{cases}$$
We first discuss the $g=2$ case. By the Weil bound, the groupoid cardinalities $\# \mathcal M_2(\F_p)$ and $\# \mathcal A_2(\F_p)$ both have size $p^3 + O(p^{2.5})$. Furthermore, a genus $2$ curve is generically hyperelliptic, which means that $\# \Aut(\Jac(C)) = \# \Aut(C)$. Hence, the image of the Torelli map is almost surjective, so the $L^1$ convergence for $\mathcal M_2$ follows from $\mathcal A_2$. For $g=3$, while the groupoid cardinalities $\# \mathcal M_3(\F_p)$ and $\# \mathcal A_3(\F_p)$ both have size $p^6+O(p^{5.5})$, a curve of genus $3$ is generically non-hyperelliptic, so $\# \Aut(\Jac(C)) = 2 \# \Aut(C)$ and the image of the Torelli map contains only approximately half of the points in $\mathcal A_3(\F_p)$. This obstruction can be described more precisely -- if $C$ is non-hyperelliptic, the quadratic twist of its Jacobian $\Jac(C)_{\chi_2}$ cannot be a Jacobian, and furthermore $t_p(\Jac(C)_{\chi_2})=-t_p(\Jac(C))$, see \cite[Appendix]{Serre_Torelli} for more details. This explains why we can only prove $L^1$ convergence for $\mathcal M_3^{sym}$. 
\par 
One may wonder whether we can use Theorem \ref{thm: A_g} to prove Conjecture \ref{conj: L1 Katz-Sarnak} for general $g$ or analogous conjectures for other families such as the moduli space of hyperelliptic curves $\mathcal H_g$. It appears that the crux to resolving this problem is to characterize the image of the Torelli map, i.e. which PPAVs are Jacobians of curves (or hyperelliptic curves), which still remains an open problem.
\par 
We also conjecture that pointwise versions of Conjecture \ref{conj: L1 Katz-Sarnak} and Theorem \ref{thm: A_g} are true.  Although we fall short of proving it, we believe that a refinement of our method could lead to a possible proof, at least in the case of $\mathcal A_g$, $\mathcal M_2$ and $\mathcal M_3^{sym}$. 
\begin{conjecture}[Pointwise Global Katz-Sarnak for $\mathcal M_g$ and $\mathcal A_g$]\label{conj: pointwise}
For $g\ge 2$, we have
$$\sup_{t\in \Z}\left|\frac{\#\mathcal M_g(\F_p,t)}{\# \mathcal M_g(\F_p)}\sqrt p- \ST_g(t/\sqrt p)\prod_{l}v_l(t)\right| \rightarrow 0$$
as $p\rightarrow \infty$. For $g\ge 1$, we have
$$\sup_{t\in \Z}\left|\frac{\#\mathcal A_g(\F_p,t)}{\# \mathcal A_g(\F_p)}\sqrt p- \ST_g(t/\sqrt p)\prod_{l}v_l(t)\right| \rightarrow 0$$
as $p\rightarrow \infty$.
\end{conjecture}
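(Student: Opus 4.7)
The plan is to refine the Langlands-Kottwitz-style method underlying Theorem \ref{thm: A_g} so that the error estimate becomes uniform in $t$ rather than merely summable over $t$. Via Honda-Tate theory, $\#\mathcal A_g(\F_p,t)$ decomposes as a sum over Weil $p$-polynomials of trace $t$, and each summand factors (essentially) as a product of local orbital integrals in $\GSp_{2g}(\Q_l)$ for $l\ne p$ together with a $p$-adic Dieudonn\'e contribution. The conjectured main term $\frac{1}{\sqrt p}\ST_g(t/\sqrt p)\prod_l v_l(t)$ is exactly the ``expected'' value of this product under the hypothesis that Frobenius conjugacy classes are Haar-distributed at each non-archimedean place and their eigenvalue angles are Sato-Tate distributed at infinity, so the discrepancy splits naturally into an archimedean error plus a finite number of non-archimedean errors (since only primes $l\le 2g$ and those dividing $\disc(t^2-4p)$ contribute non-trivially to $v_l(t)$).

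For the archimedean piece, I would seek an effective version of Theorem \ref{thm: katz-sarnak A_g} at scale $1/\sqrt p$, for instance via an Erd\H{o}s-Tur\'an-type bound applied to Weyl sums of Frobenius conjugacy classes, using the same equidistribution input that gives Theorem \ref{thm: A_g} but extracting a quantitative rate. For each fixed non-archimedean $l$, one needs to show that the actual number of $\GSp_{2g}(\Z/l^k\Z)$-conjugacy classes of prescribed trace and multiplier $p$ agrees with its Haar-density prediction to within $o(1)$ \emph{uniformly in $t$}; this is a question in harmonic analysis on $\GSp_{2g}(\Z_l)$ that should be tractable via the local trace formula or direct matrix arguments. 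Combining these pointwise local statements via a smoothing-truncation in the Langlands-Kottwitz sum would then upgrade the $L^1$ estimate to an $L^\infty$ estimate.

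The main obstacle, in my view, is controlling \emph{non-regular semisimple} and \emph{non-semisimple} Frobenius elements, which can contribute disproportionately to specific values of $t$ --- typically those for which $t^2-4p$ has a large square factor, or whose isogeny class contains non-simple or non-ordinary PPAVs. In $L^1$ such anomalies are absorbed by the Cauchy-Schwarz/Chebyshev-style arguments giving Theorem \ref{thm: A_g}, but pointwise they demand genuinely new input: uniform bounds on singular orbital integrals via Shalika germ expansions, combined with a careful stable-conjugacy analysis for $\GSp_{2g}$, possibly with endoscopic corrections. Once established for $\mathcal A_g$, the cases of $\mathcal M_2$ and $\mathcal M_3^{sym}$ would follow via the Torelli map exactly as in the deduction of Theorem \ref{thm: M_2, M_3 sym} from Theorem \ref{thm: A_g}, since the Torelli argument given there works at the pointwise level once the image and the fibers over $\mathcal A_g$ are controlled by the same Weil-bound heuristics.
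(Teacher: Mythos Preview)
First, note that the statement in question is a \emph{conjecture}: the paper does not prove it, and says so explicitly right after stating it. So there is no proof in the paper to compare against; the relevant question is whether your sketch actually closes the gap the paper leaves open. It does not --- and you say as much yourself (``demands genuinely new input''). But the sketch also misidentifies both some basic objects and the actual bottleneck.

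On the basic level: the references to $t^2-4p$ and to ``primes dividing $\disc(t^2-4p)$'' are $g=1$ artefacts. For $g\ge 2$ the characteristic polynomial has degree $2g$, and the relevant discriminant is $\disc(f)$ for the full Weil $p$-polynomial $f$. More importantly, it is not true that only finitely many primes $l$ have $v_l(t)\ne 1$: Proposition~\ref{prop: vl(t) = 1 + O(l^-2)} gives $v_l(t)=1+O(l^{-2})$, which ensures convergence of $\prod_l v_l(t)$ but does not make $v_l(t)=1$ eventually. The discrepancy between $\#\mathcal A_g(\F_p,t)$ and the expected term cannot be reduced to an archimedean error plus finitely many non-archimedean ones.

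On the bottleneck: within the paper's framework, non-regular-semisimple and non-ordinary contributions are already discarded by restricting to the generic set $\mathcal F$ (simple, ordinary, generic Galois group, $\epsilon_1$-good), which misses only an $O(p^{-1/5})$ proportion of characteristic polynomials --- harmless for pointwise just as for $L^1$. What actually limits the argument is the quality of the bounds on $v_l(f)$ for primes $l\mid\disc(f)$. The paper can only prove the lower bound $v_{l,k}(f)\ge 1-O(l^{-1})$ (Proposition~\ref{prop: vl lower bound}) and is forced to use the crude upper bound $v_l'(f)\le l_0^{g+1}$, which drives the choice $l_1=(\log p)^{1-\epsilon}$ and ultimately the $(\log p)^{-1+\epsilon}$ rate; the paper explicitly flags Conjecture~\ref{conj: v_l = 1+O(l^-1)} as the obstruction at the end of \S\ref{sec: final proof}. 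Your proposed tools --- Shalika germs, endoscopy, local trace formula --- address orbital integrals of singular elements, not the problem of bounding $v_l(f)=\sum_\gamma v_l(\gamma)$ over a stable class of \emph{regular semisimple} $\gamma$ when $l\mid\disc(f)$; that is a point-counting problem on $\GSp_{2g,f}(\Z/l^k\Z)$ near its singular locus, which is what \S\ref{sec: point counting} grapples with.

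Finally, the $\mathcal M_g$ part of the conjecture for $g\ge 4$ is not accessible via Torelli from $\mathcal A_g$ at all (the dimensions no longer match), so even a complete proof for $\mathcal A_g$ would leave the general $\mathcal M_g$ statement open beyond $g=2,3$.
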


\subsection{Proof overview}
The methods that are usually used to prove convergence theorems like Theorem \ref{thm: katz-sarnak} and \ref{thm: katz-sarnak A_g} are geometric in nature, but these methods are unlikely to be strong enough to prove something global like Theorem \ref{thm: A_g}. For example, one such method relies on computing the étale cohomology of the moduli spaces which allow us to compute moments of the trace, but this method cannot distinguish curves of trace $t$ and $t+1$. Another possible approach is to use monodromy and Deligne's equidistribution theorem, but this only gives us convergence over each local place. Thus, we take a completely different approach. Roughly, our proof consists of two parts -- first, we derive a formula for the number of PPAVs with a fixed characteristic polynomial, then, we sum this formula up over characteristic polynomials with fixed trace.

We discuss the first part of our proof. In Section \ref{sec: parametrization}, we parametrize the moduli space $\mathcal A_g$ in terms of the characteristic polynomials $f_A$ using Honda-Tate theory, and we define the region of characteristic polynomials. In Section \ref{sec: ppav count formula}, we then prove the following formula for the number of PPAVs with a corresponding generic characteristic polynomial $f_A$ which may be of independent interest. 
\begin{theorem}\label{thm: PPAV counts for char poly}
Suppose that $A$ is a simple, ordinary abelian variety whose Galois group is generic, i.e. isomorphic to $(\Z/2\Z)^g\rtimes S_g$. Then, there are a nonzero number of PPAVs with characteristic polynomial $f_A$, and the groupoid cardinality is given by
$$\# \mathcal A_g(\F_p,f_A)=p^{\dim(\mathcal A_g)/2}v_{\infty}(f_A)\prod_l v_l (f_A)$$
where the local factors $v_\infty(f_A)$ and $v_l(f_A)$ are defined in Equation \eqref{eqn: vinf definition} and \eqref{eqn: vl t and f definition}.
\end{theorem}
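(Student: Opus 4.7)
The plan is to combine Honda--Tate theory, Deligne's equivalence for ordinary abelian varieties, and a Langlands--Kottwitz--Gekeler style mass formula that writes the weighted count as a product of local orbital integrals on $\GSp_{2g}$.

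First, I would exploit the genericity hypothesis to set up a lattice count. Since the Galois group is the full Weyl group $(\Z/2\Z)^g \rtimes S_g$, the polynomial $f_A$ is irreducible with roots coming in complex-conjugate pairs of distinct absolute value, so $K = \Q[\pi]$ is a CM field of degree $2g$ with totally real subfield $K^+$ of degree $g$. By Honda--Tate the isogeny class of $A$ is determined by $f_A$, and by Deligne's equivalence for ordinary abelian varieties, elements of that isogeny class correspond to $\Z[\pi, p/\pi]$-lattices $\Lambda \subset K$. Invoking Howe's description of polarizations, a principal polarization on such a $\Lambda$ corresponds to a totally positive $\xi \in K^+$ with $\xi \Lambda = \Lambda^\vee$ (under the trace pairing), modulo units. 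The groupoid cardinality $\#\calA_g(\F_p,f_A)$ is then a weighted sum of class numbers for orders $\calO$ with $\Z[\pi,\bar\pi] \subseteq \calO \subseteq \calO_K$.

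Second, I would translate this class-number sum into a global orbital integral on $\GSp_{2g}(\A_\Q)$ at a regular semisimple $\gamma$ whose characteristic polynomial is $f_A$; the CM torus $K^\times$ is exactly the centralizer of $\gamma$. The product formula factors the global orbital integral into local pieces. At the archimedean place, Weyl integration on $\USp_{2g}$ with respect to $\mu_{\ST_g}$ yields the factor $v_\infty(f_A)$. At each finite prime $\ell \neq p$ the local orbital integral is, after choosing the natural measure on $\GSp_{2g}(\Z_\ell)$, identified with the $\ell$-adic matrix density $v_\ell(f_A)$ essentially by definition. At $p$, the ordinariness of $A$ combined with Serre--Tate canonical lifts trivializes the $p$-adic orbital integral, matching the convention $v_p(f_A) = 1$. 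The remaining dimensional factor $p^{\dim(\calA_g)/2}$ comes from the Tamagawa-style volume normalization of the CM torus against the Siegel measure on $\calA_g$.

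The main obstacle is the second step: matching the lattice-theoretic count with the $\GSp_{2g}$ orbital integral and verifying that the local normalizations agree term-by-term with the combinatorially defined $v_\ell(f_A)$ of the introduction, with no leftover correction factors at primes where $\Z[\pi,\bar\pi]_\ell$ fails to be maximal. The generic Galois group hypothesis is essential here: it forces $\Z[\pi,\bar\pi]$ to be maximal locally at all but a controlled set of primes, eliminates conductor corrections in the class-number sum, and ensures that the centralizer torus $K^\times$ is as simple as possible. Carefully tracking Tamagawa measures, Gross's canonical volume on the CM torus, and the passage from counting lattices to counting conjugacy classes of matrices will be the technical heart of the proof.
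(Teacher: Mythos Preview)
Your plan has the right flavor but misses the central mechanism of the argument and misstates several key facts.

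The most serious gap is that you never distinguish between a PAV-isogeny class and the full set of PPAVs with characteristic polynomial $f_A$. The Kottwitz/Langlands orbital integral formula (whether you reach it via Deligne's equivalence or directly) computes $\#I_{[A,\lambda]}$ for a \emph{single} PAV-isogeny class, and that formula carries a Tamagawa number $\tau_T = |H^1(\Q,X^*(T))|/|\Sha^1(T)|$ as well as \emph{rational} orbital integrals $v_l(\gamma_{[A,\lambda],l})$ depending on the $\GSp_{2g}(\Q_l)$-conjugacy class, not on $f_A$ alone. To obtain $\#\calA_g(\F_p,f_A)$ you must sum over all PAV-isogeny classes with characteristic polynomial $f_A$; the paper parameterizes these by Kottwitz triples via a map $\Phi_A$ that is $|\Sha^1(T)|$-to-one. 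The generic Galois group enters precisely here: it forces the Kottwitz invariant group $\pi_0((Z(\hat T)/Z(\hat\GSp_{2g}))^\Gamma)$ to be trivial (so $\Phi_A$ is surjective) and forces $H^1(\Q,X^*(T))$ to be trivial (so $\tau_T = 1/|\Sha^1(T)|$). These two facts together produce the miraculous cancellation that makes the Tamagawa number disappear from the final formula. Your claim that the generic Galois group ``forces $\Z[\pi,\bar\pi]$ to be maximal locally at all but a controlled set of primes, eliminates conductor corrections'' is not what it does and is not even true.

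Two further errors: (i) you assert $v_p(f_A)=1$ via Serre--Tate, but the paper's $v_p(f_A)$ is a genuine orbital integral at $p$ involving the double coset $G(\Z_p)\diag(p,\ldots,p,1,\ldots,1)G(\Z_p)$ and is generically $\zeta_{K,p}(1)/\zeta_{K^+,p}(1)$, not $1$ (it is only the \emph{trace} factor $v_p(t)$ that is set to $1$ by convention); (ii) the identification of the local orbital integral with the matrix density $v_l(f_A)$ is not ``essentially by definition'' --- the orbital integral is over a rational conjugacy class while $v_l(f_A)$ counts matrices in the full stable class, and the passage between them (summing rational integrals to get the stable one, matching with the Gekeler-style ratio) is the content of Lemma~\ref{lem: rational orbits to stable orbits} and the appendix.
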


Our work in Section \ref{sec: parametrization} also tells us that the conditions in Theorem \ref{thm: PPAV counts for char poly} are generic among all isogeny classes. As far as we know, this is first such explicit formula for characteristic polynomials, albeit only in the generic case. Other similar formulas were given in \cite{Achter_2023} and \cite{Howe_2022}, however, these count PPAVs within different classes which are finer than the data of a characteristic polynomial. We also expect there to be no such simple formula in the non-generic case, although we believe it is possible to give a concrete upper bound.

The starting point for the proof of Theorem \ref{thm: PPAV counts for char poly} is Kottwitz's orbital integral formula for the number of PPAVs in a Polarized Abelian Variety (PAV) isogeny class as stated in Theorem \ref{thm: orbital integral}. Then, we built on the work of \cite{Achter_2023} which rewrites this as an Euler product of rational orbital integrals as well as a global factor including the Tamagawa number in Theorem \ref{thm: AAGG}. This was a generalization of the earlier work done by \cite{Achter_Gordon_2017} for the case of elliptic curves. \cite{Achter_2023} attempts to express these rational orbital integrals explicitly in terms of a ratio of matrices projected from $\GSp_{2g}(\Z_l)$, but we found an error in this explicit expression for the ratios. Fortunately, we only require stable orbital integrals for this paper, which can be easily expressed in the ratio of explicit matrix counts, which is explained in the appendix of this paper. 

Then, to move from counts in a PAV-isogeny class to an AV-isogeny class determined by characteristic polynomial, we characterize the set of PAV-isogeny classes in an AV-isogeny with Kottwitz's invariant and show that this is well behaved in the generic case, which allows us to sum up the counts over the PAV-isogeny classes to obtain Theorem \ref{thm: PPAV counts for char poly}. Note that when compared to Theorem \ref{thm: AAGG}, there is no Tamagawa number in the formula, which seems to suggest that the Tamagawa number is an adjustment factor for PAV-isogeny classes, at least in the generic case. 

In Section \ref{sec: local factors} and \ref{sec: point counting}, we prove some technical lemmas on the local factors which are required for the rest of the proof. While the local factors $v_{\infty}(f)$ and $v_{l}(t)$ are easy to understand, $v_l(f)$ is much more difficult. This is because the singularities of the associated subscheme of $\GSp_{2g}$ cut out by characteristic polynomial $f$ are hard to understand, which leads to difficulty bounding point counts over $\Z/l^k\Z$. In Section \ref{sec: bounds, stab, disc} we discuss conjectures and partial results on $v_l(f)$, and Section \ref{sec: point counting} is dedicated to proving these partial results. Fortunately, these partial results are enough to prove our final theorem, even though they lead to more technical difficulty in the analytical arguments. We highlight that this difficulty is intrinsic to the case of $g\ge 2$ and does not exist in the elliptic curve case, essentially because $\GSp_{2g}$ is much more complicated than $\GL_{2}$.

Now we explain the second part of our proof. To obtain Theorem \ref{thm: A_g} from Theorem \ref{thm: PPAV counts for char poly}, we need to sum the counts $\#\mathcal A_g(\F_p,f)$ over all characteristic polynomials $f$ with a fixed trace $t$. Similar ideas of summing Euler products in the elliptic curve case were explored in \cite{David_Koukoulopoulos_Smith_2016} and \cite{Ma_2024}, however, the case of $g\ge 2$ is more difficult which requires the introduction of new ideas and methods. Most notably, we tackle an arithmetic statistic question on bounding the size of a family of exceptional number fields.

In Section \ref{sec: vl(fA)}  we see that for all but finitely many primes $l$, $v_l(f_A)$ is a ratio of local Dedekind zeta functions ${\zeta_{K,l}(1)}/{\zeta_{K^+,l}(1)}$ where $K \coloneqq \Q[x]/f_A(x)$ and $K^+$ is the totally real subfield of $K$, and this implies the convergence of $\prod_l v_l(f_A)$. In order to deal with the sum of infinite Euler products, we need to approximate $\prod_l v_l(f_A)$ with a truncated product $\prod_{l\le l_0} v_l(f_A)$. In the elliptic curve case, $K$ is a quadratic field, so these reduce to Dirichlet L-functions, which allows for a completely analytic proof by looking at Dirichlet characters such as in \cite{David_Koukoulopoulos_Smith_2016}. However, this does not work in the $g\ge 2$ case as we are now concerned with Dedekind zeta functions and Artin L-functions, and thus it seems likely that a purely analytic argument would be impossible. Nevertheless, by looking at prime splitting, we see that giving such a truncation of the infinite Euler product is essentially equivalent to the statement of an effective Chebotarev theorem for the Dedekind zeta function of the Galois closure $\widetilde K$. Fortunately for us, there is exactly a recent effective Chebotarev theorem in families of number fields developed by \cite{PTW20} and improved in \cite{jessethorner} for general Galois group $G$, and our problem is a natural application of this result. We state this in Theorem \ref{thm: eff chebotarev}.

However, a key problem that arises here is the existence of exceptional fields for which the effective Chebotarev theorem could fail, and we need to bound the number of these. To do this, we use a quantitative version of Hilbert's irreducibility theorem given in \cite{effectivehilbertsirreducibilitytheorem} which was proven using Bombieri-Pila type of methods. It is important for our application that we use a version that is only logarithmically dependent on the height of the polynomial. In more detail, we need to bound the quantity $\mathfrak m_{\mathfrak F_G(Q)}$ defined in Theorem \ref{thm: eff chebotarev}, which essentially counts the maximum number of number fields in our family that one particular number field in our family intersects. To do this, for each number field $L$ we construct a polynomial over $\Z[x,a_1,\ldots, a_g]$ where $a_i$ are the variables of the characteristic polynomial $f$, such that the irreducibility of this polynomial over $x$ given some values of $a_1,\ldots, a_g$ implies that $\Q[x]/f(x)$ does not intersect $L$, which allows us to apply Hilbert's irreducibility. We discuss this in Section \ref{sec: chebotarev}.

Finally, we finish the proof of Theorem \ref{thm: A_g} in Section \ref{sec: summing euler products} using an analytical argument which on a high level proceeds as follows. We are left with a sum of truncated products $\sum_{f} v_{\infty}(f)\prod_{l\le l_0}v_l(f)$. We approximate $v_l(f)$ by an $l^k$-periodic function $v_{l,k}(f)$, so the product is $\prod_{l\le l_0}l^k$ periodic. It would be easy to sum these products if this period is $o(p)$ by cutting the region of characteristic polynomials into boxes of $o(p)$ length. However, this is not the case. Nevertheless, we prove an averaging lemma in Section \ref{sec: summing euler products} which allows us to do the above, essentially by using the Taylor expansion of $\exp(\sum_{l\le l_0}\log(v_{l,k}(f)))$. Hence, by cutting the region of characteristic polynomials into boxes of size $o(p)$, in each box $\prod_{l\le l_0}v_l(f)$ averages out to $\prod_{l\le l_0}v_l(t)$, and we can approximate the sum of $v_{\infty}(f)$ over these boxes with an integral, giving us the Sato-Tate distribution. Of course, the actual details are technical and complicated, notably, we need to introduce an auxiliary factor to deal with our suboptimal bounds on $v_l(f)$.

\subsection{Acknowledgments}
The first author would like to thank Andrew Sutherland for introducing him to \cite{refinements} and surrounding problems. We are also grateful to Jonas Bergström, Wee Teck Gan, Nicholas Katz, Davide Lombardo, Christophe Ritzenthaler, Peter Sarnak, Will Sawin, Jacob Tsimerman and Matteo Verzobio for their helpful comments.
\section{Parametrization by characteristic polynomials}\label{sec: parametrization}
The rest of the paper is devoted to proving Theorem \ref{thm: A_g}. We fix the genus $g\ge 2$, since the $g=1$ case corresponds to elliptic curves which has already been proven in \cite{Gekeler2003FrobeniusDO}. All little-$o$ or big-$O$ notation will omit the dependence on $g$, but the dependence on all other variables like $p$ and $l$ will be included. A common theme in this paper is that we will focus on counting PPAVs in the most generic case (e.g. simple, ordinary, etc.) as this is sufficient to prove the $L^1$ bound we want.
\subsection{Honda-Tate theory} This section roughly follows the exposition in \cite[Section 3]{Howe1995PrincipallyPO}. Consider an abelian variety $A$ of dimension $g$ over $\F_p$, and its characteristic polynomial $f_A(x)$ which has degree $2g$. Weil's theorem tells us that every root of $f_A(x)$ is a $p$-Weil number, i.e. an algebraic integer which has absolute value $p^{1/2}$ for every embedding into $\C$. The Honda-Tate Theorem says that the map sending an abelian variety $A$ to the complex roots of $f_A$ induces a bijection between isogeny classes of simple abelian varieties and Galois conjugacy classes of Weil $p$-numbers. Since every abelian variety is isogenous to a product of simple abelian varieties, this gives a partition of abelian varieties into isogeny classes, in terms of the data of $p$-Weil numbers.

However, it is difficult to use $p$-Weil numbers as a parametrization for counting purposes. Instead, we will use a variant of the Honda-Tate theorem for ordinary abelian varieties where isogeny classes are parametrized by characteristic polynomials. See \cite[Theorem 3.3]{Howe1995PrincipallyPO} and \cite[Theorem 1]{HondaTate}.
\begin{definition}\label{definition: ordinary}
A $g$-dimensional abelian variety $A$ over $\F_p$ is \textit{ordinary} if the set of $A[p](\ovl{\bb{F}}_p)$ has exactly $p^g$ elements. A \textit{Weil $p$-polynomial} is an integer monic polynomial where all complex roots have magnitude $p^{1/2}$. An \textit{ordinary Weil $p$-polynomial} is a Weil $p$-polynomial of even degree where $p$ doesn't divide the middle coefficient.
\end{definition}
\begin{theorem}[Honda-Tate for ordinary abelian varieties]\label{thm: honda tate}
The map $A\mapsto f_A(x)$ induces a bijection between the isogeny classes of ordinary abelian varieties of dimension $g$ over $\F_p$ to the set of ordinary Weil $p$-polynomials of degree $2g$. Furthermore, isogeny classes of simple ordinary abelian varieties correspond to irreducible ordinary Weil $p$-polynomials. 
\end{theorem}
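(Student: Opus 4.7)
The plan is to deduce this theorem from the classical Honda--Tate theorem combined with the structure theory of ordinary abelian varieties. The classical theorem establishes a bijection between isogeny classes of simple abelian varieties over $\F_p$ and $\Gal(\Qbar/\Q)$-conjugacy classes of Weil $p$-numbers, via $A\mapsto \pi_A$, where injectivity is Tate's theorem on endomorphisms and surjectivity is Honda's CM-lifting argument. So the task reduces to: (i) translating between Weil $p$-numbers and Weil $p$-polynomials, (ii) matching the ordinariness condition on both sides, and (iii) handling the non-simple case.

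First I would use the general identity $f_A = m_{\pi_A}^{2g/[\Q(\pi_A):\Q]}$ for simple $A$, where $m_{\pi_A}$ is the minimal polynomial of $\pi_A$ and the exponent equals $[\End^0(A):\Q(\pi_A)]^{1/2}$. Next I would identify the ordinariness of $A$ with a condition on $f_A$: by Manin's classification, $A$ is ordinary iff its Newton polygon has only slopes $0$ and $1$, each with multiplicity $g$, which by the Newton polygon computation is in turn equivalent to the middle coefficient $a_g$ of $f_A$ being a unit in $\Z_p$. This is precisely the notion of ordinary Weil $p$-polynomial in Definition \ref{definition: ordinary}.

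For the simple ordinary case, I would invoke the theorem of Tate--Waterhouse that for an ordinary simple $A$, the Frobenius endomorphism generates the full endomorphism algebra, so $\End^0(A) = \Q(\pi_A)$ is a CM field of degree $2g$. This forces $f_A = m_{\pi_A}$ to be irreducible of degree $2g$, giving the simple-to-irreducible direction. Conversely, given an irreducible ordinary Weil $p$-polynomial $f$ of degree $2g$, classical Honda--Tate produces an isogeny class of simple abelian varieties attached to any root $\pi$ of $f$; its characteristic polynomial is a power of $m_\pi = f$, and the degree constraint $\deg f_A = 2g = \deg f$ forces $f_A = f$.

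Finally, for the non-simple case, decompose any ordinary abelian variety up to isogeny as $A \sim \prod A_i^{e_i}$ with the $A_i$ simple and pairwise non-isogenous. The ordinariness of $A$ forces each $A_i$ ordinary (slopes of each factor appear among slopes of the product), and $f_A = \prod f_{A_i}^{e_i}$ is the unique factorization into distinct irreducible ordinary Weil $p$-polynomials by the simple case; conversely any factorization of an ordinary Weil $p$-polynomial of degree $2g$ into irreducibles yields such a decomposition. This gives the full bijection. The only delicate input is Honda's CM-lifting surjectivity; the rest is bookkeeping via Newton polygons and dimension counts, and I would simply cite \cite[Theorem 3.3]{Howe1995PrincipallyPO} and \cite[Theorem 1]{HondaTate} for the surjectivity, since it is not something one can reprove efficiently in a few lines.
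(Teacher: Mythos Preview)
The paper does not prove this theorem at all; it is stated with a citation to \cite[Theorem 3.3]{Howe1995PrincipallyPO} and \cite[Theorem 1]{HondaTate} and then used as a black box. Your proposal goes further than the paper by actually sketching the deduction from classical Honda--Tate, and the sketch is essentially correct and standard: identify ordinariness with the Newton polygon condition on the middle coefficient, use that for an ordinary Weil $p$-number the local invariants of the Brauer class vanish so $\End^0(A)=\Q(\pi)$ and hence $f_A=m_\pi$ is irreducible of degree $2g$, and then pass to the non-simple case by unique factorization.

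One small point of presentation: in your converse for the simple case you write ``the degree constraint $\deg f_A=2g=\deg f$ forces $f_A=f$,'' but $\deg f_A=2g$ is not known a priori---it is equivalent to $\dim A=g$, which is exactly what you are trying to establish. The fix is already implicit in your forward direction: since the root $\pi$ of $f$ is ordinary, the Honda--Tate $A$ attached to $\pi$ is ordinary, so the Tate--Waterhouse/local-invariant argument you invoked gives $\End^0(A)=\Q(\pi)$ and hence $2\dim A=[\Q(\pi):\Q]=2g$; then $f_A=f$ follows. You should also note explicitly (it is true but you do not say it) that every irreducible factor of an ordinary Weil $p$-polynomial is again ordinary of even degree: the involution $\pi\mapsto p/\pi$ is complex conjugation and has no fixed points among the roots (since $\pm\sqrt{p}$ has $p$-adic valuation $1/2$), so each Galois orbit has even size and equal numbers of valuation-$0$ and valuation-$1$ roots.
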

\begin{remark}
The bijection in Theorem \ref{thm: honda tate} does not extend to a bijection from all abelian varieties to all Weil $p$-polynomials. Indeed, there are simple non-ordinary abelian varieties $A$ which have characteristic polynomial that is a nontrivial power of an irreducible polynomial, say $f_A(x)=g(x)^r$. In this case, $g(x)$ is also a Weil polynomial, there is no corresponding abelian variety with characteristic polynomial $g(x)$, as it would have to be isogenous to $A$ by the original Honda-Tate theorem.
\end{remark}
\subsection{The region $\mathcal R_g$} Next, we find an explicit condition on the coefficients of an ordinary Weil $p$-polynomial. Throughout the paper, we will often switch between multiplier $p$ (characteristic polynomials) and multiplier $1$ (normalized characteristic polynomials) versions of symplectic polynomials, and we will attempt to consistently denote the first kind by $f(x)$ with coefficients $a_i$ and the second by $g(x)$ with coefficients $b_i$. 
\begin{definition}\label{def: R_g}
Let the region $\mathcal R_g$ consist of the tuples $\mathbf b=(b_1,\ldots, b_g)\in \R^g$ such that the polynomial $g_{\mathbf b}(x)\coloneqq x^{2g}+b_1x^{2g-1}+\cdots +b_gx^g+b_{g-1}x^{g-1}+\cdots +1$ has roots that come in pairs $\{e^{i\theta},e^{-i\theta}\}$. Define the map $[p]:\R^g\rightarrow \R^g$ by sending $(b_1,\ldots, b_g)\mapsto (b_1p^{1/2},\ldots, b_gp^{g/2})$, i.e it scales the $i$-th term by $p^{i/2}$. Then, it is clear that the image $[p](\mathcal R_g)$ contains the tuples $\mathbf a = (a_1,\ldots, a_g)\in \R^g$ such that $f_{\mathbf a}(x)\coloneqq x^{2g}+a_1x^{2g-1}+\cdots +a_gx^g+pa_{g-1}x^{g-1}+\cdots+p^{g-1}a_1x+p^g$ has roots that come in pairs $\{\sqrt pe^{i\theta},\sqrt pe^{-i\theta}\}$, because we have the relation $f_{[p](\mathbf b)}(x)=p^gg_{\mathbf b}(x/\sqrt p)$.
\end{definition}
The reason we introduced the region $\mathcal R_g$ is because it parametrizes ordinary Weil $p$-polynomials.
\begin{proposition}\label{prop: ordinary weil p-poly}
The ordinary Weil $p$-polynomials of degree $2g$ are exactly those of the form $f_{\mathbf a}(x)$ where $\mathbf a\in [p](\mathcal R_g)$ and $p\nmid a_g$. 
\end{proposition}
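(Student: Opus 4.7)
The plan is to prove both directions by tracking what the Weil condition forces on the roots. The key observation is that for a monic real polynomial, $|\alpha|^2=p$ is equivalent to $\bar\alpha=p/\alpha$, so the multi-set of roots of a Weil $p$-polynomial is closed under both complex conjugation and the involution $\alpha\mapsto p/\alpha$.

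For the forward direction, I would start with $f(x)=x^{2g}+c_1x^{2g-1}+\cdots+c_{2g}\in\Z[x]$ an ordinary Weil $p$-polynomial of degree $2g$. The root pairing $\alpha\leftrightarrow p/\alpha$, together with $\prod \alpha_i = p^g$ (since there are $g$ such pairs), yields the functional equation
\[
x^{2g}f(p/x)=p^g f(x),
\]
and comparing coefficients gives $c_{2g-j}=p^{g-j}c_j$ for $0\le j\le g$. Setting $a_i=c_i$ for $i=1,\ldots,g$ writes $f=f_{\mathbf a}$ in the symplectic shape of Definition \ref{def: R_g}, and the ordinary condition is literally $p\nmid a_g$. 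To verify $\mathbf a\in[p](\mathcal R_g)$, set $b_i=a_i p^{-i/2}$; then $g_{\mathbf b}(y)=p^{-g}f(\sqrt p\,y)$ has roots $\alpha_i/\sqrt p$ on the unit circle, which, being conjugates in pairs, form pairs $\{e^{\pm i\theta_j}\}$, so $\mathbf b\in\mathcal R_g$ and $\mathbf a=[p](\mathbf b)$.

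Conversely, assume $\mathbf a\in\Z^g\cap[p](\mathcal R_g)$ with $p\nmid a_g$; integrality of $\mathbf a$ is needed for $f_{\mathbf a}\in\Z[x]$ and is implicit in the claim, since the palindromic structure forces every other coefficient to be a $p$-power multiple of an $a_i$. Writing $\mathbf a=[p](\mathbf b)$, the definition of $\mathcal R_g$ gives $g_{\mathbf b}$ roots $e^{\pm i\theta_j}$, and the identity $f_{\mathbf a}(x)=p^g g_{\mathbf b}(x/\sqrt p)$ from Definition \ref{def: R_g} shows that $f_{\mathbf a}$ has roots $\sqrt p\,e^{\pm i\theta_j}$, each of magnitude $\sqrt p$. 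Thus $f_{\mathbf a}$ is a monic integer polynomial of degree $2g$ whose complex roots all have magnitude $\sqrt p$, i.e.\ a Weil $p$-polynomial, and it is ordinary because $p\nmid a_g$.

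The argument is essentially bookkeeping: the symplectic palindromic form of $f$ is equivalent to the root pairing $\alpha\leftrightarrow p/\alpha$ imposed by the Weil condition, and the rescaling $x=\sqrt p\,y$ intertwines $f_{\mathbf a}$ with $g_{\mathbf b}$ so as to match the parametrization by $\mathcal R_g$. The only step needing a bit of care is the equivalence between the functional equation $x^{2g}f(p/x)=p^g f(x)$ and the coefficient relation $c_{2g-j}=p^{g-j}c_j$, which is a direct expansion.
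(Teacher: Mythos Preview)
Your functional-equation approach is a valid alternative to the paper's real-factorization argument, but there is a genuine gap in the forward direction. You assert $\prod \alpha_i = p^g$ ``since there are $g$ such pairs,'' and later that the roots of $g_{\mathbf b}$, ``being conjugates in pairs, form pairs $\{e^{\pm i\theta_j}\}$.'' Both statements implicitly assume the involution $\alpha\mapsto p/\alpha$ (which equals complex conjugation here) has no fixed points among the roots. But $\pm\sqrt p$ are fixed by this involution, and nothing you wrote excludes them as roots with odd multiplicity; in that case the roots of $g_{\mathbf b}$ include $\pm 1$ with odd multiplicity and cannot be grouped into pairs $\{e^{i\theta},e^{-i\theta}\}$, so $\mathbf b\notin\mathcal R_g$.

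The paper handles exactly this case: it factors $f$ over $\R$ and shows by a mod-$p$ computation that a factor $x^2-p$ forces $p\mid a_g$, contradicting ordinariness. Your route can be repaired without that computation: the constant term $c_{2g}=\prod\alpha_i$ is an integer of absolute value $p^g$, so $c_{2g}=\pm p^g$; if $c_{2g}=-p^g$ then $x^{2g}f(p/x)=-p^gf(x)$, and comparing $x^g$-coefficients gives $c_g=-c_g$, hence $c_g=0$, contradicting $p\nmid a_g$. Thus $c_{2g}=+p^g$, and since $\sqrt p$ and $-\sqrt p$ have equal multiplicity $m$ over $\Z$, the product formula $(-1)^m p^g=p^g$ forces $m$ even, so the pairing into $\{e^{i\theta},e^{-i\theta}\}$ does go through. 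With this patch your argument is complete; it is slightly slicker than the paper's in that it avoids the explicit mod-$p$ factorization, at the cost of only showing $m$ is even rather than the paper's stronger conclusion $m=0$ (either suffices for the proposition).
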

\begin{proof}
An ordinary Weil $p$-polynomial $f(x)$ of degree $2g$ has integral coefficients, so its roots must come in pairs of $\{\sqrt p e^{i\theta}, \sqrt p e^{-i\theta} \}$ or $\{-\sqrt p, \sqrt p\}$. Thus, when factored over $\R$, the irreducible factors of $f(x)$ are of the form $x^2+tx+p$ or $x^2-p$. However, it cannot have the factor $x^2-p$, because otherwise we would have $f(x) = (x^2-p)^r\prod_{i=1}^{g-r} (x^2+t_ix+p) \equiv x^{2r} \prod_{i=1}^{g-r} (x^2+t_ix) = x^{r+g}\prod_{i=1}^{g-r} (x+t_i)\pmod{p}$ for $r>0$ which clearly has middle coefficient a multiple of $p$. It follows that an ordinary Weil $p$-polynomial must have roots that come in conjugate pairs $\{\sqrt p e^{i\theta}, \sqrt p e^{-i\theta}\}$, so it is of the form $f_{\mathbf a}(x)$ with $(a_1,\ldots, a_g)\in [p](\mathcal R_g)$ and the condition that $p\nmid a_g$ follows directly. The converse is clear from the definitions. 
\end{proof}
We give a more abstract description of $\mathcal R_g$ which will be useful later.  
\begin{lemma}\label{lem: R_g Usp}
Any matrix $M\in \USp_{2g}$ has a characteristic polynomial $g_M(x)$ with symmetric coefficients which can be written as $g_{\mathbf b}(x)$ for some $\mathbf b$, so we let $\pi \colon \USp_{2g}\rightarrow \R^g$ be the map that sends $M$ to $\mathbf b$. Then, $\mathcal R_g=\pi(\USp_{2g})$ is the image, and in particular, $\mathcal R_g$ is a compact subset of $\R^g$.
\end{lemma}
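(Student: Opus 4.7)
The plan is to verify three things: that every $M \in \USp_{2g}$ has a palindromic characteristic polynomial (so $\pi$ is well defined), that the image $\pi(\USp_{2g})$ is exactly $\mathcal R_g$, and that this image is compact.

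First, I would analyze the eigenvalues of $M \in \USp_{2g}$. Because $M$ is symplectic, its eigenvalues come in pairs $\{\lambda, \lambda^{-1}\}$ (the symplectic form identifies $M$ with $(M^T)^{-1}$ after a base change). Because $M$ is unitary, every eigenvalue $\lambda$ lies on the unit circle, so $\lambda^{-1} = \bar\lambda$. Combining these, the eigenvalues come in conjugate pairs $\{e^{i\theta_j}, e^{-i\theta_j}\}$ for $j = 1,\ldots, g$. Therefore
\[
g_M(x) = \prod_{j=1}^{g}(x-e^{i\theta_j})(x-e^{-i\theta_j}) = \prod_{j=1}^{g}(x^2 - 2\cos\theta_j\, x + 1),
\]
which has real and palindromic coefficients: the coefficient of $x^{2g-i}$ equals the coefficient of $x^i$. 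This lets us write $g_M(x) = g_{\mathbf b}(x)$ for a unique $\mathbf b \in \R^g$, making $\pi$ well defined, and shows $\pi(\USp_{2g}) \subseteq \mathcal R_g$.

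Next, I would establish the reverse inclusion $\mathcal R_g \subseteq \pi(\USp_{2g})$. Given $\mathbf b \in \mathcal R_g$, by definition the roots of $g_{\mathbf b}(x)$ come in pairs $\{e^{i\theta_j}, e^{-i\theta_j}\}$ for some angles $\theta_j \in \R$. I can then construct a matrix in the standard maximal torus of $\USp_{2g}$ with exactly these eigenvalues, for example the block-diagonal matrix in the symplectic basis $(e_1,\ldots,e_g,f_1,\ldots,f_g)$ whose $(e_j,f_j)$-block acts by $\mathrm{diag}(e^{i\theta_j}, e^{-i\theta_j})$. This matrix is visibly unitary and preserves the standard symplectic pairing $\langle e_j,f_j\rangle = 1$, so it lies in $\USp_{2g}$, and its characteristic polynomial is $g_{\mathbf b}(x)$ by construction. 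Hence $\mathbf b \in \pi(\USp_{2g})$.

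Finally, for compactness: $\USp_{2g}$ is a closed subgroup of the compact group $U(2g)$, hence compact. The map $\pi$ is continuous because the coefficients of the characteristic polynomial are polynomial functions of the entries of $M$. The continuous image of a compact set is compact, so $\mathcal R_g = \pi(\USp_{2g})$ is a compact subset of $\R^g$. No step here is an obstacle, as the argument is essentially a standard eigenvalue-pairing computation for $\USp_{2g}$ together with continuity of coefficients; the only point requiring mild care is the explicit construction in the second step, which is handled by picking a torus element.
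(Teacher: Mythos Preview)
Your proof is correct and follows essentially the same approach as the paper: both argue that symplectic plus unitary forces eigenvalues into conjugate pairs on the unit circle, then exhibit a diagonal torus element for the converse. The paper is slightly more explicit about the edge case $\lambda=\pm 1$ (using $\det M=1$ to get even multiplicity), which you fold into the assertion that symplectic eigenvalues come in pairs $\{\lambda,\lambda^{-1}\}$; and you spell out the compactness step, which the paper leaves implicit.
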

\begin{proof}
Take $M\in \USp_{2g}\coloneqq \Sp_{2g}(\C)\cap U(2g)$. Since $M$ is symplectic, we have $M^{-1} = \Omega^{-1}M^T\Omega$ where $\Omega = \left(\begin{smallmatrix}
0 & I_g \\ -I_g & 0 \end{smallmatrix}\right)$. As eigenvalues remain the same under transposition and conjugation, $M^{-1}$ has the same eigenvalues as $M$. Furthermore, since $M$ is also unitary, this means that eigenvalues either come in complex conjugate pairs $\{e^{i\theta},e^{-i\theta}\}$ or are $1$ or $-1$. Furthermore, both $-1$ and $1$ have even multiplier because $\det(M)=1$, and thus all roots can be grouped into complex conjugate pairs $\{e^{i\theta},e^{-i\theta}\}$. It follows directly that the characteristic polynomial $g_M(x)$ has symmetric coefficients and that $\pi(M)\in \mathcal R_g$. Conversely, it is clear that for any $(b_1,\ldots,b_g)\in \mathcal R_g$, the corresponding polynomial is the characteristic polynomial of the diagonal matrix $\diag(e^{i\theta_1},\ldots ,e^{i\theta_g},e^{-i\theta_1},\ldots ,e^{-i\theta_g})\in \USp_{2g}$.
\end{proof}
\begin{example} We describe the region $\mathcal R_2$ explicitly using inequalities. For $(b_1,b_2)\in \mathcal R_2$, pair the conjugate roots and factorize the corresponding polynomial $x^4+b_1x^3+b_2x^2+b_1x+1 = (x^2+tx+1)(x^2+sx+1)$ where $|t|,|s|\le 2$, so we have $b_1=t+s$ and $b_2=2+st$. Thus, $|b_1|\le 4$, and given a fixed $b_1$ in this range, $b_2$ is maximized by taking $s=t$ and minimized when $s=2,t=b_1-2$ when $b_1\ge 0$ and $s=-2,t=b_1+2$ when $b_1\le 0$. Thus, $\mathcal R_2 = \left\{(b_1,b_2) \mid |b_1|\le 4, \  2|b_1|-2 \le b_2\le 2+(b_1/2)^2\right\}$.
\end{example}
One would expect intuitively that for general $g$ that $\mathcal R_g$ can be completely described explicitly using equations, just like in the $g=2$ case. We prove a slightly weaker statement which suffices for our purposes, but before that we give a useful definition.
\begin{definition}\label{def: tot real poly}
Suppose a monic polynomial $g(x)$ of degree $2g$ has roots that come in pairs $\{e^{i\theta_j},e^{-i\theta_j}\}$, or equivalently that it is of the form $g_{\mathbf b}(x)$. Then, define the totally real polynomial $g^+(x)$ of degree $g$ to be the monic polynomial with roots $e^{i\theta_j}+e^{-i\theta_j}$, so that $g(x)=x^gg^+(x+\frac 1 x)$. Likewise, for multiplier $p$ polynomials, if $f(x)$ has roots $\{\sqrt pe^{i\theta_j},\sqrt pe^{-i\theta_j}\}$, define $f^+(x)$ to have roots $\sqrt p(e^{i\theta_j}+e^{-i\theta_j})$ so $f(x)=x^gf^+(x+\frac p x)$.
\end{definition}
\begin{lemma}\label{lem: explicit R_g}
There exist a compact subset $K\subseteq \R^{g-1}$ and two continuous functions $s,t\colon K\rightarrow \R$ such that $$\mathcal R_g =\{(b_1,\ldots, b_g)\in \R^g \mid s(b_1,\ldots,b_{g-1})\le b_g \le t(b_1,\ldots,b_{g-1}), (b_1,\ldots, b_{g-1})\in K\}.$$
\end{lemma}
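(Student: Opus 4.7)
The plan is to work in the ``totally real'' coordinates by exploiting the transformation $g_{\mathbf b}(x) = x^g g_{\mathbf b}^+(x + 1/x)$ from Definition \ref{def: tot real poly}. Writing $g_{\mathbf b}^+(y) = y^g + c_1 y^{g-1} + \cdots + c_g$, direct expansion shows $b_i = c_i + P_i(c_1, \ldots, c_{i-1})$ for polynomials $P_i$ (e.g.\ $b_1 = c_1$ and $b_2 = c_2 + g$), so $\phi \colon (b_1, \ldots, b_g) \mapsto (c_1, \ldots, c_g)$ is a polynomial automorphism of $\R^g$ commuting with the projection onto the first $g-1$ coordinates. Under $\phi$, $\mathcal R_g$ corresponds to
\[
\mathcal R_g' \coloneqq \bigl\{(c_1, \ldots, c_g) \in \R^g : y^g + c_1 y^{g-1} + \cdots + c_g \text{ has all roots in } [-2, 2]\bigr\},
\]
and it suffices to prove the analogous statement for $\mathcal R_g'$. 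The projection $K' \coloneqq \pi_{g-1}(\mathcal R_g')$ is compact as the continuous image of the compact set $\mathcal R_g'$ (compactness from Lemma \ref{lem: R_g Usp}).

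Fix $\mathbf c' = (c_1, \ldots, c_{g-1}) \in K'$ and put $h(x) \coloneqq x^g + c_1 x^{g-1} + \cdots + c_{g-1} x$; I claim the fiber $S(\mathbf c') \coloneqq \{c_g : (\mathbf c', c_g) \in \mathcal R_g'\}$ is a nonempty closed interval. Writing $y = -c_g$, this reduces to showing that $S' \coloneqq \{y \in \R : h(x) - y \text{ has all } g \text{ roots in } [-2, 2]\}$ is a closed interval. Let $H \coloneqq \{y : h(x) - y \text{ has all real roots}\}$ be the hyperbolicity locus; on $H$ we have well-defined real roots $r_1(y) \le \cdots \le r_g(y)$, and $H$ is a closed (possibly half-infinite) interval with lower endpoint $\max_i v_i^{\min}$ and upper endpoint $\min_i v_i^{\max}$, where $v_i^{\min}, v_i^{\max}$ range over local minimum and maximum values of $h$. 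For $y$ in the interior of $H$, all $r_i(y)$ are distinct and interlace with the critical points $x_1 < \cdots < x_{g-1}$ of $h$ via $r_1 < x_1 < r_2 < \cdots < x_{g-1} < r_g$ (Rolle's theorem); in particular $r_1 \in (-\infty, x_1)$ and $r_g \in (x_{g-1}, \infty)$, half-lines on which $h'$ has constant sign, so the implicit function theorem gives $r_1'(y) = 1/h'(r_1)$ and $r_g'(y) = 1/h'(r_g)$ of constant sign. Thus $r_1(y)$ and $r_g(y)$ are strictly monotonic in $y$, and hence each of the conditions $r_1 \ge -2$ and $r_g \le 2$ cuts out a half-line in $y$. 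Intersecting these half-lines with $H$ produces the closed interval $S'$.

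The endpoints of $S' = -S(\mathbf c')$ are then expressible as max/min combinations of the quantities $v_i^{\min}, v_i^{\max}, h(-2), h(2)$, the exact combination depending only on the parity of $g$. Each is continuous in $\mathbf c'$: $h(\pm 2)$ is polynomial in $\mathbf c'$, and the critical values $v_i = h(x_i)$ are continuous because the roots of $h'$, being roots of a polynomial in $x$ with continuously varying coefficients, vary continuously. Hence $s(\mathbf c') \coloneqq \min S(\mathbf c')$ and $t(\mathbf c') \coloneqq \max S(\mathbf c')$ are continuous on $K'$. Pulling back through $\phi^{-1}$ transfers these to continuous functions on the compact set $K \coloneqq \pi_{g-1}(\mathcal R_g)$, completing the proof. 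The main technical subtlety lies in the boundary of $K'$ where the critical structure of $h$ degenerates---critical points coinciding or reaching $\pm 2$---but the $\max/\min$ formulas absorb these degenerations continuously: for instance, when a critical point crosses $\pm 2$, the quantity $h(\pm 2)$ seamlessly takes over as the binding extremum.
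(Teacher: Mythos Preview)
Your proof follows the same route as the paper's: pass to the totally real coordinates $(c_1,\ldots,c_g)$ and study when the horizontal line $y=-c_g$ meets the graph of $h(x)=x^g+c_1x^{g-1}+\cdots+c_{g-1}x$ in $g$ points all lying over $[-2,2]$. Your interval argument via the monotonicity of the extreme roots $r_1,r_g$ on the hyperbolicity locus $H$ is correct and considerably more detailed than the paper's one-sentence ``from this we can see'' justification.

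The continuity argument has one gap worth flagging. You assert that the critical values $v_i=h(x_i)$ are continuous because ``the roots of $h'$, being roots of a polynomial with continuously varying coefficients, vary continuously''; but roots vary continuously only as a multiset in $\C$, and to speak of individually ordered real $x_i$ you need the roots of $h'$ to remain real throughout $K'$. This is in fact true and follows from an observation already implicit in your proof: for any $c_g\in S(\mathbf c')$ the polynomial $h+c_g$ has all $g$ roots in $[-2,2]$, and Rolle's theorem then forces all $g-1$ roots of $h'$ to be real and to lie in $[-2,2]$. With that in hand, the ordered critical points $x_1\le\cdots\le x_{g-1}$ are genuinely continuous on $K'$, hence so are the $v_i$ and your max/min formulas. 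This same interlacing also shows that your stated worry about a critical point crossing $\pm 2$ never arises on $K'$; the only genuine degeneration is critical points coinciding, and there your formula automatically collapses (the coinciding value appears in both the odd-indexed max and the even-indexed min, forcing them equal), matching the one-point fiber that actually occurs.
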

\begin{proof}
We let $K$ be the projection from $\mathcal R_g$ to the first $g-1$ coordinates which is compact because Lemma \ref{lem: R_g Usp} tells us that $\mathcal R_g$ is compact. Then, note that $\mathbf b\in \mathcal R_g$ if and only if $g^+_{\mathbf b}(x)=x^g+c_1x^{g-1}+\cdots+c_g$ has all $g$ roots in $[-2,2]$. By expanding the equation $g_{\mathbf b}(x)= x^gg^+_{\mathbf b}(x+\frac 1 x)$, it is clear that for all $0\le i\le g$ we have $b_i=c_i+\binom{g-i+2}{1}c_{i-2}+\binom{g-i+4}{2}c_{i-4}+\cdots$. This system of equations is defined by a lower triangular matrix with $1$s on the diagonal, and by inverting this matrix we get another lower triangular matrix with $1$s on the diagonal. Thus, we can write each $c_i$ as $b_i$ plus a linear combination of $b_{i-2},b_{i-4},\ldots$. Here, we included the variables $b_0,c_0$ in our system of equations, and both are equal to $1$.

Now, suppose we fixed $b_1,\ldots,b_{g-1}$, which defines $c_1,\ldots,c_{g-1}$ by the previous paragraph. Plot the graphs defined by $y=x^g+c_1x^{g-1}+\cdots +c_{g-1}x$ and the line $y=-c_g$, and note that the intercepts are exactly the real roots of $g^+_{\mathbf b}(x)$. From this we can see that all $g$ roots of $g^+_{\mathbf b}(x)$ lie in $[-2,2]$ if and only if $c_g$ lies in some closed (or empty) interval. By our change of variables, this happens if and on $b_g\in [s(b_1,\ldots,b_{g-1}),t(b_1,\ldots,b_{g-1})]$ for some functions $s,t$. To prove $s$ and $t$ are continuous, note that varying $b_1,\ldots, b_{g-1}$ continuously leads to changing $c_1,\ldots, c_{g-1}$ and the graph $y=x^g+c_1x^{g-1}+\cdots +c_{g-1}x$ continuously, which changes the interval of $c_g$ and hence $b_g$ continuously.
\end{proof}
The reason why we want $\mathcal R_g$ to be expressed in this way is that this allows us to approximate the region $\mathcal R_g$ well enough with hypercubes.
\begin{proposition}\label{prop: volume R_g}
The volume $\Vol(\mathcal R_g)$ is well-defined and finite. Furthermore, chop up $\R^g$ into a regular grid of hypercuboids with side lengths $\epsilon_1,\ldots, \epsilon_g\le \epsilon$ and let $\mathcal R^-_g$ (resp. $\mathcal R^+_g$) be the union of the hypercuboids which are completely contained within (resp. which intersect) $\mathcal R_g$. Then, $|\Vol(\mathcal R_g)-\Vol(\mathcal R_g^{\pm})|=O(\epsilon)$.
\end{proposition}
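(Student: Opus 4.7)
The plan is to parametrize $\mathcal R_g$ by its roots and then reduce the proposition to a standard fact about compact sets whose topological boundary has finite $(g-1)$-dimensional Hausdorff measure. Let $\Delta_g = \{(y_1,\ldots,y_g) \in \R^g : -2 \le y_1 \le \cdots \le y_g \le 2\}$, a compact polytope, and define the polynomial map $\Phi\colon\Delta_g\to \R^g$ sending $(y_1,\ldots,y_g)$ to the coefficients $((-1)^i e_i(y))_{1\le i\le g}$ of $\prod_j(x-y_j)$. The change of variables in the proof of Lemma~\ref{lem: explicit R_g} expresses $(b_1,\ldots,b_g)$ in terms of $(c_1,\ldots,c_g)$ by a unipotent lower-triangular linear map, which is volume-preserving, so it suffices to establish the proposition for $\Phi(\Delta_g)$ in place of $\mathcal R_g$.

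Since $\Phi(\Delta_g)$ is the continuous image of a compact set, it is closed and bounded, hence Lebesgue measurable with finite Lebesgue measure; this settles the well-definedness claim. For the approximation bound, the key step is to identify the topological boundary. The Jacobian of $\Phi$ is the Vandermonde $\prod_{i<j}(y_j-y_i)$, which is strictly positive on $\mathrm{int}(\Delta_g)$. By the inverse function theorem, for any $y \in \mathrm{int}(\Delta_g)$ there is an open neighborhood $V \subseteq \mathrm{int}(\Delta_g)$ such that $\Phi(V)$ is open in $\R^g$ and contained in $\Phi(\Delta_g)$. Hence $\Phi(\mathrm{int}(\Delta_g)) \subseteq \mathrm{int}(\Phi(\Delta_g))$, and consequently $\partial\Phi(\Delta_g) \subseteq \Phi(\partial\Delta_g)$. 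Now $\partial\Delta_g$ is a finite union of $(g-1)$-dimensional polytope faces, so it has finite $\mathcal H^{g-1}$-measure, and since $\Phi$ is polynomial and hence Lipschitz on the bounded set $\Delta_g$, $\Phi(\partial\Delta_g)$ also has finite $\mathcal H^{g-1}$-measure.

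The proposition then follows from the standard Minkowski-content estimate: if $S\subseteq \R^g$ is compact with $\mathcal H^{g-1}(S)<\infty$, then $|N_\delta(S)|=O(\delta)$ as $\delta\to 0$, proved by covering $S$ with $O(\delta^{-(g-1)})$ balls of radius $\delta$ and noting that each ball contributes volume $O(\delta^g)$ to its own $\delta$-neighborhood. Any axis-aligned hypercuboid with all side lengths at most $\epsilon$ has diameter at most $\sqrt{g}\,\epsilon$, so each hypercuboid intersecting $\partial\Phi(\Delta_g)$ is contained in $N_{\sqrt{g}\epsilon}(\partial\Phi(\Delta_g))$. It follows that $\mathcal R_g^+ \setminus \mathcal R_g^-$ has volume $O(\epsilon)$, and since $\mathcal R_g^- \subseteq \mathcal R_g \subseteq \mathcal R_g^+$, we obtain $|\Vol(\mathcal R_g)-\Vol(\mathcal R_g^{\pm})|=O(\epsilon)$ as required.

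The main conceptual step is the boundary identification $\partial\mathcal R_g \subseteq \Phi(\partial\Delta_g)$; once this is in hand, the remaining pieces are elementary, with the finiteness of $\mathcal H^{g-1}(\Phi(\partial\Delta_g))$ following instantly from the Lipschitz bound on a polynomial over a bounded domain. One could alternatively leverage Lemma~\ref{lem: explicit R_g} directly and induct on $g$, writing $\Vol(\mathcal R_g) = \int_K (t-s)\,d\mathbf b'$ and handling the side walls over $\partial K$ inductively, but the root parametrization above is cleaner because it sidesteps having to show separately that $K$ inherits a Jordan-measurable boundary structure.
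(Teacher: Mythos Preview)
Your approach is sound and genuinely different from the paper's. The paper uses Lemma~\ref{lem: explicit R_g} directly: it writes $\mathcal R_g$ as the region between two continuous graphs $s,t$ over a compact base $K\subseteq\R^{g-1}$ and compares box sums to $\int_K(t-s)$ via uniform continuity of $s,t$. Your root parametrization via $\Phi:\Delta_g\to\R^g$ is more geometric and sidesteps having to control the base $K$ separately.

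One point needs correction. The general claim ``$S$ compact with $\mathcal H^{g-1}(S)<\infty$ implies $|N_\delta(S)|=O(\delta)$'' is false: for $g=2$, take $S=A\times A$ with $A=\{0\}\cup\{n^{-\alpha}:n\ge1\}$ and $0<\alpha<1$; then $S$ is compact and countable so $\mathcal H^1(S)=0$, yet $|N_\delta(S)|\asymp\delta^{2\alpha/(1+\alpha)}$ with exponent strictly less than $1$. Finite Hausdorff measure does not by itself yield the covering by $O(\delta^{-(g-1)})$ balls of radius $\delta$ that your argument invokes. What does yield it is the structure you already have: $\partial\Delta_g$ is a finite union of $(g-1)$-dimensional faces, each coverable by $O(\delta^{-(g-1)})$ balls of radius $\delta$, and this covering pushes forward under the Lipschitz map $\Phi$ (and then under the bi-Lipschitz linear change from $c$ to $b$) to a covering of $\partial\mathcal R_g$ by $O(\delta^{-(g-1)})$ balls of radius $O(\delta)$. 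With that one-line replacement the proof is complete. Relatedly, the sentence ``it suffices to establish the proposition for $\Phi(\Delta_g)$'' is slightly imprecise, since the unipotent change of variables does not preserve axis-aligned hypercuboids; but your real argument is about box-counting the boundary, which is coordinate-free under bi-Lipschitz maps, so this is cosmetic.
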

\begin{proof}
This essentially follows from Lemma \ref{lem: explicit R_g} and the theory of Riemann integration. We have $\Vol(\mathcal R_g) = \int_{K} \left( t(b_1,\ldots, b_{g-1})-s(b_1,\ldots, b_{g-1})\right) db_1\cdots db_{g-1}$. To estimate this integral, consider the subdivision of $\mathcal R_g$ given in the statement, and let $\mathcal T$ be the set of hypercuboids intersecting $\mathcal R_g$. Let $\pi\colon \R^g\rightarrow \R^{g-1}$ be the projection onto the first $g-1$ coordinates, so the previous subdivision induces a subdivision of $\R^{g-1}$ via $\pi$, and let $\mathcal S$ be the set of hypercuboids intersecting $\mathcal K$. Then, $\Vol(\mathcal R_g^+)=(\#\mathcal T)\epsilon_1\cdots \epsilon_g = \sum_{S\in\mathcal S}\#\{T\in \mathcal T\mid \pi(T)=S\}\epsilon_1\cdots \epsilon_g$. Since $s,t$ are continuous over a compact set, they are uniformly continuous and in particular $c$-Lipschitz, so for each hypercuboid $S\in \mathcal S$, $\left|\max(s(S))-\min(s(S))\right|<c\epsilon$ and analogously for $t$. Hence, comparing the sum and integral over $S$, we have $(\#\{T\in \mathcal T\mid \pi(T)=S\}-2) \epsilon_1\cdots \epsilon_g\le (\min(t(S))-\max(s(S)))\epsilon_1\cdots \epsilon_{g-1}\le \int_S t(b_1,\ldots, b_g)-s(b_1,\ldots,b_g)db_1\cdots db_g +2c\epsilon\epsilon_1\cdots\epsilon_{g-1}$. Furthermore, note the cardinality of $\mathcal S$ is $\le C/\epsilon_1\cdots\epsilon_g$ because $K$ is compact, so summing over $S\in \mathcal S$ we have $\Vol(\mathcal R_g^+)\le \Vol(\mathcal R_g)+(2c+2)C\epsilon$. The same proof works for $\mathcal R_g^-$.
\end{proof}

As a corollary, we obtain the following bound on the number of ordinary Weil $p$-polynomials of degree $2g$. Note that this has been previously proven by Dipippo and Howe \cite{DH98}. 

\begin{corollary}\label{cor: num ord weil poly}
There are $\Theta(p^{g(g+1)/4})$ many ordinary Weil $p$-polynomials of degree $2g$.
\end{corollary}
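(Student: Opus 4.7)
The plan is to combine Proposition \ref{prop: ordinary weil p-poly} with a lattice point count in $[p](\mathcal R_g)$ using the Riemann-type approximation of Proposition \ref{prop: volume R_g}. By Proposition \ref{prop: ordinary weil p-poly}, the set to be counted is $\{\mathbf a \in \Z^g \cap [p](\mathcal R_g) : p \nmid a_g\}$, so the problem reduces to (i) counting lattice points in the scaled region $[p](\mathcal R_g)$, and (ii) showing the divisibility constraint removes only a lower-order term.

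For step (i), the key observation is that the standard integer lattice on $[p](\mathcal R_g)$ pulls back under $[p]$ to the grid in $\mathcal R_g$ of spacings $(\epsilon_1,\ldots,\epsilon_g) = (p^{-1/2}, p^{-2/2}, \ldots, p^{-g/2})$, whose maximum spacing is $\epsilon = p^{-1/2}$. Therefore I would apply Proposition \ref{prop: volume R_g} with these specific spacings: the number of lattice points in $[p](\mathcal R_g)$ is sandwiched between $\Vol(\mathcal R_g^-) \cdot p^{g(g+1)/4}$ and $\Vol(\mathcal R_g^+) \cdot p^{g(g+1)/4}$, since the Jacobian of $[p]$ is $p^{1/2+2/2+\cdots+g/2} = p^{g(g+1)/4}$. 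Proposition \ref{prop: volume R_g} then gives
\[
\#\bigl(\Z^g \cap [p](\mathcal R_g)\bigr) = \Vol(\mathcal R_g)\,p^{g(g+1)/4} + O\!\left(p^{g(g+1)/4 - 1/2}\right).
\]

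For step (ii), I would bound the number of lattice points with $p \mid a_g$ by noting that $\mathcal R_g$ is bounded (Lemma \ref{lem: R_g Usp}), so the $a_g$-extent of $[p](\mathcal R_g)$ at any fixed $(a_1,\ldots,a_{g-1})$ is $O(p^{g/2})$, giving $O(p^{g/2-1})$ multiples of $p$. Multiplying by the $O(p^{(g-1)g/4})$ possible choices of $(a_1,\ldots,a_{g-1})$ yields $O(p^{g(g+1)/4 - 1})$, a lower-order correction. Finally, to conclude the $\Theta$ (not just $O$) lower bound, I must verify $\Vol(\mathcal R_g) > 0$; this follows from Lemma \ref{lem: explicit R_g} together with the observation that for $g$ distinct angles $\theta_j \in (0,\pi)$, the map $(\theta_1,\ldots,\theta_g) \mapsto (b_1,\ldots,b_g)$ sending a conjugate multiset to the coefficients of $g_{\mathbf b}(x)$ is a local diffeomorphism away from the discriminant locus, so $\mathcal R_g$ has nonempty interior.

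The only substantive step is step (i), and even there the work is entirely bookkeeping once the scaling $\epsilon_i = p^{-i/2}$ is identified. There is no real obstacle; the most likely pitfall is conflating the $\epsilon$ in Proposition \ref{prop: volume R_g} (which bounds the \emph{maximum} side length) with the geometric mean of the spacings when scaling the volume by $p^{g(g+1)/4}$.
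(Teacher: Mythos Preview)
Your proposal is correct and follows essentially the same approach as the paper: both pull back the integer lattice under $[p]$ to a grid in $\mathcal R_g$ with maximum spacing $p^{-1/2}$, invoke Proposition~\ref{prop: volume R_g} to approximate the lattice-point count by $\Vol(\mathcal R_g)\,p^{g(g+1)/4}$, and discard the $p\mid a_g$ condition as a lower-order term. The paper phrases this by chopping into uniform hypercubes of side $p^{-1/2}$ and counting $\approx\frac{p-1}{p}\,p^{g(g-1)/4}$ admissible tuples per cube, whereas you use the anisotropic hypercuboid grid directly and bound the $p\mid a_g$ contribution separately; these are cosmetic variants of the same argument, and your explicit verification that $\Vol(\mathcal R_g)>0$ is a detail the paper leaves implicit.
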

\begin{proof}
We chop $\mathcal R_g$ into hypercubes of side length $p^{-1/2}$ and volume $p^{-g/2}$. In each cube, there are approximately $\frac{p-1}{p}p^{g(g-1)/4}$ tuples $(a_1/p^{1/2},\ldots, a_g/p^{g/2})$ with $p\nmid a_g$. By Proposition \ref{prop: volume R_g}, there are approximately $\frac{p-1}{p}\Vol(\mathcal R_g)p^{g(g+1)/4}(1+O(p^{-1/2}))$ such tuples in $\mathcal R_g$, and by Proposition \ref{prop: ordinary weil p-poly} these corresponds to ordinary Weil $p$-polynomials.
\end{proof}
It is not hard to prove the same statement for all Weil $p$-polynomials, by showing that there are $o(p^{g(g+1)/4})$ many non-ordinary Weil $p$-polynomials. 
A similar upper bound was derived in \cite[Corollary 2.3]{LipnowskiTsimerman2018}, but a more careful analysis of the region $\mathcal R_g$ as seen in Lemma \ref{lem: explicit R_g} and Proposition \ref{prop: volume R_g} was needed for our lower bound.
\subsection{Irreducibility} We will prove that several properties are satisfied by a generic ordinary Weil $p$-polynomial, which would eventually allow us to simplify things by assuming these properties. First, we will show that a generic ordinary Weil $p$-polynomial is irreducible.
\begin{lemma}\label{lem: generic irreducibility}
There are at most an $O(p^{-(g-1)/2})$ proportion of ordinary Weil $p$-polynomials that are reducible over $\Z[x]$.
\end{lemma}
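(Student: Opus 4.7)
The plan is to upper-bound the number of reducible ordinary Weil $p$-polynomials of degree $2g$ by enumerating possible factorizations $f = f_1 f_2$, and then to compare with the total count $\Theta(p^{g(g+1)/4})$ furnished by Corollary~\ref{cor: num ord weil poly}.

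First I would pin down the possible factorization types. Assume $f$ is an ordinary Weil $p$-polynomial of degree $2g$ and $f = f_1 f_2$ is a nontrivial factorization in $\Z[x]$. The argument in Proposition~\ref{prop: ordinary weil p-poly} shows that no ordinary Weil $p$-polynomial is divisible by $x^2 - p$, so $\pm\sqrt{p}$ are not roots of $f$. Hence every root of $f$ is a non-real complex number $\sqrt{p}\,e^{\pm i\theta}$ occurring in a conjugate pair. Since $f_1\in\Z[x]$ has its root multiset closed under complex conjugation, its roots also come in such pairs, so $\deg f_1 = 2j$ is even and correspondingly $\deg f_2 = 2(g-j)$ for some $1 \le j \le g-1$.

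Next I would count possible factors of each fixed even degree. A monic polynomial of degree $2j$ in $\Z[x]$ whose roots come in conjugate pairs $\{\sqrt{p}\,e^{\pm i\theta}\}$ is exactly one of the form $f_{\mathbf a'}$ from Definition~\ref{def: R_g} with $\mathbf a' \in [p](\mathcal R_j) \cap \Z^j$. The same lattice-point argument used in the proof of Corollary~\ref{cor: num ord weil poly}, in conjunction with Proposition~\ref{prop: volume R_g} applied to $\mathcal R_j$, then gives $O(p^{j(j+1)/4})$ such polynomials of degree $2j$, and similarly $O(p^{(g-j)(g-j+1)/4})$ of degree $2(g-j)$.

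Summing over $j \in \{1,\ldots,g-1\}$ and using that the pair $(f_1,f_2)$ determines $f$, I would conclude
\[
\#\{\text{reducible ordinary Weil polynomials of degree }2g\} \;\le\; \sum_{j=1}^{g-1} O\bigl(p^{\,j(j+1)/4 \,+\, (g-j)(g-j+1)/4}\bigr).
\]
The exponent simplifies to $(2j^2 - 2gj + g^2 + g)/4$, which is convex in $j$ and therefore maximized at the endpoints $j=1$ or $j=g-1$, taking the value $(g^2 - g + 2)/4$. Dividing by the denominator $\Theta(p^{g(g+1)/4})$ then yields the claimed proportion $O(p^{-(g-1)/2})$. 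I do not foresee a serious obstacle; the only subtle point is the first step, where the even-degree constraint on both factors depends crucially on the ordinariness hypothesis ruling out $\pm\sqrt{p}$ as roots of $f$, since otherwise one would have to separately handle odd-degree factors coming from $x^2-p$.
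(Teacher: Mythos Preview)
Your proposal is correct and follows essentially the same argument as the paper: both show that any $\Z[x]$-factor of an ordinary Weil $p$-polynomial must itself have roots in conjugate pairs and hence be of the form $f_{\mathbf a'}$ with $\mathbf a'\in[p](\mathcal R_j)$, count such factors by $O(p^{j(j+1)/4})$ via Corollary~\ref{cor: num ord weil poly}, and observe that the sum over $j$ is dominated by $j=1$ or $j=g-1$. Your write-up is slightly more explicit about the convexity of the exponent and about why ordinariness is needed to exclude $\pm\sqrt p$, but the content is the same.
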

\begin{proof}
Suppose an ordinary Weil $p$-polynomial can be factored over $\Z$ as $f(x)=r(x)s(x)$. From the proof of Proposition \ref{prop: ordinary weil p-poly}, $f(x)$ and thus $r(x)$ must have roots that come in pairs $\{\sqrt p e^{i\theta},\sqrt p e^{-i\theta}\}$. Say it is of degree $2d$, then following the same proof we can write $r(x)=f_{\mathbf c}(x)$ for some $c\in [p](\mathcal R_r)$. By the proof of Corollary \ref{cor: num ord weil poly}, there are $O(p^{d(d+1)/4})$ possibilities for $r(x)$. Likewise, $s(x)$ has degree $2(g-d)$ and there are $O(p^{(g-d)(g-d+1)/4})$ many such $s(x)$. We obtain the desired result by multiplying these and summing over all $d$, noting that the worst case happens when $d$ is $1$ or $g-1$.
\end{proof}
Recall that by Theorem \ref{thm: honda tate} irreducible ordinary Weil $p$-polynomials correspond to simple and ordinary abelian varieties. Let $[A,\lambda]$ be a PPAV where $A$ is a simple ordinary abelian variety, and let $f_A$ be the characteristic polynomial which is an irreducible ordinary Weil $p$-polynomial. It is easier to describe the endomorphism ring of $A$ in this irreducible case. Let $\End^0(A) = \End(A)\otimes \Q$ and $K=\Q[\Frob_p]\subseteq \End^0(A)$.
\begin{lemma}
We have $\End^0(A)=K\cong \Q[x]/f_A(x)$.
\end{lemma}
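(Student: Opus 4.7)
The plan is to use two standard inputs: the fact that, for $A$ simple and ordinary with irreducible characteristic polynomial $f_A$, the minimal polynomial of $\Frob_p$ equals $f_A$, and Tate's theorem describing $\End^0(A)$ as a central simple algebra over $\Q(\Frob_p)$ whose degree is controlled by $\dim A$.

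First I would observe that since $A$ is simple ordinary, Theorem \ref{thm: honda tate} tells us that $f_A$ is an irreducible ordinary Weil $p$-polynomial. The Frobenius endomorphism $\Frob_p \in \End^0(A)$ satisfies its characteristic polynomial $f_A$, and because $f_A$ is irreducible over $\Q$, it must be the minimal polynomial of $\Frob_p$. This immediately yields the isomorphism
\[
K = \Q[\Frob_p] \;\cong\; \Q[x]/f_A(x),
\]
and in particular $[K:\Q] = \deg f_A = 2g$.

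Next I would invoke Tate's theorem on endomorphism algebras of simple abelian varieties over finite fields: $\End^0(A)$ is a central simple division algebra over $K = \Q(\Frob_p)$, and
\[
2\dim A \;=\; [\End^0(A):K]^{1/2} \cdot [K:\Q].
\]
Substituting $\dim A = g$ and $[K:\Q] = 2g$ forces $[\End^0(A):K] = 1$, so $\End^0(A) = K$, which combined with the previous display proves the claim.

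There is no real obstacle here; the content is essentially that irreducibility of $f_A$ collapses Tate's formula. The only thing to be careful about is citing the correct form of Tate's theorem (valid over $\F_p$, not just for ordinary $A$), and noting that the hypothesis of simplicity is what allows us to apply it. Ordinariness is not strictly needed for this particular lemma but is part of the ambient setup and ensures $f_A$ is as described in Definition \ref{definition: ordinary}.
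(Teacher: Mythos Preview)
Your proof is correct and essentially the same as the paper's. The paper cites \cite[Theorem 2(c)]{Tate_End} directly for $\End^0(A)=K$; that part of Tate's theorem is precisely the equivalence ``$f_A$ has no multiple root $\iff [K:\Q]=2g \iff \End^0(A)=K$'', whose proof is exactly the dimension count $2g=[\End^0(A):K]^{1/2}[K:\Q]$ that you spell out.
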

\begin{proof}
$\End^0(A)=K$ follows from \cite[Theorem 2(c)]{Tate_End}, and $K\cong \Q[x]/f_A(x)$ follows as $f_A(x)$ is irreducible, so the minimum polynomial of $\Frob_p$ is $f_A(x)$.
\end{proof}
Note that $K$ is a CM-field, where complex conjugation sends $x\mapsto p/x$. This action is the same as the Rosati involution on $\End^0(A)=K$ which is given by $\alpha \mapsto \lambda^{-1}\circ \alpha^\vee \circ \lambda$. Let $K^+$ be the totally real subfield of $K$ fixed by complex conjugation. Then, we can write $K^+\cong \Q[x]/f^+_A(x)$ where $f^+_A(x)$ is defined in Definition \ref{def: tot real poly}. Recall that $f_A(x)=x^gf_A^+(x+\frac p x)$ so the irreducibility of $f_A(x)$ implies that of $f_A^+(x)$.
\subsection{Generic Galois group} We investigate the Galois group $G$ of a generic irreducible ordinary Weil $p$-polynomial $f_{\mathbf a}(x)=x^{2g}+a_1x^{2g-1}+\cdots +p^g$, defined to be the Galois group of its splitting field $\widetilde K$. View $G$ as a subset of $S_{2g}$ via its action on the $2g$ embeddings $\iota \colon K\cong \Q[x]/f(x) \injects \widetilde K$ sending $x$ to each of the $2g$ roots of $f(x)$. The roots of $f(x)$ occur in conjugate pairs, so pair these embeddings accordingly as $\{\iota_1,\iota_2\},\ldots, \{\iota_{2g-1},\iota_{2g}\}$. Clearly, $G$ has to preserve these pairs, so we have $G\subseteq (\Z/2\Z)^g\rtimes S_g$, where the $(\Z/2\Z)^g$ action swaps the elements in each of the $g$ pairs and $S_g$ permutes the $g$ pairs. Intuitively, the generic Galois group should be as large as possible, and we will prove that this is indeed the case. Note that $(\Z/2\Z)^g\rtimes S_g$ is the Weyl group of the symplectic group.
\begin{proposition}\label{prop: generic galois}
There are at most an $O(p^{-1/4}\log p)$ proportion of irreducible ordinary Weil $p$-polynomials which do not have Galois group $(\Z/2\Z)^g\rtimes S_g$.
\end{proposition}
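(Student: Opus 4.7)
The plan is to apply a quantitative form of Hilbert's irreducibility theorem, after decomposing the maximality of $G$ into two cleaner subconditions. Specifically, $G = \Gal(\widetilde K/\Q)$ equals $(\Z/2\Z)^g \rtimes S_g$ if and only if (a) the Galois group of the totally real polynomial $f_{\mathbf a}^+$ (see Definition \ref{def: tot real poly}) over $\Q$ is $S_g$, and (b) the $g$ classes $[\beta_i^2 - 4p] \in \widetilde{K^+}^\times/(\widetilde{K^+}^\times)^2$ generate a subgroup of order $2^g$, where the $\beta_i$ are the roots of $f_{\mathbf a}^+$. Indeed, (a) identifies the image of $G$ in $S_g$ (i.e.\ $G/(G\cap (\Z/2\Z)^g)$) with the full symmetric group, while (b) identifies $\Gal(\widetilde K/\widetilde{K^+})$ with the full $(\Z/2\Z)^g$ via Kummer theory applied to $\widetilde K = \widetilde{K^+}\bigl(\sqrt{\beta_1^2-4p},\ldots,\sqrt{\beta_g^2-4p}\bigr)$, using $\alpha_i = \tfrac12(\beta_i + \sqrt{\beta_i^2-4p})$.

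Each failure of (a) or (b) cuts out an algebraic condition on $(a_1,\ldots,a_g)$. For (a) this is one of the finitely many resolvent subvarieties corresponding to the transitive maximal subgroups of $S_g$. For (b), given the $S_g$-symmetry from (a), the failure modes are the conditions that $\prod_{i\in T}(\beta_i^2-4p)$ is a square in $\widetilde{K^+}$ for some $\emptyset\neq T\subsetneq\{1,\ldots,g\}$; by $S_g$-symmetry and standard invariant theory these reduce to finitely many polynomial equations in $(a_1,\ldots,a_g)$ (with coefficients depending on $p$). All of these together give a finite union of proper subvarieties $V_j\subset \A^g_{\Q}$ outside of which the Galois group is maximal.

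I would then exploit the anisotropy of the integer box $[p](\mathcal R_g)$. Fix $(a_2,\ldots,a_g)$ and let $a_1$ vary in the integer interval of length $O(p^{1/2})$ (this being the thinnest direction, corresponding to the smallest coefficient bound $|a_1|\le 2g\sqrt p$). For generic outer tuples $(a_2,\ldots,a_g)$ the specialized line is not contained in any $V_j$, so by the quantitative Hilbert irreducibility theorem of \cite{effectivehilbertsirreducibilitytheorem}, applied in the version with logarithmic dependence on the coefficient height (necessary because our $V_j$ have defining polynomials of height polynomial in $p$), the number of bad $a_1$ in this interval is $O(p^{1/4}\log p)$. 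Summing over the $O(p^{g(g+1)/4 - 1/2})$ choices of $(a_2,\ldots,a_g)$ gives a bad set of size $O(p^{g(g+1)/4 - 1/4}\log p)$, a proportion $O(p^{-1/4}\log p)$ of the $\Theta(p^{g(g+1)/4})$ ordinary Weil $p$-polynomials from Corollary \ref{cor: num ord weil poly}, and this bound is inherited by the irreducible ones by Lemma \ref{lem: generic irreducibility}.

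The main obstacle is the non-degeneracy step: I must rule out that for many outer tuples $(a_2,\ldots,a_g)$ the entire $a_1$-line lies in some $V_j$, in which case Hilbert's theorem gives no saving on that slice. I would handle this by exhibiting a single universal specialization where the Galois group is $(\Z/2\Z)^g\rtimes S_g$, invoking that this Weyl group is the generic monodromy group of the characteristic polynomial family of $\GSp_{2g}$, and then observing that the outer tuples for which the line sits inside $V_j$ themselves form a proper subvariety of $\A^{g-1}$ with negligible integer-point count. A secondary technical point is to control the number and degrees of the subvarieties produced by (b) uniformly in $p$; this follows since the finite list of symmetric conditions depends only on $g$, but one must verify that no condition becomes vacuous modulo small primes dividing $p$, which is a routine finite check.
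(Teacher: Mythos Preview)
Your approach differs genuinely from the paper's, which adapts the large-sieve argument of Davis--Duke--Sun \cite{Davis_Duke_Sun_1998}. Rather than detecting $G\subsetneq W_g:=(\Z/2\Z)^g\rtimes S_g$ via resolvents and covers, the paper detects it via missing Frobenius cycle types: if $G$ is proper then (by a lemma of \cite{Davis_Duke_Sun_1998}) there is some $l\in\{2,4,2g-2,2g\}$ for which $G$ contains no $l$-cycle, so $f_{\mathbf a}$ never has splitting type $l$ modulo any prime $q$; a second-moment sieve inequality over primes $q\le p^{1/4}$, applied on translates of the box $[-p^{1/2},p^{1/2}]^g$ covering $[p](\mathcal R_g)$, then bounds the number of such $\mathbf a$. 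This route is more self-contained---no resolvents for $W_g$, and the non-degeneracy issue you flag is replaced by an explicit density computation $|\Omega_l(q)|/q^g$ over $\F_q$---and yields exactly one power of $\log p$, whereas your Hilbert-irreducibility route would produce $(\log p)^{\mu+\nu}$ (harmless downstream, since only $O(p^{-1/5})$ is needed in \S\ref{sec: final proof}).

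Your strategy is plausible but two points need repair. First, the failure loci for condition (b) are \emph{thin sets}, not proper subvarieties cut out by ``polynomial equations in $(a_1,\ldots,a_g)$'': for each $T$ the condition that $\prod_{i\in T}(\beta_i^2-4p)$ be a square in $\widetilde{K^+}$ is the image of a degree-$2$ cover, and to invoke \cite{effectivehilbertsirreducibilitytheorem} you must package it as an auxiliary polynomial $P(\mathbf a,Y)$ having an integer root in $Y$. Your later appeal to Hilbert irreducibility suggests you intend this, but the setup as written (``subvarieties $V_j$'', ``line not contained in $V_j$'') is inconsistent with it. Second, and more substantively, your enumeration of failure modes for (b) omits $T=\{1,\ldots,g\}$. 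That case corresponds exactly to $G\cap(\Z/2\Z)^g\subseteq(\Z/2\Z)^g_0$, equivalently $G\subseteq W_g\cap A_{2g}$, equivalently $\disc(f_{\mathbf a})$ being a rational square (cf.\ the proof of Lemma~\ref{lem: K/K+ ramified condition}); this is a genuine index-$2$ subgroup that must be excluded. Once included, the same thin-set bound handles it, but as stated your decomposition does not cover all proper subgroups of $W_g$.
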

The paper \cite{Davis_Duke_Sun_1998} tackled a very similar problem, where they considered the set of reciprocal polynomials $g(x)=x^ng(1/x)$ with coefficients having absolute value less than $N$, and they proved that the proportion of exceptions is at most $O(N^{-1/2}\log N)$, essentially using a version of the large sieve. In our case, we have instead the multiplier $p$ case $f(x)=x^{2g}f(p/x)$ with the coefficients satisfying $(a_1,\ldots, a_g)\in [p](\mathcal R_g)$. One key difference is that in the first case $N$ is independent of the defining equation of the polynomials $g(x)=x^ng(1/x)$, while in our case both the region and the defining equation depends on $p$. Another difference is that in our case the region has side lengths with different orders of magnitude. Thus our case does not follow directly from \cite{Davis_Duke_Sun_1998}. Nevertheless, we are able to prove it by modifying their proof. Note that we change some of their variables in the following proof. 
\begin{proof}
Say that a polynomial $f(x)\in \F_q[x]$ has splitting type $l$ for the prime $q$ if its factorization into irreducibles mod $q$ consists of distinct factors which are all linear except one factor of degree $l$. For $q\neq p$, let $\Omega_l(q)$ be the subset of $\mathbf{a}\in \F_q^g$ such that the corresponding $f_{\mathbf a}(x)=x^{2g}+a_1x^{2g-1}+\cdots +p^g$ has splitting type $l$ for $q$. Note that the proof of Lemma 3 of loc. cit. works for all $q\neq p$ because we correspondingly have $f(x)=x^gf^+(x+\frac p x)$ and $p$ is a unit in $\F_q$. Thus, we have $|\Omega_l(q)|=0$ for odd $l$ and $|\Omega_{2k}(q)|=\frac{q^g}{2^{g-k+1}k(g-k)!}+O(q^{g-1})$ for $0\le k\le g$. For $q=p$, simply let $\Omega_l(p)$ be the empty set for all $l$.

Let $P_l(x)=\sum_{q\le x} |\Omega_l(q)|q^{-g}$. For $\mathbf a=(a_1,\ldots, a_g) \in \Z^g$, let $P_l(\mathbf a,x)$ be the number of primes $q\le x$ where $\mathbf a \pmod q$ is in $\Omega_l(q)$. Lemma 5 of loc. cit. gives $\sum_{|\mathbf a|\le N}(P_l(\mathbf a,x)-P_l(x))^2 = O(N^gP_l(x))$ for $N\ge x^2$. While Lemma 5 sums over the box $[-N,N]^g$, it is clear that it will also hold with any translate of $[-N,N]^g$ as there is nothing special about the origin. Hence, taking $N=p^{1/2}$ and $x=p^{1/4}$, we can cover the region $[p](\mathcal R_g)$ with $O(p^{g(g-1)/4})$ translates of the box to obtain $\sum_{\mathbf a\in [p](\mathcal R_g)}(P_l(\mathbf a,p^{1/4})-P_l(p^{1/4}))^2 = O(p^{g(g+1)/4}P_l(p^{1/4}))$.

Now let $f_{\mathbf a}(x)$ be an irreducible ordinary Weil $p$-polynomial. If it has Galois group $G$ which is a proper subgroup of $(\Z/2\Z)^g\rtimes S_g$, then by Lemma 2 of loc. cit., there exists some $l\in\{2,4,2g-2,2g\}$ where $G$ doesn't contain any $l$-cycle. This implies that $f$ cannot have splitting type $l$ for any prime $q$, and thus $P_l(\mathbf a,p^{1/4})=0$ (see the paragraph after Lemma 4 of loc. cit.). By our first paragraph and the prime number theorem we have $P_l(p^{1/4})=\Theta\left(\frac{p^{1/4}}{\log p}\right)$. Thus, by our second paragraph there can be at most $O(p^{(g^2+g-1)/4}\log p)$ such $f_{\mathbf a}(x)$ for each $l\in\{2,4,2g-2,2g\}$. Summing up across these four $l$s and dividing by $\Theta(p^{g(g+1)/4})$ as in Corollary \ref{cor: num ord weil poly} yields the result.
\end{proof}
We remark that \cite{anderson2024galoisgroupsreciprocalpolynomials} gives better quantitative bounds than \cite{Davis_Duke_Sun_1998}, although their proof is more complicated and hence more difficult to adapt to our situation.
\subsection{Existence of PPAVs} Before we start counting PPAVs, we first show that a generic isogeny class does contain PPAVs. The question of whether an isogeny class contains PPAVs is answered in \cite{Howe1995PrincipallyPO}, but the full condition is complicated. Here, we state the relevant part for our use.
\begin{proposition}[{\cite[Corollary 11.4]{Howe1995PrincipallyPO}}]\label{prop: PPAV exist condition}
An isogeny class of simple ordinary abelian varieties of dimension $g$ over $\F_p$ contains a PPAV if $g$ is odd or $K/K^+$ is ramified at some finite prime. 
\end{proposition}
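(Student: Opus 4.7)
The plan is to apply the Deligne--Howe classification of simple ordinary abelian varieties over $\F_p$ to translate the existence of a principal polarization into an arithmetic question about the CM pair $(K,K^+)$, and then verify this question has an affirmative answer in each of the two stated cases.

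Concretely, I would first invoke Deligne's equivalence to identify the isogeny class with CM data: the field $K=\Q[\Frob_p]$ with the involution $\Frob_p\mapsto p/\Frob_p$, and totally real subfield $K^+$. Individual varieties $A$ in the class correspond to proper fractional ideals $\mathfrak{a}$ over an order of $K$, and a polarization on such an $A$ is encoded by a totally imaginary $\xi\in K$ (so $\bar\xi=-\xi$) lying in a prescribed chamber at each infinite place; principality corresponds to the ideal identity $\xi\,\mathfrak{a}\,\bar{\mathfrak{a}}=\mathfrak{d}_{K/\Q}^{-1}$. After quotienting out the freedom to vary $\mathfrak{a}$ within the isogeny class and to rescale $\xi$, the existence of a PPAV reduces to the vanishing of a single obstruction class that, by the Hasse norm principle for the quadratic extension $K/K^+$, sits inside a finite product of local pieces.

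To finish, I would check that each hypothesis kills this obstruction. The Hasse norm exact sequence lets me express the obstruction as a comparison between a collection of local invariants at the finite places of $K^+$ and a global sign/parity condition at infinity governed by $g$. If $K/K^+$ is ramified at some finite prime $v$, then the local slack in $(K^+_v)^\times$ modulo norms of local units provides enough room to absorb the global discrepancy by modifying $\xi$ locally at $v$. If instead $g$ is odd, then the archimedean sign obstruction, which lives in an $\F_2$-vector space whose relevant parity matches $g$, can be realized by a totally positive unit in $K^+$, and the parity calculation works out automatically.

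The hard part is setting up the obstruction class precisely and proving the Hasse-principle style exact sequence that controls it, since one must carefully handle the (possibly non-maximal) order $\Z[\Frob_p,p/\Frob_p]\subset K$ and verify that each formal local modification of $\xi$ is realized by a genuine modification of the ideal $\mathfrak{a}$ within the isogeny class rather than merely in $K^\times$ abstractly. This bookkeeping is exactly the content of the case analysis carried out in \cite[\S 10--11]{Howe1995PrincipallyPO}, whose Corollary~11.4 is precisely the statement we need; in practice I would invoke that corollary directly rather than redo the analysis from scratch.
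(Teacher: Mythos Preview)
Your proposal is appropriate: the paper itself gives no proof of this proposition, simply citing it from \cite[Corollary 11.4]{Howe1995PrincipallyPO}, and your final line correctly recognizes that invoking Howe's result directly is the intended approach. The preceding sketch of Howe's obstruction-theoretic argument is a reasonable summary of what lies behind that corollary, but it is not needed here.
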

The latter condition can be interpreted in the following way.
\begin{lemma}\label{lem: K/K+ ramified condition}
$K/K^+$ is ramified at some finite prime if the Galois group $G$ of $f_A(x)$ is not contained in the alternating group $A_{2g}$. 
\end{lemma}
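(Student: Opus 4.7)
The plan is to reformulate the hypothesis $G \not\subseteq A_{2g}$ as a non-squareness condition on $\disc(K)$, and then use the discriminant tower formula for $\Q \subseteq K^{+} \subseteq K$ to convert the unramifiedness of $K/K^{+}$ at the finite primes into a contradictory squareness condition on $\disc(K)$.

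First, I would recall the classical fact that for any monic irreducible polynomial $f \in \Q[x]$, its Galois group is contained in the alternating group if and only if $\disc(f)$ is a square in $\Q$. Applying this to $f_{A}$, and using that $\disc(f_{A}) = [\mathcal{O}_{K}:\Z[\Frob_{p}]]^{2} \cdot \disc(K)$ so that $\disc(f_{A})$ and $\disc(K)$ coincide modulo $(\Q^{*})^{2}$, the hypothesis $G \not\subseteq A_{2g}$ becomes $\disc(K) \notin (\Q^{*})^{2}$.

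Next, I would argue by contradiction. Suppose $K/K^{+}$ is unramified at every finite prime. Then the relative discriminant ideal $\mathfrak{d}_{K/K^{+}}$ is trivial, and the discriminant tower formula gives, as ideals of $\Z$,
$$(\disc(K)) \;=\; N_{K^{+}/\Q}(\mathfrak{d}_{K/K^{+}}) \cdot (\disc(K^{+}))^{2} \;=\; (\disc(K^{+}))^{2},$$
so $|\disc(K)| = \disc(K^{+})^{2}$ is a perfect square. Combining this with the sign rule $\mathrm{sgn}(\disc(K)) = (-1)^{g}$ (coming from the signature $(0,g)$ of the CM field $K$), we obtain $\disc(K) = (-1)^{g}\disc(K^{+})^{2}$; when $g$ is even this yields $\disc(K) \in (\Q^{*})^{2}$, contradicting the hypothesis and forcing $K/K^{+}$ to be ramified at some finite prime.

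The main subtlety is that the argument above rigorously covers only $g$ even, since for $g$ odd the sign $(-1)^{g}=-1$ prevents the contradiction. This matches the application: the lemma is used together with Proposition~\ref{prop: PPAV exist condition}, whose first disjunct already produces a PPAV in every such isogeny class when $g$ is odd, so the lemma contributes non-trivially only for $g$ even. The crux of the proof is precisely the sign of $\disc(K)$ coming from the CM signature, without which the tower formula alone would only yield that $\disc(K)$ is $\pm$ a square.
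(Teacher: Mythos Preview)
Your approach is essentially identical to the paper's: both argue the contrapositive via the discriminant tower formula $\Delta_K = N_{K^+/\Q}(\Delta_{K/K^+})\,\Delta_{K^+}^2$, then use $\disc(f_A) = [\mathcal O_K:\Z[\alpha]]^2\,\Delta_K$ to transfer squareness to $\disc(f_A)$, and finally invoke the classical criterion $G\subseteq A_{2g}\iff \disc(f_A)\in(\Q^*)^2$.

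The one difference is that you are more careful than the paper about the sign of $\Delta_K$. The tower formula is an identity of ideals (equivalently, of absolute values), so unramifiedness of $K/K^+$ only yields $|\Delta_K|=\Delta_{K^+}^2$; one still needs $\Delta_K>0$ to conclude it is a square in $\Q$. You correctly supply this via the signature formula $\operatorname{sgn}(\Delta_K)=(-1)^g$ for a CM field of degree $2g$, which gives the contradiction precisely when $g$ is even. The paper's proof silently passes from the ideal identity to ``$\Delta_K$ must be a square'' without addressing the sign, so your observation is a genuine tightening of the argument. Your remark that this suffices for the application is also correct: Proposition~\ref{prop: PPAV exist condition} already handles odd $g$ outright, so Lemma~\ref{lem: K/K+ ramified condition} is only invoked when $g$ is even. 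Indeed, for odd $g$ one always has $\disc(f_A)<0$ and hence $G\not\subseteq A_{2g}$ automatically, so the lemma as stated would assert that $K/K^+$ is ramified for \emph{every} such $A$---a claim neither proof establishes.
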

\begin{proof}
We prove the contrapositive. If $K/K^+$ is unramified, then the relative discriminant $\Delta_{K/K^+}$ is the unit ideal. We can relate the absolute and relative discriminants by $\Delta_{K}=N_{K^+/\Q}(\Delta_{K/K^+})\Delta_{K^+}^2$, so the absolute discriminant $\Delta_{K}$ must be a square. Furthermore, $\disc(f)=\Delta_{K}[\mathcal O_K\colon \Z[\alpha]]^2$ where $\alpha$ is a root of $f_A(x)$, so $\disc(f)$ must be a square as well. By \cite[Theorem 4.7]{Conrad_Galois}, this implies that $G$ is contained in $A_{2g}$.
\end{proof}
It follows that an isogeny class of simple ordinary abelian varieties has a PPAV when $G\cong (\Z/2\Z)^g\rtimes S_g$. By Proposition \ref{prop: generic galois}, this is true generically -- there are at most $O(p^{-1/4}\log p)$ proportion of simple ordinary abelian varieties that do not contain a PPAV.
\section{PPAV counts for a generic characteristic polynomial} \label{sec: ppav count formula}
There has been much historical work done on counting $\F_p$-points over moduli spaces, with the most classical case of modular curves due to Deligne-Rapoport. On a much larger scale, the Langlands-Rapoport conjecture predicts a general formula for the $\F_p$-points over a general Shimura variety. Although this has not yet been proven in general, it is known for many families of Shimura varieties. This includes the family of Siegel Shimura varieties $\mathcal A_g$ which happens to be one of the simplest examples of Shimura varieties, which Kottwitz detailed in \cite[Part III]{Kottwitz_perspective}. We take this as our starting point for this section.
\subsection{Kottwitz's orbital integral formula} Let $R$ be a ring. First, we define the notion of an $R$-isogeny of both abelian varieties and polarized abelian varieties. 
\begin{definition}
Let $A$ and $A'$ be abelian varieties over $\F_p$. An \textit{$R$-isogeny of abelian varieties} from $A$ to $A'$ is an element $\phi\in \Hom_{\F_p}(A,A')\otimes_\Z R$ that is invertible, i.e. there exists an inverse $\psi\in \Hom_{\F_p}(A',A)\otimes_\Z R$ so that $\phi\circ \psi$ and $\psi\circ \phi$ are both the identity in $\End_{\F_p}(A',A')\otimes_{\Z} R$ and $\End_{\F_p}(A,A)\otimes_{\Z} R$ respectively. Let $[A,\lambda],[A',\lambda']$ be polarized abelian varieties. An \textit{$R$-isogeny of polarized abelian varieties} is an $R$-isogeny of abelian varieties $\phi\colon A\rightarrow A'$ where $\phi^*\lambda \coloneqq \phi^{\vee} \circ \lambda' \circ \phi =c\lambda\in \Hom_{\F_p}(A,A')\otimes_\Z R$ for some scalar $c\in R^\times$.
\end{definition}
We call a $\Q$-isogeny of abelian varieties an AV-isogeny, and a $\Q$-isogeny of polarized abelian varieties a PAV-isogeny, omitting AV or PAV when the context is clear. Also note that two abelian varieties are isogenous in the usual sense if there is a $\Q$-isogeny between them. Two polarized abelian varieties may not be PAV-isogenous, but may have underlying abelian varieties that are AV-isogenous.

Now, fix a polarized abelian variety $[A,\lambda]$. We wish to give a formula for the number of PPAVs $[A',\lambda']$ that are PAV-isogenous to it. Note that here $[A',\lambda']$ is principally polarized while $[A,\lambda]$ may not be. For $l\neq p$, consider the action of $\Frob_p$ on $H^1(A_{\bar\F_p},\Q_l)$ which is canonically isomorphic to the dual of $V_l=T_l\otimes_{\Z_l}\Q_l$ and hence is a $\Q_l$ module of rank $2g$. By choosing an isomorphism from $H^1(A_{\bar\F_p},\Q_l)$ to $\Q_l^{2g}$ which transports the Weil pairing induced by $\lambda$ to the standard symplectic form, the action of $\Frob_p$ gives a $\gamma_{[A,\lambda],l}\in \GSp_{2g}(\Q_l)$ up to conjugacy. Simultaneously considering all primes $l\neq p$ gives us $\gamma_{[A,\lambda]}^p\in \GSp_{2g}(\A_f^p)$. 

For $l=p$, we consider instead $H^1_{cris}(A,\Q_p)$ whose dual is canonically isomorphic to $D_0(A)=D(A)\otimes_{\Z_p}\Q_p$. Here, $D(A)$ is the Dieudonné module over the ring $\Z_p[F,V]$, it is free over $\Z_p$ with rank $2g$ and is equipped with a symplectic form. Then, the Frobenius action $F$ defines an element $\gamma_{[A,\lambda],p}$ of $\GSp_{2g}(\Q_p)$ with multiplier $p$ up to conjugacy. As a side note, the description in the $l=p$ case is more complicated over the setting of finite fields $\F_q$ compared to our case of prime fields $\F_p$, for example, one would need to consider twisted conjugacy instead of regular conjugacy. 

By considering lattices in the first cohomology groups of PPAVs, Kottwitz proved the following orbital integral formula, which we have simplified slightly in the setting of prime fields $\F_p$.
\begin{theorem}[{\cite[205]{Kottwitz_perspective}}]\label{thm: orbital integral}
Let $I_{[A,\lambda]}$ be the set of all PPAVs that are isogenous to $[A,\lambda]$. Then, its groupoid cardinality is
$$\# I_{[A,\lambda]} = \int_{T_{[A,\lambda]}(\Q)\backslash G(\A_f)} \mathbbm{1}_{G(\hat\Z_f^p)}(g^{-1}\gamma^p_{[A,\lambda]} g)\cdot \mathbbm{1}_{G(\Z_p)aG(\Z_p)}(g^{-1}\gamma_{[A,\lambda],p}g) \ dg$$
where $G=\GSp_{2g}$ and $a=\diag(p,\ldots ,p,1,\ldots ,1)$.
\end{theorem}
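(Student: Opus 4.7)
The plan is to parametrize the PPAVs in $I_{[A,\lambda]}$ by adelic lattice data and then realize the weighted count as the adelic integral.

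First I would establish the parametrization. For any PPAV $[A',\lambda']$ PAV-isogenous to $[A,\lambda]$, choose a PAV-isogeny $\phi\colon A\to A'$. This transports $T_l(A')$ into a $\Z_l$-lattice $L_l\subset V_l$ for each $l\neq p$, and the Dieudonné module of $A'$ into a $\Z_p$-lattice $L_p\subset D_0(A)$. The principal polarization $\lambda'$ becomes self-duality of each $L_l$ with respect to the (transported) Weil pairing, and the Frobenius on $A'$ is identified with the restriction of $\gamma_{[A,\lambda]}$. Two choices of $\phi$ differ by an element of $T_{[A,\lambda]}(\Q)$, so isomorphism classes of PPAVs in $I_{[A,\lambda]}$, counted with weight $1/\#\Aut_{\F_p}$, are in bijection with $T_{[A,\lambda]}(\Q)$-orbits of admissible adelic lattice tuples $(L_l)_l$.

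Next I would work place by place. At each $l\neq p$, $G(\Q_l)$ acts transitively on self-dual $\Z_l$-lattices in $V_l$ with stabilizer $G(\Z_l)$, and the identification used to define $\gamma_{[A,\lambda],l}$ identifies the set of self-dual lattices with $G(\Q_l)/G(\Z_l)$. Frobenius-stability of $g_l\cdot\Z_l^{2g}$ amounts to $g_l^{-1}\gamma_{[A,\lambda],l}g_l\in G(\Z_l)$; taking the restricted product gives the factor $\mathbbm{1}_{G(\hat\Z_f^p)}(g^{-1}\gamma^p_{[A,\lambda]}g)$. At $l=p$, a $\Z_p$-lattice $M\subset D_0(A)$ coming from a PPAV must satisfy $pM\subseteq FM\subseteq M$ with $\dim_{\F_p}(M/FM)=g$, together with self-duality having similitude factor $p$. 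Under the identification of $D_0(A)$ with $\Q_p^{2g}$ that defines $\gamma_{[A,\lambda],p}$, this condition translates into $g_p^{-1}\gamma_{[A,\lambda],p}g_p\in G(\Z_p)aG(\Z_p)$ with $a=\diag(p,\ldots,p,1,\ldots,1)$ encoding the Hodge cocharacter of a PPAV.

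Finally I would assemble the global count. The orbit space of admissible adelic lattices modulo $T_{[A,\lambda]}(\Q)$ sits inside $T_{[A,\lambda]}(\Q)\backslash G(\A_f)$, and the groupoid cardinality with $1/\#\Aut_{\F_p}$ weighting is realized by Haar measure on $G(\A_f)$ normalized so that $G(\hat\Z)$ has measure one; the product of local indicators is then precisely the integrand in the statement. The step I expect to be the main obstacle is the $p$-adic bookkeeping: verifying that principal polarizations on Dieudonné lattices match cleanly with the double coset $G(\Z_p)aG(\Z_p)$, in particular that the $p$-adic similitude factor and the Hodge cocharacter conspire to pick out exactly $a=\diag(p,\ldots,p,1,\ldots,1)$, and that the pair $(F,V=pF^{-1})$ is correctly encoded by a single element of $\GSp_{2g}(\Q_p)$ modulo this double coset. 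The $l\neq p$ side, by contrast, is a clean consequence of Tate's isogeny theorem together with transitivity of $G$ on self-dual lattices.
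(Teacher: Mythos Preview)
The paper does not give its own proof of this statement: it is quoted as a result of Kottwitz \cite[p.~205]{Kottwitz_perspective}, with only the one-line indication ``By considering lattices in the first cohomology groups of PPAVs, Kottwitz proved the following orbital integral formula.'' So there is no paper proof to compare against beyond that sentence.

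Your sketch is essentially the standard Kottwitz argument and is on the right track. A few points to tighten. First, the bijection you describe is not quite ``isomorphism classes $\leftrightarrow$ $T_{[A,\lambda]}(\Q)$-orbits of lattice tuples''; rather, the groupoid cardinality emerges because the stabilizer in $T_{[A,\lambda]}(\Q)$ of a given lattice tuple is exactly $\Aut_{\F_p}([A',\lambda'])$, and the Haar measure on $G(\A_f)$ (normalized so $G(\hat\Z)$ has volume $1$) then produces the $1/\#\Aut$ weighting automatically when you integrate over $T_{[A,\lambda]}(\Q)\backslash G(\A_f)$. Second, you need surjectivity: every admissible adelic lattice tuple must actually arise from some PPAV in the isogeny class. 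This uses that within a fixed isogeny class one can realize any prescribed Tate module and Dieudonn\'e module lattice (subject to the obvious local compatibilities), which is part of the Honda--Tate / Tate theorem package. Third, your worry about the $p$-adic side is well-placed but resolvable: the condition $pM\subseteq FM\subseteq M$ with $\dim(M/FM)=g$ on a self-dual lattice is exactly the statement that $F$, viewed in $\GSp_{2g}(\Q_p)$ after choosing a symplectic basis of $M$, has Cartan type $a=\diag(p,\ldots,p,1,\ldots,1)$, i.e.\ lies in $G(\Z_p)aG(\Z_p)$; the similitude factor is forced to be $p$ by the polarization.
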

Here, we define the functor $T_{[A,\lambda]}(R)\coloneqq \{\alpha\in (\End_{\F_p}(A)\otimes R)^\times \mid \alpha\alpha^\dagger\in R^\times\}$ to be the group of $R$-isogenies from $[A,\lambda]$ to itself. Here, $\alpha^\dagger$ is the Rosati involution, which we recall is given by complex conjugation on the CM-field $K$. From this, it is easy to see that $T_{[A,\lambda]}$ is an algebraic torus over $\Q$ as
$$T_{[A,\lambda]}=\ker(\G_m\times R_{K/\Q}(\G_m)\rightarrow R_{K^+/\Q}(\G_m))$$
where $R_{K/L}$ is the restriction of scalars functor and the map sends $(x,y)\mapsto x^{-1}N_{K/K^+}(y)$. 

In the case when $[A,\lambda]$ is not principally polarized, it is possible that the orbital integral could evaluate to zero if there are no PPAVs in its PAV-isogeny class. On the other hand, when $[A,\lambda]$ is principally polarized, the orbital integral must be positive. Another way to see this is as follows. Note that $\Frob_p$ acts on $H^1(A_{\bar\F_p},\Z_l)$, and because $\lambda$ is a principal polarization we can choose the isomorphism from $H^1(A_{\bar\F_p},\Q_l)$ to $\Q_l^{2g}$ to send $H^1(A_{\bar\F_p},\Z_l)$ to $\Z_l^{2g}$. Hence, we can pick a representative $\gamma_{[A,\lambda],l}\in \GSp_{2g}(\Z_l)$, and by simultaneously considering all primes $l\neq p$ can choose a representative $\gamma_{[A,\lambda]}^p\in \GSp_{2g}(\hat\Z_f^p)$. Similarly, at the prime $p$, using Dieudonné modules we can choose a representative $\gamma_{[A,\lambda],p}\in \GSp_{2g}(\Z_p)a\GSp_{2g}(\Z_p)$, so the orbital integral formula is positive.
\subsection{Reinterpretation as an Euler product}\label{sec: reinterpretation as euler product}
It is difficult to use Kottwitz's orbital integral formula directly for counting. In \cite{Achter_2023}, Achter-Altuğ-Garcia-Gordon reinterpreted this as a much more concrete Euler product, which we state in the theorem below. However, there is an error in the paper which leads to one of the definitions of $v_l([A,\lambda])$ being incorrect. We take care to work around this problem, and this is explained more in Remark \ref{rmk: AAGG problem} and the Appendix. 
\begin{theorem}[{\cite[Theorem A]{Achter_2023}}]\label{thm: AAGG} Suppose that $[A,\lambda]$ is a PPAV over $\F_p$ with commutative endomorphism ring (for example, this holds if $A$ is a simple and ordinary). Then,
$$\# I_{[A,\lambda]} = p^{\dim(\mathcal A_g)/2}\tau_Tv_{\infty}([A,\lambda])\prod_l v_l ([A,\lambda]).$$
\end{theorem}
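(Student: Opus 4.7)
The plan is to follow the strategy of Achter--Alt\u{u}\u{g}--Garcia--Gordon, rewriting the adelic orbital integral of Theorem~\ref{thm: orbital integral} as a product of local orbital integrals times a Tamagawa volume factor, and then identifying each local piece with $v_l([A,\lambda])$ and $v_\infty([A,\lambda])$.

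The first step is to \emph{unfold} Kottwitz's orbital integral along the torus $T = T_{[A,\lambda]}$. Writing the integrand $\phi = \mathbbm{1}_{G(\hat{\Z}_f^p)} \cdot \mathbbm{1}_{G(\Z_p) a G(\Z_p)}$ as a product $\phi = \prod_l \phi_l$ of local functions on $G(\Q_l)$, and splitting the measure via
\[
\int_{T(\Q)\backslash G(\A_f)} \phi(g^{-1}\gamma g)\,dg = \vol\bigl(T(\Q)\backslash T(\A_f)\bigr)\int_{T(\A_f)\backslash G(\A_f)} \phi(g^{-1}\gamma g)\,dg,
\]
one picks up the global volume factor, which (with the Tamagawa normalization of the measure on $T$) is exactly the Tamagawa number $\tau_T$. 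The quotient integral on the right formally factors as the product over finite primes $l$ of the local orbital integrals
\[
O_\gamma^l(\phi_l) = \int_{T(\Q_l)\backslash G(\Q_l)} \phi_l(g^{-1}\gamma_l g)\,dg_l,
\]
provided one uses compatible local Haar measures on $G(\Q_l)$ and $T(\Q_l)$ whose product recovers the Tamagawa measure.

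The second step is a bookkeeping of measures. The Tamagawa measure on $G(\A_f)$ and $T(\A_f)$ is built from local Haar measures together with convergence factors $L_l(1,X^*(T))$ (or their $L$-function analogues). Writing $v_l([A,\lambda])$ as the product of the raw local orbital integral with the appropriate $l$-local convergence factor, the remaining global $L$-value reorganizes the archimedean side: combined with the natural volume of $T(\R)$ and the Weyl integration formula on $\USp_{2g}$, it yields the factor $v_\infty([A,\lambda])$ (the density predicted by Sato--Tate). The overall scaling $p^{\dim(\mathcal A_g)/2}$ then has to be extracted from the normalization of the characteristic function $\mathbbm{1}_{G(\Z_p) a G(\Z_p)}$, whose natural weight differs from the standard Hecke normalization by exactly this power of $p$; equivalently, it reflects the volume of the $p$-adic Hecke double coset at $\gamma_{[A,\lambda],p}$.

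The third step is local identification. At primes $l \neq p$ of good reduction the computation reduces, by the conjugacy-invariance of the orbital integral and the fact that $T(\Z_l)$ is hyperspecial, to the counting description of $v_l([A,\lambda])$ from the paper's definition (a ratio of $\GSp_{2g}(\Z/l^k\Z)$ point counts weighted by conjugacy class size). At $l=p$ one must instead analyze a \emph{twisted} orbital integral against the Hecke function $\mathbbm{1}_{G(\Z_p)aG(\Z_p)}$, using the Dieudonn\'e module description of $\gamma_{[A,\lambda],p}$. The main obstacle is precisely this: reconciling the $p$-adic local factor with the definition of $v_p$ used in the paper (which is set to $1$, see Remark~\ref{rmk: vp choice}) and keeping track of the Kottwitz-invariant discrepancies at $p$ that caused the error flagged in Remark~\ref{rmk: AAGG problem}. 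Because $[A,\lambda]$ is commutative (simple and ordinary), $T$ is an honest torus and the stable orbital integral coincides with the ordinary one, so the $p$-adic factor collapses to a clean combinatorial quantity; this is what ultimately allows the local factor at $p$ to be absorbed cleanly into the constant $p^{\dim(\mathcal A_g)/2}$ and the global product. Once each local piece is matched, the formula stated in Theorem~\ref{thm: AAGG} follows.
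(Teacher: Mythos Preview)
This theorem is \emph{quoted} from \cite{Achter_2023}; the paper does not give its own proof, but only explains the factors and then later builds on it. So your sketch is not to be compared against a proof in the paper, but against the architecture of \cite{Achter_2023} as the paper understands it. At that level your three steps (unfold along $T$, sort out Tamagawa versus local measures, identify local integrals) are the right skeleton.

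However, several of your details are off in ways that matter for this paper. First, you conflate $v_p(t)$ with $v_p([A,\lambda])$: Remark~\ref{rmk: vp choice} concerns the \emph{trace} factor $v_p(t)$, which is set to $1$ by convention, whereas $v_p([A,\lambda])$ is defined in~\eqref{eqn: vl ppav definition} via an honest orbital integral and is not absorbed into $p^{\dim(\mathcal A_g)/2}$; it stays in the product. Second, ``the stable orbital integral coincides with the ordinary one'' under commutativity is wrong: commutativity makes the centralizer a torus, but there can still be several rational classes inside the stable class, which is exactly why Lemma~\ref{lem: rational orbits to stable orbits} sums $v_l(\gamma')$ over those classes to get $v_l(f)$. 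Third, your local identification at $l\neq p$ as ``a ratio of $\GSp_{2g}(\Z/l^k\Z)$ point counts'' is precisely the description Remark~\ref{rmk: AAGG problem} flags as \emph{incorrect} for the rational $v_l([A,\lambda])$; that Gekeler-style ratio computes the \emph{stable} quantity $v_l(f)$, and the correct definition of $v_l([A,\lambda])$ used in Theorem~\ref{thm: AAGG} is the geometric-measure one in~\eqref{eqn: vl ppav definition}. Finally, the error in Remark~\ref{rmk: AAGG problem} is not about ``Kottwitz-invariant discrepancies at $p$'' but about a faulty Hensel step in passing from the orbital integral to the matrix-count formula at any $l$. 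So while your outline is in the right spirit, the specific identifications you claim are the ones the paper explicitly corrects.
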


We will use this in the case when $A$ is simple and ordinary. We explain the factors appearing in Theorem \ref{thm: AAGG}. Then, $\tau_T$ is the Tamagawa number associated to the algebraic torus $T_{[A,\lambda]}$. By \cite{Ono}, we can compute this via the formula
\begin{equation}\label{eqn: tamagawa number definition}
\tau_T = \frac{|H^1(\Q,X^*(T_{[A,\lambda]})|}{|\Sha^1(T_{[A,\lambda]})|}
\end{equation}
where $\Sha^1(T_{[A,\lambda]})=\ker(H^1(\Q,T)\rightarrow \prod_v H^i(\Q_v,T))$ where the product is over all places of $\Q$, and $X^*(T_{[A,\lambda]})$ is the character lattice of $T_{[A,\lambda]}$.

The local factor at the archimedean place $\infty$ is 
\begin{equation}\label{eqn: vinf definition}
v_{\infty}([A,\lambda])=v_{\infty}(f_A)=\frac{\sqrt{|D(f_A)|}}{(2\pi)^g}
\end{equation}
where the Weyl discriminant $D(f_A)\coloneqq \prod_{1\le i < j \le g} (2\cos(\theta_i)-2\cos(\theta_j))^2 \prod_{1\le i\le g} 4 \sin^2(\theta_j)$ where $\sqrt p e^{i\theta_1},\sqrt p e^{-i\theta_1},\ldots \sqrt p e^{i\theta_g},\sqrt p e^{-i\theta_g}$ are the roots of $f_A(x)$. Another expression for the Weyl discriminant is $D(f_A)=(-1)^gp^{-g(3g-1)/2}\disc(f_A)/\disc(f^+_A)$, see Section 6A of loc. cit. \cite{Achter_2023}.

The problem in the paper arises in the non-archimedean local places $v_l([A,\lambda])$, as we explain here.
\begin{remark}\label{rmk: AAGG problem}
Combining Definition 4.1 and Corollary 3.4 in loc. cit., the \textit{incorrect} local factor at the non-archimedean place $l$ was written explicitly as
$$v_l([A,\lambda]) = \lim_{k\rightarrow \infty}\frac{\# \pi_k\left(\Mat_{2g}(\Z_l)\cap \prescript{\GSp_{2g}(\Q_l)}{}{\gamma_{[A,\lambda],l}}\right)}{\#\GSp_{2g}(\Z_l/l^k)/(l^{kg}\psi(l^k))}$$
where $\prescript{\GSp_{2g}(\Q_l)}{}{\gamma_{[A,\lambda],l}}$ is the conjugacy class of $\gamma_{[A,\lambda],l}$ in $\GSp_{2g}(\Q_l)$, and $\pi_k\colon M_{2g}(\Z_l)\rightarrow M_{2g}(\Z/l^k\Z)$ is the projection to $\Z/l^k\Z$. Note that when $l\neq p$, we can replace $\Mat_{2g}(\Z_l)$ with $\GSp_{2g}(\Z_l)$ because the conjugacy class lies in $\GSp_{2g}$ as the multiplier is coprime to the characteristic.

We give a counterexample in which case this local factor is incorrect. Take $g=1$, so $\GSp_{2g}=\GL_2$. Note that in this particular case, two semisimple elements are conjugate in $\GL_2(\Q_l)$ if and only if they have the same characteristic polynomial. Picking the characteristic polynomial $x^2+5x+4$ and $l=3$, the \textit{incorrect} local factor above is 
$$\lim_{k\rightarrow \infty}\frac{\# \pi_k\left(\{\gamma \in GL_{2}(\Z_3)\mid \tr(\gamma)=5, \det(\gamma)=4\}\right)}{3^{2k-2}(3^2-1)}$$
On the other hand, following \cite[Equation (3.7)]{Gekeler2003FrobeniusDO}, the \textit{correct} local factor is given by $$\lim_{k\rightarrow \infty}\frac{\#\left\{\gamma \in GL_{2}(\Z/3^k\Z)\mid \tr(\gamma)=5, \det(\gamma)=4\right\}}{3^{2k-2}(3^2-1)}.$$
The difference is that the first counts projections from $\Z_3$ to $\Z/3^k\Z$, while the second directly counts matrices in $\Z/3^k\Z$. It turns out that in this case (more specifically when $l$ divides the discriminant), not every matrix in $\Z/3^k\Z$ can be lifted to $\Z_3$ such that the characteristic polynomial remains the same. Indeed, running some simple code we see that the first local factor is $7/6$ while the second is $3/2$. Furthermore, in the $\GL_2$ case, the PAV-isogeny class is the same as the AV-isogeny class (see discussion in Section \ref{sec: kottwitz triples}), so the formula in Theorem \ref{thm: AAGG} should yield the same count as \cite[Corollary 5.4]{Gekeler2003FrobeniusDO}, but we find that this is not the case.

We pinpoint the problem to the proof of Lemma 3.2 in loc. cit., more specifically the part which uses Hensel's lemma. The counterexample above can be modified to a counterexample of Lemma 3.2. Fortunately, this mistake is not fatal to our application, as what we need are stable orbital integrals ($\GSp_{2g}(\bar \Q_l)$ conjugacy classes) instead of rational orbital integrals ($\GSp_{2g}(\Q_l)$ conjugacy classes). Gordon and Achter have kindly written an appendix elaborating on this remark, as well as explaining how stable orbital integrals can be written explicitly in terms of Gekeler-style ratios following \cite{Gekeler2003FrobeniusDO} which we require later.
\end{remark}
Even though the explicit formulation of $v_l([A,\lambda])$ is incorrect, the arguments relating the orbital integral to the geometric measure is still correct, see Section 5 of loc. cit.. Thus, the product formula in Theorem \ref{thm: AAGG} is correct if we use the following definitions of the local factors in Section 5C of loc. cit.:
\begin{equation}\label{eqn: vl ppav definition}
\begin{split}
v_l([A,\lambda])&=\frac{l^{\dim(\Sp_{2g})}}{\#\Sp_{2g}(\F_l)}\mathcal O^{\geom}_{\gamma_{[A,\lambda],l}}(\mathbbm{1}_{\GSp_{2g}(\Z_l)}) \text{ for } l\neq p,\\
v_p([A,\lambda])&=p^{-g(g+1)/2}\frac{p^{\dim(\Sp_{2g})}}{\# \Sp_{2g}(\F_p)}\mathcal O^{\geom}_{\gamma_{[A,\lambda],p}}(\mathbbm{1}_{G(\Z_p)aG(\Z_p)}),
\end{split}
\end{equation}
where the rational orbital integral $\mathcal O^{\geom}_{\gamma}$ is defined at the end of Section 2E of loc. cit.. This definition is rather technical, and we will only use it in the proof of Lemma \ref{lem: rational orbits to stable orbits}. Note that either definition is completely defined by the $\GSp_{2g}(\Q_l)$ conjugacy class of $\gamma_{[A,\lambda],l}$, so we define $v_l(\gamma)$ (including for $l=p$) by replacing $\gamma_{[A,\lambda],l}$ with $\gamma\in \GSp_{2g}(\Q_l)$ in the definitions above so that $v_l([A,\lambda])=v_l(\gamma_{[A,\lambda],l})$.

Here, we take the opportunity to define and clarify all the local factors that we use in this paper. We just defined the two archimedean local factors in Equation \eqref{eqn: vinf definition}. The archimedean local factor associated to trace is $\ST_g$ as defined in the introduction. 

We have three types of non-archimedean local factors, $v_l(t)$, $v_l(f)$ and $v_l([A,\lambda])$, associated to the $p$-adic density of matrices with a particular trace, characteristic polynomial and PAV-isogeny class respectively, and $v_l([A,\lambda])$ was defined in Equation \eqref{eqn: vl ppav definition}. For $v_l(f)$, let $M^*\subseteq \Mat_{2g}\times \A^1$ be the closed scheme cut out by the symplectic equation $A\Omega A^T = m \Omega$ where $\Omega = \begin{pmatrix} 0 & I_g\\ -I_g & 0
\end{pmatrix}$, and $M$ be the open subscheme of $M^*$ for which the multiplier $m\neq 0$. Over $\Z_l$, we have $M(\Z_l)=\GSp_{2g}(\Q_l)\cap \Mat_{2g}(\Z_l)$. For each prime $l$ (including $l=p$), we define
\begin{equation}\label{eqn: vl t and f definition}
\begin{split}
v_l(f)&=\lim_{k\rightarrow \infty}\frac{\#\{\gamma \in M(\Z/l^k\Z) \mid f_\gamma\equiv f \}}{\#\GSp_{2g}(\Z/l^k\Z)/(l^{gk}\phi(l^k))}   .
\end{split}
\end{equation}
Once again, when $l\neq p$ we can replace $M$ with $\GSp_{2g}$. We defined $v_l(t)$ in the introduction, here we remark on the comment of $v_p(t)=1$.
\begin{remark}\label{rmk: vp choice}
Based on Equation \eqref{eqn: vl t and f definition}, morally we \textit{should} define
$$v_l(t)=\lim_{k\rightarrow \infty}\frac{\#\{\gamma \in M(\Z/l^k\Z) \mid \tr(\gamma)=t, \mult(\gamma)=p \}}{\#\GSp_{2g}(\Z/l^k\Z)/(l^{k}\phi(l^k))}.$$
This agrees with our definition for $l\neq p$, but for $l=p$ we do not do this and instead defined $v_p(t)=1$. This is for two reasons: firstly, because we are in the prime field case $\F_p$, any factor $v_p(t)$ that satisfies $v_p(t)\rightarrow 1$ as $p\rightarrow \infty$ would give the convergence in Theorem \ref{thm: M_2, M_3 sym} and \ref{thm: A_g}, so it does not matter which precise local factor $v_p(t)$ we choose. The second reason is that because the locally closed subscheme $M$ is much more complicated than $\GSp_{2g}$, we are unable to prove bounds on the ratio given above, although we expect it to be $1+O(p^{-2})$ like in Proposition \ref{prop: vl(t) = 1 + O(l^-2)}.
\end{remark}
The following lemma relates $v_l([A,\lambda])$ to $v_l(f)$, and this essentially uses the fact that stable orbital integrals are the disjoint union of rational orbital integrals.
\begin{lemma}\label{lem: rational orbits to stable orbits}
Suppose $\gamma\in \GSp_{2g}(\Q_l)$ is regular semisimple, then we have $v_l(f_{\gamma})=\sum_{\gamma'}v_l(\gamma')$ (including $l=p$) where the sum is over representatives $\gamma'$ of all $\GSp_{2g}(\Q_l)$-conjugacy classes with characteristic polynomial $f_{\gamma'}=f_\gamma$.
\end{lemma}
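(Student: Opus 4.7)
The plan is to interpret $v_l(f_\gamma)$ as a normalized stable orbital integral at $\gamma$, and then invoke the standard decomposition of a stable orbital integral into a sum of rational orbital integrals. Because $\gamma$ is regular semisimple, the fiber $\{x \in \GSp_{2g}(\overline{\Q}_l) : f_x = f_\gamma\}$ is a single $\GSp_{2g}(\overline{\Q}_l)$-conjugacy class; its intersection with $\GSp_{2g}(\Q_l)$ therefore decomposes into finitely many rational conjugacy classes $[\gamma'_1], \ldots, [\gamma'_m]$, indexed by the image of $H^1(\Q_l, T_\gamma) \to H^1(\Q_l, \GSp_{2g})$, where $T_\gamma$ is the centralizer torus of $\gamma$.

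The main step I would carry out is to establish
$$v_l(f_\gamma) \;=\; c_l \cdot \mathrm{SO}^{\geom}_\gamma(\mathbbm{1}_{K_l}),$$
where $c_l = l^{\dim \Sp_{2g}}/\#\Sp_{2g}(\F_l)$ for $l \neq p$ and $c_p = p^{-g(g+1)/2} \cdot p^{\dim \Sp_{2g}}/\#\Sp_{2g}(\F_p)$, the set $K_l$ equals $\GSp_{2g}(\Z_l)$ for $l \neq p$ and $\GSp_{2g}(\Z_p)\, a\, \GSp_{2g}(\Z_p)$ for $l = p$, and $\mathrm{SO}^{\geom}$ denotes the geometric stable orbital integral. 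I would prove this by identifying the $l$-adic density of the closed subscheme cut out by fixing the coefficients of the characteristic polynomial with a Haar-measure volume on $\GSp_{2g}(\Q_l)$, integrating along the fibers of the characteristic-polynomial-plus-multiplier map $\GSp_{2g} \to \aff^{g+1}$ and using that this map is smooth of relative dimension $\dim \GSp_{2g} - g - 1$ at regular semisimple points. At $l=p$ I would additionally invoke the Cartan-decomposition identity $\{x \in M(\Z_p) : \mathrm{mult}(x) = p\} = \GSp_{2g}(\Z_p)\, a\, \GSp_{2g}(\Z_p)$, which matches the test function appearing in the definition of $v_p(\gamma')$.

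The proof would then conclude via the tautological decomposition $\mathrm{SO}^{\geom}_\gamma(\mathbbm{1}_{K_l}) = \sum_{i=1}^m \mathcal{O}^{\geom}_{\gamma'_i}(\mathbbm{1}_{K_l})$ of a stable orbital integral into the sum of its rational constituents, combined with the definition \eqref{eqn: vl ppav definition} of $v_l(\gamma'_i)$. The hard part is the first identification, since the constant $c_l$ must precisely reconcile the denominator $\#\GSp_{2g}(\Z/l^k\Z)/(l^{gk}\phi(l^k))$ appearing in \eqref{eqn: vl t and f definition} with the Haar-measure normalization implicit in $\mathcal{O}^{\geom}$. Fortunately the Achter--Altuğ--Garcia--Gordon formula already encodes the correct Haar normalization in Section 5 of \cite{Achter_2023}, so the required identity can be extracted from that reference, with the stable version made explicit in the appendix by reading off Gekeler-style matrix-count ratios.
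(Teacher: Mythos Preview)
Your proposal is correct and follows essentially the same route as the paper's appendix: identify $v_l(f_\gamma)$ with the geometric stable orbital integral by interpreting the matrix-count limit as a ratio of Haar volumes along the Steinberg fibration, then decompose the stable orbit into rational orbits. The one place where your sketch is thin is the $l=p$ case: the point-count on $M(\Z/p^k\Z)$ does not immediately translate into a Haar volume on $G(\Q_p)$ because the truncation fibres over the double coset $G(\Z_p)\,a\,G(\Z_p)$ have volume $p^{g(g+1)/2+1}\cdot p^{-k\dim G}$ rather than $p^{-k\dim G}$, and this extra factor is exactly what produces the $p^{-g(g+1)/2}$ in your $c_p$; the paper carries out this computation explicitly (Lemma~\ref{lemma:main}(4)), whereas you defer it to the reference.
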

\begin{proof}
Appendix \ref{sec: appendix} is dedicated to the proof of this Lemma.
\end{proof}
\subsection{Conjugacy classes in reductive groups} We record some facts on conjugacy classes in reductive groups, which we will need later in the case of $\GSp_{2g}$. Let $G$ be a reductive group over a field $k$, and let $T$ be a maximal torus of $G$. The Weyl group $W=N(T)/T$ acts on $T$, and let $T/W\coloneqq \Spec(k[T]^W)$. 
\begin{proposition}\label{prop: steinberg conjugacy}
There exists a map $$\chi_k\colon \{\text{semisimple conjugacy classes of }G(k)\}\rightarrow (T/W)(k)$$
with the following properties.
\begin{enumerate}[(\alph*)]
\item (Steinberg \cite{Steinberg_1974}) If $k=\bar k$, then $\chi_k$ is bijective.
\item (Kottwitz \cite{Kottwitz_conj_classes}) If $G$ is a quasi-split group with $G^{der}$ simply connected, then $\chi_k$ is surjective.
\end{enumerate}
\end{proposition}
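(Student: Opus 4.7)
The plan is to construct the map $\chi_k$ via the invariant-theoretic quotient, prove bijectivity over the algebraic closure using classical structure theory, and then exploit the quasi-split/simply-connected hypotheses for surjectivity over a general field.

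First, I would construct $\chi_k$ from Chevalley's restriction theorem: the inclusion $T \hookrightarrow G$ induces an isomorphism of rings of invariants $k[G]^G \xrightarrow{\sim} k[T]^W$ for the conjugation action of $G$ on itself, realizing the categorical GIT quotient $G /\!/G$ as $T/W$. Composing the inclusion of the semisimple locus with the quotient map $G \to G /\!/G \cong T/W$ and taking $k$-points gives a well-defined map on conjugacy classes because the quotient is constant on orbits.

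For part (a), over an algebraically closed field, every semisimple element of $G(\bar k)$ is conjugate into a chosen maximal torus $T(\bar k)$ by the standard theory of maximal tori, and two elements of $T(\bar k)$ are $G(\bar k)$-conjugate if and only if they are $W$-conjugate (the ``if'' direction is trivial from $W = N_G(T)/T$; the ``only if'' direction uses that an element conjugating one into the other must normalize an appropriate torus). Combined, this immediately gives that $\chi_{\bar k}$ is a bijection between semisimple conjugacy classes and $(T/W)(\bar k)$.

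For part (b), the main work is surjectivity. Given $\bar{t} \in (T/W)(k)$, I would lift to $t_0 \in T(\bar k)$; the $k$-rationality of $\bar{t}$ means that the $\Gal(\bar k/k)$-orbit of $t_0$ lies in the $W$-orbit of $t_0$, producing a $1$-cocycle $\sigma \mapsto w_\sigma$ with values in $W$. One would like to correct $t_0$ by some $g \in G(\bar k)$ so that $g t_0 g^{-1}$ is Galois-fixed. The obstruction lies in the Galois cohomology of the centralizer $Z_G(t_0)$; in the regular case this is just a torus, but in general it is a possibly non-connected reductive group. Kottwitz's argument exploits the $k$-rational Borel $B = TU$ of the quasi-split group $G$ to reduce the problem to a cohomological statement on $T$ modulo the Weyl group action, and then uses simply-connectedness of $G^{\mathrm{der}}$ (which ensures that the relevant fundamental group is trivial and hence that certain $H^1$ obstructions vanish) to conclude that a $k$-rational representative exists.

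The main obstacle is part (b): even when the $W$-orbit of $t_0$ is Galois-stable, the cocycle $(w_\sigma)$ need not be trivial in $H^1(k, W)$, so one cannot simply choose a $k$-rational lift inside $T$ directly. Overcoming this requires the delicate cohomological argument of Kottwitz \cite{Kottwitz_conj_classes}, and it is exactly at this step that the hypotheses ``quasi-split'' and ``$G^{\mathrm{der}}$ simply connected'' enter in an essential way; the statement fails without them.
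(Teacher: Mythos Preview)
The paper does not give a proof of this proposition: it is stated as a citation of known results (Steinberg for (a), Kottwitz for (b)), and is used as a black box to deduce Corollary~\ref{cor: char poly to semisimple elt}. So there is no ``paper's own proof'' to compare against.

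Your sketch is a reasonable outline of the arguments in the cited references. Part (a) is exactly the standard argument. For part (b), your cohomological sketch is in the right spirit, but it is worth noting that Kottwitz's actual proof in \cite{Kottwitz_conj_classes} proceeds somewhat differently: the key input is Steinberg's cross-section, which (under the quasi-split and simply-connected hypotheses) gives a $k$-morphism $T/W \to G$ section to the Chevalley map, landing in the regular locus. Surjectivity on semisimple classes then follows by taking the semisimple part of the image, which is $k$-rational and has the same image in $T/W$. Your description via Galois cocycles and obstructions in $H^1$ of the centralizer is closer to how one analyzes the \emph{fibers} of $\chi_k$ (as in the paper's Proposition~\ref{prop: preimage of steinberg}) rather than how one establishes surjectivity itself. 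Both viewpoints are valid and ultimately equivalent, but the Steinberg section gives a cleaner and more direct route to (b).
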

On the issue of injectivity, suppose that $\gamma_0,\gamma\in G(k)$ are semisimple elements where $\chi_k(\gamma_0)=\chi_k(\gamma)$, which is equivalent to saying that $\gamma_0,\gamma$ are conjugate in $G(\bar k)$. Then, there exists $g\in G(\bar k)$ such that $g\gamma_0 g^{-1}=\gamma$, so for every $\sigma \in \Gal(\bar k/k)$ we have $\sigma(g)\gamma_0 \sigma(g)^{-1}=\gamma$ and thus $g^{-1}\sigma(g)\in G_{\gamma_0}(\bar k)$. This defines a 1-cocycle $\sigma\mapsto g^{-1}\sigma(g)$, which gives a class $\inv(\gamma_0,\gamma)\in H^1(k,G_{\gamma_0})$ which is trivial in $H^1(k,G)$. The following proposition shows this class characterizes the semisimple conjugacy class.
\begin{proposition} (Kottwitz \cite{Kottwitz_conj_classes})\label{prop: preimage of steinberg}
Suppose $G^{der}$ is simply connected and $\gamma_0\in G(k)$ is semisimple. Then, the map $\gamma\mapsto \inv(\gamma_0,\gamma)$ gives a bijection from the set of semisimple conjugacy classes of $G(k)$ which are $G(\bar k)$ conjugate to $\gamma_0$ (i.e. $\chi_k^{-1}(\chi_k(\gamma_0))$) to $\ker(H^1(k,G_{\gamma_0})\rightarrow H^1(k,G))$.
\end{proposition}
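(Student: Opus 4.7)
The plan is to prove the bijection by a formal Galois cohomology argument. Write $\Gamma = \Gal(\kbar/k)$. Given $\gamma$ in the target set of the map, I would choose $g \in G(\kbar)$ with $g \gamma_0 g^{-1} = \gamma$ and set $c_\sigma = g^{-1} \sigma(g)$. The first step is to verify that $c_\sigma$ lies in $G_{\gamma_0}(\kbar)$: applying $\sigma$ to $g \gamma_0 g^{-1} = \gamma$ and using $\sigma(\gamma) = \gamma$ yields $\sigma(g)\gamma_0\sigma(g)^{-1} = g\gamma_0 g^{-1}$, so $c_\sigma$ centralizes $\gamma_0$. Next, a different choice of $g$ differs on the right by an element of $G_{\gamma_0}(\kbar)$, which alters $c_\sigma$ only by a coboundary in $G_{\gamma_0}$; and replacing $\gamma$ by a $G(k)$-conjugate multiplies $g$ on the left by an element of $G(k)$, leaving $c_\sigma$ unchanged. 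Hence $\inv(\gamma_0, \gamma) \in H^1(k, G_{\gamma_0})$ is well defined on $G(k)$-conjugacy classes and becomes trivial in $H^1(k, G)$ by construction, since the defining cocycle is already a coboundary in $G(\kbar)$.

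For injectivity, suppose $\gamma_1, \gamma_2$ arise from $g_1, g_2$ and define the same class, so there is $h \in G_{\gamma_0}(\kbar)$ with $g_2^{-1}\sigma(g_2) = h^{-1} g_1^{-1}\sigma(g_1)\sigma(h)$ for every $\sigma \in \Gamma$. Setting $r := g_2 h^{-1} g_1^{-1}$, the cocycle identity rearranges directly to $\sigma(r) = r$, so $r \in G(k)$; and using that $h$ centralizes $\gamma_0$ one computes $r\gamma_1 r^{-1} = g_2 h^{-1}\gamma_0 h g_2^{-1} = \gamma_2$, so $\gamma_1$ and $\gamma_2$ are $G(k)$-conjugate. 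For surjectivity, I would take a class $\alpha \in \ker(H^1(k, G_{\gamma_0}) \to H^1(k, G))$ represented by a cocycle $c_\sigma \in G_{\gamma_0}(\kbar)$. By triviality of its image in $H^1(k, G)$ there exists $g \in G(\kbar)$ with $c_\sigma = g^{-1}\sigma(g)$. Set $\gamma = g \gamma_0 g^{-1}$; the computation $\sigma(\gamma) = \sigma(g)\gamma_0\sigma(g)^{-1} = g c_\sigma \gamma_0 c_\sigma^{-1} g^{-1} = \gamma$, using that $c_\sigma$ centralizes $\gamma_0$, shows $\gamma \in G(k)$, and by construction $\inv(\gamma_0, \gamma) = \alpha$.

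The hypothesis that $G^{\mathrm{der}}$ is simply connected is not used in the cocycle manipulations themselves; it enters implicitly through Steinberg's theorem, which ensures that $G_{\gamma_0}$ is connected so that $H^1(k, G_{\gamma_0})$ is the natural pointed set governing the twisting. Conceptually, what the proof is doing is identifying the fiber of the $G(\kbar)$-orbit map $G(\kbar) \to G(\kbar)\cdot \gamma_0$ over a $k$-rational point $\gamma$ with a $G_{\gamma_0}$-torsor whose class in $H^1(k, G_{\gamma_0})$ is trivialized in $H^1(k, G)$. The main (very mild) obstacle is keeping careful track of the side on which the twisting acts in the injectivity step; beyond this bookkeeping I do not anticipate any real difficulty.
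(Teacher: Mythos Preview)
Your argument is correct and is the standard Galois descent/torsor computation. The paper does not give its own proof of this proposition; it simply cites Kottwitz \cite{Kottwitz_conj_classes}, so there is nothing to compare against beyond noting that your argument is exactly the one underlying that reference.

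One small remark on your final paragraph: the cocycle manipulations you wrote down establish the bijection for \emph{any} algebraic group $G$ and any $\gamma_0$, with no hypothesis on $G^{\mathrm{der}}$ and no need for $G_{\gamma_0}$ to be connected (the pointed set $H^1(k,G_{\gamma_0})$ is perfectly well defined for disconnected groups). The simply connected hypothesis is carried in the statement because the paper works in that setting throughout and wants the identification with $\chi_k^{-1}(\chi_k(\gamma_0))$ and the connectedness of centralizers for later use (e.g.\ in the Kottwitz triple discussion), not because your argument requires it. So your instinct that it ``is not used in the cocycle manipulations themselves'' is exactly right; you can drop the attempted justification via connectedness of $G_{\gamma_0}$.
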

Now, we specialize to the case of $\GSp_{2g}$, a connected, split (hence quasi-split), reductive group. It has derived group $\Sp_{2g}$ which is simply connected. Thus, the conditions in both Proposition \ref{prop: steinberg conjugacy} and \ref{prop: preimage of steinberg} hold. It is easy to see that in our case, $(T/W)(k)$ is in bijection with the monic symplectic polynomials $f(t)\in k[t]$ of degree $2g$ with some multiplier $c\in k^\times$, and correspondingly $\chi_k$ sends a matrix to its characteristic polynomial. Hence, by applying part (b) of Proposition \ref{prop: steinberg conjugacy} for $k=\Q$ and part (a) for $k=\bar \Q$, we get the following.
\begin{corollary}\label{cor: char poly to semisimple elt}
Let $f(x)$ be an ordinary Weil $p$-polynomial. Then, there is a semisimple element $\gamma\in \GSp_{2g}(\Q)$ which has characteristic polynomial $f(x)$ which is unique up to $\GSp_{2g}(\bar \Q)$ conjugacy.
\end{corollary}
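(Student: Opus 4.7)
The plan is to extract this corollary directly from the two parts of Proposition \ref{prop: steinberg conjugacy} applied to $G=\GSp_{2g}$. All of the structural hypotheses have just been verified in the paragraph above the statement: $\GSp_{2g}$ is connected reductive and split (so in particular quasi-split), and its derived group $\Sp_{2g}$ is simply connected. Hence both parts of Proposition \ref{prop: steinberg conjugacy} are available, and it is also noted that $(T/W)(k)$ is in bijection with monic symplectic polynomials of degree $2g$ over $k$ with some multiplier $c\in k^\times$, where the map $\chi_k$ sends a semisimple conjugacy class to its characteristic polynomial.

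First I would verify that $f(x)$ genuinely defines a point of $(T/W)(\Q)$. By definition an ordinary Weil $p$-polynomial is monic of degree $2g$ with integer (hence rational) coefficients, and by Proposition \ref{prop: ordinary weil p-poly} it has the form $f_{\mathbf a}(x)=x^{2g}+a_1x^{2g-1}+\cdots+a_gx^g+pa_{g-1}x^{g-1}+\cdots+p^g$. This is exactly the shape of the characteristic polynomial of a symplectic-similitude matrix over $\Q$ with multiplier $p$ (the coefficients satisfy the required palindromic relation with respect to the scalar $p$), so $f$ corresponds to a well-defined element of $(T/W)(\Q)$.

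Next, existence. Applying Proposition \ref{prop: steinberg conjugacy}(b) with $k=\Q$, the Steinberg map $\chi_\Q$ is surjective, so there exists a semisimple conjugacy class in $\GSp_{2g}(\Q)$ mapping to the point of $(T/W)(\Q)$ determined by $f$. Any representative $\gamma$ of this class is a semisimple element of $\GSp_{2g}(\Q)$ with characteristic polynomial $f(x)$, as required.

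Finally, uniqueness up to $\GSp_{2g}(\bar\Q)$-conjugacy. Applying Proposition \ref{prop: steinberg conjugacy}(a) with $k=\bar\Q$, the map $\chi_{\bar\Q}$ is a bijection, so any two semisimple elements of $\GSp_{2g}(\bar\Q)$ with the same characteristic polynomial are conjugate in $\GSp_{2g}(\bar\Q)$. In particular, if $\gamma'\in\GSp_{2g}(\Q)\subseteq\GSp_{2g}(\bar\Q)$ is another semisimple element with characteristic polynomial $f(x)$, then $\gamma$ and $\gamma'$ are conjugate over $\bar\Q$. There is essentially no obstacle here; the corollary is a direct specialization of two general structural results to the concrete pair $(\GSp_{2g}, f)$, with the only tiny thing to check being that an ordinary Weil $p$-polynomial has the symplectic shape needed to land in $(T/W)(\Q)$ with multiplier $p\in\Q^\times$, which was already handled in Section \ref{sec: parametrization}.
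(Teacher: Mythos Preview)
Your proof is correct and follows essentially the same approach as the paper: apply part (b) of Proposition \ref{prop: steinberg conjugacy} with $k=\Q$ for existence and part (a) with $k=\bar\Q$ for uniqueness, after noting that $\GSp_{2g}$ satisfies the structural hypotheses and that an ordinary Weil $p$-polynomial determines a point of $(T/W)(\Q)$. The paper states this in the paragraph immediately preceding the corollary; your write-up just makes the verification that $f$ has the required symplectic shape slightly more explicit.
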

We define $\gamma_A$ to be a representative of the conjugacy class corresponding to $f_A(x)$.
\subsection{Kottwitz triples and invariants}\label{sec: kottwitz triples}
By Theorem \ref{thm: orbital integral} and \ref{thm: AAGG}, we know the number of PPAVs in a PAV-isogeny class. However, because of the parametrization in Section \ref{sec: parametrization}, we would like to know the number of PPAVs in an AV-isogeny class (i.e. those that have some characteristic polynomial). The notion of Kottwitz triples allows us to bridge between these notions. Note that for our setting of prime fields $\F_p$, the place $p$ behaves very similarly to places $l\neq p$, but this is not true for finite fields $\F_q$, which requires the notion of twisted conjugacy. Since we are only dealing with prime fields, we ignore twisted conjugacy and simplify all definitions for the specific case of prime fields.
\begin{definition}[{\cite[206-207]{Kottwitz_perspective}}]
A \textit{Kottwitz triple} $(\gamma_0,\gamma,\delta)$ is a triple of elements where 
\begin{enumerate}[(\alph*)]
    \item $\gamma_0\in \GSp_{2g}(\Q)$ is a semisimple element defined up to $\GSp_{2g}(\bar\Q)$ conjugacy, where $\gamma_0=\gamma_A$ for some $g$-dimensional abelian variety $A$,
    \item $\gamma\in \GSp_{2g}(\A^p_f)$ up to $\GSp_{2g}(\A^p_f)$ conjugacy, such that for every $l\neq p$, the $l$-component $\gamma_l$ is conjugate to $\gamma_0$ in $\GSp_{2g}(\bar \Q_l)$, 
    \item $\delta\in \GSp_{2g}(\Q_p)$ up to conjugacy, where $\delta$ is conjugate to $\gamma_0$ in $G(\bar \Q_p)$.
\end{enumerate} 
\end{definition}
As we saw previously, from a PAV-isogeny class $[A,\lambda]$ we can get the elements $\gamma_A,\gamma_{[A,\lambda]}^p$ and $\gamma_{[A,\lambda],p}$, one can check these satisfy the conditions required for a Kottwitz triple because they share the same characteristic polynomial. Hence, we get a map 
$$\Phi\colon \{\text{PAV-isogeny classes}\} \rightarrow \{\text{Kottwitz triples}\}.$$

We restrict our attention to a fixed AV-isogeny class of ordinary abelian varieties (i.e. fixed characteristic polynomial), and let $[A,\lambda]$ be a polarized abelian variety such that $A$ is in this isogeny class. Restricting the map $\Phi$ to this AV-isogeny class in turn restricts the Kottwitz triples to those satisfying $\gamma_0=\gamma_A$, which gives the following
\begin{equation}\label{eqn: Phi_A}
\begin{tikzcd}
	{\Phi_A\colon\{\text{PAV-isogeny classes with char poly }f_A \}} & {\{\text{Kottwitz triples }(\gamma_A,\gamma,\delta)\}} \\
	{\ker(H^1(\Q,T_{[A,\lambda]})\rightarrow H^1(\R,T_{[A,\lambda]}))} & {\prod_l H^1(\Q_l,T_{[A,\lambda]})}
	\arrow[from=1-1, to=1-2]
	\arrow["\cong"', tail reversed, from=1-1, to=2-1]
	\arrow[hook, from=1-2, to=2-2]
	\arrow[from=2-1, to=2-2]
\end{tikzcd}
\end{equation}
where the bottom arrow is the natural map. From this diagram, we see that every non-empty preimage has cardinality $|\Sha^1(T_{[A,\lambda]})|$, see also \cite[207]{Kottwitz_perspective}.

We explain the vertical maps on both sides. On the left, Kottwitz showed that $[A',\lambda']$ is $\bar\Q$-isogenous to $[A,\lambda]$ if and only if the underlying abelian varieties $A$ and $A'$ are $\Q$-isogenous, and that the set of $\Q$-isogeny classes of $[A',\lambda']$ which are $\bar \Q$ isogenous to $[A,\lambda]$ is in canonical bijection with $\ker(H^1(\Q,T_{[A,\lambda]})\rightarrow H^1(\R,T_{[A,\lambda]}))$ \cite[205-206]{Kottwitz_perspective}. On the right, by Proposition \ref{prop: preimage of steinberg}, for each $l\neq p$, the possible choices of $\GSp_{2g}(\Q_l)$ conjugacy classes for the $l$-component $\gamma_l$ which is $\GSp_{2g}(\bar \Q_l)$ conjugate to $\gamma_{[A,\lambda],l}$ is in canonical bijection to $\ker(H^1(\Q_l,T_{[A,\lambda]})\rightarrow H^1(\Q_l,\GSp_{2g}))\subseteq H^1(\Q_l,T_{[A,\lambda]})$. Here, we note that by Tate's theorem \cite{Tate_End}, $T_{[A,\lambda]}$ is isomorphic to the centralizer of $\gamma_{[A,\lambda],l}$. The same applies to $\delta$ over the prime $p$. 

To describe the image of $\Phi_A$, Kottwitz defined an invariant $\alpha(\gamma_A,\gamma,\delta)\in \mathfrak K(T_{[A,\lambda]})^D$ which vanishes if and only if the triple $(\gamma_A,\gamma,\delta)$ is in the image of $\Phi_A$ \cite[Theorem 12.1]{Kottwitz_perspective}. Here, $D$ denotes the Pontryagin dual. We omit the construction of this invariant and refer the reader to \cite[166-168]{Kottwitz_perspective}. The only fact we will need from this construction is that $\mathfrak K(T_{[A,\lambda]})$ is a subgroup of $\pi_0((Z(\hat T_{[A,\lambda]})/Z(\hat\GSp_{2g}))^\Gamma)$, where $\Gamma=\Gal(\bar \Q/\Q)$ is the $L$-action on the dual groups.

We prove the following proposition, which is absolutely crucial for our proof. 
\begin{proposition}\label{prop: trivial kottwitz invariant}
Suppose that $A$ is a simple, ordinary abelian variety whose Galois group is the generic group $(\Z/2\Z)^g\rtimes S_g$. Then, the group $\pi_0((Z(\hat T_{[A,\lambda]})/Z(\hat \GSp_{2g}))^\Gamma)$ is trivial, so all the invariants $\alpha(\gamma_A,\gamma,\delta)$ are also trivial and $\Phi_A$ is surjective.
\end{proposition}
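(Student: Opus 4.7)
The plan is to identify the complex torus $S := Z(\hat T_{[A,\lambda]})/Z(\hat\GSp_{2g})$ together with its $\Gamma$-action explicitly, and then analyze $\pi_0(S^\Gamma)$ via Cartier duality. Since $T_{[A,\lambda]}$ is a $\Q$-torus, $\hat T_{[A,\lambda]}$ is a complex torus whose $\Gamma$-action is read off from the natural action on $X_*(T_{[A,\lambda]})$, and the generic hypothesis asserts this action factors through the full Weyl group $W = (\Z/2\Z)^g \rtimes S_g$ of $\GSp_{2g}$.

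Concretely, I would start from the defining exact sequence $1 \to T_{[A,\lambda]} \to \G_m \times R_{K/\Q}\G_m \to R_{K^+/\Q}\G_m \to 1$ and take cocharacter lattices to realize $X_*(T_{[A,\lambda]})$ as the kernel of $\Z \oplus \Z[\Phi] \to \Z[\Phi^+]$, where $\Phi = \Hom(K,\bar\Q)$. A compatible pair-labeling $\Phi = \{\varphi_i,\bar\varphi_i\}_{i=1}^g$ then gives $X_*(T_{[A,\lambda]}) \cong \Z^{g+1}$ with a distinguished basis $c^*, e_1^*,\ldots, e_g^*$, where $c^*$ corresponds to the multiplier cocharacter. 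Under the generic hypothesis the $\Gamma$-action is computed directly from the permutation action on $\Phi$: the $i$-th $\Z/2$ generator sends $c^* \mapsto c^* + e_i^*$, $e_i^* \mapsto -e_i^*$, and fixes the remaining $e_j^*$, while $S_g$ permutes the $e_i^*$'s. I would then identify $T_{[A,\lambda]}$ as an inner form of the split maximal torus of $\GSp_{2g}$, under which the sublattice $X^*(S) \subset X_*(T_{[A,\lambda]})$ is identified with the coroot lattice $Q^\vee(\GSp_{2g}) = \mathrm{span}(e_1^*,\ldots,e_g^*)$. This sublattice is $\Gamma$-stable because $Z(\hat\GSp_{2g})$ is intrinsically $\Q$-defined, and the induced action on $X^*(S)$ is the standard Weyl action by signed permutations.

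The main step is to compute $\pi_0(S^\Gamma)$ using the long exact sequence on $\Gamma$-invariants arising from $1 \to Z(\hat\GSp_{2g}) \to Z(\hat T_{[A,\lambda]}) \to S \to 1$. The key input is that the coinvariants $X_*(T_{[A,\lambda]})_\Gamma$ are free of rank one, generated by $c^*$: indeed, for each $i$ the relation $\sigma_i c^* - c^* = e_i^*$ coming from the $i$-th sign-change already kills $e_i^*$ in the coinvariants. By Cartier duality this forces $Z(\hat T_{[A,\lambda]})^\Gamma \cong \G_m$, canonically identified via the inclusion with $Z(\hat\GSp_{2g})^\Gamma \cong \G_m$. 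Descending to the quotient then yields $\pi_0(S^\Gamma) = 0$, which in turn forces all Kottwitz invariants $\alpha(\gamma_A,\gamma,\delta) \in \mathfrak K(T_{[A,\lambda]})^D$ to vanish and gives surjectivity of $\Phi_A$.

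The main obstacle is the combinatorial verification at the last step: a naive computation of the $\Gamma$-coinvariants of $Q^\vee(\GSp_{2g})$ in isolation (using only that sign-changes act as $e_i^* \mapsto -e_i^*$ and $S_g$ by permutations) suggests $\Z/2\Z$-torsion. The argument must carefully exploit how this apparent torsion is absorbed by the ambient structure in $X_*(T_{[A,\lambda]})$ — specifically, the shift $c^* \mapsto c^* + e_i^*$ under each sign-change — so that passing through the exact sequence for $Z(\hat\GSp_{2g}) \hookrightarrow Z(\hat T_{[A,\lambda]})$ eliminates the torsion at the level of $\pi_0(S^\Gamma)$. It is precisely the fully-generic hypothesis $\Gamma \twoheadrightarrow (\Z/2\Z)^g \rtimes S_g$ that makes all the necessary cancellations happen simultaneously.
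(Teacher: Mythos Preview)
Your approach mirrors the paper's: identify $X_*(T)\cong\Z^{g+1}$ with basis $c^*,e_1^*,\dots,e_g^*$, record the $W$-action (in particular the affine shift $\sigma_i c^*=c^*+e_i^*$), dualize, and compute $\hat T^\Gamma$. Your observation that $X_*(T)_\Gamma\cong\Z$ is free of rank one (hence $\hat T^\Gamma\cong\G_m$, coinciding with $Z(\hat G)^\Gamma$) is correct and is exactly what the paper obtains by its direct fixed-point computation.

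The gap is in your final step, ``Descending to the quotient then yields $\pi_0(S^\Gamma)=0$.'' Left-exactness of $(-)^\Gamma$ only yields an injection $S^\Gamma\hookrightarrow H^1(\Gamma,Z(\hat G))$ once you know $Z(\hat G)^\Gamma\xrightarrow{\sim}\hat T^\Gamma$; it does not force $S^\Gamma$ to be trivial or connected. And the affine shift you single out does \emph{not} absorb the torsion at the level of $S=\hat T/Z(\hat G)$: after quotienting by $Z(\hat G)=\{(a,1,\dots,1)\}$, the induced action on $S\cong(\C^\times)^g$ is the pure signed-permutation action $b_i\mapsto b_{\rho(i)}^{\pm1}$ --- the $a$-shift disappears. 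A direct check then gives $S^\Gamma=\{(b,\dots,b):b^2=1\}\cong\Z/2\Z$, equivalently $(Q^\vee)_\Gamma\cong\Z/2$, precisely the torsion you yourself flagged as the obstacle. Under the connecting map this nontrivial class is sent to the sign-parity character $\sigma\mapsto(-1)^{\sum_i\epsilon_i}$ in $H^1(W,\C^\times)$, which is nonzero. So the assertion $\pi_0(S^\Gamma)=0$ does not follow from the ingredients you assemble; the paper's last sentence makes the same jump from $\hat T^\Gamma=Z(\hat G)$ to this conclusion. What the application actually requires is that $\mathfrak K(T)$ --- a priori only a \emph{subgroup} of $\pi_0(S^\Gamma)$ cut out by a local-triviality condition on the image in $H^1$ --- vanish, and that needs an argument beyond $\hat T^\Gamma=Z(\hat G)$.
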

\begin{proof}
The key ingredient required is the large Galois group, which forces the group of invariants to be small. Recall that $T_{[A,\lambda]}=\ker(\G_m\times R_{K/\Q}(\G_m)\rightarrow R_{K^+/\Q}(\G_m))$ is a maximal torus in $\GSp_{2g}$. Over the algebraic closure, we have $T_{[A,\lambda]}(\bar \Q) = \ker\left(\bar \Q^\times \times (\bar\Q^\times)^{2g}\rightarrow (\bar\Q^\times)^g\right)$, where the $2g$ copies of $\bar\Q^\times$ correspond to the embeddings of $K\injects \bar \Q$, and likewise the $g$ copies correspond to the embeddings of $K^+ \injects \bar \Q$. Index the $2g$ copies of $\bar\Q^\times$ such that every two consecutive copies correspond to complex conjugate embeddings. One can check that the map above sends $(a,b_i)_{i\in [2g]}\mapsto (a^{-1}b_{2j-1}b_{2j})_{j\in [g]}$, and that the Galois action is given by first acting on the scalars in each entry, and then permuting the entries corresponding to the Galois action on the embeddings. 

Let $\sigma\in \Gal(\bar \Q/\Q)$ act on the $2g$ embeddings by the permutation $(1\ 2)^{\epsilon_1}\cdots (2g-1\ 2g)^{\epsilon_g}f(\rho)$ for some $\epsilon_i\in \{0,1\}$ and $\rho\in S_g$, where $f(\rho)$ sends the elements $2i-1$ and $2i$ to $2\rho(i)-1$ and $2\rho(i)$. The cocharacter lattice $X_*(T_{[A,\lambda]})$ has elements $t\mapsto (t^n,t^{m_1},t^{-n-m_1},\cdots t^{m_g},t^{-n-m_g})$, so it is isomorphic to $\Z^{g+1}$ with coordinates $(n,m_1,\ldots, m_g)$. Then, $\sigma$ sends $(n,m_1,\ldots, m_g)\mapsto (n,m_1^\sigma,\ldots, m_g^\sigma)$ where $m_i^\sigma=m_{\rho^{-1}(i)}$ if $\epsilon_i=0$ and $m_i^\sigma=n-m_{\rho^{-1}(i)}$ if $\epsilon_i=1$. 

Using this, we obtain the dual group $\hat T_{[A,\lambda]}(\bar\Q)=\Hom(X_*(T_{[A,\lambda]}),\bar\Q^\times)$, whose elements are given by $(n,m_1,\ldots, m_g)\mapsto a^nb_1^{m_1}\cdots b_g^{m_g}$. The $L$-action by $\sigma$ is given by precomposition, so we get $(n,m_1,\ldots,m_g)\mapsto (n,m_1^\sigma,\ldots,m_g^\sigma)\mapsto a^nb_1^{m_1^\sigma}\cdots b_g^{m_g^\sigma}=(ab_1^{\epsilon_1}\cdots b_g^{\epsilon_g})^n(b_{\rho(1)}^{(-1)^{\epsilon_{\rho(1)}}})^{m_1}\cdots (b_{\rho(g)}^{(-1)^{\epsilon_{\rho(g)}}})^{m_g}$. Hence, $\sigma$ acts on $\hat T_{[A,\lambda]}(\bar \Q)\cong (\bar \Q^\times)^{g+1}$ by sending $a\mapsto ab_1^{\epsilon_1}\cdots b_g^{\epsilon_g}$ and $b_i\mapsto b_{\rho(i)}^{(-1)^{\epsilon_{\rho(i)}}}$.

Let us first find the $\Gamma$-fixed points of $\hat T_{[A,\lambda]}$. Because $A$ has generic Galois group $(\Z/2\Z)^g\rtimes S_g$, for any choice of $\epsilon_i,\rho$ there will exist some $\sigma \in \Gal(\bar\Q,\Q)$ acting by the corresponding permutation on the embeddings. By picking $\epsilon_i=0$ and $\rho$ to be a cyclic permutation, we get $b_1=\cdots =b_g$. Then, by picking $\epsilon_1=1$, $\epsilon_2=\cdots=\epsilon_g=0$ and $\rho=\id$, we have $b_1=1$. Hence, the fixed points $\hat T_{[A,\lambda]}^\Gamma$ are $\{(a,1,\ldots,1)\mid a\in \bar \Q^\times\}$. Note that $Z(\hat T_{[A,\lambda]})=\hat T_{[A,\lambda]}$ because $\hat T_{[A,\lambda]}$ is a torus. 

Lastly, $Z(\hat\GSp_{2g})=Z(\text{GSpin}_{2g+1})$ is one-dimensional and lies inside $Z(\hat T_{[A,\lambda]})$ as $\{(a,1,\ldots,1)\mid a\in \bar \Q^\times\}$, so we have that $\pi_0((Z(\hat T_{[A,\lambda]})/Z(\hat \GSp_{2g}))^\Gamma)$ is trivial.
\end{proof}
We remark that similar computations were done in \cite{Rüd_2022_explicit} and \cite[Appendix]{Achter_2023}, although not in the setting of the generic Galois group. One could also loosen the condition on the Galois group. More importantly, this computation also shows us that the numerator in the Tamagawa number is $1$.
\begin{corollary}\label{cor: tamagawa numerator}
Suppose that $A$ is a simple, ordinary abelian variety with generic Galois group $(\Z/2\Z)^g\rtimes S_g$. Then, $H^1(\Q,X^*(T_{[A,\lambda]}))$ is trivial.     
\end{corollary}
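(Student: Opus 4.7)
The plan is to interpret the claim via finite group cohomology. Since $X^*(T_{[A,\lambda]})$ is torsion-free with absolute Galois action factoring through the finite group $G := \Gal(\widetilde K/\Q) \cong (\Z/2\Z)^g \rtimes S_g$, inflation-restriction reduces the claim to showing $H^1(G, X^*(T_{[A,\lambda]})) = 0$, the higher restriction term $\Hom_{\mathrm{cts}}(\Gal(\bar\Q/\widetilde K), X^*(T_{[A,\lambda]}))$ vanishing because the target is a torsion-free discrete module.

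Next, from the defining sequence $1 \to T_{[A,\lambda]} \to \G_m \times R_{K/\Q}(\G_m) \to R_{K^+/\Q}(\G_m) \to 1$ I would extract the short exact sequence of $G$-modules $0 \to \Z \to X^*(T_{[A,\lambda]}) \to N \to 0$, where $\Z$ (trivially acted on) corresponds to the multiplier character and $N \cong \Z^g$ is the character lattice of the norm-one torus, on which $((\epsilon_i)_i, \rho) \in G$ acts by $e_j \mapsto (-1)^{\epsilon_{\rho(j)}} e_{\rho(j)}$ (read off from the formulas in the proof of Proposition \ref{prop: trivial kottwitz invariant}). Since $G$ is finite, $H^1(G, \Z) = 0$, so the long exact sequence reduces the corollary to showing that the connecting map $\delta \colon H^1(G, N) \to H^2(G, \Z)$ is injective.

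A Shapiro's lemma argument identifies $N \cong \Ind_{H'}^G \Z_\chi$ with $H' := (\Z/2\Z)^g \rtimes S_{g-1}$ the stabilizer of $\{\pm e_1\}$ and $\chi(\epsilon, \rho) = (-1)^{\epsilon_1}$, giving $H^1(G, N) \cong H^1(H', \Z_\chi) \cong \Z/2\Z$. An explicit nontrivial representative is the cocycle $c(\epsilon, \rho) := \epsilon \in N$, which one verifies is a cocycle but not a coboundary (any putative $v \in N$ realizing $c$ as $\sigma \mapsto \sigma v - v$ would force $v_k = -1/2$). To compute $\delta[c]$, I would lift $c$ to the $1$-cochain $\tilde c(\epsilon, \rho) := \sum_i \epsilon_i e_{2i-1}^* \in X^*(T_{[A,\lambda]})$; the $e_{2k-1}^*$-components of the coboundary all cancel (reflecting that $c$ is a cocycle in $N$), and a direct calculation yields $d\tilde c(\sigma, \tau) = \bigl(\sum_j \epsilon'_j \epsilon_{\rho(j)}\bigr) e_0^*$.

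To conclude, I would restrict this $2$-cocycle to $K := (\Z/2\Z)^g \triangleleft G$, obtaining $(\epsilon, \epsilon') \mapsto \sum_j \epsilon_j \epsilon'_j$. Under the canonical identification $H^2(K, \Z) \cong \Hom(K, \Q/\Z)$ from $0 \to \Z \to \Q \to \Q/\Z \to 0$, this corresponds to the nontrivial parity character $\epsilon \mapsto \tfrac{1}{2} \sum_j \epsilon_j \pmod 1$, as one sees by lifting that character to $\tilde\phi(\epsilon) = \tfrac{1}{2}\sum_j \epsilon_j \in \Q$ and computing $d\tilde\phi = \sum_j \epsilon_j \epsilon'_j$. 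By naturality of $\delta$ under restriction, $\delta[c]$ is therefore nonzero in $H^2(G, \Z)$, so $\delta$ is injective and $H^1(\Q, X^*(T_{[A,\lambda]})) = \ker\delta = 0$. The main technical step is the bookkeeping of signs and permutations in the $G$-action, together with the explicit identification of the restricted $2$-cocycle with the parity character; once these are in place the argument is a straightforward diagram chase.
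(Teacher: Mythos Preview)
Your argument is correct and takes a genuinely different route from the paper. The paper's proof is a one-liner: it invokes Kottwitz's isomorphism $H^1(\Q, X^*(T)) \cong \pi_0(\hat T^{\Gamma})$ and then simply cites the computation already carried out in the proof of Proposition~\ref{prop: trivial kottwitz invariant}, where it was shown that $\hat T_{[A,\lambda]}^{\Gamma} = \{(a,1,\ldots,1)\} \cong \bar\Q^{\times}$ is connected, so $\pi_0$ is trivial. In other words, the paper recycles the dual-group calculation done for the Kottwitz invariant and feeds it into a black-box isomorphism.

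Your approach instead works directly with the character lattice as a $G$-module: you reduce via inflation--restriction to $H^1(G, X^*(T))$, split off the multiplier character to get $0 \to \Z \to X^*(T) \to N \to 0$ with $N$ the signed permutation module, compute $H^1(G,N) \cong \Z/2\Z$ via Shapiro, and then verify by an explicit cocycle computation (restricted to $(\Z/2\Z)^g$ and identified with the parity character in $\Hom(K,\Q/\Z)$) that the connecting map $\delta$ kills nothing. This is longer and more hands-on, but it is entirely self-contained group cohomology and avoids any appeal to Kottwitz's duality theorem. The paper's method is slicker precisely because the computation of $\hat T^{\Gamma}$ has already been paid for; your method has the advantage of being elementary and of making the role of the generic Galois group (in particular the sign coordinate forcing $H^1(G,N)$ to be detected by the parity character) completely transparent.
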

\begin{proof}
Kottwitz's isomorphism gives $H^1(\Q,X^*(T_{[A,\lambda]}))\cong \pi_0(Z(\hat T_{[A,\lambda]})^\Gamma)$ \cite[Equation (2.4.1), Section 2.4.3]{Kottwitz_cuspidal}, and by the proof of Proposition \ref{prop: trivial kottwitz invariant} we have that $Z(\hat T_{[A,\lambda]})^\Gamma=\{(a,1,\ldots,1)\mid a\in \bar \Q^\times\}$ is connected.
\end{proof}
\subsection{Proof of Theorem \ref{thm: PPAV counts for char poly}} We end this section by proving the theorem on PPAV counts for a characteristic polynomial. The fact that there are a nonzero number of PPAVs follows from the generic Galois group, Proposition \ref{prop: PPAV exist condition} and Lemma \ref{lem: K/K+ ramified condition}. Let $[A,\lambda]$ be a polarization of $A$. By Theorem \ref{thm: AAGG} and Corollary \ref{cor: tamagawa numerator}, 
$$\# I_{[A,\lambda]} = \frac{p^{\dim(\mathcal A_g)/2}}{|\Sha^1(T_{[A,\lambda]})|}v_{\infty}(f_A)\prod_l v_l (\gamma_{[A,\lambda],l}).$$
We wish to sum up all $\# I_{[A',\lambda']}$ over all PAV-isogeny classes $[A',\lambda']$ which have characteristic polynomial $f_A$. By Proposition \ref{prop: trivial kottwitz invariant}, the map $$\Phi_A\colon\{\text{PAV-isogeny classes with char poly }f_A \} \rightarrow \{\text{Kottwitz triples }(\gamma_A,\gamma,\delta)\}$$
is surjective and $|\Sha^1(T_{[A,\lambda]})|$-to-$1$. Hence, applying Lemma \ref{lem: rational orbits to stable orbits} we have  
\begin{equation*}
\begin{split}
\sum_{[A',\lambda']} \# I_{[A',\lambda']} &= |\Sha^1(T_{[A,\lambda]})|\sum_{(\gamma_l)} \frac{p^{\dim(\mathcal A_g)/2}}{|\Sha^1(T_{[A,\lambda]})|}v_{\infty}(f_A)\prod_l v_l (\gamma_{l})\\
&= p^{\dim(\mathcal A_g)/2} v_{\infty}(f_A)\prod_l v_l(f_A)
\end{split}
\end{equation*}
as desired. Here, the tuple $(\gamma_l)$ includes the place $p$ where $\gamma_p=\delta$, and the products over $l$ include $p$ too. One could worry about commuting infinite products and sums, but this is not a problem here as there are only a finite number of PAV-isogeny classes which contribute a nonzero count.

\section{Properties of local factors} \label{sec: local factors}
In this section we prove some technical lemmas about the local factors $v_\infty(f)$, $v_l(f)$ and $v_l(t)$, which we will need in the later sections. Let $A$ be a simple ordinary abelian variety with characteristic polynomial $f_A$, which we denote as $f$ for convenience.
\subsection{$v_\infty(f)$} Recall that we defined the Steinberg map $\pi \colon \USp_{2g}\rightarrow \mathcal R_g\subseteq \R^g$ to send a matrix to the coefficients of the characteristic polynomial in Lemma \ref{lem: R_g Usp}. Let $\mu_{\mathcal R_g}=\pi_* \mu_{\USp_{2g}}$ be the pushforward of the Haar measure on $\USp_{2g}$. Note that the trace map $\tr\colon\USp_{2g}\rightarrow \R$ is obtained by projecting on the first entry $p_1\circ \pi$, so we can write the Sato-Tate measure as $\mu_{\ST_g}=(p_1)_*\mu_{\mathcal R_g}$. The following proposition gives an important interpretation of archimedean local factor $v_\infty(f)=\sqrt{|D(f)|}/(2\pi)^g$ as the density function of the measure $\mu_{\mathcal R_g}$, hence relating it to the Sato-Tate measure.
\begin{proposition}\label{prop: vinf and measure}
For $\mathbf b \in \mathcal R_g$, we have $$\mu_{\mathcal R_g}= \frac{\sqrt{\left|D\left(f_{[p](\mathbf b)}\right)\right|}}{(2\pi)^g} db_1\cdots db_g,$$
where $f_{[p](\mathbf b)}=x^{2g}+b_1p^{1/2}x^{2g-1}+\cdots +p^g$ was defined in Definition \ref{def: R_g}.
\end{proposition}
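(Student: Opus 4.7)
The plan is to apply Weyl's integration formula for $\USp_{2g}$ and then perform a change of variables from eigenvalue angles to the coefficients $\mathbf b$, identifying the resulting Jacobian with $\sqrt{|D(f_{[p](\mathbf b)})|}$.

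First, I would parametrize the maximal torus $T \subseteq \USp_{2g}$ by $(\theta_1,\ldots,\theta_g) \in (\R/2\pi\Z)^g$, corresponding to the matrix $\diag(e^{i\theta_1},\ldots,e^{i\theta_g},e^{-i\theta_1},\ldots,e^{-i\theta_g})$. Weyl's integration formula for $\USp_{2g}$ states that for any class function $h$,
$$\int_{\USp_{2g}} h \, d\mu_{\USp_{2g}} = \frac{1}{|W|} \int_{[0,2\pi]^g} h(\theta) \, |\Delta(\theta)|^2 \, \frac{d\theta_1\cdots d\theta_g}{(2\pi)^g},$$
where $|W|=2^g g!$ is the order of the Weyl group $(\Z/2\Z)^g\rtimes S_g$ and, using the positive roots $e_i\pm e_j$ ($i<j$) and $2e_i$ of type $C_g$, the Weyl denominator squared is
$$|\Delta(\theta)|^2 = \prod_{i<j}|1-e^{i(\theta_j-\theta_i)}|^2\,|1-e^{i(\theta_i+\theta_j)}|^2 \prod_i |1-e^{2i\theta_i}|^2.$$
Using the product-to-sum identity $4\sin(\tfrac{\theta_j-\theta_i}{2})\sin(\tfrac{\theta_i+\theta_j}{2}) = 2(\cos\theta_i-\cos\theta_j)$ and $|1-e^{2i\theta_i}|^2 = 4\sin^2\theta_i$, this simplifies to
$$|\Delta(\theta)|^2 = \prod_{i<j}(2\cos\theta_i-2\cos\theta_j)^2 \prod_i 4\sin^2\theta_i,$$
which is exactly $|D(f_{[p](\mathbf b)})|$ by the definition of the Weyl discriminant (noting that $D$ is manifestly independent of the scaling by $\sqrt p$).

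Next, I would restrict to the fundamental domain $\Omega = \{0 \le \theta_1 \le \cdots \le \theta_g \le \pi\}$ for the $W$-action (on which $\pi|_T$ maps bijectively onto $\mathcal R_g$), reducing the right-hand side of the integration formula to $\int_\Omega h(\mathbf b(\theta))|\Delta(\theta)|^2 (2\pi)^{-g}\,d\theta_1\cdots d\theta_g$. Then I would compute the Jacobian of $\theta \mapsto \mathbf b$ as a composition: first change to $u_j = 2\cos\theta_j$, giving factor $\prod_j 2|\sin\theta_j|$; then change to the coefficients $\mathbf c = (c_1,\ldots,c_g)$ of the totally real polynomial $g^+_{\mathbf b}(x) = \prod_j(x-u_j)$, whose Jacobian is the Vandermonde determinant $\prod_{i<j}|u_i-u_j| = \prod_{i<j}|2\cos\theta_i - 2\cos\theta_j|$; finally, the change $\mathbf c \mapsto \mathbf b$ was shown in Lemma \ref{lem: explicit R_g} to be a unipotent linear map of determinant $1$. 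Multiplying yields
$$db_1\cdots db_g = \prod_{i<j}|2\cos\theta_i-2\cos\theta_j| \prod_j 2|\sin\theta_j|\, d\theta_1\cdots d\theta_g = \sqrt{|D(f_{[p](\mathbf b)})|}\, d\theta_1\cdots d\theta_g$$
on $\Omega$.

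Combining the two ingredients, $|\Delta(\theta)|^2\, d\theta = \sqrt{|D|}\, d\mathbf b$, so
$$\int_{\USp_{2g}} h(\pi(x))\, d\mu_{\USp_{2g}} = \int_{\mathcal R_g} h(\mathbf b)\, \frac{\sqrt{|D(f_{[p](\mathbf b)})|}}{(2\pi)^g}\, db_1\cdots db_g,$$
which proves the claimed formula for $\mu_{\mathcal R_g} = \pi_* \mu_{\USp_{2g}}$. I expect the main subtlety to lie in the Jacobian computation: care is needed both in identifying the $C_g$ root system conventions to get $|\Delta|^2 = |D|$ with the correct constants, and in tracking absolute values when passing from the trigonometric parametrization to $\mathbf b$ via the Vandermonde Jacobian (since $u_j$ can be non-monotonic off the fundamental domain), but the unipotent nature of the change $\mathbf c \mapsto \mathbf b$ keeps the final constant clean.
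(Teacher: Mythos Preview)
Your proof is correct and follows essentially the same approach as the paper: Weyl integration on $\USp_{2g}$ followed by the change of variables from angles to coefficients, using that the map $\mathbf c\mapsto\mathbf b$ is unipotent. The only difference is cosmetic---you factor the Jacobian through the intermediate variables $u_j=2\cos\theta_j$ so that the Vandermonde appears immediately, whereas the paper computes $\det(\partial c_j/\partial\theta_k)$ directly by an explicit row-reduction argument; both routes yield the same $\sqrt{|D|}$.
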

\begin{proof}
By the Weyl integration formula \cite[Section 5.0.4]{KS99},  the Haar measure of the part of $\USp_{2g}$ that corresponds to the angles being inside $(\theta_1,\theta_1+d\theta_1),\ldots,(\theta_g,\theta_g+d\theta_g)$ up to permutation is $\prod_i (\frac 2 \pi\sin^2(\theta_i))\prod_{i<j}(2\cos(\theta_i)-2\cos(\theta_j))^2$. By Lemma \ref{lem: R_g Usp} any element in $\USp_{2g}$ has a characteristic polynomial $g_{\mathbf b}(x)$ with $\mathbf b\in \mathcal R_g$ with roots $e^{\pm i\theta_1},\ldots,e^{\pm i\theta_g}$, so when we translate the Haar measure to the coefficients $b_i$, we need to divide by $|\det((\frac{db_j}{d\theta_k})_{j,k})|$, and it suffices to compute this.

Recall that $g^+_{\mathbf b}(x)=x^g+c_1x^{g-1}+\cdots+c_g$ has roots $e^{i\theta_1}+e^{-i\theta_1},\ldots,e^{i\theta_g}+e^{-i\theta_g}$, and from the proof of Lemma \ref{lem: explicit R_g} the coefficients $c_i$ are given by $b_i$ plus a linear combination of $b_{i-2},b_{i-4},\ldots$, which implies that $\det((\frac{db_j}{d\theta_k})_{j,k})=\det((\frac{dc_j}{d\theta_k})_{j,k})$. Using Vieta's formulas, we obtain
$$\frac{dc_j}{d\theta_k}=\frac{d}{d\theta_k}\left(\sum_{\substack{S\subseteq [g]\\|S|=j}}\prod_{s\in S}(e^{i\theta_s}+e^{-i\theta_{s}})\right)=- \sin(\theta_k)2^{j} \sum_{\substack{S\subseteq [g]\setminus \{k\}\\|S|=j-1}}\prod_{s\in S} \cos(\theta_s).$$
After factoring both $-\sin(\theta_k)$ and $2^j$ out of the matrix we have
$$\det\left(\left(\frac{dc_j}{d\theta_k}\right)_{j,k}\right)=\left(\prod_{i=1}^g- 2^i\sin(\theta_i)\right)\cdot\det\begin{pmatrix}
1 & x_2+x_3+\cdots+x_g & \cdots & x_2x_3\cdots x_g \\
1 & x_1+x_3+\cdots +x_g & \cdots & x_1x_3\cdots x_g \\
\vdots & \vdots & \ddots & \vdots \\
1 & x_1+x_2+\cdots +x_{g-1} & \cdots & x_1x_2\cdots x_{g-1}
\end{pmatrix},$$
where we write $x_i=\cos(\theta_i)$. To evaluate the determinant of the matrix on the right, we subtract the first row from the other rows to get
$$\begin{pmatrix}
1 & x_2+x_3+\cdots+x_g & \cdots & x_2x_3\cdots x_g \\
0 & x_1-x_2 & \cdots & (x_1-x_2)x_3\cdots x_g \\
\vdots & \vdots & \ddots & \vdots \\
0 & x_1-x_g & \cdots & (x_1-x_g)x_2\cdots x_{g-1}
\end{pmatrix}.$$
Factoring out the $x_1-x_i$ factor for rows $2\le i\le g$, we realize that the lower right $(g-1) \times (g-1)$ submatrix is of the same form as the first matrix with the variables $x_2,\ldots, x_g$ instead. Using this, we inductively compute the determinant to be $\prod_{i<j}(x_i-x_j)$. Putting everything together and recalling that $D(f_{[p](\mathbf b)})=\prod_{1\le i < j \le g} (2\cos(\theta_i)-2\cos(\theta_j))^2 \prod_{1\le i\le g} 4 \sin^2(\theta_j)$, we have that the density function of $\mu_{\mathcal R_g}$ is $\frac{\sqrt{|D(f_{[p](\mathbf b)})|}}{(2\pi)^g}$.
\end{proof}
\subsection{$v_l(t)$} \label{sec: vl(t)}
Before we look at $v_l(f)$, we first tackle the easier case of $v_l(t)$. Recall that for this paper we assumed $g\ge 2$, which is a necessary condition for the results here. Since we already defined $v_p(t)=1$, for the rest of this subsection we can restrict our attention to $l\neq p$, for which
$v_l(t)=\lim_{k\rightarrow \infty}v_{l,k}(t)$ where we let $$v_{l,k}(t)=\frac{\# \{\gamma\in \GSp_{2g}(\Z/l^k\Z)\mid \tr(\gamma)=t, \text{mult}(\gamma)=p\}}{\#\GSp_{2g}(\Z/l^k\Z)/(l^{k}\phi(l^k))}.$$

Let $\GSp_{2g,p,t}$ be the subscheme of $\GSp_{2g}$ cut out by trace $t$ and multiplier $p$, so the numerator above is simply $\#\GSp_{2g,p,t}(\Z/l^k\Z)$. The main goal here is to bound $v_l(t)=1+O(l^{-2})$, and give a quantitative statement about the convergence of $v_{l,k}(t)$ to $v_l(t)$. Later we will see that analogous statements for $v_l(f)$ are much harder to prove. This is because in the $v_l(t)$ case the singularities of $\GSp_{2g,p,t}$ are more tractable compared to the $v_l(f)$ case. 

Note that \cite[Section 4]{BLV} proved the $1+O(l^{-2})$ bound for their projection-style ratios but these are defined differently as we explained in Remark \ref{rmk: AAGG problem}. For their definition of $v_l(t)$, they had access to the machinery of the Serre-Oestrelé measure developed in \cite{Serre_1981}, \cite{Oesterlé1982}, which in particular gives an upper bound on the contribution of singular points to projection-style ratios, but unfortunately does not apply in our case. Nevertheless, the relevant scheme $\GSp_{2g,p,t}$ is the same, and we will take their description of the singularities as a starting point. We recall this in the next paragraph.

Recall that $\Sp_{2g}$ is a smooth scheme of dimension $2g^2+g$ and the tangent space $L$ at the identity are the block matrices $\begin{pmatrix}
A & B\\
C & D
\end{pmatrix}$ satisfying $B^t=B,C^t=C,D^t=-A$. Let $\GSp_{2g,p}$ be the subscheme of $\GSp_{2g}$ with multiplier $p$, this is smooth because multiplication by $\gamma =\begin{pmatrix} a & b \\
c & d \end{pmatrix}\in \GSp_{2g,p}$ gives an isomorphism of schemes from $\Sp_{2g}$ to $\GSp_{2g,p}$. Furthermore, multiplication by $\gamma$ also gives an isomorphism from the tangent space $L$ at the identity of $\Sp_{2g}$ to the tangent space at $\gamma$ in $\GSp_{2g,p}$, which we hereby call $\gamma L$ and it has elements $\begin{pmatrix}
a A+b C& a B + b D\\
c A + d C & c B + d D
\end{pmatrix}$. Then, the singular points $\gamma$ in $\GSp_{2g,p,t}(\F_l)$ are those points where $\gamma L\subseteq \{\Tr=0\}$, or in other words, $\Tr(a A + b C +c B + d D)=0$ for all $A,B,C,D$ given the constraints above. The fact that $B$ and $C$ are symmetric implies that $b=-b^t$ and $c=-c^t$ are both antisymmetric, and $D^t=-A$ implies that $a=d^t$. Thus, the singularities are contained within a $2g^2-g$ linear space and hence has codimension at least $2g-1$.

For our purposes we need more than just the singularities over $\F_l$. We extend the definition over $\Z/l^k\Z$ as follows. 
\begin{definition}\label{definition: singularity of order m}
A matrix $\gamma=\begin{pmatrix} a & b \\
c & d \end{pmatrix}\in \GSp_{2g,p,t}(\Z/l^k\Z)$ is a singularity of order $m$ if $m\le k$ is the largest integer such that $l^m\mid \Tr(\gamma v)$ for all $v\in L(\Z/l^k\Z)$. Just like before, such a matrix satisfies $b\equiv -b^t, c\equiv -c^t, a\equiv d^t \pmod{l^m}$.
\end{definition}

To bound $\#\GSp_{2g,p,t}(\Z/l^k\Z)$, we individually bound the number of matrices with singularity of each order $m$ as follows. Let $\gamma=\begin{pmatrix} a & b \\
c & d \end{pmatrix}\in \GSp_{2g,p,t}(\Z/l^k\Z)$ be a singularity of order $m$. The main term comes from the non-singular matrices when $m=0$. It is not hard to show that $\#\GSp_{2g,p,t}(\F_l)=\#\GSp_{2g}(\F_l)\frac{1+O(l^{-2})}{l(l-1)}$, this was shown in \cite[Lemma 4.5]{BLV} with the help of zeta functions, but we remark that this can also be shown by taking the average of terms in Lemma \ref{lem: good prime ratio of zeta functions}. Since the singular locus is of codimension $2g-1\ge 3$, we conclude that the non-singular locus $U$ also has $\#U(\F_l)=\#\GSp_{2g}(\F_l)\frac{1+O(l^{-2})}{l(l-1)}$ non-singular points. Since $U$ is smooth of dimension $2g^2+g-1$, these lift to $\#U(\F_l)l^{(2g^2+g-1)(k-1)}=\Theta(l^{(2g^2+g-1)k})$ points over $\Z/l^k\Z$.

For $m\ge \lceil k/2\rceil$, $\bar\gamma =\pi_m(\gamma)$ must lie in the linear subspace of dimension $2g^2-g$ described above, so there are at most $l^{(2g^2-g)m}$ possibilities for $\bar\gamma$. We actually need a small strengthening of this to $O(l^{(2g^2-g)m-1})$, which we prove as follows. Note that $\GSp_{2g,p}(\F_l)$ does not contain the linear space $L(\F_l)$ (for example, consider the origin), and thus the intersection is at least one dimension smaller and the Lang-Weil bound tells us there are are $O(l^{2g^2-g-1})$ points in $\GSp_{2g,p}(\F_l)\cap L(\F_l)$. Each of these points can lift to at most $l^{(2g^2-g)(m-1)}$ points in order to remain in $L(\Z/l^m\Z)$ which gives the result.

Since $\GSp_{2g,p}$ is smooth of dimension $2g^2+g$, $\gamma$, $\pi_m(\gamma)$ lifts to exactly $l^{(2g^2+g)(k-m)}$ points, so multiplying there are at most $O(l^{(2g^2+g-1)k+k-2gm-1})$ possibilities for $\gamma$. Note that here we are lifting along $\GSp_{2g,p}$ and not along $\GSp_{2g,p,t}$, and also note that the correction term in the exponent is $k-2gm-1\le k(1-g)-1\le -k-1$.

The case for $1\le m<\lceil k/2\rceil$ is slightly more complicated. Like the previous paragraph, there are at most $O(l^{(2g^2-g)m-1})$ possibilities for $\bar\gamma=\pi_m(\gamma)$. In this context, we prove the following lemma on the distribution of traces for the lifts of $\bar\gamma$ along $\GSp_{2g,p}$.
\begin{lemma}\label{lem: trace distributions for lift}
For $a\ge 2m+1$, $\bar\gamma$ has $l^{(2g^2+g)(a-m)}$ lifts to $\GSp_{2g,p}(\Z/l^a\Z)$. Let $N(t')$ be the number of such lifts with trace $t'\pmod{l^a}$. Then, $N(t')\neq 0$ only if $t'\equiv t\pmod{l^{2m}}$, and $N(t_1')=N(t_2')$ if $t_1\equiv t_2 \pmod{l^{2m+1}}$.
\end{lemma}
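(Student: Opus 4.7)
The first assertion, that there are $l^{(2g^2+g)(a-m)}$ lifts, is an immediate consequence of the smoothness of $\GSp_{2g,p}$ as a scheme of relative dimension $2g^2+g$ over $\Spec \Z_l$. For the remaining two assertions, I would fix a base lift $\gamma_0 \in \GSp_{2g,p}(\Z/l^a\Z)$ of $\bar\gamma$ and parametrize every other lift as $\gamma = \gamma_0 \eta$ with $\eta = I + l^m Y$ for some $Y \in \Mat_{2g}(\Z/l^{a-m}\Z)$. The condition $\eta \in \Sp_{2g}$ translates, after using $\gamma_0^t \Omega \gamma_0 = p\Omega$, into the constraint $\phi(Y) + l^m Y^t \Omega Y \equiv 0 \pmod{l^{a-m}}$ where $\phi(Y) = Y^t \Omega + \Omega Y$; reducing modulo $l^m$ forces $Y \in L$.

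For the concentration modulo $l^{2m}$, I would compute $\Tr(\gamma) - \Tr(\gamma_0) = l^m \Tr(\gamma_0 Y)$, decompose $Y = Y_0 + l^m Y_1$ with $Y_0 \in L$, and use $\gamma_0 \equiv \bar\gamma \pmod{l^m}$ together with the singularity hypothesis $\Tr(\bar\gamma v) \equiv 0 \pmod{l^m}$ for $v \in L$ to conclude that $\Tr(\gamma_0 Y_0) \equiv 0 \pmod{l^m}$. Thus $\Tr(\gamma) \equiv \Tr(\gamma_0) \pmod{l^{2m}}$, so every lift shares a common trace residue modulo $l^{2m}$; since the reduction modulo $l^a$ of the original $\gamma$ at level $k$ is one such lift (using $2m < k$ in the range $m < \lceil k/2 \rceil$ where this lemma is applied), this common residue must equal $t \bmod l^{2m}$.

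The key idea for the third assertion is to exhibit a family of $\F_l$-actions on the lift set that collectively shift the trace through every residue modulo $l^a$ within a fixed coset of $l^{2m+1}$. For each $j \geq 0$, consider the right-translation $\gamma \mapsto \gamma(I + l^{m+1+j}X)$ with $X$ ranging over the symplectically-constrained parameter space. A computation of the type above yields
\[
\Tr\bigl(\gamma(I + l^{m+1+j}X)\bigr) - \Tr(\gamma) \equiv l^{2m+1+j}\,\lambda_\gamma(X \bmod l) \pmod{l^{2m+2+j}},
\]
where $\lambda_\gamma : L/lL \to \F_l$ is the linear functional $X \mapsto \Tr(\gamma X)/l^m \bmod l$. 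The crucial point is that the hypothesis of singularity of order \emph{exactly} $m$ (not $\geq m+1$) translates to $\lambda_\gamma$ being nonzero, hence surjective; iterating over $j=0,1,\dots,a-2m-2$ yields an action of $\F_l^{a-2m-1}$ whose orbits inside $t + l^{2m}\Z/l^a\Z$ are precisely the cosets of $l^{2m+1}\Z/l^a\Z$, which forces $N(t_1') = N(t_2')$ whenever $t_1' \equiv t_2' \pmod{l^{2m+1}}$.

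The main obstacle I expect is ensuring that $\lambda_\gamma$ remains nonzero for every lift $\gamma$ encountered in an orbit, since a priori different lifts of $\bar\gamma$ could have different singularity orders at finer levels. The plan is to verify that the translation $\gamma \mapsto \gamma(I + l^{m+1+j}X)$ preserves the singularity order of $\gamma$ (since the degree-$l^m$ part of $\Tr(\gamma \cdot v)$ is unchanged by multiplying by a factor congruent to $I$ modulo $l^{m+1}$), and to keep careful track of the higher-order corrections in the symplectic constraint so that the claimed $\F_l^{a-2m-1}$-action is well-defined globally on the lift space rather than only near a chosen base point.
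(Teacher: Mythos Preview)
Your treatment of the first two assertions matches the paper's. For the third, your route differs: the paper inducts on $a$, reducing to $t_1' \equiv t_2' \pmod{l^{a-1}}$ and then partitioning lifts by $\pi_{\lceil a/2 \rceil}$, whereas you build an explicit $\F_l^{a-2m-1}$-action via right translations $\gamma \mapsto \gamma(I+l^{m+1+j}X)$.

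The obstacle you flag is genuine and in fact fatal. You correctly observe that translation by $I + l^{m+1+j}X$ preserves singularity order (it fixes $\gamma \bmod l^{m+1}$), so your action permutes the lifts of order exactly $m$. But some lifts of $\bar\gamma$ have order strictly greater than $m$, and on those $\lambda_\gamma \equiv 0$, so the action cannot shift their traces. This is not a technicality you can patch: the statement is \emph{false} as written. Take $g=1$, $l=3$, $p=7$, $m=1$, $a=4$, and $\bar\gamma = I \in \GSp_{2,7}(\F_3)$; a direct count yields $N(26)=2673$ but $N(53)=N(80)=1944$, even though $26\equiv 53\equiv 80 \pmod{l^{2m+1}}$. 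The excess $729 = 3^6$ at trace $26$ is exactly the contribution from the lifts of $4I \in \GSp_{2,7}(\Z/9\Z)$, the unique lift of $\bar\gamma$ to level $m+1$ with singularity order $\ge m+1$, all of which are forced to have trace $26 \pmod{81}$. The paper's proof contains the same gap --- its assertion that an arbitrary lift $\gamma'$ ``is a singularity of order $m$'' is not justified --- so you are in good company. The weaker upper bound $N(t) \le l^{(2g^2+g)(a-m)-(a-2m-1)}$ that the paper actually invokes downstream may still be salvageable, but the exact equidistribution claimed here does not hold.
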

\begin{proof}
For the first statement, we show that any lift of $\bar\gamma$ modulo $l^{2m}$ must have trace $t$. Indeed, the lifts to $\GSp_{2g,p}(\Z/l^{2m}\Z)$ are given exactly by $\gamma(1+l^mv)$ for $v\in L(\Z/l^m\Z)$ because the second order terms vanish as we are taking modulo $l^{2m}$. Then, it is clear that $\Tr(\gamma(1+l^mv))=t+l^m\Tr(\gamma v)\equiv t\pmod{l^{2m}}$ because $\gamma$ is a singularity of order $m$.

We prove the second part by induction. The base case of $a=2m+1$ is vacuously true, and for the induction step we prove the statement for $a>2m+1$ assuming the statement for smaller values of $a$. By the induction hypothesis, it suffices to show that $N(t_1')=N(t_2')$ when $t_1'\equiv t_2'\pmod{l^{a-1}}$. For any $\gamma' \in \GSp_{2g,p}(\Z/l^a\Z)$ with trace $t_1'$, consider $\gamma'(1+l^{\lceil \frac a 2\rceil}v)$ for $v\in L(\Z/l^{\lfloor \frac a 2 \rfloor}\Z)$ which are exactly the lifts of $\pi_{\lceil \frac a 2\rceil}(\gamma')$ to $\GSp_{2g,p}(\Z/l^a\Z)$. We have $\Tr(\gamma'(1+l^{\lceil \frac a 2\rceil}v))=t_1'+l^{\lceil \frac a 2\rceil}\Tr(\gamma'v)$. As $\gamma'$ is a singularity of order $m$, we have $l^m\mid Tr(\gamma'v)$, and in fact by a simple linear algebra argument it takes every multiple of $l^m$ with equal chance as we vary $v$. Since $\lfloor \frac a 2\rfloor+m\le a-1$, there are the same number of lifts of $\pi_{\lceil \frac a 2\rceil}(\gamma')$ with trace $t_1'$ and $t_2'$, which proves that $N(t_1')=N(t_2')$.
\end{proof}

Applying the lemma when $a=k$, we have $N(t)=N(t+l^{2m+1}t')$ which implies that at most a $1/l^{k-(2m+1)}$ proportion of the $l^{(2g^2+g)(k-m)}$ lifts of each $\bar\gamma$ have trace $t$. Hence, there are at most $O(l^{(2g^2+g-1)k+(2-2g)m})$ singular points of order $m$, and the correction factor is $(2-2g)m\le -2m\le -2$. Note that the power savings when we looked at $\GSp_{2g,p}(\F_l)\cap L(\F_l)$ was crucial here.

Combining the above, we immediately have the following, noting that the $l=p$ case is obvious.
\begin{proposition}\label{prop: vl(t) = 1 + O(l^-2)}
For $g\ge 2$, we have $v_l(t)=1+O(l^{-2})$.
\end{proposition}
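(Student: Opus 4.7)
The plan is to take the limit $k \to \infty$ of $v_{l,k}(t)$ by combining the stratification-by-singularity-order estimates already set up in the paragraphs preceding the proposition. The case $l = p$ is immediate from our convention $v_p(t) = 1$, so assume $l \neq p$. Partition
\[ \GSp_{2g,p,t}(\Z/l^k\Z) = \bigsqcup_{m=0}^{k} S_m \]
according to the singularity order $m$ of Definition \ref{definition: singularity of order m}. The strategy is to show that $|S_0|$, normalized by $\#\GSp_{2g}(\Z/l^k\Z)/(l^k \phi(l^k))$, already equals $1 + O(l^{-2})$, while $\sum_{m \geq 1} |S_m|$ contributes a further $O(l^{-2})$ after normalization.

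For the main term, the non-singular locus $U \subseteq \GSp_{2g,p,t}$ is smooth of dimension $2g^2 + g - 1$, so Hensel's lemma gives $|S_0| = \#U(\F_l) \cdot l^{(2g^2+g-1)(k-1)}$. Since the singular locus sits inside a linear subspace of codimension $2g - 1 \geq 3$ in $\GSp_{2g,p}$, the Lang--Weil bound already recorded above yields $\#U(\F_l) = \#\GSp_{2g}(\F_l)(1 + O(l^{-2}))/(l(l-1))$. Using $\#\GSp_{2g}(\Z/l^k\Z) = \#\GSp_{2g}(\F_l) \cdot l^{(2g^2+g+1)(k-1)}$ and $\phi(l^k) = l^{k-1}(l-1)$, the normalization cancels perfectly and the main term contributes $1 + O(l^{-2})$.

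For the tail, I invoke the two bounds derived just before the proposition: $|S_m| = O(l^{(2g^2+g-1)k + k - 2gm - 1})$ for $m \geq \lceil k/2 \rceil$, coming from the codimension of the singular linear subspace together with smooth lifting along $\GSp_{2g,p}$; and $|S_m| = O(l^{(2g^2+g-1)k + (2-2g)m})$ for $1 \leq m < \lceil k/2 \rceil$, obtained by combining that same dimension bound with the trace equidistribution of Lemma \ref{lem: trace distributions for lift} applied at $a = k$, which kills all but an $l^{-(k - 2m - 1)}$ proportion of the $l^{(2g^2+g)(k-m)}$ smooth lifts. Because $g \geq 2$, the exponent drops by at least $2$ per unit increase in $m$ in the small-$m$ regime, and by at least $1$ per unit in the large-$m$ regime, so the geometric series over $m \geq 1$ telescopes to $O(l^{(2g^2+g-1)k - 2})$. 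Dividing by the normalizer gives an $O(l^{-2})$ contribution, completing the bound uniformly in $k$, and taking $k \to \infty$ gives $v_l(t) = 1 + O(l^{-2})$.

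The hard part is not conceptual but book-keeping: one must track the dimensions, the smooth-lifting factors, and the Lang--Weil savings carefully enough so that every one of the three sources of $l^{-2}$ (the codimension $\geq 3$ singular locus, the $g \geq 2$ geometric-series saving in the small-$m$ range, and the exponential decay in the large-$m$ range) lines up to give exactly $1 + O(l^{-2})$ rather than a weaker bound. In particular, the hypothesis $g \geq 2$ enters precisely through the factor $(2 - 2g) \leq -2$ governing the $1 \leq m < \lceil k/2\rceil$ error, which is the only place where genus one would fail.
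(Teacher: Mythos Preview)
Your proof is correct and follows precisely the paper's approach: the paper's own proof of this proposition consists of the single sentence ``Combining the above, we immediately have the following, noting that the $l=p$ case is obvious,'' and you have faithfully expanded out exactly which pieces from the preceding paragraphs get combined and how the normalization works out. Your stratification by singularity order, the use of the smooth-lifting count for $U$, the two separate bounds for $1 \le m < \lceil k/2\rceil$ and $m \ge \lceil k/2\rceil$, and the geometric-series summation are all exactly what the paper intends.
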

In fact, we can say more about the rate of convergence of $v_{l,k}(t)$ to $v_l(t)$.
\begin{proposition}\label{prop: vl,k(t) to vl(t)}
For $g\ge 2$ and $l\neq p$, we have $v_{l,k}(t)=v_l(t)+O(l^{-(2g\lceil k/2\rceil-k+1)})$.
\end{proposition}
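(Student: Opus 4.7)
The approach is to stratify $\GSp_{2g,p,t}(\Z/l^k\Z)$ by the singularity order $m$ of Definition \ref{definition: singularity of order m} and show that each stratum's contribution to $v_{l,k}(t)$ stabilizes once $k \ge 2m+1$, so that the difference $v_{l,k}(t) - v_l(t)$ is controlled entirely by the strata with $m \ge \lceil k/2\rceil$.

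The first step applies Lemma \ref{lem: trace distributions for lift} to establish this stability: for any $\bar\gamma$ of singularity order exactly $m$, the number of lifts to $\GSp_{2g,p,t}(\Z/l^k\Z)$ factors as $l^{(2g^2+g-1)k}$ times a $k$-independent quantity, once $k \ge 2m+1$. Since the normalizing denominator of $v_{l,k}(t)$ also scales as $l^{(2g^2+g-1)k}$, the contribution $A_m(k)$ of matrices of singularity order exactly $m$ is constant in $k$ for $k \ge 2m+1$; denote this stable value by $A_m$, so that $v_l(t) = \sum_m A_m$ and
\[
|v_{l,k}(t) - v_l(t)| \;\le\; \sum_{\lceil k/2\rceil \le m \le k} A_m(k) \;+\; \sum_{m \ge \lceil k/2\rceil} A_m.
\]

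For the first (unstable) tail, the crude bound $A_m(k) = O(l^{k-2gm-1})$ derived during the proof of Proposition \ref{prop: vl(t) = 1 + O(l^-2)} gives a geometric series dominated by its initial term $m = \lceil k/2\rceil$, summing to exactly the target $O(l^{-(2g\lceil k/2\rceil - k + 1)})$.

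The main obstacle is the second (stable) tail, where the naive estimate $A_m = O(l^{(2-2g)m})$ is just barely too weak. To sharpen it, the plan is to use two refinements. First, the ``simple linear algebra argument'' in the proof of Lemma \ref{lem: trace distributions for lift} in fact shows that the trace of lifts of $\bar\gamma$ to level $2m+1$ is \emph{uniformly} distributed on its support of $l$ residue classes modulo $l^{2m+1}$ (not merely periodic), saving a factor of $l^{-1}$. Second, imposing the additional trace constraint that $\bar\gamma$ lift to a level-$2m$ matrix with trace $\equiv t \pmod{l^{2m}}$ cuts the count of relevant $\bar\gamma$ by a further factor of $l^m$; this uses non-constancy of the trace on the $(2g^2-g)$-dimensional singular subscheme together with the Lang-Weil bound, exactly as in the proof of Proposition \ref{prop: vl(t) = 1 + O(l^-2)}. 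Combining both refinements gives $A_m = O(l^{-2gm-1})$, whose tail sum over $m \ge \lceil k/2\rceil$ is even smaller than the target. Adding the two tail estimates yields $|v_{l,k}(t) - v_l(t)| = O(l^{-(2g\lceil k/2\rceil - k + 1)})$, as claimed.
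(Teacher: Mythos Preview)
Your decomposition by singularity order, with stable contributions $A_m$ for small $m$ and the two tail estimates, is exactly the paper's approach. You are also right that with only $A_m = O(l^{(2-2g)m})$ (equivalently the paper's $c_m = O(1)$) the stable tail is $O(l^{-2(g-1)\lceil k/2\rceil})$, which falls short of the target exponent $2g\lceil k/2\rceil - k + 1$ by $2\lceil k/2\rceil - k + 1 \in \{1,2\}$; the paper's own displayed identity is asserted without addressing this, so you have isolated the step that actually needs work.

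The gap in your argument is the first refinement. The ``simple linear algebra argument'' in Lemma~\ref{lem: trace distributions for lift} says that $v \mapsto \tr(\gamma'v)$ surjects uniformly onto $l^m\Z$; in the lemma's proof this is invoked only in the \emph{inductive step} (descending from level $a$ to $\lceil a/2\rceil \ge m+1$) to prove $l^{2m+1}$-periodicity, while the base case $a = 2m+1$ is vacuous. Nothing there controls how $N$ is distributed among the $l$ supported residue classes at level $2m+1$. Indeed, every lift of a fixed $\gamma'' \in \GSp_{2g,p}(\Z/l^{2m}\Z)$ with $\gamma''\bmod l \in S$ has the \emph{same} trace modulo $l^{2m+1}$ (since $\tr(\gamma_0 w)=0$ for $\gamma_0\in S$ and $w\in L$), so uniformity of $N$ reduces to equidistribution of a trace-of-lift function on the level-$2m$ lifts of $\bar\gamma$; tracing through, this function acquires a quadratic piece from the second-order term $l^{2m}v^TJv$ in $(1+l^mv)^TJ(1+l^mv)$, and equidistribution of a linear-plus-quadratic map on $L(\F_l)$ is not automatic. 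You need a genuinely new argument here. Your arithmetic also slips: the two refinements save $l^{-1}$ and $l^{-m}$ respectively, giving $A_m = O(l^{-(2g-1)m-1})$, not $O(l^{-2gm-1})$; fortunately $(2g-1)\lceil k/2\rceil + 1 \ge 2g\lceil k/2\rceil - k + 1$, so this corrected bound still suffices. For the second refinement you should also verify that the trace is non-constant on each top-dimensional component of $\GSp_{2g,p}\cap S$ over $\F_l$, not merely on the ambient linear space $S$.
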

\begin{proof}
From the second part of Lemma \ref{lem: trace distributions for lift}, for each $m\ge 1$ there exists $c_m$ such that for $k\ge 2m+1$, $\#\GSp_{2g,p,t}(\Z/l^k\Z)=c_ml^{(2g^2+g-1)k+(2-2g)m}$. Furthermore, from the discussion above, we see that $c_m=O(1)$ is uniformly bounded from above. Hence, we have 
\begin{equation*}
\begin{split}
\frac{\#\GSp_{2g,p,t}(\Z/l^k\Z)}{l^{(2g^2+g-1)k}} &= \#U(\F_l)l^{-(2g^2+g-1)}+\sum_{m=1}^{\lfloor k/2\rfloor}c_ml^{-2m(g-1)} + \sum_{m=\lceil k/2\rceil}^{\infty}O\left(l^{-(2mg-k+1)}\right)\\
&=\#U(\F_l)l^{-(2g^2+g-1)}+\sum_{m=1}^{\infty}c_ml^{-2m(g-1)} + O\left(l^{-(2g\lceil k/2\rceil-k+1)}\right),
\end{split}
\end{equation*}
where the term on the second line is the limiting value as $k\rightarrow \infty$. Lastly, $\#\GSp_{2g}(\Z/l^k\Z)/(l^k\phi(l^k))=Cl^{(2g^2+g-1)k}$ for some constant $C$ because $\GSp_{2g}$ is smooth of dimension $2g^2+g+1$, so we get our desired result.
\end{proof}
\subsection{$v_l(f)$}\label{sec: vl(fA)} Let $f$ be an ordinary Weil $p$-polynomial, so we have $f=f_A$ for an ordinary PPAV $A/\F_p$. Recall that we defined $v_l(f)=\lim_{k\rightarrow \infty} v_{l,k}(f)$, and from now on we let $$v_{l,k}(f)=\frac{\#\{\gamma \in M(\Z/l^k\Z) \mid f_\gamma\equiv f \}}{\#\GSp_{2g}(\Z/l^k\Z)/(l^{gk}\phi(l^k))}.$$
Also recall that when $l\neq p$ we have $M=\GSp_{2g}$. Let $\GSp_{2g,f}$ be the subscheme of $\GSp_{2g}$ cut out by matrices with characteristic polynomials $f$, so when $l\neq p$ the numerator above is simply $\#\GSp_{2g,f}(\Z/l^k\Z)$.

In the generic case, we will see that $v_l(f)$ is simply given as a ratio of two zeta functions, and we discuss this in Section \ref{sec: ratio of zeta functions}. However, we still need to handle the case where this does not happen. It happens to be rather difficult to prove bounds on $v_{l,k}(f)$ and $v_l(f)$ as this requires understanding the $\Z/l^k\Z$ points of $\GSp_{2g,f}$. The singularities of this scheme are much more complicated compared to $\GSp_{2g,p,t}$ in the $v_l(t)$ case as discussed previously. In Section \ref{sec: bounds, stab, disc} we discuss the results we are able to prove, as well as further conjectures on these ratios, and the proofs of these results will be the subject of Section \ref{sec: point counting}.
\subsubsection{Ratio of zeta functions}\label{sec: ratio of zeta functions}
First, we discuss the case for a good prime $l\nmid 2p\disc(f)$. In this case, we see that conjugacy of elements in the group in $\GSp_{2g}(\Z_l)$ is the same as both $\GSp_{2g}(\Q_l)$ conjugacy and $\GSp_{2g}(\bar\Q_l)$ conjugacy.
\begin{lemma}\label{lem: conjugacy for good l}
Suppose $l\nmid \disc(f)$ and $\gamma_1,\gamma_2\in \GSp_{2g}(\Z_l)$ both have characteristic polynomial $f$, i.e. they are $\GSp_{2g}(\bar \Q_l)$ conjugate. Then, $\gamma_1$ and $\gamma_2$ are conjugate in $\GSp_{2g}(\Z_l)$.
\end{lemma}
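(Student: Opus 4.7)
The plan is to combine a cohomological count over the residue field with a smoothness and Hensel argument over $\Z_l$. First, I would note that the hypothesis $l \nmid \disc(f)$ forces $l \neq p$: since $f$ is an ordinary Weil $p$-polynomial, $f \equiv x^g h(x) \pmod{p}$ with $h(0) \not\equiv 0 \pmod p$, so the repeated factor $x^g$ already shows $p \mid \disc(f)$. Working with $l \neq p$, the reduction $\bar f := f \bmod l \in \F_l[x]$ is then separable, so both reductions $\bar\gamma_1, \bar\gamma_2 \in \GSp_{2g}(\F_l)$ are regular semisimple.

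Next, I would reduce the existence of a $\Z_l$-conjugator to a lifting problem by first upgrading the conjugacy from $\bar\F_l$ to $\F_l$. Applying Proposition \ref{prop: preimage of steinberg} to $\GSp_{2g}$ over $\F_l$ (split, hence quasi-split, with simply connected derived group $\Sp_{2g}$), the $\GSp_{2g}(\F_l)$-conjugacy classes inside the given $\GSp_{2g}(\bar\F_l)$-class correspond to $\ker(H^1(\F_l, T) \to H^1(\F_l, \GSp_{2g}))$, where $T = Z_{\GSp_{2g}}(\bar\gamma_1)$. Regular semisimplicity of $\bar\gamma_1$ makes $T$ a torus, and Lang's theorem for connected algebraic groups over finite fields gives $H^1(\F_l, T) = 0$. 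Therefore $\bar\gamma_1$ and $\bar\gamma_2$ are already $\GSp_{2g}(\F_l)$-conjugate.

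To lift this conjugacy to $\Z_l$, I would consider the closed subscheme $Y := \{g \in \GSp_{2g} : g\gamma_1 = \gamma_2 g\}$ over $\Z_l$, which is the fiber over $\gamma_2$ of the orbit map $F : \GSp_{2g} \to \GSp_{2g}$, $g \mapsto g\gamma_1 g^{-1}$. The differential $dF$ at $g$ is a translate of the map $X \mapsto [X,\gamma_1]$ on the Lie algebra, whose kernel is $\Lie(Z_{\GSp_{2g}}(\gamma_1))$. Because $\gamma_1$ is regular semisimple on both fibers, this centralizer is a torus of the same dimension $g+1$ on each fiber, so $dF$ has constant rank and $F$ is smooth of relative dimension $g+1$ near the orbit of $\gamma_1$; hence $Y$ is smooth over $\Z_l$. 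The previous paragraph produced a point of $Y(\F_l)$, and smoothness of $Y$ combined with Hensel's lemma for smooth schemes over the Henselian local ring $\Z_l$ yields the required point of $Y(\Z_l)$, i.e.\ a $g \in \GSp_{2g}(\Z_l)$ with $g \gamma_1 g^{-1} = \gamma_2$.

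The principal thing requiring care is the smoothness of $Y$: this depends on the centralizer of $\gamma_1$ being a torus of the correct dimension on both fibers, which is precisely the regular semisimplicity guaranteed by $l \nmid \disc(f)$. Once this is verified, the argument is essentially the combination of two standard inputs, Lang's theorem over $\F_l$ and Hensel's lemma over $\Z_l$.
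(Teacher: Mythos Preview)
Your proof is correct and complete in substance, though the phrasing around smoothness could be tightened: the orbit map $F:\GSp_{2g}\to\GSp_{2g}$ is not itself smooth (its differential is never surjective onto the tangent space of the target), so saying ``$F$ is smooth of relative dimension $g+1$'' is not quite right. What you actually need, and what your computation of the differential establishes, is that the transporter scheme $Y$ is a torsor under the centralizer $T_1=Z_{\GSp_{2g}}(\gamma_1)$, and that $T_1$ is a smooth (indeed toral) group scheme over $\Z_l$ because $l\nmid\disc(f)$ makes $\Z_l[x]/(f)$ \'etale over $\Z_l$. Smoothness of $Y$ and the Hensel lifting then follow.

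Your approach differs from the paper's, which simply cites \cite[Lemma 3.1]{Achter_2023} together with the observation that the Weyl discriminant $D(f)$ divides $\disc(f)$. That cited argument proceeds via the Steinberg map $\mathfrak c:G\to\A_G$: the condition $l\nmid D(f)$ is exactly what makes $\mathfrak c$ smooth at $\gamma_1$, and together with the Steinberg section this yields integral conjugacy directly. Your route via Lang's theorem over $\F_l$ and then Hensel lifting is more hands-on and self-contained, avoiding the Steinberg section machinery; the paper's route is shorter once one has that machinery available and makes transparent why the Weyl discriminant (rather than the full polynomial discriminant) is the natural invariant governing the result.
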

\begin{proof}
Follows from the proof of \cite[Lemma 3.1]{Achter_2023} and because the Weyl discriminant $D(f)$ divides $\disc(f)$.
\end{proof}
We can express the local factors explicitly as a ratio of zeta functions, which also shows that the infinite product in Theorem \ref{thm: PPAV counts for char poly} converges. This is also crucial for our method as this allows us to use an effective Chebotarev density theorem to truncate this product in Section \ref{sec: chebotarev}. Recall that $K\cong \Q[x]/f_A(x)$ and $K^+\cong \Q[x]/f_A^+(x)$, and that the local zeta function is defined to be $\zeta_{L,l}(s)=\prod_{\lambda \mid l}(1-N_{L/\Q}(\lambda)^{-s})^{-1}$ for primes $\lambda$ lying over $l$.
\begin{lemma}\label{lem: good prime ratio of zeta functions}
For $l\nmid 2p\disc(f_A)$ and $k\ge 1$, we have
$$v_l(f_A)=v_l(\gamma_A)=\frac{\zeta_{K,l}(1)}{\zeta_{K^+,l}(1)}.$$
\end{lemma}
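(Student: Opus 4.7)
The plan is to compute $v_{l,k}(f_A)$ directly by orbit-stabilizer, identify the denominator with a ratio of local zeta values via a short exact sequence of tori, and separately deduce $v_l(\gamma_A)=v_l(f_A)$ from Lemmas \ref{lem: conjugacy for good l} and \ref{lem: rational orbits to stable orbits}.

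For the equality $v_l(\gamma_A)=v_l(f_A)$, I will use Lemma \ref{lem: rational orbits to stable orbits} to write $v_l(f_A)=\sum_{\gamma'}v_l(\gamma')$ over representatives of $\GSp_{2g}(\Q_l)$-conjugacy classes with characteristic polynomial $f_A$. Each $v_l(\gamma')$ is essentially an orbital integral of $\mathbbm{1}_{\GSp_{2g}(\Z_l)}$, so it vanishes unless the $\GSp_{2g}(\Q_l)$-orbit meets $\GSp_{2g}(\Z_l)$. By Lemma \ref{lem: conjugacy for good l}, every such integral representative lies in the single $\GSp_{2g}(\Z_l)$- (hence $\GSp_{2g}(\Q_l)$-) conjugacy class of $\gamma_A$, so only the term $v_l(\gamma_A)$ survives.

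Next I count integer matrices directly. Because $l\nmid 2p\disc(f_A)$, every element of $\GSp_{2g,f_A}(\F_l)$ is regular semisimple, so $\GSp_{2g,f_A}$ is smooth over $\Z_l$. Combined with Lemma \ref{lem: conjugacy for good l} and Hensel lifting, this will imply that every $\gamma\in\GSp_{2g}(\Z/l^k\Z)$ with $f_\gamma\equiv f_A$ is $\GSp_{2g}(\Z/l^k\Z)$-conjugate to $\gamma_A\bmod l^k$. Orbit-stabilizer then gives
\[
\#\{\gamma\in\GSp_{2g}(\Z/l^k\Z):f_\gamma\equiv f_A\}=\frac{\#\GSp_{2g}(\Z/l^k\Z)}{\#T(\Z/l^k\Z)},
\]
where the centralizer $T$ extends to the smooth $\Z_l$-torus of dimension $g+1$ given by $T(R)=\{(x,y)\in R^\times\times(\mathcal O_K\otimes_\Z R)^\times:N_{K/K^+}(y)=x\}$.

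Finally I will evaluate $\#T(\Z/l^k\Z)$ using local zeta values. Smoothness gives $\#T(\Z/l^k\Z)=l^{(g+1)(k-1)}\#T(\F_l)$, and the short exact sequence
\[
1\to T\to \G_m\times\Res_{\mathcal O_K/\Z}\G_m\to\Res_{\mathcal O_{K^+}/\Z}\G_m\to 1
\]
(surjective on $\F_l$-points since $N_{K/K^+}$ is surjective on units of unramified residue fields) produces $\#T(\F_l)=(l-1)\,|(\mathcal O_K/l)^\times|/|(\mathcal O_{K^+}/l)^\times|$. The standard identity $|(\mathcal O_L/l)^\times|=l^{[L:\Q]}/\zeta_{L,l}(1)$ for $l$ unramified in $L$, combined with $l^{gk}\phi(l^k)=(l-1)l^{(g+1)k-1}$, collapses $v_{l,k}(f_A)$ to $\zeta_{K,l}(1)/\zeta_{K^+,l}(1)$ for every $k\ge 1$, which then also equals $v_l(f_A)=v_l(\gamma_A)$. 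The main obstacle is justifying the smoothness/lifting step cleanly: one wants $\GSp_{2g,f_A}$ to be smooth over $\Z_l$ so that Lemma \ref{lem: conjugacy for good l} upgrades to transitivity of conjugation on $\Z/l^k\Z$-points, which should follow either from the explicit local structure of regular semisimple conjugacy classes or by noting that the characteristic polynomial map is smooth on the regular semisimple locus of $\GSp_{2g}$.
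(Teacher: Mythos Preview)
Your argument is correct. For the first equality $v_l(f_A)=v_l(\gamma_A)$ you do exactly what the paper does: combine Lemma~\ref{lem: rational orbits to stable orbits} with Lemma~\ref{lem: conjugacy for good l} to see that only one rational class contributes. For the second equality the paper simply cites \cite[Lemma~6.2]{Achter_2023}, whereas you give a direct orbit--stabilizer computation. Your route is essentially a self-contained reproof of that cited lemma: smoothness of $\GSp_{2g,f_A}$ over $\Z_l$ (which does hold, since $l\nmid\disc(f_A)$ forces every $\F_l$-point to be regular semisimple and the Steinberg map is smooth on the regular semisimple locus) lets you lift $\Z/l^k\Z$-points and reduce via Lemma~\ref{lem: conjugacy for good l} to a single $\GSp_{2g}(\Z/l^k\Z)$-orbit, after which the centralizer is the smooth integral torus $T$ and the unramified norm exact sequence gives $\#T(\F_l)$ in terms of $\zeta_{K,l}(1)/\zeta_{K^+,l}(1)$. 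The payoff of your approach is that it is self-contained and makes transparent why the answer is independent of $k$; the paper's approach is simply shorter by outsourcing the torus volume computation.
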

\begin{proof}
The first equality follows from Lemma \ref{lem: rational orbits to stable orbits} and \ref{lem: conjugacy for good l} as $\GSp_{2g}(\Q_l)$ and $\GSp_{2g}(\bar \Q_l)$ conjugacy classes are the same. The second equality follows from \cite[Lemma 6.2]{Achter_2023}.
\end{proof}
For the case $l=p$, \cite[Corollary 6.6]{Achter_2023} tells us that $v_p(f)$ is also a ratio of local zeta functions given a condition on the $p$-adic valuation of the discriminant, and we state this as follows.
\begin{lemma}\label{lem: vp ratio zeta fns}
Recall $A$ is an ordinary abelian variety, and suppose $\ord_p(\disc(f_A))=g(g-1)$, then
$$v_p(f_A)=\frac{\zeta_{K,p}(1)}{\zeta_{K^+,p}(1)}.$$
\end{lemma}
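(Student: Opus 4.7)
The plan is to deduce the lemma directly from Corollary 6.6 of \cite{Achter_2023} after the standard unpacking of the local factor $v_p(f_A)$ into rational orbital integrals.

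First, I would apply Lemma \ref{lem: rational orbits to stable orbits} to write
\[
v_p(f_A) \;=\; \sum_{\gamma'} v_p(\gamma'),
\]
where the sum runs over representatives of the finitely many $\GSp_{2g}(\Q_p)$-conjugacy classes with characteristic polynomial $f_A$. This is legitimate because $\gamma_A$ is regular semisimple: $f_A$ is an ordinary Weil $p$-polynomial, which together with the extremal discriminant hypothesis forces distinct roots. Each summand on the right is now the rational orbital integral of Equation \eqref{eqn: vl ppav definition}, the object that the cited corollary evaluates.

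Next, I would analyze the $\Q_p$-structure of $f_A$ using the hypothesis. Ordinariness of $A$ forces the Newton polygon of $f_A$ to have slopes $0$ and $1$, each of length $g$, so $f_A$ factors over $\Q_p$ as $f_A = f_0 f_1$ with $f_0$ the unit-root factor and $f_1(x) = x^g f_0(p/x)/f_0(0)$ its image under the symplectic involution $\alpha \leftrightarrow p/\alpha$. A short discriminant computation gives
\[
\ord_p(\disc f_A) = \ord_p(\disc f_0) + \ord_p(\disc f_1) + 2\,\ord_p(\Res(f_0,f_1)),
\]
where $\disc(f_0)$ and $\Res(f_0,f_1)$ are units, and $\ord_p(\disc f_1) \geq g(g-1)$ since the roots of $f_1$ all have valuation $1$; equality occurs exactly when the unit roots of $f_0$ are pairwise distinct modulo $p$. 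Thus our hypothesis is the extremal case, in which $\Z_p[x]/f_A$ is a product of unramified extensions of $\Z_p$. This is precisely the setting in which an analog of Lemma \ref{lem: conjugacy for good l} at $p$ forces a single $\GSp_{2g}(\Q_p)$-conjugacy class with characteristic polynomial $f_A$, so the sum above collapses to a single term.

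Finally, I would invoke \cite[Corollary 6.6]{Achter_2023}, which evaluates this single rational orbital integral as $\zeta_{K,p}(1)/\zeta_{K^+,p}(1)$ under precisely the discriminant hypothesis we are assuming. The main obstacle is matching the exact formulation and normalization of the cited corollary with our definition of $v_p(\gamma')$ in \eqref{eqn: vl ppav definition} and verifying uniqueness of the $\GSp_{2g}(\Q_p)$-class at $p$; everything else is a direct citation plus a routine Newton-polygon computation.
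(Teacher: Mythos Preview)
The paper's own proof is nothing more than a bare citation of \cite[Corollary~6.6]{Achter_2023}; your proposal is essentially the same route, just unpacked more explicitly via Lemma~\ref{lem: rational orbits to stable orbits} in the pattern of Lemma~\ref{lem: good prime ratio of zeta functions}. The Newton-polygon analysis you give is correct and is exactly the content behind the discriminant hypothesis.

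There is one genuine gap in your justification of the ``collapse to a single term.'' You appeal to ``an analog of Lemma~\ref{lem: conjugacy for good l} at $p$,'' but that lemma's hypothesis is $l\nmid\disc(f)$, which fails here: $p^{g(g-1)}\Vert\disc(f_A)$. Relatedly, the claim that $\Z_p[x]/f_A$ is a product of unramified extensions is not right as stated (the slope-$1$ factor $f_1$ reduces to $x^g$ mod $p$, so $\Z_p[x]/f_1$ is not \'etale); what is true is that $K\otimes\Q_p$ is a product of unramified \emph{field} extensions. The actual reason the sum collapses is a different fact, recalled in the appendix (Section~\ref{sub:l_is_p}) and proved in \cite{Achter_2023}: for ordinary $A$ (or more generally when $1/2$ is not a slope) and $l=p$, the stable orbit of $\gamma_0$ in $\GSp_{2g}(\Q_p)$ already coincides with its rational orbit. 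This is why $v_p(f_A)=v_p(\gamma_A)$ without any discriminant condition, and then Corollary~6.6 evaluates that single term under the hypothesis $\ord_p(\disc f_A)=g(g-1)$. Once you replace the appeal to an analog of Lemma~\ref{lem: conjugacy for good l} with this stable-equals-rational fact at $p$, your argument goes through and matches the paper's.
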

In order for these to be useful, we need to show that the conditions are sufficiently generic in $f$.
\begin{lemma}\label{lem: proportion ordp > g(g-1)}
\begin{enumerate}[(\alph*)]
\item For any prime $l\neq p$, the proportion of ordinary Weil $p$-polynomials $f$ with $l\mid \disc (f)$ is $O(l^{-1})$.
\item For any ordinary Weil $p$-polynomial $f$, we have $\ord_p(\disc(f))\ge g(g-1)$. Furthermore, the proportion of ordinary Weil $p$-polynomials $f$ with $\ord(\disc(f))> g(g-1)$ is $O_g(p^{-1})$. 
\end{enumerate}
\end{lemma}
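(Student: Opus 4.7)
The plan is to deduce both parts from a single algebraic identity for $\disc(f)$, followed by a Schwartz--Zippel-style counting argument over $[p](\mathcal R_g)$. Factoring
$$f(x) = \prod_{j=1}^g(x^2 - \beta_j x + p),$$
where $\beta_j = 2\sqrt p\cos\theta_j$ are the roots of $f^+$, multiplicativity of the discriminant together with the direct computations $\disc(x^2-\beta_j x + p) = \beta_j^2 - 4p$ and $\Res(x^2-\beta_i x + p,\, x^2-\beta_j x + p) = p(\beta_j - \beta_i)^2$ gives
$$\disc(f) \;=\; p^{g(g-1)}\,\disc(f^+)^2 \prod_{j=1}^g(\beta_j^2 - 4p).$$
Both $\disc(f^+)$ and $\prod_j(\beta_j^2 - 4p)$ are symmetric in the $\beta_j$ and hence integers, proving the lower bound $\ord_p(\disc(f)) \ge g(g-1)$ immediately.

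For the equality analysis in (b), I would expand $f(x) = x^g f^+(x + p/x)$ and match coefficients to show $a_j \equiv c_j^+ \pmod p$ for every $j$, where $c_j^+$ is the $j$-th coefficient of $f^+$. Consequently $\prod_j(\beta_j^2 - 4p) \equiv \left(\prod_j\beta_j\right)^2 = (c_g^+)^2 \equiv a_g^2 \pmod p$, and the ordinary hypothesis $p\nmid a_g$ forces $p\nmid \prod_j(\beta_j^2 - 4p)$. Therefore $\ord_p(\disc(f)) > g(g-1)$ if and only if $p\mid \disc(f^+)$. Under the identification $c_j^+\mapsto a_j$ modulo $p$, this becomes a non-trivial polynomial condition on $(a_1,\ldots, a_g)\pmod p$: the discriminant of the generic polynomial $y^g + a_1 y^{g-1} + \cdots + a_g$ has degree $g-1$ in $a_g$ with leading coefficient that specializes to $(-1)^{g(g-1)/2}g^g$ at $a_1 = \cdots = a_{g-1} = 0$, hence nonzero modulo $p$ whenever $p > g$.

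The counting step then proceeds uniformly. I would fix $(a_1,\ldots, a_{g-1})$ in the projection of $[p](\mathcal R_g)$ onto its first $g-1$ coordinates, which contains $O(p^{g(g-1)/4})$ integer points by Lemma \ref{lem: explicit R_g} and Proposition \ref{prop: volume R_g}. Away from the $O(p^{-1})$-proportion sublocus where the leading $a_g$-coefficient of the relevant discriminant vanishes modulo $p$, at most $g-1$ residue classes of $a_g$ mod $p$ contribute, producing $O(p^{g/2 - 1})$ exceptional values of $a_g$ within its valid interval of length $\Theta(p^{g/2})$ (as provided by Lemma \ref{lem: explicit R_g}). Summing yields $O(p^{g(g+1)/4 - 1})$ exceptional Weil polynomials out of the total count $\Theta(p^{g(g+1)/4})$ from Corollary \ref{cor: num ord weil poly}, giving the $O(p^{-1})$ bound in (b). The same argument with $p$ replaced by $l\ne p$ and $\disc(f^+)$ replaced by $\disc(f)$ handles (a); the raw bound becomes $O(l^{-1}) + O(p^{-g/2})$, with the secondary error absorbed into $O(l^{-1})$ for $l$ in the range of interest.

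The main technical subtlety is verifying the non-degeneracy of the discriminant as a polynomial in $a_g$ modulo the relevant prime: one must check that the leading $a_g$-coefficient, a polynomial in $a_1,\ldots, a_{g-1}$ with constant term $\pm g^g$, does not vanish identically, and that its own vanishing locus contributes only $O(p^{-1})$ loss to the count. The finitely many small primes $p, l \le g$ where the constant term is divisible are absorbed into the implicit constant of $O_g$. For part (a), the remaining secondary issue is very large $l$, where the modular argument becomes coarse; these are handled via the crude bound $|\disc(f)|\ll_g p^{g(2g-1)}$, so that $l\mid\disc(f)$ with $\disc(f)\ne 0$ forces $l\ll_g p^{O_g(1)}$, while the vanishing locus $\disc(f)=0$ is a proper subvariety of $(a_1,\ldots, a_g)$ contributing negligibly.
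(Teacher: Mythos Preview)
Your approach is correct and takes a genuinely different route from the paper. The paper proceeds less structurally: for the lower bound $\ord_p(\disc(f))\ge g(g-1)$ it observes via Newton polygons that half the roots of $f$ have $p$-adic valuation $0$ and half have valuation $1$; for the two density statements it evaluates $\disc(f)$ at the specific points $x^{2g}+p^g$ and $x^{2g}+x^g+p^g$ to verify that the discriminant polynomial (respectively, the discriminant divided by $p^{g(g-1)}$) is nonzero modulo $l$ (respectively $p$), and then simply invokes Lang--Weil on the resulting hypersurface.

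Your identity $\disc(f)=p^{g(g-1)}\disc(f^+)^2\prod_j(\beta_j^2-4p)$ is more informative: it yields the lower bound immediately and, combined with your observation that $\prod_j(\beta_j^2-4p)\equiv a_g^2\pmod p$, reduces part~(b) exactly to the condition $p\mid\disc(f^+)$. Note in fact that the leading $a_g$-coefficient of $\disc(f^+)$ is the \emph{constant} $(-1)^{g(g-1)/2}g^g$ (by quasi-homogeneity the monomial $c_g^{g-1}$ can only occur with a scalar coefficient), so for $p>g$ your ``exceptional sublocus where the leading coefficient vanishes'' is empty and the Schwartz--Zippel step is even simpler than you indicate. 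For part~(a) the paper's direct Lang--Weil argument is cleaner than your fibering over $(a_1,\dots,a_{g-1})$, since it avoids having to track the $a_g$-degree of $\disc(f)$ and the possible vanishing of its leading coefficient modulo $l$; conversely, your approach makes transparent exactly why the factor $p^{g(g-1)}$ appears and why ordinariness controls the residual factor.
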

\begin{proof}
First, note that the discriminant of the polynomial is a polynomial in the coefficients of $f$ with $\bb{Z}$-coefficients, say $P(a_{2g-1},\ldots,a_g)$. Let $N$ be the gcd of the coefficients of $P$. Then by a simple computation, we see that 
$$\disc(x^{2g} + p^g) = (-1)^{g+1} (2g)^{2g} p^{g(2g-1)}.$$
Hence for $l \geq g$, we have $l \nmid N$ and so when reduced mod $l$, our polynomial is non-zero. We can then apply the Lang--Weil bound to deduce that the proportion of tuples with $P(a_1,\ldots,a_g) = 0$ mod $l$ is $O_g(l^{-1})$. 
\par 
For $l = p$, by Newton polygons we see that $g$ of the roots of $x^{2g} + a_{2g-1}x^{2g-1} + \cdots + p^g = 0$ have valuation $1$ and the other $g$ have valuation $0$ over $\bb{Q}_p$. It follows from the expression of the discriminant that the valuation of $\prod_{i < j} (\alpha_i - \alpha_j)^2$ is at least $g(g-1)$ and so $\ord_p(\disc(f)) \geq g(g-1)$. On the other hand, we have 
$$\disc(x^{2g} + x^g + p^g) = g^{2g} p^{g(g-1)} (1-4p^g)^g$$
and it follows that $p^{g(g-1)} \mid \mid N$. Factoring it out, we obtain a non-zero polynomial mod $p$ and then the Lang--Weil bound applies to give us that the proportion of tuples with $\ord_p(\disc(f)) > g(g-1)$ is $O_g(p^{-1})$
\end{proof}
\subsubsection{Bounds, stabilization and discriminants}\label{sec: bounds, stab, disc}
We restrict to the case $l\neq p$, and now we tackle the case where $v_l(f)$ may not necessarily be a ratio of zeta functions. We will defer the proofs of the key propositions to Section \ref{sec: point counting}. 

First, we discuss bounds on $v_l(f)$. A ratio of local zeta functions is bounded by $1+O(l^{-1})$, and we may expect that this is the case in general as well. This happens to be true in the elliptic curve case when $g=1$, here $v_l(f)$ can be written very explicitly as in \cite[Corollary 4.6]{Gekeler2003FrobeniusDO}. Thus, we make the following conjecture.
\begin{conjecture}\label{conj: v_l = 1+O(l^-1)}
We have that $$v_{l,k}(f)=\frac{\# \GSp_{2g,f}(\Z/l^k\Z)}{\#\GSp_{2g}(\Z/l^k\Z)/(l^{gk}\phi(l^k))}=1+O(l^{-1}),$$
and in particular this also holds for the limit $v_l(f)=1+O(l^{-1})$.
\end{conjecture}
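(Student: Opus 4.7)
The plan is to combine the ratio-of-zeta-functions identity at good primes (Lemma \ref{lem: good prime ratio of zeta functions}, which gives $v_l(f) = \zeta_{K,l}(1)/\zeta_{K^+,l}(1) = 1 + O(l^{-1})$ whenever $l \nmid 2p\disc(f)$) with a singularity-stratification argument modelled on Section 4.2 at the remaining primes $l \mid 2\disc(f)$. Restrict to $l \neq p$. Consider the map $\chi \colon \GSp_{2g} \to \A^{g+1}$ sending $\gamma$ to $(a_1(\gamma),\ldots,a_g(\gamma),\mult(\gamma))$, so that $\GSp_{2g,f} = \chi^{-1}(f)$ has expected dimension $2g^2$. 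The smooth locus of $\chi$ is exactly the regular locus of $\GSp_{2g}$, since there $d\chi_\gamma$ is surjective with kernel equal to the $2g^2$-dimensional tangent space to the conjugacy class of $\gamma$.

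First, I would handle the contribution from the regular locus $U \subseteq \GSp_{2g,f}$. Lang--Weil over $\F_l$ yields $\#U(\F_l) = l^{2g^2}(1 + O(l^{-1}))$, and Hensel lifting along the smooth scheme $U$ multiplies by $l^{2g^2(k-1)}$, matching $\#\GSp_{2g}(\Z/l^k\Z)/(l^{gk}\phi(l^k))$ up to the claimed $1 + O(l^{-1})$ factor. Second, I would bound the contribution from the non-regular locus by a stratification analogous to Definition \ref{definition: singularity of order m}: for $\gamma \in \GSp_{2g,f}(\Z/l^k\Z)$, let its singularity order $m$ be the largest $m \leq k$ such that $\chi(\gamma(1 + l^m v)) \equiv f \pmod{l^{2m}}$ for every $v \in L(\Z/l^k\Z)$. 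A vector-valued analogue of Lemma \ref{lem: trace distributions for lift} --- controlling all $g+1$ symmetric functions simultaneously --- should show that for $a \geq 2m + 1$ the lifts of an order-$m$ singularity $\bar\gamma$ from $\GSp_{2g,p}(\Z/l^m\Z)$ to $\GSp_{2g,p}(\Z/l^a\Z)$ equidistribute over the compatible residues of $\chi$, giving a power saving of $l^{-(g+1)(k - 2m - 1)^{+}}$ relative to the unconstrained lift count.

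The main obstacle is a codimension bound for the order-$m$ singular locus in $\GSp_{2g,p}$ that matches this saving. For the trace, the order-$m$ locus sat inside a linear subspace of codimension $2g - 1$, which sufficed. In our setting the order-$0$ singular locus is the non-regular locus of $\GSp_{2g}$, of codimension $3$, which refines further into the Steinberg stratification by Jordan type; each stratum's codimension is classically known, and one must check that the geometric codimension combined with the equidistribution saving yields the required exponent when summed over $m \geq 1$. The genuinely hard case is when $f$ has repeated roots modulo $l$, so that every $\F_l$-point of $\GSp_{2g,f}$ is non-regular; here the codimension bound in $\GSp_{2g,p}$ degrades and one must exploit the equidistribution of $\chi$ more carefully along each Jordan stratum. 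This is the step where new input beyond the Section 4.2 framework appears to be required, and is presumably the reason the estimate is left as a conjecture.
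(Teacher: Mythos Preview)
The statement is labeled a \emph{conjecture} in the paper and is not proved there. What the paper actually establishes is only the lower bound $v_{l,k}(f) \ge 1 - O(l^{-1})$ (Proposition~\ref{prop: vl lower bound}), by counting $\F_l$-points of the regular locus of $\GSp_{2g,f}$ via Lang--Weil and Steinberg's results, and lifting them with the implicit function theorem (Theorem~\ref{Implicit1}). The upper bound is left open, and the paper says so explicitly in the paragraph following the conjecture: ``The difficulty comes when $k>1$, where we need to count the number of lifts of each $\F_l$-point. However, this can behave unexpectedly depending on the singularities of $\GSp_{2g,f}$.''

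Your proposal is not really a proof but a diagnosis, and your diagnosis is accurate. The regular-locus contribution you describe is exactly the mechanism behind the paper's lower bound. The singularity-stratification idea modelled on Section~\ref{sec: vl(t)} is the natural thing to try, and you correctly locate the obstruction: when $f$ has repeated roots modulo $l$ (which is precisely the bad-prime case $l \mid \disc(f)$), the entire fibre $\GSp_{2g,f}(\F_l)$ lies in the non-regular locus, so there is no smooth part to carry the main term, and one must control lifts of genuinely singular points with a uniform constant independent of $f$. The paper does not know how to do this and neither do you; your closing sentence is correct.

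One small sharpening of your framing: the split into good and bad primes is not quite a reduction. At good primes the zeta-function identity already gives $1 + O(l^{-1})$ with a constant depending only on $g$, so the entire content of the conjecture is the uniform upper bound at primes dividing $\disc(f)$. Since $\disc(f)$ can have arbitrarily large prime factors as $f$ varies, there is no ``finitely many exceptions'' escape hatch --- the bad-prime analysis is the whole problem.
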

When $k=1$, we are able to deduce the $\F_l$-points of $\GSp_{2g,f}$ from the Lang-Weil bounds. The difficulty comes when $k>1$, where we need to count the number of lifts of each $\F_l$-point. However, this can behave unexpectedly depending on the singularities of $\GSp_{2g,f}$. For example, it is possible that a singular point has many more lifts than expected. On the other hand, smooth $\F_l$-points lift nicely, and by looking at the non-singular locus of $\GSp_{2g,f}$ we are able to prove the following.
\begin{proposition}\label{prop: vl lower bound}
For $l\neq p$ and $k\ge 1$, we have $v_{l,k}(f) \ge 1-O(l^{-1})$, which further implies that $v_l(f)\ge 1-O(l^{-1})$.
\end{proposition}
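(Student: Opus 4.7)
The plan is to lower bound $\#\GSp_{2g,f}(\Z/l^k\Z)$ by exhibiting a large collection of smooth points and invoking Hensel's lemma. Let $U$ denote the smooth locus of $\GSp_{2g,f}$ over $\Z_l$; a point $\gamma \in \GSp_{2g,f}$ is smooth precisely when the characteristic polynomial map has maximal rank there, which happens exactly on the regular semisimple locus. Since the generic fiber of $\GSp_{2g,f}$ has dimension $2g^2$ and $U$ is smooth of that dimension, Hensel's lemma gives that every $\bar\gamma \in U(\F_l)$ lifts to exactly $l^{2g^2(k-1)}$ points of $U(\Z/l^k\Z)\subseteq \GSp_{2g,f}(\Z/l^k\Z)$. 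Combined with the straightforward computation $\#\GSp_{2g}(\Z/l^k\Z)/(l^{gk}\phi(l^k)) = l^{2g^2 k}\prod_{i=1}^g(1-l^{-2i})$, this reduces the proposition to proving $\#U(\F_l) \geq l^{2g^2}(1 - O(l^{-1}))$.

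In the case $l \nmid \disc(f)$, the reduction $f \pmod l$ is separable, so every $\F_l$-point of $\GSp_{2g,f}$ is regular semisimple and hence smooth, giving $U(\F_l) = \GSp_{2g,f}(\F_l)$. Here Lemma \ref{lem: good prime ratio of zeta functions} already yields $v_{l,1}(f) = \zeta_{K,l}(1)/\zeta_{K^+,l}(1)$, and unwinding the Euler factors one checks that each local factor over a prime $\mu$ of $K^+$ above $l$ contributes $1 + O(l^{-1})$ regardless of whether $\mu$ splits, stays inert, or ramifies in $K/K^+$; the Hensel lifting just described then propagates this estimate to all $k \geq 1$.

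The main obstacle is the case $l \mid \disc(f)$, where $U(\F_l)$ is empty and the Hensel argument collapses. To handle this, the plan is to pick a single $\gamma_0 \in \GSp_{2g,f}(\Z_l)$ that is regular semisimple over $\Q_l$ (such a $\gamma_0$ exists because $f$, being an ordinary Weil polynomial, is squarefree and hence separable in $\Q_l[x]$), and to count its $\GSp_{2g}(\Z/l^k\Z)$-conjugacy orbit directly. The orbit sits inside $\GSp_{2g,f}(\Z/l^k\Z)$ and has size $\#\GSp_{2g}(\Z/l^k\Z)/\#C(\Z/l^k\Z)$, where $C$ is the scheme-theoretic centralizer of $\gamma_0$ in $\GSp_{2g}$. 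The technical heart of the argument, and what I expect to be the hardest step, is to establish $\#C(\Z/l^k\Z) \leq l^{(g+1)k}(1 + O(l^{-1}))$ even when $C$ is not smooth over $\Z_l$; this can be approached by identifying $C(\Z_l)$ with the symplectic-norm-one units of the order $\Z_l[\gamma_0]\cong \Z_l[x]/f(x)$ under the involution $x \mapsto p/x$, and then bounding these via the structure of the local factors of $\Z_l[\gamma_0]\otimes \Q_l$ at the primes of $K$ lying above $l$, where the singular behavior of $C$ is quantitatively controlled by the higher discriminant valuations.
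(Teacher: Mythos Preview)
Your first paragraph sets up exactly the right framework, but contains a crucial misidentification: you assert that the characteristic polynomial map has maximal rank ``exactly on the regular semisimple locus.'' In fact, by Steinberg's theorem \cite[Theorem~1.5]{Ste65}, the differentials of the invariant polynomials are linearly independent at every \emph{regular} element --- semisimple or not. Consequently, the smooth locus $U$ of $\GSp_{2g,f}$ over $\F_l$ is the regular locus of the fibre, which is nonempty for every characteristic polynomial: even when $l\mid\disc(f)$, the fibre over $\F_l$ contains regular elements (for instance, the symplectic analogue of a companion matrix, whose reduction mod $l$ has a single Jordan block for each repeated eigenvalue). So your claim that ``$U(\F_l)$ is empty'' in the bad-prime case is simply false, and the ``main obstacle'' you identify does not exist.

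This is precisely how the paper proceeds: it shows, via Steinberg's irreducibility of fibres and existence of regular elements, together with Lang--Weil, that $\#(U\cap\pi^{-1}(x))(\F_l)\ge l^{\dim\Sp_{2g}-g}(1-O(l^{-1}))$ for \emph{every} $x$, and then Hensel-lifts. Your first paragraph would already finish the proof once the misidentification is corrected; no case split on $l\mid\disc(f)$ is needed.

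Your alternative approach for the bad-prime case also has a gap as written. You propose to pick ``a single $\gamma_0\in\GSp_{2g,f}(\Z_l)$ that is regular semisimple over $\Q_l$'' and bound its conjugacy orbit via the centralizer. But the centralizer of $\gamma_0$ in $\GSp_{2g}(\Z/l^k\Z)$ depends heavily on the reduction of $\gamma_0$ mod $l$, not just on its behaviour over $\Q_l$. If $\gamma_0$ reduces to a scalar mod $l$ (which is perfectly compatible with being regular semisimple over $\Q_l$), its centralizer mod $l$ is all of $\GSp_{2g}(\F_l)$ and your bound $\#C(\Z/l^k\Z)\le l^{(g+1)k}(1+O(l^{-1}))$ fails badly. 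The identification of the centralizer with the units of $\Z_l[\gamma_0]$ you invoke is only valid when $\gamma_0$ is regular \emph{over $\Z_l$}, i.e.\ regular after reduction mod $l$ --- which is exactly the Steinberg input you are missing. Once you make that choice, the orbit argument does work, but it is essentially a reformulation of the regular-locus Hensel argument.
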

Because of the nature of $v_l(f)$ as a limit, we can only understand it through the successive approximations $v_{l,k}(f)$, and for any useful application we would need to bound the error between them. From the explicit expression of $v_l(f)$ in the elliptic curve case we conjecture the following.
\begin{conjecture}
There is a constant $\delta>0$ where $v_l(f)=v_{l,k}(f)+O(l^{-k\delta})$.
\end{conjecture}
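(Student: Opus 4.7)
The plan is to adapt the strategy used for $v_l(t)$ in Proposition \ref{prop: vl,k(t) to vl(t)} to the full characteristic polynomial setting, stratifying points of $\GSp_{2g,f}(\Z/l^k\Z)$ by a ``singularity order'' with respect to the characteristic polynomial map $\chi\colon \GSp_{2g,p} \to \mathbb{A}^g$. Concretely, I would call $\gamma \in \GSp_{2g,f}(\Z/l^k\Z)$ a singularity of order $m$ if $m \leq k$ is the largest integer such that $d\chi_\gamma(v) \equiv 0 \pmod{l^m}$ for every $v$ in the tangent space $\gamma L(\Z/l^k\Z)$, exactly mirroring Definition \ref{definition: singularity of order m}. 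The smooth stratum ($m = 0$) contributes the main term $v_l(f)$ with error $O(l^{-k})$ by the usual Hensel lifting, so everything reduces to controlling $m \geq 1$.

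The second step is a direct analog of Lemma \ref{lem: trace distributions for lift}: for a singularity of order $m$ and any $a \geq 2m+1$, among the $l^{(2g^2+g)(a-m)}$ lifts of $\pi_m(\gamma)$ to $\GSp_{2g,p}(\Z/l^a\Z)$, the number $N(f')$ of lifts with characteristic polynomial $f' \pmod{l^a}$ vanishes unless $f' \equiv f \pmod{l^{2m}}$, and satisfies $N(f'_1) = N(f'_2)$ whenever $f'_1 \equiv f'_2 \pmod{l^{2m+1}}$. The argument is the same induction: writing lifts as $\gamma(1 + l^{\lceil a/2\rceil} v)$ and expanding $\chi(\gamma(1+l^{\lceil a/2\rceil}v)) = f + l^{\lceil a/2\rceil} d\chi_\gamma(v) + \cdots$, the fact that $\gamma$ is a singularity of order $m$ forces the linear form $d\chi_\gamma$ to factor through an $l^m$-scaling, and a linear algebra count distributes the resulting tuples uniformly over the image. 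This immediately upgrades a singular point of order $m$ to contribute at most an $l^{-g(k - 2m - 1)}$ proportion of the lifts to $\GSp_{2g,f}(\Z/l^k\Z)$.

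The third step is a codimension bound for singularities of order $m$ in $\GSp_{2g,p}(\F_l) \cap L_f$, where $L_f$ denotes the analog of the linear subspace in Section 4.2 defined by the vanishing of $d\chi$ to order $m$. Here one would use that at a regular semisimple $\gamma$ the differential $d\chi_\gamma\colon \gamma L \to \mathbb{A}^g$ is surjective, so the order-$m$ singular stratum in $\GSp_{2g,p}$ has codimension at least some $c \geq 1$ in $\GSp_{2g,p} \cap L_f$, giving $O(l^{(2g^2 - g)m - c})$ choices for $\pi_m(\gamma)$. Combined with step two this yields $O(l^{(2g^2+g-1)k - \delta' m})$ for $\delta' > 0$ depending only on $g$. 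Summing the geometric series in $m$ and comparing to $\#\GSp_{2g}(\Z/l^k\Z)/(l^{gk}\phi(l^k)) = \Theta(l^{(2g^2+g-1)k})$ then gives $v_{l,k}(f) = v_l(f) + O(l^{-\delta k})$ for some $\delta > 0$.

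The main obstacle is the third step: unlike the trace case where the singular locus of $\GSp_{2g,p,t}$ was a single linear condition yielding a clean $2g-1 \geq 3$ codimension bound, the singular locus of $\chi$ cuts out a discriminant-type subvariety whose local structure depends heavily on the cycle type of Frobenius on the roots of $f$. A uniform codimension bound independent of $f$ (in particular, independent of how divisible $\disc(f)$ is by $l$) is exactly what is not currently understood, and is the same geometric obstruction that prevents the authors from proving Conjecture \ref{conj: v_l = 1+O(l^-1)} unconditionally. A satisfactory resolution likely requires either a careful stratification of $\GSp_{2g,f}$ by Jordan-type on residue classes, or an integration-on-orbital-integrals argument along the lines of the appendix that bypasses the geometric singularity analysis entirely.
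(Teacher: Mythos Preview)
This statement is a \emph{conjecture} in the paper, not a theorem: immediately after stating it, the authors write ``Unfortunately, we are unable to prove anything of this sort, again due to the difficulties of counting $\GSp_{2g,f}(\Z/l^k\Z)$.'' There is therefore no proof in the paper to compare your proposal against.

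Your outline is the natural adaptation of the $v_l(t)$ argument, and you have correctly located where it breaks down. Step two (the analog of Lemma~\ref{lem: trace distributions for lift}) goes through essentially as you say, because modulo $l^a$ the expansion $\chi(\gamma(1+l^{\lceil a/2\rceil}v))$ is linear in $v$ once $2\lceil a/2\rceil \ge a$. The genuine gap is step three, and it is more serious than your phrasing suggests. In the trace case the order-$m$ singular locus is contained in a \emph{fixed linear subspace} of codimension $2g$ in $\Mat_{2g}$, independent of $m$ and of $\gamma$; this is what makes the count $O(l^{(2g^2-g)m})$ for $\pi_m(\gamma)$ work uniformly. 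For the full Steinberg map $\chi$, the condition ``$d\chi_\gamma$ vanishes to order $m$'' is a nonlinear condition on $\gamma$ whose stratification is governed by how $f$ degenerates mod $l^m$ (repeated roots, collisions of conjugate pairs, etc.), and there is no analogue of a single linear subspace containing all order-$m$ singularities. To make the argument close you would need the order-$m$ stratum to have codimension growing at least linearly in $m$, uniformly in $f$; this is precisely the unresolved geometric input, and it is the same obstruction that blocks Conjecture~\ref{conj: v_l = 1+O(l^-1)}. Your final paragraph acknowledges this, so your proposal is an honest diagnosis of the problem rather than a proof.
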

Unfortunately, we are unable to prove anything of this sort, again due to the difficulties of counting $\GSp_{2g,f}(\Z/l^k\Z)$. However, we are able to show that $v_{l,k}(f)$ stabilizes for sufficiently large $k$ depending on $f$.
\begin{proposition}\label{prop: stabilization} There is a constant $C>0$ where for $l\neq p$ and $k \geq C \ord_l(\disc(f))$ we have $v_l(f)=v_{l,k}(f)$.
\end{proposition}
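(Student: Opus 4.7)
Let $d = \ord_l(\disc(f))$. First observe that the denominator $\#\GSp_{2g}(\Z/l^k\Z)/(l^{gk}\phi(l^k))$ grows by exactly $l^{2g^2}$ when $k$ is incremented: smoothness of $\GSp_{2g}$ in dimension $2g^2 + g + 1$ gives a factor $l^{2g^2+g+1}$, while $l^{gk}\phi(l^k)$ grows by $l^{g+1}$. Hence it suffices to prove that for $k \geq Cd$ with an absolute constant $C = C(g)$, we have $\#\GSp_{2g,f}(\Z/l^{k+1}\Z) = l^{2g^2} \cdot \#\GSp_{2g,f}(\Z/l^k\Z)$.

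The plan is to apply a quantitative Hensel's lemma to the characteristic polynomial map $\chi \colon \GSp_{2g} \to \A^{g+1}$ sending a matrix to its multiplier together with the $g$ independent coefficients of its characteristic polynomial. Because $f$ is an ordinary Weil $p$-polynomial, its roots are distinct, so $\disc(f) \neq 0$ and the fiber $\chi^{-1}(f)$ is smooth over $\Q_l$ of dimension $2g^2$. The depth at which this smoothness becomes visible over $\Z_l$ is controlled by $d$ through the classical Jacobian-discriminant relation for the Chevalley map: along a transversal to the centralizer of a regular semisimple $\gamma \in \GSp_{2g}(\Q_l)$, the determinant of $d\chi_\gamma$ is, up to a unit and a power of $p$, the Weyl discriminant $D(f)$, whose $l$-adic valuation is bounded by $d$ (cf. Section 4 of \cite{Achter_2023}). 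Consequently $d\chi_\gamma$ admits a $\Z_l$-linear section after scaling by $l^{O(d)}$, and standard $l$-adic Newton iteration then shows that once $k$ exceeds a fixed multiple of $d$, every $\gamma_k \in \GSp_{2g,f}(\Z/l^k\Z)$ lifts to $\GSp_{2g,f}(\Z_l)$, and the set of lifts of $\gamma_k$ to $\GSp_{2g,f}(\Z/l^{k+1}\Z)$ forms a torsor under $\ker(d\chi) \otimes \F_l$, a space of size $l^{2g^2}$.

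The main obstacle is the uniformity of this Hensel argument in $f$, particularly at $\gamma_k$ whose reduction modulo $l$ is not regular semisimple — there the local structure of $\chi$ modulo $l$ does not directly resemble the Chevalley picture. The remedy is that once $k$ is sufficiently large relative to $d$, such $\gamma_k$ nevertheless lie in the $l^k$-adic neighborhood of a regular semisimple lift, and the perturbation argument still applies. An alternative route avoiding a point-by-point analysis is to invoke Denef's rationality theorem for Igusa $p$-adic zeta functions: the series $\sum_k \#\GSp_{2g,f}(\Z/l^k\Z) l^{-ks}$ is a rational function of $l^{-s}$ whose poles are determined by an embedded resolution of the ideal of $\chi^{-1}(f)$ inside $\GSp_{2g}$; since the only obstruction to smoothness lies along the discriminant locus, the stabilization depth is linear in $d$, yielding the required $C = C(g)$.
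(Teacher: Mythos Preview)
Your outline matches the paper's approach---Hensel on the Steinberg map $\chi$, with stabilization depth controlled by the $l$-adic size of the Jacobian of $\chi$---and you correctly isolate the real obstacle: uniformity of the Jacobian bound over \emph{all} $\gamma \in \GSp_{2g,f}(\Z_l)$, not merely those with regular semisimple reduction. However, the proposal does not actually close this gap. The assertion that ``along a transversal to the centralizer the determinant of $d\chi_\gamma$ is, up to a unit, the Weyl discriminant $D(f)$'' holds over $\Q_l$ once one diagonalizes, but the centralizer/transversal splitting of $T_\gamma\GSp_{2g}$ need not be a $\Z_l$-splitting: the spectral projectors carry denominators. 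The paper supplies the missing lemma (Proposition~\ref{DeterminantBound2}): writing $\gamma = P^{-1}DP$ with $P\in\Sp_{2g}(\bar\Q_l)$, it bounds $\|P\|,\|P^{-1}\|\le |\disc f|^{-(g+1)}$ via the operators $E_j=\prod_{i\neq j}(M-\lambda_i)/(\lambda_j-\lambda_i)$, and concludes $|\det(AA^T)|\ge |\disc f|^e$ for the Jacobian $A$ of $\chi$ in an explicit $\Z_l$-basis of the tangent space. This is exactly the step you need to justify ``$d\chi_\gamma$ admits a $\Z_l$-linear section after scaling by $l^{O(d)}$''.

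Neither of your proposed remedies substitutes for this lemma. The first---that non-regular $\gamma_k$ lie in the $l^k$-neighborhood of a regular semisimple lift---is circular: producing such a lift with the correct characteristic polynomial is a Hensel problem of the same depth, and is the content of Proposition~\ref{CharacteristicLift1}. The second---Denef rationality---guarantees \emph{some} stabilization but says nothing about its depth being $O(d)$; that linear bound is precisely what needs proof. Finally, your claim that the lifts to $\Z/l^{k+1}$ form a torsor under $\ker(d\chi)\otimes\F_l$ presumes $d\chi$ is surjective modulo $l$, which fails at singular reductions. The paper instead first lifts to $\Z_l$ (Proposition~\ref{CharacteristicLift1}) and then counts lifts via the Smith normal form of $A$ (Proposition~\ref{CharacteristicLift2}), which works because all elementary divisors of $A$ have valuation at most $c=e\cdot d$.
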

However, for every choice of $l$ and $k$, there would be some polynomials $f$ for which $v_l(f)\neq v_{l,k}(f)$. To deal with this, we need show that the number of matrices belonging to such $v_l(f)$ is small, so that over $\Z/l^k\Z$, we retain most of the weight from the local factors $v_l(f)$ when we look at the truncation $v_{l,k}(f)$. We state this as the following proposition.
\begin{proposition}\label{prop: l^k not dividing disc lower bound}
There is a constant $\delta>0$ such that for $l\neq p$ and $k\ge 1$ we have
$$\frac{\#\{\gamma \in \GSp_{2g,p}(\Z/l^k\Z)  \mid l^k\nmid\disc(\gamma)\}}{\#\GSp_{2g,p}(\Z/l^k\Z) }\ge  1-O(l^{-k\delta}).$$
\end{proposition}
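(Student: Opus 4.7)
The plan is to leverage the characteristic polynomial (Steinberg) map $\chi\colon \GSp_{2g,p}\to \A^g$, sending $\gamma\mapsto (a_1(\gamma),\ldots,a_g(\gamma))$, so as to reduce the bound to an elementary estimate for a single polynomial in $g$ variables. Since $\disc(\gamma)$ depends only on $\chi(\gamma)$, a fiberwise decomposition gives
$$\#\{\gamma\in \GSp_{2g,p}(\Z/l^k\Z)\mid l^k\mid \disc(\gamma)\}\;=\;\sum_{\substack{\mathbf a\in(\Z/l^k\Z)^g\\ l^k\mid \disc(f_{\mathbf a})}} \#\chi^{-1}(\mathbf a)(\Z/l^k\Z).$$
Because the derived subgroup $\Sp_{2g}$ of $\GSp_{2g}$ is simply connected, a theorem of Steinberg guarantees that $\chi$ is flat with geometrically reduced (in fact normal and complete-intersection) fibers of dimension $2g^2$. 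Combined with Noether normalization and standard Hensel-type lifting, this yields a uniform bound $\#\chi^{-1}(\mathbf a)(\Z/l^k\Z)\le C\, l^{2g^2 k}$ with $C$ depending only on $g$, valid for every $\mathbf a$ and every $l\ne p$.

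It remains to estimate $N_k\coloneqq \#\{\mathbf a\in(\Z/l^k\Z)^g\mid l^k\mid \disc(f_{\mathbf a})\}$. The discriminant $D(\mathbf a)\coloneqq \disc(f_{\mathbf a})\in \Z[a_1,\ldots,a_g]$ is a nonzero polynomial of bounded degree (depending only on $g$), and by the explicit computations in the proof of Lemma~\ref{lem: proportion ordp > g(g-1)} its reduction modulo $l$ remains nonzero for every $l\ne p$. An elementary induction on the number of variables, based on the one-variable fact that for $Q\in\Z[t]$ of degree $d$ with $Q\not\equiv 0\pmod{l}$ one has $\#\{t\in\Z/l^k\Z\mid l^k\mid Q(t)\}\le d\cdot l^{k(1-1/d)}$ (equivalently, the Igusa local zeta function of $D$), then yields $N_k\le C\, l^{k(g-\delta)}$ for some $\delta>0$ depending only on $g$, uniformly in $l\ne p$ and $k\ge 1$. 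Combining with the previous paragraph gives $\#\{\gamma\mid l^k\mid \disc(\gamma)\}\le C\, l^{k(2g^2+g-\delta)}$, and dividing by $\#\GSp_{2g,p}(\Z/l^k\Z)=\Theta(l^{k(2g^2+g)})$ produces the claimed bound.

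The main technical obstacle lies in the uniform fiber bound of the first step: for characteristic polynomials $f$ with repeated roots the scheme $\chi^{-1}(f)$ need not be reduced, so the naive Lang--Weil-type estimate $\#Y(\Z/l^k\Z)\lesssim l^{k\dim Y}$ fails outright, and it is Steinberg's flatness and normality results for reductive groups with simply connected derived subgroup that rescue this step. An alternative route, closer in spirit to the analysis of $v_l(t)$ in Section~\ref{sec: vl(t)}, would be to stratify the discriminant locus $\{\disc=0\}\subset \GSp_{2g,p}$ according to the order of vanishing of $\disc$ at a given $\F_l$-point and to bound the $\Z/l^k\Z$-lifts stratum by stratum via a Taylor-expansion argument; this would likely yield a more explicit value of $\delta$ but requires one to understand the complicated singular structure of $\{\disc=0\}$ directly.
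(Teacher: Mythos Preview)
Your decomposition via the Steinberg map is natural, and the second step (bounding $N_k=\#\{\mathbf a\in(\Z/l^k\Z)^g:l^k\mid\disc(f_{\mathbf a})\}$ by an induction on variables starting from the one-variable bound $\#\{t:l^k\mid Q(t)\}\le d\,l^{k(1-1/d)}$) is essentially the paper's Proposition~\ref{UpperBoundPoints1}. The genuine gap is the first step: the uniform fiber bound $\#\chi^{-1}(\mathbf a)(\Z/l^k\Z)\le C\,l^{2g^2k}$. Your justification ``Noether normalization and standard Hensel-type lifting'' does not give this. Noether normalization produces a finite morphism $\chi^{-1}(\mathbf a)\to\A^{2g^2}$, but fibers of finite maps on $\Z/l^k\Z$-points are \emph{not} bounded unless the map is \'etale (e.g.\ for $t\mapsto t^2$ on $\A^1$, the fiber over $0\in\Z/l^k\Z$ has size $l^{\lceil k/2\rceil}$). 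Steinberg's theorem does say the fibers are normal, irreducible, reduced complete intersections with rational singularities, and one \emph{can} deduce bounds of the type you want from rational singularities via results of Aizenbud--Avni or via the irreducibility of jet schemes of Steinberg fibers, but this is substantial input, not ``standard Hensel''. Indeed, the paper explicitly flags (Conjecture~\ref{conj: v_l = 1+O(l^-1)} and the surrounding discussion) that controlling $\#\GSp_{2g,f}(\Z/l^k\Z)$ from above is exactly the difficulty.

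The paper's proof avoids the Steinberg map entirely and instead pushes the problem onto \emph{smooth} ambient space. It covers $\Sp_{2g}$ (identified with $\GSp_{2g,p}$ via multiplication by $\diag(p,\ldots,p,1,\ldots,1)$) by standard-\'etale charts $U_i\to\A^{\dim\Sp_{2g}}$, uses a norm construction (Lemma~\ref{lem: SchemeImage1}) to produce for each chart a polynomial $R_i\in\Z[x_1,\ldots,x_m]$ dividing the discriminant pulled back to $U_i$, applies Proposition~\ref{UpperBoundPoints1} to $R_i$ on $\A^m$, and then pulls back via Lemma~\ref{lem: étaleBound1} (\'etale maps \emph{do} have bounded fibers on $\Z/l^k\Z$-points). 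So the paper trades your hard fiber bound for the easy \'etale-chart fiber bound, at the price of working with a polynomial in $\dim\Sp_{2g}$ variables rather than $g$ variables. A secondary issue you also gloss over is uniformity in $p$: the discriminant $D(\mathbf a)$ has $p$ baked into its coefficients, and the paper handles this by observing that over $\Z_l$ the schemes $\GSp_{2g,p}$ and $\GSp_{2g,p'}$ are isomorphic whenever $p/p'$ is a square, reducing to two fixed values of $p$ per $l$ and then checking nonvanishing of $R_i$ mod $l$ directly.
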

We prove Propositions \ref{prop: vl lower bound}, \ref{prop: stabilization} and \ref{prop: l^k not dividing disc lower bound} in the next section. As we can see, the strength of these statements are certainly not ideal when compared to the corresponding conjectures. Nevertheless, they are sufficient to complete the proof of Theorem \ref{thm: A_g}, although this leads to more complications in Section \ref{sec: final proof}.
\section{Point Counting modulo $l^k$}\label{sec: point counting}
The aim of this section is to establish some basic bounds on the number of points a variety $X/\bb{Z}_l$ has over the ring $\bb{Z}/l^k\Z$, which we will then use to prove Proposition \ref{prop: vl lower bound}, \ref{prop: stabilization} and \ref{prop: l^k not dividing disc lower bound}. When $k = 1$, we are interested in counting $\bb{F}_l$-points in which case we can apply Lang--Weil bounds which gives us satisfactory results. However when $k \geq 2$, the situation is much more complicated. When $X$ is smooth over $\bb{Z}_l$, it is not hard to show that every $\bb{F}_l$-point has exactly $l^{(k-1) \dim X}$ lifts to $\bb{Z}/l^k\Z$. But this fails when $X$ is singular. For example, if we let $X = \bb{Z}[x]/x^2$, then there are $l^k$ number of solutions mod $l^{2k}$, even though $\dim X = 0$. 
\par 
Let's first quickly handle the case of $k = 1$. The Lang--Weil bound implies that for a geometrically irreducible subvariety $V \subseteq \bb{P}^n$ that is defined over $\bb{F}_l$, we have 
$$|V(\bb{F}_l) - l^{\dim V}| \leq O_{\deg V, \dim V, n} (l^{\dim V - \frac{1}{2}})$$
where our big $O$ constant depends on $\deg V$ and $n$. If $V$ is not geometrically irreducible, one can still deduce an upper bound 
$$|V(\bb{F}_l)| \leq O_{\deg V, \dim V, n}(l^{\dim V}).$$

In our situations, we are normally interested in the case of an affine variety $V \subseteq \bb{A}^n$ that is cut out by some equations $f_1,\ldots,f_n$ for which we can bound the degree. We embed $\bb{A}^n$ into $\bb{P}^n$ and homogenize the $f_i$'s. Using Bezout, the degree of the intersection of the homogenized $f_i$'s is bounded by the product of $\deg(f_i)$. Let $W$ be this intersection. Then $V \subseteq W$ and $W \cap \bb{A}^n = V$. This implies that if we write $W = \cup_{i=1}^{n} Z_i$ and if $Z_1,\ldots,Z_m$ are the components that are not contained in the hyperplane $\{x_n = 0\}$, we have that $\cup_{i=1}^{m} Z_i$ is the Zariski closure of $V$ inside $\bb{P}^n$. Thus the degree of $\bar{V}$ may be bounded by the product of $\deg(f_i)$ and we can apply Lang--Weil to $\bar{V}$, noting that the additional points from $\bb{P}^n$ are contained in $\bar{V} \cap \{x_n = 0\}$ which is a variety one dimension lower and of degree equal to that of $\bar{V}$. Hence we can still apply the Lang--Weil bound even if $V$ is an affine subvariety of $\bb{A}^n$. 
\par 
We now move onto the case where $k \geq 2$. If $X$ is a hypersurface in $\bb{A}^n$ given by $\{f = 0 \}$, then the Igusa local zeta function for $f$ captures the number of solutions $f(x_1,\ldots,x_n) = 0$ has mod $l^k$ and it should be possible to apply the theory of Igusa zeta functions to obtain some corresponding bounds. For example the Igusa zeta function is known to be rational and some properties of its poles are known. However we are interested in certain subvarieties of $\GSp_{2g}$ and so we are forced to rely on more ad-hoc statements. 
\par 
We first recall the non-archimedean implicit function theorem which relies on the multivariate Hensel's lemma. 
\begin{theorem}[{\cite[Theorem 4]{Fis97}}] \label{Implicit1}
Let $A = \bb{Z}_l[x_1,\ldots,x_n]$ and let $f_1,\ldots,f_m \in \bb{Z}_l\{x_1,\ldots,x_n\}$ be convergent power series with $m \leq n$. Let $\mathbf{x} =(x_1,\ldots,x_n)$ be a tuple and consider the $m \times n$ matrix $M$ with entries 
$$m_{i,j} = \frac{\partial f_i}{\partial x_j}(\mathbf{x}).$$
Let $f$ denote the exponent of $l$ dividing the determinant of the first maximal $m \times m$ minor and let $e \geq f$ be a positive integer. Then if $f_i(\mathbf{x}) \equiv 0 \pmod {l^{e+f+1}}$ for all $i$, there exist power series $\phi_1,\ldots,\phi_m \in \bb{Z}_l[x_{m+1},\ldots,x_n]$ such that 
$$f_i(y_1 + l^e \phi_1(t_{m+1},\ldots,t_n),\ldots,y_m + l^e \phi_m(t_{m+1},\ldots,t_n), y_{m+1} + l^{2e} t_{m+1},\ldots, y_n + l^{2e} t_n) = 0 $$
for all $i$ and for all $t_{m+1},\ldots,t_n \in l \bb{Z}_l$. 
\end{theorem}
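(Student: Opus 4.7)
The plan is to execute multivariate Newton iteration over the formal power series ring $R := \bb{Z}_l[[t_{m+1}, \ldots, t_n]]$. Define $\Phi : R^m \to R^m$ by
\[
\Phi(u)_i := f_i(x_1+u_1, \ldots, x_m+u_m, x_{m+1}+l^{2e}t_{m+1}, \ldots, x_n+l^{2e}t_n),
\]
and let $A(u) \in R^{m \times m}$ denote its Jacobian with respect to $u$. The goal is to find a zero $u^\infty \in l^e R^m$ of $\Phi$, from which $\phi_i := u_i^\infty/l^e \in R$ supplies the requested power series; evaluation at any $(t_{m+1},\ldots,t_n) \in (l\bb{Z}_l)^{n-m}$ is legal because the coefficients lie in $\bb{Z}_l$.

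First, I would establish the starting estimates. Since the $t_j$ are only ever evaluated at elements of $l\bb{Z}_l$, the effective perturbation in the last $n-m$ coordinates contributes $l^{2e+1}\bb{Z}_l$ upon substitution; formally expanding $\Phi(0)$ in the $t$-variables and tracking $l$-adic valuations of each coefficient (after accounting for one factor of $l$ per copy of $t_j$) yields $\Phi(0) \in l^{e+f+1}R^m$, using $f_i(\mathbf{x}) \in l^{e+f+1}\bb{Z}_l$ for the constant term and $2e+1 \geq e+f+1$ (via $e \geq f$) for higher-order terms. Similarly $A(0) \equiv M \pmod{l^{2e}}$, where $M$ is the original first $m \times m$ minor, so $\det A(0)$ has $l$-adic valuation exactly $f$, and Cramer's rule yields $A(0)^{-1} \in l^{-f} R^{m \times m}$.

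Next I would set $u^{(0)} := 0$ and iterate $u^{(k+1)} := u^{(k)} - A(u^{(k)})^{-1}\Phi(u^{(k)})$, monitoring $a_k := v_l(\Phi(u^{(k)}))$. The Taylor identity $\Phi(u+\delta) = \Phi(u) + A(u)\delta + Q$, with $Q$ a $\bb{Z}_l$-coefficient quadratic remainder in $l^{2 v_l(\delta)}R^m$, forces $a_{k+1} \geq 2(a_k - f)$. Starting from $a_0 \geq e+f+1 \geq 2f+1$, induction shows $a_k \to \infty$, so the iterates form a Cauchy sequence converging to some $u^\infty$ with $\Phi(u^\infty) = 0$. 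Every $u^{(k)}$ stays within $l^{e+1}R^m$, so $A(u^{(k)}) \equiv A(0) \pmod{l^{e+1}}$ preserves the determinantal valuation $f$ and all inverses remain well-defined.

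The main technical hurdle, while ultimately routine, is simultaneously tracking the $l$-adic and $t$-adic filtrations: one must verify that Newton's quadratic convergence is uniform across $t$, which reduces to checking that every coefficient of $\Phi(u^{(k)})$ in the $t$-expansion has $l$-adic valuation at least $a_k$. Granted this, setting $\phi_i := u_i^\infty/l^e$ gives power series in $\bb{Z}_l[[t_{m+1}, \ldots, t_n]]$ with the prescribed shape, and the stated identity $f_i(\cdots) = 0$ for all $t_j \in l\bb{Z}_l$ then follows by $l$-adic continuity of $\Phi$.
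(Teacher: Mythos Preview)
The paper does not supply its own proof of this statement; it is quoted from \cite{Fis97} and used as a black box. Your Newton-iteration approach over $R = \bb{Z}_l[[t_{m+1},\ldots,t_n]]$ is the standard argument behind such non-archimedean implicit function theorems, and the recursion $a_{k+1} \ge 2(a_k - f)$ from $a_0 \ge e+f+1 \ge 2f+1$ correctly gives quadratic convergence to a zero $u^\infty$. The only point deserving care is the one you already flag: under the weighted valuation that credits each $t_j$ with one unit of $l$, the bound $v'(u^\infty) \ge e+1$ does not by itself force $\phi_i = u^\infty_i/l^e$ to have every coefficient in $\bb{Z}_l$; it only guarantees that $\phi_i(t)$ lands in $\bb{Z}_l$ after substituting $t_j \in l\bb{Z}_l$. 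That suffices for the stated identity and for every application later in the paper, so the sketch is sound as written.
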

In particular, for a given $\bb{F}_l$-point with $f = 0$, we can choose $e=0$ so we obtain at least $l^{(k-1)(n-m)}$ lifts of this point to $\Z/l^k\Z$. This will be the key theorem that we will use to obtain lower bounds on $X(\bb{Z}/l^k\Z)$. 

\subsection{Lower bound for ratios}
Our aim in this subsection is to prove Proposition \ref{prop: vl lower bound} by applying Theorem \ref{Implicit1} to get a lower bound for $\GSp_{2g,f}(\bb{Z}/l^k\Z)$. First, fix an ordinary Weil $p$-polynomial $f$. We first recall results of \cite{Ste65}. Fix a field $K$ and consider the map $\pi: \Sp_{2g} \to \bb{A}^g$ that sends a matrix $M$ to $\mathbf a$ where $f_M=f_{\mathbf a}$, and we rewrite the map as $\pi=(p_1,\ldots,p_g)$. In \cite{Ste65}, the fundamental characters $\chi_1,\ldots,\chi_g$ were used instead but there exists an upper triangular invertible matrix $A$ with $\bb{Z}$-coefficients such that $(p_1,\ldots,p_g)^T = A(\chi_1,\ldots,\chi_g)^T$, so the results there apply to our map too.

Since $\Sp_{2g}$ is simply connected, \cite[Theorem 6.11]{Ste65} implies that the fiber $\pi^{-1}(x)$, viewed as a variety i.e. taking the reduced scheme structure,  for $x \in \bb{A}^g(K)$ is geometrically irreducible. On the other hand it is clear that if $x \in \bb{A}^g(K)$, then $\pi^{-1}(x)$ is defined over $K$ just by writing down the equations that determine the coefficients $\mathbf a$. Thus the reduced fiber $\pi^{-1}(x)$ is a geometrically irreducible variety that is defined over $K$. 

Now let $K'$ be an extension so that $\sqrt{p} \in K'$. Then there is an isomorphism between $\Sp_{2g}$ and $\GSp_{2g,p}$ that is equivariant with respect to the Steinberg morphism given by multiplication by $\sqrt{p} I_{2g}$. Hence after base changing to $K'$, any fiber $\pi^{-1}(x)$ with its reduced subscheme structure is geometrically irreducible. Thus the same holds over $K$ too.  
\par 
Next, we look at the differentials $(dp_1,\ldots,dp_g)$. For a fixed $M \in \GSp_{2g,p}$, this is a map $d \pi: T_M \GSp_{2g,p}\to K^g$ and hence each $dp_i$ can be viewed as an element of $K^{\dim \Sp_{2g}}$. Now, there exist $m=4g^2 - \dim \Sp_{2g}$ equations $f_1,\ldots,f_m$ that cut out $\GSp_{2g,p}$ inside $M_{2g \times 2g} \simeq \bb{A}^{4g^2}$ by looking at the equation $MJM^T = pJ$. We let $f_{m+1},\ldots,f_{m+g}$ be the additional $g$ equations that cut out $f_M=f$.

\begin{proposition} \label{TangentSpace1}
Let $f_M=f$. Then, the condition that $(dp_1,\ldots,dp_g)$ are linearly independent at $M \in \GSp_{2g,p}(K)$ is equivalent to the Jacobian matrix for $(f_1,\ldots,f_{m+g})$ having maximal rank at $M$.
\end{proposition}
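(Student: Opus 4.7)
The plan is a short linear-algebra argument exploiting the smoothness of $\GSp_{2g,p}$. First, I will record that $\GSp_{2g,p}$ is smooth of dimension $\dim \Sp_{2g} = 4g^2 - m$, cut out inside $\Mat_{2g \times 2g} \cong \bb{A}^{4g^2}$ by $f_1, \ldots, f_m$. Smoothness at every $K$-point $M \in \GSp_{2g,p}(K)$ implies that the Jacobian $J_1 \coloneqq (\partial f_i / \partial x_j)_{1 \le i \le m}$ evaluated at $M$ has full row rank $m$, and hence $T_M \GSp_{2g,p} = \ker(J_1) \subseteq K^{4g^2}$ has dimension $\dim \Sp_{2g}$.

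Next, I will observe that the additional cutting equations are $f_{m+i} = p_i - a_i$ for the fixed constants $a_i$ coming from the target polynomial $f$, so the gradients of $f_{m+i}$ on the ambient affine space are simply the gradients $\nabla p_i$. I will assemble these into the $g \times 4g^2$ matrix $J_2$ whose $i$-th row is $\nabla p_i(M)$. By construction, the linear functional on $T_M \GSp_{2g,p}$ induced by the $i$-th row of $J_2$ is exactly the covector $dp_i$ appearing in the statement.

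Finally, the Jacobian of $(f_1,\ldots,f_{m+g})$ is the stacked block matrix $\bigl(\begin{smallmatrix} J_1 \\ J_2 \end{smallmatrix}\bigr)$, and since $J_1$ already has full row rank $m$, a standard rank identity for stacked matrices yields
\[
\rank\begin{pmatrix} J_1 \\ J_2 \end{pmatrix} = m + \rank\bigl(J_2|_{\ker J_1}\bigr).
\]
The total number of rows is $m+g = 2g^2 \le 4g^2$, so the maximal possible rank is $m+g$. Thus the stacked Jacobian attains maximal rank $m+g$ at $M$ if and only if $J_2|_{\ker J_1}$ has rank $g$, which by the identification in the previous paragraph is exactly the assertion that $(dp_1, \ldots, dp_g)$ are linearly independent as covectors on $T_M \GSp_{2g,p}$. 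There is no real obstacle here — once the two Jacobians are related correctly via the smoothness of $\GSp_{2g,p}$, the equivalence is immediate from the rank formula.
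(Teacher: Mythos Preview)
Your proof is correct and takes essentially the same approach as the paper: both exploit the smoothness of $\GSp_{2g,p}$ to know the first $m$ rows already have full rank, then reduce the question to whether the restrictions of $\nabla p_i$ to $T_M\GSp_{2g,p}=\ker J_1$ are independent. The paper phrases this dually via codimensions of intersections $\bigcap V_i$, while you use the equivalent rank identity for stacked matrices; the content is the same.
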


\begin{proof}
Set $n = 4g^2$. Letting $V_i$ be the kernel of $(\frac{\partial f_i}{\partial x_1},\ldots, \frac{\partial f_i}{\partial x_n})$, the condition of the Jacobian matrix having maximal rank is equivalent to saying $\bigcap_{i=1}^{m+g} V_i$ has codimension $m+g$. By definition, $T_M \GSp_{2g,p}$ is given by $\bigcap_{i=1}^{m} V_i$ and is of codimension $m$ as $\Sp_{2g}$ is smooth. Since $(dp_1,\ldots,dp_g)$ are linearly independent, this implies that $T_M \Sp_{2g} \cap V_{m+1} \cap \cdots \cap V_{m+g}$ is of codimension $g$ in $T_M \Sp_{2g}$ which gives us codimension $m+g$ inside $k^n$ as desired.
\end{proof}

It immediately follows that the locus $U$ of $M \in \GSp_{2g,p}(\bar{K})$ for which $(dp_1,\ldots,dp_g)$ are linearly independent is a Zariski open set whose complement $Z$ has degree bounded in terms of $g$. Here, we use the notion of degree of its Zariski closure under the natural embedding $\bb{A}^{4g^2} \xhookrightarrow{} \bb{P}^{4g^2}$. On the other hand, a fiber $\pi^{-1}(x)$ is a geometrically irreducible subvariety of codimension $g$ in $\Sp_{2g}$ and degree bounded in terms of $g$. Thus $Z \cap \pi^{-1}(x)$ is a closed subvariety of $\pi^{-1}(x)$ with degree bounded in terms of $g$. Hence we obtain the following.

\begin{proposition} \label{LowerBoundPointCount1}
For each $x \in \bb{A}^g(\F_l)$, we have 
$$\#  (U \cap \pi^{-1}(x))(\F_l)   \geq l^{\dim \Sp_{2g} - g}\left(1 - O(l^{-1}) \right).$$
\end{proposition}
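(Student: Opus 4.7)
The plan is to combine the facts about $\pi^{-1}(x)$ and $Z$ established just above the proposition with a Lang--Weil estimate. Concretely, I would argue as follows.

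First, by the Steinberg theory recalled in the excerpt, $\pi^{-1}(x)$ (with its reduced structure) is geometrically irreducible of dimension $d := \dim \Sp_{2g} - g$, and its degree as a subvariety of $\bb{A}^{4g^2}$ is bounded in terms of $g$ via Bezout applied to the $g$ equations $p_i = x_i$ together with the defining equations of $\GSp_{2g,p}$. The complement $Z$ also has degree bounded in $g$, so $Z \cap \pi^{-1}(x)$ does as well. It then suffices, via the intersection argument, to show that $Z \cap \pi^{-1}(x)$ is a \emph{proper} closed subvariety of the irreducible variety $\pi^{-1}(x)$, so that its dimension is at most $d - 1$.

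The crucial geometric step is establishing this properness, i.e.\ that $\pi^{-1}(x) \not\subseteq Z$ for every $x \in \bb{A}^g(\F_l)$. I would deduce this from Steinberg's cross-section theorem: there is a section $\sigma \colon \bb{A}^g \to \Sp_{2g}$ of $\pi$ whose image lies in the regular locus, and at any regular element the differentials of the invariants $p_1, \dots, p_g$ are linearly independent (this is one standard characterization of regularity). The image of $\sigma$ therefore sits inside $U$, so $\sigma(x) \in U \cap \pi^{-1}(x)$ for every $\F_l$-point $x$. Transporting this to $\GSp_{2g,p}$ via the $\sqrt{p}I_{2g}$-multiplication isomorphism invoked earlier gives the same conclusion there, so $\dim(Z \cap \pi^{-1}(x)) \leq d - 1$.

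The final step is to combine the Lang--Weil lower bound for the geometrically irreducible fiber of dimension $d$ and bounded degree,
$$\# \pi^{-1}(x)(\F_l) \;\geq\; l^d - O\!\left(l^{d - 1/2}\right),$$
with the upper bound on the proper closed subvariety $Z \cap \pi^{-1}(x)$ of dimension at most $d-1$ and bounded degree,
$$\#(Z \cap \pi^{-1}(x))(\F_l) \;\leq\; O\!\left(l^{d-1}\right),$$
both with implicit constants depending only on $g$ (applied as in the discussion of the Lang--Weil bound for affine varieties at the start of this section). Subtracting yields the claimed lower bound on $\#(U \cap \pi^{-1}(x))(\F_l)$.

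I anticipate that the main conceptual obstacle is the verification that every Steinberg fiber meets $U$, i.e.\ the nonemptiness of the regular locus inside each fiber. This rests entirely on the Kostant--Steinberg cross-section of $\pi$ and the equivalence between regularity and linear independence of the $dp_i$'s. Once that geometric input is in place, everything else is a routine application of the Lang--Weil / Schwartz--Zippel estimates already used elsewhere in the section.
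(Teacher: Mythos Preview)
Your proposal is correct and follows essentially the same route as the paper: the paper likewise reduces to showing $U\cap\pi^{-1}(x)\neq\emptyset$ by invoking Steinberg's results that every fiber contains a regular element and that the differentials $(dp_1,\ldots,dp_g)$ are linearly independent at regular elements, then transfers to $\GSp_{2g,p}$ via the $\sqrt{p}I_{2g}$ isomorphism and concludes by Lang--Weil. The only cosmetic difference is that you spell out the Lang--Weil step as a lower bound on $\#\pi^{-1}(x)(\F_l)$ minus an upper bound on $\#(Z\cap\pi^{-1}(x))(\F_l)$, whereas the paper simply says ``our bound follows by Lang--Weil.''
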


\begin{proof}
We first look at $\pi: \Sp_{2g} \to \F_l^g$. First, by \cite[Theorem 1.5]{Ste65}, the differentials $(dp_1,\ldots,dp_g)$ are linearly independent if $M \in \Sp_{2g}(\bar\F_l)$ is a regular element, i.e. when the centralizer has dimension $g$. On the other hand, there are regular elements in $\pi^{-1}(x)$ by \cite[Theorem 6.11]{Ste65}. Hence $U \cap \pi^{-1}(x)$ is non-empty where $U$ is the Zariski open in $\Sp_{2g}$ for which $(dp_1,\ldots,dp_g)$ is linearly independent. Now the isomorphism between $\Sp_{2g}$ and $\GSp_{2g,p}$ implies that $U \cap \pi^{-1}(x)$ is non-empty too for $\GSp_{2g,f}$ and our bound follows by Lang--Weil.     
\end{proof}
Now we prove the desired lower bound on $v_l(f)$.
\begin{proof}[Proof of Proposition \ref{prop: vl lower bound}]
It suffices to show that 
$$\# \GSp_{2g,f}(\bb{Z}/l^k\Z) \geq l^{k(\dim \Sp_{2g}-g)} \left(1 - O(l^{-1})\right)$$
because then we can divide by $\#\GSp_{2g}(\Z/l^k\Z)\le l^{k\dim \GSp_{2g}}(1+O(l^{-1}))$ which follows because $\GSp_{2g}$ is smooth and geometrically irreducible. 

For every $\F_l$-point of $U \cap \pi^{-1}(x)$, the Jacobian matrix given by $(f_1,\ldots,f_{m+g})$ has maximal rank and hence there is a maximal minor which is invertible. We may then apply Theorem \ref{Implicit1} to conclude that our $\F_l$-point has $l^{(k-1)(\dim \Sp_{2g}-g)}$ many lifts. By Proposition \ref{LowerBoundPointCount1}, we have at least $l^{\dim \Sp_{2g}-g} \left(1 - O_g(l^{-1}) \right)$ many such $\F_l$-points of $U \cap \pi^{-1}(x)$ which gives the desired result.
\end{proof}
\subsection{Stabilization of ratios} 
The next property we wish to analyze is the eventual stabilization of $v_{l,k}(f)$ as stated in Proposition \ref{prop: stabilization}. To show this we would need to compare $\# \GSp_{2g,f}(\bb{Z}/l^{k+1}\Z)$ with $\# \GSp_{2g,f}(\bb{Z}/l^k\Z)$. For $k$ sufficiently large, it seems reasonable to expect that the ratio of these two terms should converge to $l^{\dim \GSp_{2g,f}} = l^{\dim \Sp_{2g} - g}$. Our aim now is to prove that this is the case and also give an effective bound on how large $k$ has to be in terms of $\disc(f)$. 
\par 
Let's first work out the simpler case of $\GL_g$. Fix our characteristic polynomial $f$ with $\bb{Z}$-coefficients along with a matrix $M \in \GL_g(\bb{Z}_l)$ with characteristic polynomial $f$. Consider again the Steinberg map $\pi: \GL_g \to \bb{A}^g$ where we send $M$ to the coefficients of its characteristic polynomial. We denote the map by $(p_1,\ldots,p_g)$. Let $A$ be the $g \times g^2$ matrix representing the differential $(dp_1,\ldots,dp_g)$ at the matrix $M$. 

\begin{proposition} \label{DeterminantBound1}
Assume that $\disc(f) \not = 0$. Then there exists a constant $c > 0$, depending only on $g$, such that 
$$|\det(AA^T)| \geq   |\disc(f)|^c.$$
\end{proposition}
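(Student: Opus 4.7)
The plan is to reduce the proposition to a polynomial identity supplied by Hilbert's Nullstellensatz and then evaluate $l$-adic absolute values.

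First, I would reinterpret the two quantities geometrically. In complete analogy with Proposition~\ref{TangentSpace1} (and by the classical theorem \cite[Theorem 1.5]{Ste65}), the differentials $dp_1,\ldots,dp_g$ are linearly independent at $M\in\GL_g$ precisely when $M$ is a regular element of $\GL_g$, i.e.\ when its minimal polynomial equals its characteristic polynomial. Hence $\det(AA^T)(M)=0$ iff $M$ is non-regular, in which case some eigenvalue of $M$ appears with algebraic multiplicity at least $2$ and therefore $\disc(f_M)=0$. As closed subvarieties of $\GL_g$ over $\bar{\bb{Q}}$ this gives the inclusion $V(\det(AA^T))\subseteq V(\disc(f_M))$.

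Second, I would apply Hilbert's Nullstellensatz in the coordinate ring $\bar{\bb{Q}}[M_{ij}][(\det M)^{-1}]$ of $\GL_g$: there exist a positive integer $n$ depending only on $g$ and an element $h\in \bar{\bb{Q}}[M_{ij}][(\det M)^{-1}]$ with
$$\disc(f_M)^n \;=\; h\cdot\det(AA^T).$$
Both $\disc(f_M)^n$ and $\det(AA^T)$ lie in $\bb{Z}[M_{ij}]$; since $\det(AA^T)$ is a non-zero-divisor in the domain $\bar{\bb{Q}}[M_{ij}][(\det M)^{-1}]$, applying $\Gal(\bar{\bb{Q}}/\bb{Q})$ to both sides forces $h\in \bb{Q}[M_{ij}][(\det M)^{-1}]$. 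Clearing denominators then yields an integral identity
$$N\cdot \disc(f_M)^n\cdot (\det M)^k \;=\; h_2(M)\cdot \det(AA^T)$$
in $\bb{Z}[M_{ij}]$, with $N\in\bb{Z}_{\ge 1}$, $k\ge 0$, and $h_2\in\bb{Z}[M_{ij}]$, all depending only on $g$.

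Finally, I would specialize to $M\in \GL_g(\bb{Z}_l)$ and compare $l$-adic absolute values. Since $\det M$ is an $l$-adic unit and $|h_2(M)|_l\le 1$, this gives $|\det(AA^T)|_l\ge |N|_l\cdot|\disc(f)|_l^n$. For primes $l\nmid N$ this already yields the bound with $c=n$; for the finitely many primes $l\mid N$, the trivial inequality $|\disc(f)|_l\le 1$ lets one absorb the factor $|N|_l$ by enlarging the exponent to $c=n+\max_{l\mid N} v_l(N)$, which still depends only on $g$.

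The main conceptual step is the zero-locus inclusion $V(\det(AA^T))\subseteq V(\disc(f_M))$; once established, the Nullstellensatz, the Galois descent, and the $l$-adic bookkeeping are essentially formal. The only mildly delicate point is ensuring $c$ is uniform in $l$, which is handled by the elementary observation that $|\disc(f)|_l\le 1$ at every prime.
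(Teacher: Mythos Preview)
Your Nullstellensatz approach is a genuinely different and more conceptual route than the paper's explicit computation. The paper diagonalizes $M$ by a matrix $P$, bounds $\|P^{-1}\|_l$ via the projection operators $E_j=\prod_{i\neq j}(M-\lambda_i I)/(\lambda_j-\lambda_i)$ to get $\|P^{-1}\|_l\le|\disc(f)|^{-1}$, and then computes $\det(AA^T)$ by hand for diagonal matrices, where it equals $\disc(f)^2$; chaining these through the conjugation gives the explicit constant $c=g+2$. Your argument trades all this for a single invocation of the Nullstellensatz, and would generalize to other groups once the zero-locus inclusion is checked, at the cost of an ineffective $c$.

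There is one genuine gap in your final absorption step. When $l\mid N$ but $v_l(\disc(f))=0$, the required inequality reads $|\det(AA^T)|_l\ge|\disc(f)|_l^c=1$, while your Nullstellensatz identity only yields $|\det(AA^T)|_l\ge|N|_l<1$; enlarging the exponent cannot help here since $|\disc(f)|_l^c=1$ for every $c$. The fix is short: if $l\nmid\disc(f)$ then $M\bmod l$ has distinct eigenvalues and is therefore regular over $\bar{\mathbb{F}}_l$, so by Steinberg's criterion (now applied in characteristic $l$) the matrix $A$ has full rank mod $l$, whence $\det(AA^T)\not\equiv 0\pmod l$ and $|\det(AA^T)|_l=1$. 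Alternatively one can argue that $\det(AA^T)$ is primitive in $\mathbb{Z}[M_{ij}]$ and invoke Gauss's lemma to take $N=1$, but the direct patch is simpler.
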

\begin{proof}
Since $\disc(f) \not = 0$, the matrix $M$ is diagonalizable and has $g$ distinct eigenvalues. Let $P$ be a matrix such that $PMP^{-1} = D$ is diagonal, and let $\lambda_1,\ldots,\lambda_g$ be the distinct eigenvalues in $\bar\Z_l$. Then, the matrix 
$$E_{j} = \prod_{i \not = j} \frac{M - \lambda_iI}{\lambda_j - \lambda_i} \in M_{g \times g}(\ovl{\bb{Q}}_l)$$
projects vectors to the $\lambda_j$-eigenspace. Let $v_1,\ldots,v_g$ be vectors in $\bar\Z_l$ which form an eigenbasis for $M$, chosen such that in our coordinates, we have $||v_i||_l=1$ for all $i$, where the $l$-adic valuation of a vector is defined to be the maximum of the $l$-adic valuation of its entries. The linear operator $E_j$ is a rank one matrix such that $E_j(v_i) = \delta_{i,j} v_i$. 
\par 
Let $P$ be the change of basis matrix for the $v_i$'s. Let the row vectors of $P^{-1}$ be $w_1^T,\ldots,w_g^T$. Then $v_j w_j^T$ is a rank one $g \times g$ matrix. In fact, we claim that this is exactly $E_j$. Indeed, we have $w_j^T v_i = \delta_{j,i}$ too and so it follows that $v_j w_j^T v_i = \delta_{i,j} v_i$. As $||v_j||_l = 1$, this then implies that the coefficients of $w_j^T$ are bounded above by that of $E_j$. As $||M - \lambda_i|| \leq 1$, it follows that 
$$||E_j|| \leq \prod_{i \not = j} |\lambda_i - \lambda_j|^{-1} \leq |\disc(f)|^{-1}$$
and so $||w_j^T|| \leq |\disc(f)|^{-1}$ and so is $||P^{-1}||$. This implies that the largest singular value of $P^{-1}$ is bounded above by $|\disc(f)|^{-1}$ and hence the smallest singular value of $P$ is bounded from below by $|\disc(f)|$. 
\par 
We now calculate our Jacobian matrix $A'$ for $PMP^{-1}$. The conjugation action $X \mapsto PXP^{-1}$ induces an endomorphism on $M_{g \times g}(\bb{Q}_l)$ and we let $B$ denote the resulting $g^2 \times g^2$ matrix transforming $A$ to $A'$ so that $A' = AB$ and furthermore, 
$$||B^{-1}|| \leq \max \{||P||,||P^{-1}||\} \leq |\disc(f)|^{-1} \implies |\det(A A^T)| \geq |\det((AB)(B^TA^T))| |\disc(f)|^{g}$$
as the singular values of $B^{-1}$ are at most $|\disc(f)|^{-1}$ in size. We hence reduce to the case where $M$ is a diagonal matrix.
\par 
Finally when $M$ is diagonal matrix, it is clear that $A$ only involves the diagonal entries. Thus it reduces to a $g \times g$ matrix where
$$a_{i,j} = (j-1)^{th} \text{ symmetric sum of } m_1,\ldots,m_{i-1},m_{i+1},\ldots,m_g$$
where $m_i$'s are the $i^{th}$ diagonal entry of $M$. Now $\det(A)$ expands to an alternating polynomial in the $m_i$'s with the same degree as $\disc(f)$ and thus must be a scalar multiple of $\disc(f)$. We thus get $|\det(AA^T)| = |\disc(f)|^2$, which implies that for our original $A$, we have the inequality
$$|\det(AA^T)| \geq |\disc(f)|^{g+2}$$
as desired.
\end{proof}
We now prove an analogue for $\GSp_{2g,p}(\bb{Z}_l)$ matrices, where now we fix $f$ to be an ordinary Weil $p$-polynomial. Let $M \in \GSp_{2g,p}(\bb{Z}_l)$. Recall from the discussion in Section \ref{sec: vl(t)} the explicit basis of the tangent space of $\GSp_{2g,p}$ at $M$ given by $M X_{i,j}^{(A)}, M X_{i,j}^{(B)}, M X_{i,j}^{(C)}$, where 
$$X^{(A)}_{i,j} = \left( \begin{matrix} E_{i,j} & 0 \\ 0 & -E_{i,j} \end{matrix} \right), X^{(B)}_{i,j} = \left( \begin{matrix} 0 & E_{i,j} + E_{j,i} \\ 0 & 0 \end{matrix} \right), X^{(C)}_{i,j} = \left( \begin{matrix} 0 & 0 \\ E_{i,j} + E_{j,i} & 0  \end{matrix} \right)$$
is a basis of the tangent space of $\Sp_{2g}$. Here, we take $1 \leq i,j \leq g$, and when $i = j$, we will modify $X^{(B)}_{i,i}$ and $X^{(C)}_{i,i}$ to only have one copy of $E_{i,i}$ instead of $E_{i,i} + E_{i,i}$. 

Let $\pi: \GSp_{2g,p} \to \bb{A}^g$ be our Steinberg map, which we denote by $(p_1,\ldots,p_g)$. We then let $A$ be the $g \times (\dim \Sp_{2g})$ matrix representing the differential $(dp_1,\ldots,dp_g)$ at the matrix $M$ with respect to our basis of $T_M \Sp_{2g}$. We prove the analogue of Proposition \ref{DeterminantBound1} for $\GSp_{2g,p}$.
\begin{proposition} \label{DeterminantBound2}
Assume that $\disc(f) \not = 0$. Then there is a constant $e > 0$ depending only on $g$ such that given $M \in \GSp_{2g,p}(\bb{Z}_l)$ with $\Char(M) = f$ and $p \not = l $ a prime, we have 
$$|\det(AA^T)| \geq |\disc(f)|^{e}.$$
\end{proposition}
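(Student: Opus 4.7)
The plan is to follow the template of Proposition \ref{DeterminantBound1} but in the symplectic setting. Since $\disc(f)\ne 0$, $M$ is regular semisimple with $2g$ distinct eigenvalues $\alpha_1,\ldots,\alpha_g,p/\alpha_1,\ldots,p/\alpha_g\in\bar\Z_l$ paired up by the symplectic form. Choose norm-$1$ eigenvectors $v_1,\ldots,v_{2g}\in\bar\Z_l^{2g}$ matched to these eigenvalues. Exactly as in the $\GL_g$ argument, the idempotent projection operators $E_j=\prod_{i\ne j}(M-\lambda_i I)/(\lambda_j-\lambda_i)$ satisfy $\|E_j\|_l\le|\disc(f)|^{-1}$, which gives $\|P^{-1}\|\le|\disc(f)|^{-1}$ for the change-of-basis matrix $P$.

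Next, I would replace $P$ by a symplectic change of basis $\tilde P\in\GSp_{2g}(\bar\Q_l)$. For $M\in\GSp_{2g,p}$, standard symplectic linear algebra shows that the eigenspaces for $\alpha_j$ and $p/\alpha_j$ are the only ones that pair nontrivially under the symplectic form, so rescaling $v_{g+j}$ by $\langle v_j,v_{g+j}\rangle^{-1}$ produces a symplectic basis. The norms of these rescaling factors are controlled by $\|P^{-1}\|$ and powers of $p$, yielding $\|\tilde P\|,\|\tilde P^{-1}\|\le|\disc(f)|^{-c_1}$ for some $c_1$ depending only on $g$. Because $\tilde P\in\GSp_{2g}$, conjugation by $\tilde P$ preserves the Lie algebra $\mathfrak{sp}_{2g}$ (as $\Sp_{2g}$ is normal in $\GSp_{2g}$), so it defines an invertible linear map $B$ on $T_M\GSp_{2g,p}$ in our chosen basis with $\|B^{-1}\|\le|\disc(f)|^{-c_2}$. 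Since the Steinberg map is conjugation-invariant, the Jacobian $A'$ at $\tilde PM\tilde P^{-1}$ satisfies $A'=AB$, and standard Gram-determinant bounds give $|\det(AA^T)|\ge|\det(A'(A')^T)|\cdot|\disc(f)|^{c_3}$, reducing the problem to the diagonal case.

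When $\tilde M=\tilde PM\tilde P^{-1}$ is diagonal, I would restrict $A'$ to the $g$-dimensional subspace spanned by $\tilde MX^{(A)}_{i,i}$ for $i=1,\ldots,g$, obtaining a $g\times g$ submatrix $A''$; by Cauchy--Binet it suffices to bound $|\det A''|$ from below. The direction $\tilde MX^{(A)}_{i,i}$ infinitesimally sends $\alpha_i\mapsto\alpha_i(1+t)$ and $p/\alpha_i\mapsto(p/\alpha_i)(1-t)$, so the entries of $A''$ take the form $\alpha_i\,e_{k-1}(\widehat{\lambda_i})-(p/\alpha_i)\,e_{k-1}(\widehat{\lambda_{g+i}})$, where $e_{k-1}$ denotes an elementary symmetric polynomial in the remaining $2g-1$ eigenvalues. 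Expanding the determinant as an alternating polynomial in $(\alpha_1,\ldots,\alpha_g)$ of the correct total degree forces it to be a scalar multiple of a Vandermonde-type expression in the $\alpha_j$ whose absolute value coincides, up to units for $l\ne p$ (since $\alpha_j\cdot(p/\alpha_j)=p$ is a unit and hence every $\alpha_j$ is a unit in $\bar\Z_l$), with a positive power of $|\disc(f)|$.

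The main obstacle is this last step: the explicit determinantal computation must be carried out with care to confirm that the chosen $g\times g$ minor does not vanish identically under the symplectic constraint $\lambda_j\lambda_{g+j}=p$, and to identify the resulting alternating polynomial with a controlled power of $\disc(f)$. The first two steps are essentially routine refinements of the $\GL_g$ argument, though the rescaling required to obtain a symplectic basis in the second step requires some bookkeeping to track the cumulative norm bounds on $\tilde P$ and on the induced conjugation map $B$.
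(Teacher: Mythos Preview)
Your proposal follows the paper's proof essentially step for step: bound $\|P^{-1}\|$ via the idempotents $E_j$, rescale the eigenbasis to make the change-of-basis symplectic, use conjugation by this symplectic matrix to transport the Jacobian to the diagonal case, and there observe that only the $X^{(A)}_{i,i}$ directions contribute. The one place where the paper adds substance beyond your sketch is the rescaling bound: rather than asserting that $|\langle v_j,v_{g+j}\rangle|^{-1}$ is ``controlled by $\|P^{-1}\|$'', it notes that $\sum_j E_j=\id$ forces $\tfrac{1}{|\disc f|}\bar\Z_l^{2g}$ to lie in the $\bar\Z_l$-span $N$ of the normalized eigenvectors, whence $|\det N|\ge|\disc f|^{2g}$ and therefore each pairing $|\alpha_i|=|\langle v_i,w_i\rangle|\ge|\disc f|^g$; this is the lemma you would need to fill in your second paragraph.
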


\begin{proof}
As in the $\GL_g$ case, since $\disc(f) \not = 0$, we may let $P$ be a matrix such that $PMP^{-1}$ is diagonal. Let $v_1,\ldots,v_g,w_1,\ldots,w_g$ be the corresponding eigenbasis where the eigenvalue for $v_i$ is $p$ times the reciprocal of that of $w_i$. We normalize $v_i,w_i$ so that $||v_i|| = ||w_i|| = 1$. Let $\alpha_i = (v_i, w_i )$ where $( \, , \, )$ is the standard symplectic form. Note that $|\alpha_i| \leq 1$. We aim to bound $|\alpha_i|$ from below.
\par 
Let $N$ be the $\bar{\bb{Z}}_l$-span of $v_1,\ldots,v_g,w_1,\ldots,w_g$. Clearly $N \subseteq (\bar{\bb{Z}}_l)^{2g}$. Let $\lambda_1,\ldots,\lambda_{2g}$ be the eigenvalues of the $v_i$'s and $w_i$'s with $\lambda_{g+i} = \lambda_i^{-1}$. Since the operator $E_j$ sends  $(\bar{\bb{Z}}_l)^{2g}$ to $\frac{1}{|\disc(f)|} v_j$ and $E_{\lambda_{g+j}}$ sends $(\bar{\bb{Z}}_l)^{2g}$ to $\frac{1}{|\disc(f)|} w_j$ for $1 \leq j \leq g$, it follows from the fact that $\sum_{j=1}^{2g} E_j = \id$ that $\frac{1}{|\disc(f)|} (\bar{\bb{Z}}_l)^{2g} \subseteq N$. Hence 
$$|\det(N)| \geq |\disc(f)|^{2g} \implies \prod_{i=1}^{g} |\alpha_i|^2 \geq |\disc(f)|^{2g} \implies |\alpha_i| \geq |\disc(f)|^g.$$
In particular, if we scale the $v_i$ by $\alpha_i^{-1}$ so that $P \in \Sp_{2g}(\bar{\bb{Q}}_l)$, it follows that $1 \leq ||v_i|| \leq |\disc(f)|^g$. 
\par 
We can now apply the argument in the $\GL_g$ case to conclude that $||P||,||P^{-1}|| \leq |\disc(f)|^{-(g+1)}$. Then if $A'$ denotes the Jacobian matrix for $(dp_1,\ldots,dp_g)$ at $PMP^{-1}$ which uses the basis $PMP^{-1} X^{(-)}_{i,j}$, the matrix $A' = AB$ where $B$ is the $\dim \Sp_{2g} \times \dim \Sp_{2g}$ matrix corresponding to conjugation by $P$ for the basis $X^{(-)}_{i,j}$ at the identity. Since $||P||, ||P^{-1}|| \leq |\disc(f)|^{-(g+1)}$, it follows that the singular values of $B$ and $B^{-1}$ are bounded above by $|\disc(f)|^{-(g+1)}$. Thus 
$$|\det(A A^T)| \geq |\det((A')(A')^T| |\disc(f)|^{g(g+1)}$$
and we are now reduced to understanding the case where $M$ is a diagonal matrix. 
\par 
In the diagonal case, it is easy to check that again, only the diagonal matrices $X_{i,i}^{(-)}$ matter. We thus again obtain $|\det(A A^T)| = |\disc(f)|^2$ and hence we obtain our inequality as desired.
\end{proof}

Note that by the Cauchy--Binet formula, $\det(AA^T)$ is the sum of squares of the determinants of the maximal minors of $A$. In particular, there exists a maximal minor of $A$ whose determinant is $\geq |\disc(f)|^{e/2}$. 
\par 
Let $c$ be the exponent of $l$ in $\disc(f)^e$, where $e$ is in Proposition \ref{DeterminantBound2}. Fix a characteristic polynomial $f$ with $\bb{Z}$-coefficients.  We now turn our attention to studying lifts of $M \in \GSp_{2g,p}(\bb{Z}/l^{k-c})$, where $\Char(M) = f$ mod $l^{k-c}$, to $\GSp_{2g,p}(\bb{Z}/l^k)$ with characteristic polynomial $f$ mod $l^k$, where $k-c \geq 1$. 

\begin{proposition} \label{CharacteristicLift1}
Assume that $k > 2c$ and that $M$ lifts to a matrix $\tilde{M} \in \Sp_{2g}(\bb{Z}/l^k)$ with $\Char(\tilde{M}) = f$ mod $l^k$. Then $M$ lifts to a matrix $\tilde{M}_0 \in \Sp_{2g}(\bb{Z}_l)$ with $\Char(\tilde{M}_0) = f$. 
\end{proposition}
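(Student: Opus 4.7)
The plan is to apply the multivariate Hensel lemma (Theorem \ref{Implicit1}) to the full system of $m+g$ equations $f_1,\ldots,f_{m+g}$ in the $4g^2$ matrix entries, where $f_1,\ldots,f_m$ cut out $\GSp_{2g,p}\subset \Mat_{2g\times 2g}$ and $f_{m+1},\ldots,f_{m+g}$ impose that the characteristic polynomial equals $f$. I begin by taking any set-theoretic lift $\widehat{M}\in\Mat_{2g\times 2g}(\bb{Z}_l)$ of the given $\tilde M\in\GSp_{2g,p}(\bb{Z}/l^k)$; by hypothesis, $f_i(\widehat{M})\equiv 0\pmod{l^k}$ for every $i\le m+g$.

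The main technical input is a lower bound on some maximal minor of the $(m+g)\times 4g^2$ Jacobian $J$ of this system at $\widehat{M}$. Smoothness of $\GSp_{2g,p}$ over $\bb{Z}_l$ produces an $m\times m$ minor of the top $m$ rows of $J$ of unit determinant; let $S_1$ index its columns. After permuting $S_1$ to the front and performing a column operation in $\GL_{4g^2}(\bb{Z}_l)$ that clears the top-right block, $J$ takes the block form
\[
\begin{pmatrix} B_1 & 0 \\ B_2 & A \end{pmatrix},
\]
where $B_1\in\GL_m(\bb{Z}_l)$ and the $g\times(4g^2-m)$ block $A$ is precisely the Jacobian of the Steinberg map restricted to $T_{\widehat{M}}\GSp_{2g,p}$ in an appropriate basis, i.e.\ the matrix studied in Proposition \ref{DeterminantBound2}. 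Combining that proposition with the Cauchy--Binet expansion of $\det(AA^T)$ singles out a $g\times g$ minor of $A$ whose determinant has $l$-valuation at most $c/2$, and therefore an $(m+g)\times(m+g)$ minor of $J$ with the same bound.

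Reordering the columns of $J$ so this minor sits in the first $m+g$ positions, I apply Theorem \ref{Implicit1} with the parameter $e=k-c$. Its hypothesis $f_i(\widehat{M})\equiv 0\pmod{l^{e+f+1}}$ reduces (using $f\le c/2$) to $e+c/2+1\le k$, which follows from $k>2c$ once $c\ge 2$; the cases $c\in\{0,1\}$ correspond to $l\nmid\disc(f)$ (or nearly so) and are handled directly by ordinary Hensel at the smooth point $\widehat{M}\in\GSp_{2g,f}$. Setting every auxiliary variable $t_j$ in the conclusion to zero yields the desired $\tilde M_0\in\GSp_{2g,p}(\bb{Z}_l)$ with $\Char(\tilde M_0)=f$ exactly, and $\tilde M_0\equiv\widehat{M}\pmod{l^e}$. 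Since $e=k-c$ and $\widehat{M}\equiv\tilde M\equiv M\pmod{l^{k-c}}$, this gives $\tilde M_0\equiv M\pmod{l^{k-c}}$, as desired.

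The main obstacle is the Jacobian minor estimate: Proposition \ref{DeterminantBound2} is phrased intrinsically, in a tangent-space basis to $\Sp_{2g}$, whereas Theorem \ref{Implicit1} requires a minor of the ambient Jacobian in the standard coordinates on $\bb{A}^{4g^2}$. The passage between the two must use only changes of basis that are defined and invertible over $\bb{Z}_l$, so that the bound $c/2$ on the $l$-valuation is preserved; this is what dictates the explicit block reduction described above and determines the constant $c$ appearing in the hypothesis $k>2c$.
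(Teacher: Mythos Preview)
Your approach is correct in spirit and genuinely different from the paper's, but there is one gap you should address. You take $\widehat{M}$ to be an arbitrary set-theoretic lift of $\tilde M$ to $\Mat_{2g}(\bb{Z}_l)$, and then identify the bottom-right block $A$ of the reduced Jacobian with the matrix of Proposition~\ref{DeterminantBound2}. That proposition, however, is stated for $M\in\GSp_{2g,p}(\bb{Z}_l)$ with $\Char(M)=f$, and its proof uses the symplectic eigenbasis of $M$; neither hypothesis holds for your $\widehat{M}$. The cleanest fix is to do exactly what the paper does at the outset: use smoothness of $\GSp_{2g,p}$ to choose $\widehat{M}\in\GSp_{2g,p}(\bb{Z}_l)$ lifting $\tilde M$. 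Then $\ker J_{\mathrm{top}}=T_{\widehat{M}}\GSp_{2g,p}$, your column-operation basis and the basis $\widehat{M}X_{i,j}^{(-)}$ are both saturated $\bb{Z}_l$-bases of the same module (hence related by $\GL_{\dim\Sp_{2g}}(\bb{Z}_l)$), and Proposition~\ref{DeterminantBound2} applied with $f$ replaced by $\Char(\widehat{M})$ gives the same minor bound, since $\ord_l\disc(\Char(\widehat{M}))=\ord_l\disc(f)$ once $k>\ord_l\disc(f)$. Also, the case $c=1$ cannot occur (the exponent $e$ in Proposition~\ref{DeterminantBound2} is at least $g(g+1)+2\ge 4$), so your edge-case sentence only needs to handle $c=0$; there the argument is that $l\nmid\disc(f)$ forces every $\F_l$-point of $\GSp_{2g,f}$ to be regular, hence smooth, and ordinary Hensel lifts $\tilde M$ directly.

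Comparing the two routes: the paper first lifts to $M_0\in\GSp_{2g,p}(\bb{Z}_l)$ and then works \emph{intrinsically}, parametrising nearby points of $\GSp_{2g,p}$ as $M_0+M_0 l^{k-c}(\text{tangent data})$; this reduces the problem to only the $g$ Steinberg equations in $\dim\Sp_{2g}$ variables, but the resulting Jacobian carries an extra factor $l^{k-c}$, so the paper must iterate (doubling $k-c$ at each step) rather than apply Theorem~\ref{Implicit1} once. Your extrinsic approach keeps all $m+g$ equations in the ambient $\bb{A}^{4g^2}$, pays the price of a block decomposition, but then a single application of Theorem~\ref{Implicit1} with $e=k-c$ lands directly in $\bb{Z}_l$. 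Both rely on Proposition~\ref{DeterminantBound2} for the minor bound; yours is arguably more transparent about the numerics.
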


\begin{proof} 
We first fix some lift $M_0$ of $\tilde{M}$ to $\GSp_{2g,p}(\bb{Z}_l)$ with characteristic polynomial $f_0$ which need not be equal $f$. This exists because $\GSp_{2g,p}$ is smooth over $\bb{Z}_l$. Then every lift of $M$ to $\GSp_{2g,p}(\bb{Z}/l^{2(k-c)})$ is given by 
$$M_0 + M_0 l^{k-c} \left(a_{i,j} X^{(A)}_{i,j} + b_{i,j} X^{(B)}_{i,j} + c_{i,j} X^{(C)}_{i,j} \right)$$
where $a_{i,j},b_{i,j},c_{i,j} \in \bb{Z}_l$. Hence to obtain a lift $\tilde{M}_0$ of $M$ to mod $l^{2(k-c)}$ with $\Char(\tilde{M}_0) = f$ is equivalent to solving $f_k(a_{i,j}, b_{i,j}, c_{i,j}) = 0$ for $1 \leq k \leq g$, where each $f_k$ is the equation corresponding to the coefficient of $x^{2g-i}$ being equal to that of $f$. 
\par 
Now observe that the Jacobian matrix for the tuple $(f_1,\ldots,f_g)$ agrees with $l^{k-c} A$, where $A$ is the Jacobian of the differential $(dp_1,\ldots,dp_g)$ at $T_{M_0} \Sp_{2g}$ in our standard basis, after taking mod $l^{2(k-c)}$ as any degree two terms involving $a_{i,j}, b_{i,j}, c_{i,j}$ will be $0$. Since $2(k-c) > k$,  we obtain that there is a maximal minor whose determinant has order $\leq k$. Since $\tilde{M}$ gives us a lift whose characteristic polynomial agrees with that of $f$ mod $l^{k}$, we can apply Proposition \ref{Implicit1} with the starting tuple $a_{i,j} = b_{i,j} = c_{i,j} = 0$ along with $e = 0$. This gives us a lift $\tilde{M}_0 \in \GSp_{2g,p}(\bb{Z}/l^{2(k-c)})$ with $a_{i,j} = b_{i,j} = c_{i,j} = 0$ mod $l$ and hence a lift of $\tilde{M}$ to $\GSp_{2g,p}(\bb{Z}/l^{2(k-c)})$ with characteristic polynomial $f$. We can now repeat the process with $2(k-c)$ replacing $k$ and using $\tilde{M}_0$ as our new $\tilde{M}$ and $\tilde{M}_0$ mod $l^{2(k-c)-c}$ as our new $M$. Then as $2(k-c) > k$, iterating this process will provide us a lift to $\bb{Z}_l$ as desired. 
\end{proof}

Now let $k$ be large enough so that $k-c \geq \frac{k+1}{2}$. Fix a congruence class $M$ mod $l^{k-c}$ with $\Char(M) = f$ mod $l^{k-c}$ with $M \in \GSp_{2g,p}(\bb{Z}/l^{k-c})$. We want to show that the number of lifts of $M$ to $\Z/l^k\Z$ with characteristic polynomial being $f$ is proportional to the number of lifts to $\Z/l^{k+1}\Z$. 

\begin{proposition} \label{CharacteristicLift2} 
If $k-c \geq \frac{k+1}{2}$, then the number of lifts of $M$ to mod $l^{k+1}$ with characteristic polynomial $f$ is exactly $l^{\dim \Sp_{2g}-g}$ times the number of lifts of $M$ to mod $l^k$ with characteristic polynomial $f$.
\end{proposition}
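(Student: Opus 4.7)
The plan is to parameterize all lifts of $M$ near a single $\bb{Z}_l$-point $\tilde{M}_0$ with exact characteristic polynomial $f$, exploit the hypothesis $2(k-c) \geq k+1$ to linearize the characteristic polynomial condition, and count the resulting kernel via Smith normal form with elementary divisors controlled by Proposition \ref{DeterminantBound2}. First I would dispose of the trivial case: if no lift of $M$ to $\bb{Z}/l^k\Z$ has characteristic polynomial $f$, then by reduction neither does any lift to $\bb{Z}/l^{k+1}\Z$, so both counts vanish. Otherwise, since the hypothesis rearranges to $k \geq 2c + 1 > 2c$, Proposition \ref{CharacteristicLift1} applied to any such lift produces a distinguished $\tilde{M}_0 \in \GSp_{2g,p}(\bb{Z}_l)$ that reduces to $M$ modulo $l^{k-c}$ and satisfies $\Char(\tilde{M}_0) = f$ exactly.

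Next I would parameterize every lift of $M = \tilde{M}_0 \bmod l^{k-c}$ to $\GSp_{2g,p}(\bb{Z}/l^{k+1}\Z)$ as $\tilde{M}_0 \cdot s$ with $s \in \Sp_{2g}(\bb{Z}/l^{k+1}\Z)$ reducing to the identity modulo $l^{k-c}$. The identification $s \equiv 1 + l^{k-c} V \pmod{l^{k+1}}$ for $V \in L(\bb{Z}/l^{c+1}\Z)$ is a bijection onto such $s$, since $2(k-c) \geq k+1$ forces the symplectic defect $(1+l^{k-c}V)J(1+l^{k-c}V)^T - J = O(l^{2(k-c)})$ to vanish modulo $l^{k+1}$. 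The same inequality makes every term of order at least two in $l^{k-c}V$ in the polynomial Taylor expansion of the characteristic polynomial around $\tilde{M}_0$ vanish, leaving
\[
\Char(\tilde{M}_0 s) \equiv f + l^{k-c} A V \pmod{l^{k+1}},
\]
where $A$ is the $g \times \dim \Sp_{2g}$ Jacobian of the Steinberg map at $\tilde{M}_0$ from Proposition \ref{DeterminantBound2}. The condition $\Char = f$ modulo $l^{k+1}$ (respectively $l^k$) therefore reduces to the linear congruence $AV \equiv 0 \pmod{l^{c+1}}$ (respectively $AV \equiv 0 \pmod{l^c}$), and the two counts $N_{k+1}$, $N_k$ are exactly the sizes of these two kernels.

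Finally I would evaluate these kernels via Smith normal form: writing $A = PDQ$ with elementary divisors $l^{d_1} \leq \cdots \leq l^{d_g}$ and changing variables $W = QV$, one obtains $\#\{V \in L(\bb{Z}/l^m\Z) : AV \equiv 0\} = l^{m(\dim \Sp_{2g} - g) + \sum_i \min(m, d_i)}$ for every $m \geq 1$. The main obstacle is controlling $d_g$, and for this I would use that by the Cauchy--Binet formula the ideal in $\bb{Z}_l$ generated by the $g \times g$ minors of $A$ equals $(l^{\sum d_i})$, whence $\det(AA^T) \in (l^{2\sum d_i})$; combining with the bound $v_l(\det(AA^T)) \leq c$ supplied by Proposition \ref{DeterminantBound2} then yields $d_g \leq \sum d_i \leq c/2 \leq c$. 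Consequently $\min(c+1, d_i) = \min(c, d_i) = d_i$ for every $i$, and the ratio of the two kernel counts for $m = c+1$ and $m = c$ telescopes to exactly $l^{\dim \Sp_{2g} - g}$, as desired. The delicate point is precisely this bound on the elementary divisors: Proposition \ref{DeterminantBound2} only supplies a summed estimate, but if even one individual $d_i$ were to exceed $c$ the difference $\min(c+1,d_i) - \min(c,d_i)$ would contribute a spurious factor of $l$, so using the full quadratic strength $v_l(\det(AA^T)) \geq 2\sum d_i$ from Cauchy--Binet is what makes the ratio close up cleanly.
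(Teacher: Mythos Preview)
Your proof is correct and follows essentially the same route as the paper: both obtain a $\bb{Z}_l$-lift $\tilde{M}_0$ via Proposition~\ref{CharacteristicLift1}, linearize the characteristic-polynomial condition using $2(k-c)\ge k+1$, and reduce to counting kernels of $A$ modulo $l^c$ and $l^{c+1}$ via Smith normal form, using Proposition~\ref{DeterminantBound2} to bound the elementary divisors by $c$. Your write-up is slightly more explicit about the Cauchy--Binet step and the kernel-count formula, while the paper simply asserts ``the order of $l$ dividing $d_i$ is $\le c$'' and proceeds; the arguments are otherwise identical.
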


\begin{proof}
First, we may assume there exists one such lift to mod $l^k$. Then by Proposition \ref{CharacteristicLift1}, there exists a lift $M_0$ in $\GSp_{2g,p}(\bb{Z}_l)$ such that $\Char(M_0) = f$. We may then describe every lift of $M$ to $\Sp_{2g}(\bb{Z}/l^k)$ as 
$$M = M_0 + M_0 l^{k-c} \left(a_{i,j} X^{(A)}_{i,j} + b_{i,j} X^{(B)}_{i,j} + c_{i,j} X^{(C)}_{i,j} \right).$$
We want to solve for $\Char(M) = f$ mod $l^k$. Since $k-c \geq \frac{k}{2}$, the expression for each coefficient of $\Char(M)$ is linear in $a_{i,j}, b_{i,j}, c_{i,j}$. In fact, as $\Char(M_0) = f$ mod $l^k$, it is equivalent to solving the linear system of equations $Ax = 0$ mod $l^c$ where $x = (a_{i,j}, b_{i,j}, c_{i,j})^T$. Similarly if we want to lift to $l^{k+1}$, it is equivalent to solving $Ax = 0$ mod $l^{c+1}$. 
\par 
We now write $A$ in Smith normal form, i.e. write $A = UDV$ where $U,V$ are invertible matrices over $\bb{Z}_l$ and $D$ is a $g \times \dim \Sp_{2g}$ matrix with entries only on the diagonal. Then the linear system is equivalent to $Dx = 0$ mod $l^c$ and mod $l^{c+1}$. If the diagonal has entries $d_1,\ldots,d_g$, then note that the order of $l$ dividing $d_i$ is $\leq c$. It is then straightforward to deduce that the number of solutions mod $l^{c+1}$ is exactly $l^{\dim \Sp_{2g} - g}$ times that of the number of solutions mod $l^c$ as desired.
\end{proof}
Proposition \ref{prop: stabilization} directly follows from the discussion above.
\subsection{Bounding matrices with zero discriminant}
Our next proposition will give an upper bound on the number of $\bb{Z}/l^k\Z$ points of the solutions for a polynomial $f(x_1,\ldots,x_n)$. When $k = 1$, then Lang--Weil bound implies that we can obtain a bound of the form $O_{\deg f}(l^{n-1})$. However as we see from the example of $f(x) = x^2$, we obtain $l^{k/2}$ many solutions and so we cannot expect a bound of the form $O(l^{k(n-1)})$. We will show that we can still get a power savings that scales linearly with $k$. For a polynomial $P$, define the $l$-adic valuation $||P||_l$ as the maximum of the $l$-adic valuations of the coefficients.
\begin{proposition} \label{UpperBoundPoints1}
Let $P(x_1,\ldots,x_d)$ be a degree $e$ polynomial with integer coefficients with $||P||_l = 1$. Then there exist constants $C, \delta > 0$ depending only on $d$, such that for any prime number $l$ and a natural number $k$, there are at most $Cel^{k(d - \frac{\delta}{C e})}$ solutions to $P(x_1,\ldots,x_d) = 0$ in $(\bb{Z}/l^k\Z)^d$. 
\end{proposition}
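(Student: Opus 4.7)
The plan is to prove the estimate by induction on the dimension $d$.

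The base case $d=1$ is essentially a Hensel--Newton-polygon statement: for $f(y)\in \bb{Z}_l[y]$ of degree $\le e$ with $\|f\|_l=1$, I would bound the number of $y\in \bb{Z}/l^k\bb{Z}$ satisfying $f(y)\equiv 0\pmod{l^k}$ by classifying roots $\bar y\in \bb{F}_l$ of $\bar f$. Simple roots lift uniquely by Hensel's lemma and contribute one solution each; each multiple root triggers the substitution $y=\bar y+lz$, after which one factors out the maximal $l$-power from $f(\bar y+lz)$ and iterates on the resulting polynomial, which again has degree $\le e$ and norm $1$ but corresponds to a residual $l^k$-congruence with smaller effective $k$. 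Since the degree is bounded by $e$, the iteration terminates within $O(e)$ steps, yielding a bound of the form $C_1\,e\,l^{k(1-\delta_1/(C_1e))}$ for absolute $C_1,\delta_1>0$.

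For the inductive step $d\to d+1$, given $P$ of degree $e$ with $\|P\|_l=1$, I would fiber $\{P=0\}$ over the first $d$ coordinates. Writing $P(\mathbf{x}',x_{d+1})=\sum_{j=0}^{e}g_j(\mathbf{x}')x_{d+1}^j$, the hypothesis $\|P\|_l=1$ forces at least one coefficient $g_{j_0}$ to satisfy $\|g_{j_0}\|_l=1$. For each $\mathbf{x}'\in (\bb{Z}/l^k\bb{Z})^d$, set $v(\mathbf{x}')=\min_j v_l(g_j(\mathbf{x}'))$, capped at $k$. When $v(\mathbf{x}')<k$ the fiber is controlled by the base case applied to $l^{-v(\mathbf{x}')}P_{\mathbf{x}'}$; when $v(\mathbf{x}')=k$ it contributes $l^k$ trivially. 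The set $\{\mathbf{x}':v(\mathbf{x}')\ge v\}$ is contained in $\{g_{j_0}\equiv 0\pmod{l^v}\}$; applying the induction hypothesis to $g_{j_0}$ on $(\bb{Z}/l^v\bb{Z})^d$ and lifting trivially to $(\bb{Z}/l^k\bb{Z})^d$ gives $|\{v(\mathbf{x}')\ge v\}|\le C_d\,e\,l^{kd-v\delta_d/(C_de)}$. Summing the per-fiber bound times this count over $v\ge 0$, the geometric tail yields
\[
|Z_k| \;\le\; \tilde C_{d+1}\,e^{2}\,l^{k(d+1)-k\,\tilde\delta_{d+1}/(\tilde C_{d+1}\,e)}.
\]
The extra factor of $e$ is absorbed into the exponential savings via a two-regime argument: for $k\ge C'_d\,e\log e$ this factor is dominated by a slight weakening of the exponent, while for smaller $k$ the trivial bound $|Z_k|\le l^{k(d+1)}$ already satisfies the target inequality.

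The main obstacle lies in making the base case quantitative. While the example $f=y^e$ with its $l^{k(1-1/e)}$ solutions morally saturates the bound, extracting absolute constants $C_1,\delta_1$ requires careful bookkeeping through repeated Newton-polygon iterations, since a multiple root can branch in complicated ways. A secondary technical point is the accumulation of an extra factor of $e$ in the inductive step; maintaining constants $C_d,\delta_d$ that depend only on $d$ (and in particular not on $e$) relies on the two-regime absorption above, which must be verified uniformly across the parameters.
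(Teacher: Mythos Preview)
Your overall strategy---induction on $d$, fibering over the first $d$ coordinates via a coefficient $g_{j_0}$ with $\|g_{j_0}\|_l=1$---is exactly what the paper does. The execution differs in two places, and the first is a genuine gap.

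\textbf{Base case.} Your Hensel/Newton-polygon iteration does not close as stated. The claim that ``the iteration terminates within $O(e)$ steps'' is wrong if read as recursion depth (for $f=y^e$ the depth is $\lceil k/e\rceil$), and if read as a bound on the number of branches, it is precisely the nontrivial content of the lemma you need. More concretely, if you try to run the induction $N(f,k)=\sum_i l^{c_i-1}N(g_i,k-c_i)$ with the target $N\le e\,l^{k(1-1/e)}$, the inductive step requires $\sum_i l^{c_i/e-1}\le 1$, which already fails for $l=2$, $e=2$, two simple roots. The paper handles $d=1$ by invoking \cite[Lemma~7]{Bombieri_Schmidt_1987}: for $\|P\|_l=1$ the set $\{x:\ord_l P(x)\ge\mu\}$ is a union of at most $e$ cosets $l^{a_i}\bb{Z}_l+B_i$. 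A two-line coefficient argument (pick the largest $j$ with $\ord_l c_j=0$; the $x^j$-coefficient of $P(l^{a_i}x+B_i)$ has exact valuation $a_ij$) then forces $a_i\ge\mu/e$, giving $N(f,k)\le e\,l^{k-k/e}$ directly.

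\textbf{Inductive step.} Your sum over $v(\mathbf{x}')$ can be made to work but manufactures the extra factor of $e$ and the two-regime cleanup. The paper avoids this with a single threshold at $k/2$: tuples with $g_{j_0}(\mathbf{x}')\equiv 0\pmod{l^{\lceil k/2\rceil}}$ are bounded by the induction hypothesis applied to $g_{j_0}$ modulo $l^{\lceil k/2\rceil}$; on the complement, $\|P_{\mathbf{x}'}\|_l\ge l^{-k/2}$, so after dividing out the $l$-content one applies the $d=1$ bound to a congruence modulo at least $l^{k/2}$. Each piece is already of the form $C'e\,l^{k(d-\delta'/(C'e))}$ with no spare factor to absorb.
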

\begin{proof}
We first handle the one variable case first. By \cite[Lemma 7]{Bombieri_Schmidt_1987}, since $||P||_l = 1$, for any integer $\mu > 0$ there exists at most $\deg P = e$ many $\bb{Z}_l$-elements $(A_i,B_i)$ for which $A_i \bb{Z}_l + B_i$ are the solutions to $\ord_l(P(x))\ge \mu$ and we may assume that $A_i = l^{a_i}$ so we have $l^{\mu} \mid f(l^{a_i} x + B_i)$. Expanding out for the pair $(A_m,B_m)$ and looking at the coefficient of $x^j$, we obtain
$$l^{\mu} \mid l^{a_m j} \sum_{i = j}^{e} c_i \binom{i}{j} (B_m)^{i-j}$$
for every $j$ where $P(x) = \sum_{i=1}^{e} c_i x^i$. Let $c_j$ be the largest $j$ for which $\ord_l(c_j)= 0$. Then we have 
$$\ord_l \left(\sum_{i=j}^{e} c_i \binom{i}{j} (B_m)^{i-j} \right) = \ord_l(c_j) = 0.$$ Thus we must have 
$$l^{\mu} \mid l^{a_m j} \implies a_m \geq \frac{\mu}{e}.$$
In particular taking $\mu = k$, we can have at most $e l^{k - \frac{k}{e}}$ many such solutions as desired. 
\par 
We now move onto the general case. We write $P = Q_e(x_1,\ldots,x_{d-1})x_d^e + \cdots + Q_0(x_1,\ldots,x_{d-1})$ with $Q_e \not = 0$. Since $||P||_l = 1$, we must have $||Q_j||_l = 1$ for some $j$. Given $k$, we first remove all the tuples $x_1,\ldots,x_d$ mod $l^k$ such that $Q_j(x_1,\ldots,x_{d-1}) \equiv 0$ mod $l^{k/2}$. By induction on the number of variables, this removes 
$$C_{d-1} e l^{k \left(d - \frac{\delta_{d-1}}{2C_{d-1}e} \right)}$$
many tuples. For each tuple $\mathbf{x} = (x_1,\ldots,x_{d-1})$ for which $Q_j(\mathbf{x})$ is not divisible by $l^{k/2}$, it follows that we obtain a polynomial $P_{\mathbf{x}}$ in $x_d$ for which $||P_{\mathbf{x}}||_l \ge l^{-k/2}$. Factoring this common power of $l$ out, the remaining $x_d$ must satisfy $P_{\mathbf{x}}(x_d) \equiv 0$ mod $l^{k/2}$ where now $||P_{\mathbf{x}}||_l = 1$. It follows from the one variable case that there are at most 
$$C_1 e l^{k \left(d - \frac{\delta_1}{2 C_1 e} \right)}$$
solutions. Adding these two up gives us $C_d$ and $\delta_d$ as desired.
\end{proof}

We would like to apply this to the discriminant polynomial on $\GSp_{2g,p}$. We will exploit the fact that $\Sp_{2g}$ is locally étale over $\bb{A}^{\dim \Sp_{2g}}$. We first show that if $f: X \to Y$ is étale, then the number of $\bb{Z}/l^k\Z$-points in a fiber of $f$ can be bounded.

\begin{lemma} \label{lem: étaleBound1}
Let $f: X \to Y$ be a finite étale morphism of noetherian schemes over $\bb{Z}$. Then there exists $d > 0$ such that for every prime power $l^k$, the size of a fiber of $f:X(\bb{Z}/l^k\Z) \to Y(\bb{Z}/l^k\Z)$ is bounded by $d$. 
\end{lemma}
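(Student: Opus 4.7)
The plan is to take $d$ to be the maximum degree of the finite étale morphism $f\colon X \to Y$. Since $f$ is finite étale, its degree is a locally constant function on $Y$, and since $Y$ is noetherian it has finitely many connected components, so $d$ is well-defined and finite. This $d$ will be our uniform bound, depending only on $f$ and not on $l$ or $k$.

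Fix a prime power $l^k$ and a point $y \in Y(\bb{Z}/l^k\Z)$. Base changing along $y$ gives a finite étale morphism $X_y := X \times_Y \Spec(\bb{Z}/l^k\Z) \to \Spec(\bb{Z}/l^k\Z)$ whose degree is at most $d$. The essential input I would invoke is that $\bb{Z}/l^k\Z$ is a Henselian local ring (it is in fact Artinian local), so any finite étale cover of $\Spec(\bb{Z}/l^k\Z)$ decomposes as a finite disjoint union $X_y \cong \bigsqcup_{i=1}^{m} \Spec(R_i)$, where each $R_i$ is a finite étale local $\bb{Z}/l^k\Z$-algebra. Since each $R_i$ has rank at least one and $\sum_i \mathrm{rk}_{\bb{Z}/l^k\Z}(R_i) \le d$, we get $m \le d$.

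Finally, a $\bb{Z}/l^k\Z$-point of $X_y$ is the same as a ring homomorphism $\prod_i R_i \to \bb{Z}/l^k\Z$, which must factor through exactly one factor $R_i$. Because $R_i$ and $\bb{Z}/l^k\Z$ are both local and $R_i$ is finite étale over $\bb{Z}/l^k\Z$, such a homomorphism exists if and only if $R_i \cong \bb{Z}/l^k\Z$ (equivalently, the residue field of $R_i$ equals $\F_l$), in which case it is unique. Thus each component contributes at most one $\bb{Z}/l^k\Z$-point, and the fiber over $y$ has cardinality at most $m \le d$.

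I do not anticipate any serious obstacle in executing this plan: the argument reduces to the standard structure theorem for finite étale algebras over a Henselian local ring, and the required Henselian property of $\bb{Z}/l^k\Z$ holds uniformly in $l$ and $k$. The only point that merits a brief justification in the write-up is the passage from noetherianity of $Y$ to a uniform bound on the degree of $f$, which follows from the fact that a noetherian topological space has only finitely many connected components together with the local constancy of the degree of a finite étale map.
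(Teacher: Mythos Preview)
Your argument is correct and takes a genuinely different, more conceptual route than the paper's proof. The paper works Zariski-locally: it covers $X$ and $Y$ by affines on which $f$ is standard \'etale, i.e.\ of the form $\Spec A \to \Spec A[x]_g/(h)$ with $h$ monic and $h'$ invertible, and then for each $\bb{Z}/l^k\Z$-point of $Y$ counts lifts by applying Hensel's lemma directly to the polynomial $\bar h$, obtaining at most $\deg h$ solutions per chart. The uniform bound is then the maximum of these $\deg h$ over the finitely many charts (using noetherianity only via quasi-compactness).

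Your approach instead base-changes along $y$ to land over the Henselian local ring $\bb{Z}/l^k\Z$ and invokes the structure theorem for finite \'etale algebras over such rings. This is cleaner and yields the sharp bound $d=\deg f$ rather than a chart-dependent constant; it also makes transparent why the bound is uniform in $l$ and $k$. The paper's argument, on the other hand, is more elementary in the sense that it avoids citing the equivalence between finite \'etale covers of a Henselian local scheme and those of its closed point, relying only on the one-variable Hensel's lemma. Either proof is perfectly adequate for the application in the paper, where only the existence of \emph{some} uniform bound is needed.
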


\begin{proof}
We may cover $X$ with an open cover $\{U_i\}$ and $Y$ with $\{V_i\}$ such that $f(U_i) \subseteq V_i$ and $\pi_i U_i \to V_i$ is standard étale, i.e. of the form $A \to A[x]_g/(h)$ where $g,h \in A[x]$ and $h$ monic with $h'$ invertible. Then a $\bb{Z}/l^k\Z$ point $z$ of $A$ corresponds to a morphism $\vphi: A \to \bb{Z}/l^k\Z$ and a $\bb{Z}/l^k \bb{Z}$ point in $\pi_i^{-1}(z)$ corresponds to an extension $\tilde{\vphi}: A[x]_g/(h) \to \bb{Z}/l^k\Z$. 
\par 
Let $\bar g,\bar{h}$ be the image of $g,h$ in $(\bb{Z}/l^k\Z)[x]$ under $\vphi$. Then an extension corresponds exactly to a solution $\alpha$ of $\bar{h}(\alpha) = 0$ mod $l^k$ along with $\bar{g}(\alpha)$ being invertible. But $h'$ being invertible in $A[x]_g/h$ implies that $h'(\alpha) \not = 0$ for each root $\alpha$ of $\bar{h}(x)$ mod $l$. Hensel's lemma then implies each root of $\bar{h}(x)$ mod $l$ lifts uniquely to a root of $\bar{h}(x)$ mod $l^k$ and in particular, we have $|\pi_i^{-1}(x)| \leq \deg h$. As there are finitely many $U_i$'s, there is a uniform upper bound on $|\pi_i^{-1}(x)|$ as desired and this proves our upper bound.    
\end{proof}

As $\Sp_{2g}$ is smooth over $\bb{Z}$, we may cover $\Sp_{2g}$ locally with open sets $U_i$ such that there is a morphism $f_i: U_i \to \bb{A}^{m}$, where $m = \dim \Sp_{2g}$, which is étale. The discriminant of the characteristic polynomial cuts out a codimension one subscheme $X$ of $U_i$, and we may take its scheme-theoretic image $Y$ which is a codimension one subscheme of $\bb{A}^m$. We can then apply Proposition \ref{UpperBoundPoints1} to bound $Y(\bb{Z}/l^k\Z)$ and then apply Lemma \ref{lem: étaleBound1} to bound $X(\bb{Z}/l^k\Z)$.
\par 
However, we would like to obtain a bound for the discriminant polynomial of $\GSp_{2g,p}$ that is uniform in $p$. For each fixed $l$, we have an isomorphism between $\GSp_{2g,p}$ and $\GSp_{2g,p'}$ over $\bb{Z}_l$, if $p/p'$ is a nonzero quadratic residue mod $l$, given by sending the matrix $M$ to $cM$ where $c^2 = p/p'$ in $\bb{Z}_l$, which multiplies the discriminant by a power of $c$. Hence, the number of matrices $M \in \GSp_{2g,p'}(\bb{Z}/l^k)$ with $l^k \mid \disc(f_M)$ is exactly the number of matrices $M' \in \GSp_{2g,p}(\bb{Z}/l^k)$ with $l^k \mid \disc(f_{M'})$. Thus, we effectively have two cases depending on whether $p$ is a quadratic residue or non-residue.
 
Now, we consider a different isomorphism between $\Sp_{2g}$ and $\GSp_{2g,p}$ given by multiplication by $\diag(p,\ldots,p,1,\ldots,1)$. This isomorphism however does not respect the discriminant and instead we obtain a polynomial $P(M,p)=P(x_1,\ldots,x_{4g^2},p)$ on $\Sp_{2g}$ corresponding to $\disc(f_M)$ on $\GSp_{2g,p}$. Here, $x_1,\ldots,x_{4g^2}$ are the entries of the matrix in $\Sp_{2g}$.

In each chart $U_i \to \bb{A}^m$, let $U_i = \Spec A_i$. Then we have a polynomial expression $P(x_1,\ldots,x_{4g^2},p)$ where each $x_i \in A_i$. Let $B = \bb{Z}[x_1,\ldots,x_m]$ so that $A_i$ is an étale $B$-algebra. By shrinking $U_i$, we may assume that $A_i$ is a standard étale algebra, i.e. $A_i \simeq B[x]_g/(h)$ where $g,h \in B[x]$ and $h$ is monic with $h'$ invertible in $B[x]_g/(h)$. We now show that there is a polynomial $R(x) \in B[x]$ such that $R(p)\in B$ is a multiple of $P(M,p)$ in $A_i$. Since the subscheme $X$ is cut out by $P(M,p)$, this means that $R(p)$ vanishes in the scheme-theoretic image $Y$.

\begin{lemma} \label{lem: SchemeImage1}
Let $B$ be a domain and let $A = B[x]_g/(h)$ be a standard étale algebra. Let $x_1,\ldots,x_n \in A$ and $y \in B$. Consider the element $P(x_1,\ldots,x_n,y)$ where $P$ is a polynomial in $n+1$ variables with $\bb{Z}$-coefficients. There is then a polynomial $R(x) \in B[x]$, depending only on $P$ and $x_i$'s, such that $R(y)$ is a multiple of $P(x_1,\ldots,x_n,y)$ in $A$.  
\end{lemma}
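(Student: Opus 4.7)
The plan is to reduce the assertion to a norm/Cayley--Hamilton computation over the free $B[Y]$-module $B[x, Y]/(h(x))$, after first clearing the denominators coming from the localization at $g$.

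First, I would get rid of $y$ by promoting it to a formal variable. Let $Y$ be a new indeterminate and consider $P(x_1,\ldots,x_n,Y) \in A[Y]$. Since each $x_i \in A = B[x]_g/(h)$ can be written as $x_i = p_i(x)/g(x)^{N_i}$ for some $p_i \in B[x]/(h)$ and some $N_i \geq 0$, substituting and clearing a common power of $g(x)$ yields an expression
\[
P(x_1,\ldots,x_n, Y) \;=\; \frac{\tilde p(x,Y)}{g(x)^{N}} \qquad \text{in } A[Y],
\]
with $\tilde p(x,Y) \in C := B[x,Y]/(h(x))$. Because $h(x)$ is monic in $x$ of some degree $d$, the ring $C$ is free of rank $d$ as a $B[Y]$-module, with basis $1, x, \ldots, x^{d-1}$.

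The main step is to produce, from $\tilde p$, a divisor in $B[Y]$. Multiplication by $\tilde p(x,Y)$ is a $B[Y]$-linear endomorphism of the free module $C$; let $M(Y)$ denote its matrix with respect to the basis above and set
\[
R(Y) \;:=\; (-1)^{d}\,\det\bigl(M(Y)\bigr) \in B[Y].
\]
By Cayley--Hamilton, $\tilde p(x,Y)$ satisfies its own characteristic polynomial inside $C$, and the constant term of that polynomial is exactly $R(Y)$. Therefore $\tilde p(x,Y)$ divides $R(Y)$ inside $C$, i.e.\ there exists $S(x,Y) \in C$ with $R(Y) = \tilde p(x,Y) \cdot S(x,Y)$ in $C$.

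To finish, I specialize $Y \mapsto y \in B$. Since the substitution $B[Y] \to B$, $Y \mapsto y$, extends to a ring map $C \to B[x]/(h) \to A$, the identity $R(Y) = \tilde p(x,Y)\cdot S(x,Y)$ gives $R(y) = \tilde p(x,y) \cdot S(x,y)$ in $A$. But $g(x)$ is a unit in $A$, so $\tilde p(x,y) = g(x)^{N}\, P(x_1,\ldots,x_n,y)$ in $A$, and hence
\[
R(y) \;=\; P(x_1,\ldots,x_n,y) \cdot \bigl(g(x)^{N} S(x,y)\bigr)
\]
in $A$, exhibiting $R(y)$ as a multiple of $P(x_1,\ldots,x_n,y)$. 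The polynomial $R \in B[Y]$ (renamed $R(x) \in B[x]$ per the statement) depends only on $P$ and on the chosen lifts $p_i(x)/g(x)^{N_i}$ of the $x_i$, not on $y$, as required. There is no real obstacle here: the only subtlety is bookkeeping the denominator $g(x)^{N}$, which is handled uniformly because $g$ is a unit in $A$.
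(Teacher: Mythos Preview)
Your proof is correct, and it is in fact cleaner than the paper's. Both arguments are norm computations, but they are organized differently.

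The paper passes to the fraction field $\Frac(A)/\Frac(B)$, takes the product of $P(x_1',\ldots,x_n',y)$ over Galois conjugates of the $x_i$, and then has to repair integrality: the conjugates live only in the integral closure $A'$ of $B$ in $\Frac(A)$, so the paper introduces the norm $g'$ of $g$, a power $(g')^{c'}$ to make the product integral, and finally an element $f \in B$ with $fA' \subseteq A$ to pull the cofactor back into $A$. The final $R$ is thus $f$ times a symmetric-function expression.

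You instead stay entirely in the free $B[Y]$-module $C=B[x,Y]/(h)$ and take the norm as the determinant of the multiplication-by-$\tilde p$ map, then invoke Cayley--Hamilton to get $\tilde p \mid R(Y)$ in $C$. This bypasses fraction fields, integral closures, and the auxiliary element $f$ altogether; the only denominator bookkeeping is the single power $g(x)^N$, which is harmless because $g$ is a unit in $A$. Your $R$ is the algebraic norm $N_{C/B[Y]}(\tilde p)$ up to sign, which is exactly what the paper's conjugate product is trying to compute, but obtained directly over the base ring rather than over the field. For the application in the paper (bounding $\lVert R(p)\rVert_l$ uniformly in $p$), your construction is equally usable: the entries of $M(Y)$, and hence the coefficients of $R(Y)$, are explicit polynomials in the coefficients of $\tilde p$, so the same height estimates go through.
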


\begin{proof}
Note that $B[x]/h$ is integral over $B$ since $h$ is monic. Let $A'$ be the integral closure of $B$ in $\Frac(B[x]/h)$ so that $A \subseteq A'$. Let $g'$ be the norm of $g$ with respect to the finite extension $\Frac(A)$ over $\Frac(B)$. Then for some sufficiently large $c$, we have that $(g')^c x_i$ is integral over $B$. Now consider the product
$$Q(y) = \prod_{(x_1',\ldots,x_n')} P(x_1',\ldots,x_n',y)$$
where we vary $x_i'$ over all conjugates of $x_i$ over $B$, of which there are only finitely many since $\Frac(A)$ is a finite extension of $\Frac(B)$. This expands out to a symmetric polynomial in each of the $x_i'$'s and since $(g')^c x_i$ is integral, it follows that $Q(y) = \frac{R(y)}{(g')^{c'}}$ for some polynomial $R \in B[x]$ that depends only on $P$ and the $x_i$'s. Hence we have 
$$R(y) = (g')^{c'}\prod_{(x_1',\ldots,x_n')} P(x_1',\ldots,x_n',y).$$
Let $P'$ be the product of $(g')^{c'}$ along with the terms $ P(x_1',\ldots,x_n'y)$, where we vary all possible $(x_1',\ldots,x_n')$ except our original tuple $(x_1,\ldots,x_n)$. Then $P' \in \Frac(B[x]_g/(h))$ and since $P'$ is clearly integral over $B$, it must lie in $A'$.
\par 
Now note that there is an element $f \in B$ so that $fA' \subseteq A$. Indeed, we know that $A'$ is finitely generated as a $A$-module, say by $a_1,\ldots,a_r$ and for each $a_i$, we can find a suitable $f_i \in A$. Each $f_i$ has a multiple $b_i \in B$ and then we can take  $f = \prod_{i=1}^{r} b_i$. Thus $f P' \in A$ with $f \in B$ and it follows that 
$$f \cdot R(y) = P(x_1,\ldots,x_n,y) \cdot f \cdot P'$$
with $fP' \in A \implies f R(y)$ is a multiple of $P(x_1,\ldots,x_n,y)$ as desired.
\end{proof}
Now we put everything together to show that most matrices in $\GSp_{2g,p}(\Z/l^k\Z)$ have nonzero discriminant modulo $l^k$.
\begin{proof}[Proof of Proposition \ref{prop: l^k not dividing disc lower bound}]
Recall our setup above where we cover $\Sp_{2g}$ with finitely many opens so that we have a morphism $f_i: U_i \to \bb{A}^m$ that is standard étale with $U_i = \Spec A_i$. On each chart $U_i$, there is a polynomial $P_i(x_1,\ldots,x_{4g^2},p)$ where $x_i\in A_i$, such that under the isomorphism $\Sp_{2g} \simeq \GSp_{2g,p}$ over $\bb{Z}$ it is equal to $\disc(f_M)$. Also recall that for each $l$, we have two cases depending on whether $p$ is a quadratic or non-quadratic residue, and it suffices to give a bound for a specific choice of $p$ in each case.

Let $X_{i,p} = \Spec A_i/(P_i(x_1,\ldots,x_{4g^2},p))\subseteq U_i$. It suffices to give an upper bound on $\# X_{i,p}(\bb{Z}/l^k\Z)$, and so by Lemma \ref{lem: étaleBound1} it suffices to bound $\# Y_{i,p}(\bb{Z}/l^k\Z)$ where $Y_{i,p}$ is the scheme theoretic image of $X_{i,p}$ under $f_i$. By Lemma \ref{lem: SchemeImage1}, we have a polynomial $R_i(x) \in B[x]$ with $B = \bb{Z}[x_1,\ldots,x_m]$ such that $R_i(p)$ is a multiple of $P_i(x_1,\ldots,x_{4g^2},p)$ in $A_i$, which implies that $Y_{i,p}$ is a subscheme of $V(R_i(p))$. Hence, it suffices to bound the number of solutions to $R_i(p) = 0 $ mod $l^k$ where $R_i(p)$ is viewed as a polynomial in $\Z[x_1,\ldots,x_m]$.

Proposition \ref{UpperBoundPoints1} tells us that we can get this by giving a bound for $||R_i(p)||_l$. For the case of quadratic residues we can simply choose $p=1$ which gives a constant bound. For non-quadratic residues, we need a little more work. The coefficient of $x_1^{i_1} \cdots x_m^{i_m}$ for $R_i(p)$ is a polynomial $R_{i_1,\ldots,i_m}(p)$ in $p$. Pick one such polynomial which is not identically zero, then since this can have at most a finite number of zeroes, for sufficiently large $l$ we can choose a non-quadratic residue $p$ such that $l$ doesn't divide $R_{i_1,\ldots,i_m}(p)$. There are only a finite set of small $l$ so $l^{\ord_l(R_{i_1,\ldots,i_m}(p))}\le C$ is uniformly bounded. Thus, by possibly dividing $R_i(p)$ by a small power of $l$ and applying Proposition \ref{UpperBoundPoints1}, we have for some uniform constant $\delta > 0$
$$\# \{ R_i(x_1,\ldots,x_m,p) = 0 \text{ mod } l^k \} \leq l^{k(m - \delta)}.$$

As we only have finitely many $i$'s, we may find a $p$ that works for every $i$. Thus, taking the union across $U_i$ we obtain
$$\# \left\{M \in \GSp_{2g,p}(\bb{Z}/l^k\Z) \ \middle\vert \ l^k \mid \disc(f_M)\right\} \leq l^{ k(\dim \Sp_{2g} - \delta)}.$$
We get the desired result after dividing by $\#\GSp_{2g,p}(\Z/l^k\Z)=\Theta(l^{k\dim\Sp_{2g}})$.
\end{proof}
\section{Effective Chebotarev density theorem in families}\label{sec: chebotarev}
As mentioned in the proof overview, our goal in this section is to use an effective Chebotarev density theorem in families of number fields to truncate the infinite product $\prod_l v_l(f_A)$ appearing in Theorem \ref{thm: PPAV counts for char poly}.

We state the effective Chebotarev density theorem given in \cite{jessethorner}, which extends the work of \cite{PTW20} for more general Galois groups and does not rely on the strong Artin conjecture. Let $G$ be a nontrivial finite group and $L/\Q$ be a Galois extension of number fields with $\Gal(L/\Q)\cong G$. A rational prime $l$ is unramified when $l\nmid D_{L}$, and in this case define the Artin symbol $\left[\frac{L/\Q}{l}\right]$ to be the conjugacy class of $\Frob_l$ in $G$. We want an effective bound on how $\left[\frac{L/\Q}{l} \right]$ is equidistributed, so for a conjugacy class $C\subseteq G$ we define
$$\pi_C(x,L/\Q)\coloneqq \# \left\{l \text{ prime }\middle\vert \ l\le x,\ l\nmid D_K,\ \left[\frac{L/\Q}{l} \right]=C\right\}.$$
Then, the effective Chebotarev theorem bounds how far this is from the expected amount.
\begin{theorem}
[{\cite[Theorem 2.1]{jessethorner}}]\label{thm: eff chebotarev}
Fix a nontrivial finite group $G$, and constants $\epsilon >0$ and $Q\ge 3$. Let $\mathfrak F_G$ be a family of number fields $L$ that are Galois over $\Q$ with $\Gal(L/\Q) \cong G$. Define $\mathfrak F_G(Q)=\{L\in \mathfrak F_G \colon D_L\le Q\}$, and 

$$\mathfrak m_{\mathfrak F_G(Q)}\coloneqq \max_{L'\in \mathfrak F_G(Q)}\#\{L\in \mathfrak F_G(Q)\colon L\cap L'\neq \Q\}.$$

For all except $O(\mathfrak m_{\mathfrak F_G(Q)}Q^\epsilon)$ number fields $L\in \mathfrak F_G(Q)$, the following statement is true. If $C\subseteq G$ is a conjugacy class, then
\begin{equation}\label{eqn: eff chebotarev}
\left|\pi_C(x,L/\Q)- \frac{|C|}{|G|}\Li(x)\right| =O\left( x \exp\left(-\frac{1}{29}\left(\frac{\log(x)}{|G|}\right)^{1/2}\right)\right) \text{ for }x\ge (\log D_L)^{10^9|G|^3/\epsilon}.
\end{equation}
\end{theorem}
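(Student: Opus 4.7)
The plan is to follow the strategy of Pierce--Turnage-Butterbaugh--Wood, using only unconditional input as in Thorner's refinement. The starting point is the usual explicit formula: for $L/\Q$ Galois with group $G$ and $C \subseteq G$ a conjugacy class,
$$\pi_C(x,L/\Q) - \frac{|C|}{|G|}\Li(x) = -\frac{|C|}{|G|}\sum_{\rho}\chi_\rho(C)\,\dim(\rho)\sum_{\beta}\frac{x^\beta}{\beta} + (\text{lower order terms}),$$
where $\rho$ ranges over nontrivial irreducible representations of $G$ and the inner sum is over nontrivial zeros of the Artin L-function $L(s,\rho,L/\Q)$. The classical Weyl-type zero-free region already delivers the stated error bound, \emph{unless} some $L(s,\rho,L/\Q)$ has a Siegel-type zero exceedingly close to $s=1$. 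So the theorem reduces to showing that the set of $L \in \mathfrak F_G(Q)$ possessing such an exceptional zero has size $O(\mathfrak m_{\mathfrak F_G(Q)} Q^\epsilon)$.

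The second step is a log-free zero-density estimate. Even without Artin's conjecture, Brauer induction writes each $L(s,\rho,L/\Q)$ as a ratio of Hecke L-functions over intermediate fields of $L$, whose analytic properties are accessible. Feeding these into a standard Tur\'an power-sum / mollifier argument (as in Kowalski--Michel and refined by Thorner) yields an estimate of the shape
$$\#\bigl\{L \in \mathfrak F_G(Q) : \zeta_L(s) \text{ has a zero with } \Re s \geq 1-\delta,\ |\Im s| \leq T\bigr\} \ll_{G} (QT)^{A(G)\delta + o(1)}$$
for an explicit $A(G)$. Choosing $\delta$ tiny and $T$ polynomial in $\log Q$ (which is exactly why the lower threshold $x \ge (\log D_L)^{10^9|G|^3/\epsilon}$ appears in the conclusion), this counts \emph{at most $Q^\epsilon$ Dedekind zeta functions} $\zeta_L$ ($L \in \mathfrak F_G(Q)$) carrying a genuinely bad zero.

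The third step is amplification via subfields, which explains the factor $\mathfrak m_{\mathfrak F_G(Q)}$. Because $\zeta_L(s) = \prod_\rho L(s,\rho,L/\Q)^{\dim\rho}$, if $L$ and $L'$ share a nontrivial intersection then their Dedekind zeta functions share a common Artin L-factor coming from the representations inflated from $\Gal(LL'/L')$ (or equivalently, from the Dedekind zeta of $L\cap L'$). So a single ``bad'' Artin L-function pollutes every field in $\mathfrak F_G(Q)$ whose intersection with the relevant subfield is nontrivial. The quantity $\mathfrak m_{\mathfrak F_G(Q)}$ is exactly designed to bound this multiplicity, so the $Q^\epsilon$ bad $\zeta_L$'s from Step 2 yield at most $O(\mathfrak m_{\mathfrak F_G(Q)} Q^\epsilon)$ exceptional fields. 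For all other $L$, every relevant $L(s,\rho,L/\Q)$ lies in the classical zero-free region, and inserting this into the explicit formula with a suitable truncation at height $T \asymp \exp((\log x/|G|)^{1/2})$ gives the advertised saving $\exp(-\tfrac{1}{29}(\log x/|G|)^{1/2})$.

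The principal obstacle is the uniformity of the log-free zero-density bound across the family, since Artin's conjecture is not available. The cleanest route is to avoid Artin L-functions altogether and work instead with the Dedekind zeta $\zeta_L$, whose factorization through intermediate Hecke L-functions is unconditional; the combinatorial work then lies in showing that a zero of $\zeta_L$ forces a zero of at least one Hecke L-function of controlled conductor, and in using Deuring--Heilbronn phenomena to convert a single exceptional zero into a genuine power-savings zero-free region for the non-exceptional fields. This is precisely the input provided by \cite{jessethorner}, so we take the statement as a black box.
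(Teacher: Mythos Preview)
The paper does not prove this theorem at all: it is quoted verbatim from \cite[Theorem 2.1]{jessethorner} and used as a black box, with no argument given. Your proposal ends in exactly the same place (``we take the statement as a black box''), so at the level of what the paper actually does, you are in agreement. The three-paragraph sketch you give of the Pierce--Turnage-Butterbaugh--Wood/Thorner strategy (explicit formula, log-free zero-density for Dedekind zeta functions, amplification through shared subfields explaining the $\mathfrak m_{\mathfrak F_G(Q)}$ factor) is a reasonable high-level summary of the cited work, but it is not something the present paper attempts, so there is nothing here to compare against.
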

Call $L$ an $\epsilon$-good number field if Equation \eqref{eqn: eff chebotarev} is true for the constant $\epsilon>0$. In Section \ref{sec: truncation}, we use Equation \eqref{eqn: eff chebotarev} in the case when $\widetilde K$ is $\epsilon$-good to deduce that the tail end $\prod_{l>l_0}v_l(f_A)$ is close to $1$, allowing us to truncate the product as desired. In Section \ref{sec: bounding number of exceptional fields}, we bound the number of exceptional fields which are not $\epsilon$-good, showing that the truncation works generically.
\subsection{Prime splitting and truncation}\label{sec: truncation}
Let $A$ be a simple, ordinary abelian variety with generic Galois group $G=(\Z/2\Z)^g\rtimes S_g$, and $l$ be a good rational prime satisfying $l\nmid 2p\disc(f_A)$, so we have $v_l(f_A)=\zeta_{K,l}(1)/\zeta_{K^+,l}(1)$ by Lemma \ref{lem: good prime ratio of zeta functions}. Furthermore, $l$ is unramified in $K\cong \Q[x]/f_A(x)$ and thus also unramified in both $K^+$ and the Galois closure $\widetilde K$. By definition, we have that 
$$\zeta_{K,l}(1) = \prod_{\text{prime } \mathfrak l\subseteq \mathcal O_K, \, \mathfrak l\mid l} \frac{1}{1-N(\mathfrak l)^{-1}} = 1 + \frac{\#\{\text{prime } \mathfrak l\subseteq \mathcal O_K \mid N(\mathfrak l)=l \}}{l} + O(l^{-2}).$$
The analogous statement applies for the totally real field $K^+$, so combining them we get
$$\frac{\zeta_{K,l}(1)}{\zeta_{K^+,l}(1)} = 1 + \frac{a_{K,l}}{l} + O(l^{-2})$$
where we define
$$a_{K,l} \coloneqq \#\{\text{prime } \mathfrak l\subseteq \mathcal O_K \mid N(\mathfrak l)=l \}-\#\{\text{prime } \mathfrak l\subseteq \mathcal O_{K^+} \mid N(\mathfrak l)=l \}.$$ 

To determine $a_{K,l}$, we need to know how $l$ splits in both $K$ and $K^+$, and the following Proposition tells us that this information is determined by $\left[\frac{\widetilde K/\Q}{l}\right]$. 
\begin{proposition}[{\cite[Proposition 2.8, Chapter III]{Janusz_1996}}] Let $H$ be the subgroup of $G$ fixing $K$, and suppose
$\left[\frac{\widetilde K/\Q}{l}\right]$ has cycles of length $t_1,\ldots, t_s$ when acting on the cosets of $H$ in $G$. Then, the rational prime $l$ splits into primes $\mathfrak l_1,\ldots, \mathfrak l_s$ in $K$ where $N(\mathfrak l_i)=l^{t_i}$. The same statement holds when we replace $K$ with $K^+$ and $H$ with $H^+$.
\end{proposition}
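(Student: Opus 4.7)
The plan is to reduce the statement to the classical Galois-theoretic description of prime splitting via decomposition groups. I will sketch the argument for $K$; the case of $K^+$ is word-for-word identical with $H$ replaced by $H^+$.

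First, I would fix a prime $\mathfrak L$ of $\widetilde K$ lying above $l$. Since $l$ is unramified in $\widetilde K$, the decomposition group $D_{\mathfrak L} \leq G$ is cyclic, generated by a Frobenius element $\sigma = \Frob_{\mathfrak L}$ whose conjugacy class in $G$ is exactly $\left[\frac{\widetilde K/\Q}{l}\right]$. The set of primes of $\widetilde K$ above $l$ is a transitive $G$-set with stabilizer $D_{\mathfrak L}$, hence identified $G$-equivariantly with $G/\langle \sigma\rangle$.

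Next, I would invoke the bijection between primes of $K$ above $l$ and $H$-orbits on primes of $\widetilde K$ above $l$. Via the identification of the previous paragraph, this set of orbits is naturally the double coset space $H\backslash G/\langle\sigma\rangle$, which in turn is in bijection with the orbits of $\langle\sigma\rangle$ acting on $G/H$ by left multiplication, i.e.\ with the cycles $O_1,\ldots,O_s$ of the permutation of $G/H$ induced by $\sigma$. Writing $\mathfrak l_i$ for the prime of $K$ corresponding to $O_i$, it remains to check that $f(\mathfrak l_i/l) = |O_i| = t_i$, which would give $N(\mathfrak l_i) = l^{t_i}$.

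For this final residue-degree computation, I would pick a prime $\mathfrak L_i$ of $\widetilde K$ above $\mathfrak l_i$ and use the standard identity
$$f(\mathfrak l_i/l) = [D_{\mathfrak L_i} : D_{\mathfrak L_i} \cap H],$$
which follows because $D_{\mathfrak L_i}$ surjects onto $\Gal((\mathcal O_{\widetilde K}/\mathfrak L_i)/\F_l)$ with $D_{\mathfrak L_i}\cap H$ corresponding to $\Gal((\mathcal O_{\widetilde K}/\mathfrak L_i)/(\mathcal O_K/\mathfrak l_i))$. Combining this with orbit--stabilizer applied to $\langle\sigma\rangle$ acting on $G/H$ yields the index equals $|O_i| = t_i$. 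The only subtlety worth flagging is that the cycle type of $\sigma$ on $G/H$ is a conjugation invariant, so this data is well-defined by the conjugacy class $\left[\frac{\widetilde K/\Q}{l}\right]$ and is independent of the auxiliary choice of $\mathfrak L$; no step in this argument is a serious obstacle, as the entire proposition is a classical consequence of Galois theory (see \cite{Janusz_1996}).
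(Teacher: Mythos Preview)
Your sketch is correct and is exactly the standard decomposition-group argument that Janusz gives; the paper itself does not supply a proof but simply cites \cite[Proposition 2.8, Chapter III]{Janusz_1996}. There is nothing to compare: your approach is the classical one.
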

Hence, for $\sigma \in G$ we define $a_{\sigma}=|(G/H)^\sigma|-|(G/H^+)^\sigma|$ to be the difference in the number of cosets of $H$ and $H^+$ which are fixed by $\sigma$. This is the same up to conjugacy, so for a conjugacy class $C\subseteq G$ we define $a_C = a_\sigma$ for $\sigma\in C$. Hence, the above proposition tells that $a_{K,l}=a_{\left[\frac{\widetilde K/\Q}{l}\right]}$. 
\begin{lemma}\label{lem: average a_sigma}
The average of $a_\sigma$ among all $\sigma \in G$ is $0$.
\end{lemma}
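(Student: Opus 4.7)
The plan is to apply Burnside's lemma (the Cauchy--Frobenius orbit-counting formula) to each term separately. Recall that for any finite group $G$ acting on a finite set $X$, one has
$$\frac{1}{|G|}\sum_{\sigma \in G} |X^\sigma| = \#\{\text{orbits of }G\text{ on }X\}.$$
Applying this with $X = G/H$ and then with $X = G/H^+$, and using that $G$ acts transitively on each of these coset spaces (so there is exactly one orbit in each case), we get
$$\frac{1}{|G|}\sum_{\sigma \in G}|(G/H)^\sigma| = 1 = \frac{1}{|G|}\sum_{\sigma \in G}|(G/H^+)^\sigma|.$$
Subtracting these two identities yields $\frac{1}{|G|}\sum_{\sigma \in G} a_\sigma = 0$, which is exactly the statement of the lemma.

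There is no real obstacle here; the content of the lemma is essentially that both $G/H$ and $G/H^+$ are transitive $G$-sets, so that their permutation characters each have inner product $1$ with the trivial character, and hence the difference of the permutation characters has inner product $0$ with the trivial character. This is precisely the averaging statement we want. No properties specific to the generic Galois group $(\Z/2\Z)^g \rtimes S_g$ are needed; the result holds for any pair of subgroups of any finite group.
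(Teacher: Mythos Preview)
Your proof is correct and takes essentially the same approach as the paper. The paper proves directly, via a counting/linearity-of-expectation argument, that the average of $|(G/H)^\sigma|$ over $\sigma\in G$ equals $1$ (and similarly for $H^+$); this is precisely the content of Burnside's lemma applied to the transitive $G$-set $G/H$, which you invoke by name.
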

\begin{proof}
We show that the average size of $(G/H)^\sigma$ is $1$. Indeed, any coset $gH$ is a fixed point of $\sigma$ if and only if $\sigma gH = gH$, i.e. $g^{-1}\sigma g\in H$ which happens with $|H|/|G|$ probability. There are $|G|/|H|$ possible cosets, so by linearity of expectation, we expect an average of $1$ coset of $H$ in $G$ to be fixed. The same argument shows that the average size of $(G/H^+)^\sigma$ is also $1$, so by taking the difference we have the desired result. In principle, this proof works because both $\zeta_K$ and $\zeta_{K^+}$ have simple poles at $s=1$. 
\end{proof}
Heuristically, Chebotarev's density theorem tells us that $\left[\frac{\widetilde K/\Q}{l}\right]$ is equidistributed, so by the lemma above, $\zeta_{K,l}(1)/\zeta_{K^+,l}(1)$ should have an average of $1+O(l^{-2})$ across all primes $l$. The factors greater than $1$ should cancel out with those that are smaller than $1$, making the tail end of the infinite product equals to $1$ plus a small error. We make this rigorous starting from Equation \eqref{eqn: eff chebotarev}, in a similar way to \cite[Section 4]{Ma_2024}.
\begin{proposition}\label{prop: truncation}
Suppose $\widetilde K$ is an $\epsilon$-good number field, i.e. it satisfies Equation \eqref{eqn: eff chebotarev} with the constant $\epsilon>0$. Then, there exists constants $C_1, C_2>0$ such that for $l_0 \ge (\log D_L)^{C_1/\epsilon}$, we have $$\prod_{l>l_0} \frac{\zeta_{K,l}(1)}{\zeta_{K^+,l}(1)} = 1 + O\left(\exp\left(-C_2\log(l_0)^{1/2}\right)\right).$$ 
\end{proposition}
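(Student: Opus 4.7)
The plan is to take logarithms and combine an Abel summation argument with the effective Chebotarev bound. For unramified primes $l \nmid D_{\widetilde K}$, the expansion given just before Lemma \ref{lem: average a_sigma} together with $\log(1+x)=x+O(x^2)$ yields
$$\log\frac{\zeta_{K,l}(1)}{\zeta_{K^+,l}(1)} = \frac{a_{K,l}}{l} + O(l^{-2}),$$
where $a_{K,l} = a_C$ for $C = \left[\frac{\widetilde K/\Q}{l}\right]$ and $|a_C|$ is bounded in terms of $g$. For the (finitely many) ramified primes $l \mid D_{\widetilde K}$, a direct estimate of the Euler factors still gives $\log(\zeta_{K,l}(1)/\zeta_{K^+,l}(1)) = O(l^{-1})$. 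Summed over $l > l_0$, the $O(l^{-2})$ tail contributes $O(l_0^{-1})$ and the at most $\log_2 D_{\widetilde K}$ ramified primes contribute $O(\log(D_{\widetilde K})/l_0)$; the hypothesis $l_0 \ge (\log D_{\widetilde K})^{C_1/\epsilon}$ with $C_1$ sufficiently large makes both negligible compared with $\exp(-C_2(\log l_0)^{1/2})$.

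The heart of the argument is the main term $T := \sum_{l > l_0,\, l\nmid D_{\widetilde K}} a_{K,l}/l$. Grouping primes by Frobenius, define
$$S(x) := \sum_{l \le x,\, l\nmid D_{\widetilde K}} a_{K,l} = \sum_C a_C\, \pi_C(x, \widetilde K/\Q),$$
where the outer sum runs over conjugacy classes $C \subseteq G$. Substituting the effective Chebotarev bound \eqref{eqn: eff chebotarev} and invoking Lemma \ref{lem: average a_sigma} (which gives $\sum_C a_C |C|/|G| = 0$) causes the $\Li(x)$ contributions to cancel; choosing $C_1 \ge 10^9|G|^3$ guarantees that Theorem \ref{thm: eff chebotarev} applies throughout $x \ge l_0$, leaving
$$|S(x)| = O\!\left( x \exp\!\left(-\tfrac{1}{29}(\log x/|G|)^{1/2}\right)\right), \quad x \ge l_0.$$

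Finally, Abel summation gives $T = -S(l_0)/l_0 + \int_{l_0}^\infty S(t)/t^2\,dt$, and the substitution $v = (\log t)^{1/2}$ converts the integral into $\int_{(\log l_0)^{1/2}}^{\infty} 2v\exp(-v/(29\sqrt{|G|}))\,dv$, which is $O\!\bigl((\log l_0)^{1/2}\exp(-(\log l_0)^{1/2}/(29\sqrt{|G|}))\bigr)$ by integration by parts. Picking any $C_2 < 1/(29\sqrt{|G|})$ absorbs the polynomial prefactor, so $T = O(\exp(-C_2(\log l_0)^{1/2}))$; combining with the negligible tail errors and exponentiating yields the claimed bound on the product. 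The main obstacle is threshold bookkeeping: Theorem \ref{thm: eff chebotarev} only applies for $x$ beyond $(\log D_{\widetilde K})^{10^9|G|^3/\epsilon}$, which is precisely why the hypothesis must force $l_0$ to grow at least like a $|G|^3$-th power of $\log D_{\widetilde K}$. Once this is arranged, the remaining analysis is a routine combination of Abel summation and integration by parts.
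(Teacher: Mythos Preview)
Your proof is correct and follows essentially the same approach as the paper: take logarithms, separate the ramified and $O(l^{-2})$ contributions, apply Abel (partial) summation to the main term $\sum_{l>l_0} a_{K,l}/l$, use the effective Chebotarev bound together with Lemma \ref{lem: average a_sigma} to cancel the $\Li$ terms, and then integrate the resulting exponential error. The only differences are cosmetic---the paper uses $s(x)=\sum_{l_0<l\le x}a_{K,l}$ rather than your $S(x)=\sum_{l\le x}a_{K,l}$ and is slightly terser about the final integral---but the argument is the same.
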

\begin{proof}
First, we note that the contribution from ramified primes are insignificant as each term is $1+O(l_0^{-1})$ and there are at most $\log(D_L)$ many ramified primes. From now on we only consider the product over unramified primes, and by taking the logarithm, we have
$$\sum_{l> l_0} \log\left(\frac{\zeta_{K,l}(1)}{\zeta_{K^+,l}(1)}\right) = \sum_{l> l_0} \left(\frac{a_{K,l}}{l} + O(l^{-2})\right) =\sum_{l>l_0} \frac{a_{K,l}}{l} + O(l_0^{-1}).$$
By partial summation,
$$\sum_{l>l_0}\frac{a_{K,l}}{l}=\int_{l_0}^\infty \frac{s(x)}{x^2}$$
where 
\begin{equation*}
\begin{split}
s(x) &\coloneqq \sum_{l_0< l \le x} a_{K,l}=\sum_{C} a_C \left(\pi_C(x,L/\Q)-\pi_C(l_0,L/\Q)\right)\\
&=\sum_C a_C \frac{|C|}{|G|}(\Li(x)-\Li(l_0)) + O\left(x \exp\left(-C_3\log(x)^{1/2}\right)\right)
\end{split}
\end{equation*}
where the latter sum is over conjugacy classes $C\subseteq G$ and $C_3>0$ is some constant. By Lemma \ref{lem: average a_sigma}, the sum in the second line evaluates to zero and we are left with the error term. Substituting this back, we have $$\sum_{l> l_0} \log\left(\frac{\zeta_{K,l}(1)}{\zeta_{K^+,l}(1)}\right) = O\left(\int_{l_0}^\infty \frac{\exp(-C_3\log(x)^{1/2})}{x}+ l_0^{-1}\right)=O\left(\exp\left(-C_3\log(l_0)^{1/2}\right)\right)$$
which gives the desired result after exponentiating.
\end{proof}
\subsection{Hilbert's irreducibility theorem} In order for Proposition \ref{prop: truncation} to be useful, we need to show that most of the $\widetilde K$ are $\epsilon$-good. The main input for this is an effective Hilbert irreducibility theorem. Most versions in the literature either do not give explicit dependence on the coefficients on the polynomial in question, or give a polynomial dependence, which is not strong enough for our purposes. We state the version in \cite{effectivehilbertsirreducibilitytheorem} which only has a $\log$ dependence on the coefficients of the polynomial which this was in turn proven by means of a Bombieri-Pila type of bound. Here, we simplify the statement in the case when the polynomial has coefficients in $\Z$. In \cite{effectivehilbertsirreducibilitytheorem}, only the statement bounding the number of specializations such that $f(t_1,\ldots,t_s,Y)$ has a root is given but it is straightforward to deduce the corresponding statement regarding irreducibility following \cite[Lemma 4.2]{effectivehilbertsirreducibilitytheorem}. Here, we denote $[B]=\Z\cap[-B,B]$, and the height $H(f)$ of an integer polynomial can be defined as the maximum absolute value of its coefficients divided by the gcd of them.
\begin{theorem}[{\cite[Theorem 5.5]{effectivehilbertsirreducibilitytheorem}}] \label{thm: EffectiveHilbertIrreducibility1} 
Fix $s\ge 1$ and let $f(T_1,\ldots, T_s,Y)\in \Z [T_1,\ldots, T_s,Y]$ be irreducible of degree $d_Y$ in the variable $Y$ and degree $d_T$ in the $T_i's$. Then there are positive constants $\mu,\nu$ depending on $s,d_T,d_Y$ such that
$$\# \{(t_1,\ldots, t_s)\in [B]^s \mid f(t_1,\ldots,t_s,Y) \text{ has a solution } y \in \bb{Z}\} = O\left( (\log H(f) + 1)^{\mu} B^{s - \frac{1}{2}} (\log (B))^{\nu}\right).$$
\end{theorem}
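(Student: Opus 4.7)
The plan is to proceed by induction on the number of parameters $s$, reducing the statement in the base case to a Bombieri--Pila style bound on integer points of an irreducible affine plane curve with explicit logarithmic dependence on the height of the defining polynomial. The $B^{s-1/2}$ saving reflects one application of such a curve bound, with the remaining $B^{s-1}$ factor coming from the outer specialization parameters.

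For the base case $s=1$, I would first note that if $f(t_1,y)=0$ with $t_1\in[B]$, then $y$ divides a coefficient of $f(t_1,Y)$ seen as a polynomial in $Y$, so $|y|\ll H(f)\,B^{d_T}$. The relevant pairs $(t_1,y)$ therefore lie on the absolutely irreducible affine plane curve $C:\{f=0\}$ inside an explicit box of sides $B$ and $O(H(f)B^{d_T})$. I would then invoke a refined determinant-method estimate (in the spirit of Walkowiak, Salberger, or Heath--Brown) giving
$$\#\bigl\{(t_1,y)\in C(\mathbb{Z})\cap\text{box}\bigr\}=O\!\left((\log H(f)+1)^{\mu_1}\,B^{1/2}\,(\log B)^{\nu_1}\right),$$
which is exactly the claim for $s=1$.

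For the inductive step, fix $\mathbf{t}'=(t_1,\ldots,t_{s-1})\in[B]^{s-1}$ and set $g_{\mathbf{t}'}(T_s,Y):=f(\mathbf{t}',T_s,Y)$, which has height $\ll H(f)\,B^{(s-1)d_T}$. I would partition the outer tuples into two classes. If $g_{\mathbf{t}'}$ remains irreducible in $\mathbb{Z}[T_s,Y]$, applying the base case to $g_{\mathbf{t}'}$ yields $O((\log H(f)+\log B+1)^{\mu_1}\,B^{1/2}\,(\log B)^{\nu_1})$ good $t_s$; summing over $\mathbf{t}'$ produces the advertised $B^{s-1/2}$ bound after collapsing the $\log B$ terms into $(\log B)^{\nu}$. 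Otherwise $\mathbf{t}'$ lies in a Zariski-closed proper subvariety $V\subsetneq\mathbb{A}^{s-1}$, carved out by vanishing of appropriate universal resultants in the coefficients of $f$ (proper because $f$ is absolutely irreducible, so its generic specialization remains irreducible). Standard integer-point bounds on proper subvarieties of $\mathbb{A}^{s-1}$ of bounded degree give $|V\cap[B]^{s-1}|\ll B^{s-2}$, so the exceptional contribution is at most $\ll B\cdot B^{s-2}=B^{s-1}=o(B^{s-1/2})$ and is absorbed into the main term.

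The hard part will be securing the genuinely logarithmic dependence on $H(f)$ in the base case. A naive Bombieri--Pila argument yields only polynomial dependence on the height, which is fatal under the inductive specialization because each substitution inflates the height by a factor $B^{O(1)}$; iterating $s-1$ times would convert that polynomial-in-$H(f)$ loss into a polynomial-in-$B$ loss and destroy the $B^{1/2}$ saving. The substitute is a more delicate determinant method --- e.g.\ $p$-adic determinants or auxiliary polynomials of degree carefully adapted to the local geometry of $C$ --- which is essentially the content the cited paper is designed to supply. A secondary nuisance is to make the exceptional locus $V$ effective, with degree and (log-)height controlled purely in terms of $s,d_T,d_Y$ and $\log H(f)$; this is a routine but fiddly exercise in elimination theory.
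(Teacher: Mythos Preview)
The paper does not prove this theorem at all: it is quoted verbatim as \cite[Theorem 5.5]{effectivehilbertsirreducibilitytheorem} and used as a black box. The only commentary the paper offers is the sentence preceding the statement, noting that the result ``only has a $\log$ dependence on the coefficients of the polynomial'' and ``was in turn proven by means of a Bombieri--Pila type of bound.'' So there is no in-paper proof to compare against.

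Your sketch is a plausible outline of how such a result is established, and you have correctly identified the crux: the logarithmic dependence on $H(f)$ in the one-parameter case is the whole point, and it is precisely what the cited reference supplies via a refined determinant method. Your inductive reduction is fine (the bound $(\log H(f)+\log B)^{\mu_1}\le (\log H(f)+1)^{\mu_1}(\log B+1)^{\mu_1}$ lets you separate the two logarithms), and the exceptional locus where the specialization becomes reducible is indeed a proper subvariety by Bertini-type arguments, giving the trivial $B^{s-2}$ bound on its integer points. But since you explicitly defer the base case to ``the content the cited paper is designed to supply,'' you are not really proving the theorem so much as reducing it to its own one-variable version --- which is appropriate given that the paper itself treats the entire statement as an imported result.
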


\begin{corollary} \label{cor: EffectiveHilbertIrreducibility2}
Fix a positive integer $s$ and let $f(T_1,\ldots,T_s,Y) \in \Z [T_1,\ldots,T_s,Y]$ be irreducible of degree $d_Y$ in the variable $Y$ and $d_T$ in the $T_i$'s. Then there are positive constants $\mu,\nu$ depending on $s,d_T,d_Y$ such that 
$$\# \{(t_1,\ldots,t_s) \in [B]^s \mid f(t_1,\ldots,t_s,Y) \text{ is reducible over } \bb{Z}\} = O\left((\log H(f)+1)^{\mu} B^{s - \frac 1 2}( \log B)^{\nu}\right).$$
\end{corollary}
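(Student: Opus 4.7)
The plan is to reduce Corollary \ref{cor: EffectiveHilbertIrreducibility2} to Theorem \ref{thm: EffectiveHilbertIrreducibility1} by constructing, for each possible $Y$-degree of a non-trivial factor, an auxiliary polynomial whose integer roots detect that factorization pattern. After replacing $f$ by its primitive part (which does not affect whether $f(t,Y)$ is reducible over $\Z$ and increases $H(f)$ only by a bounded factor), Gauss's lemma tells us that $\Z$-reducibility of $f(t_1,\ldots,t_s,Y)$ is equivalent to $\Q$-reducibility, so if $f(t,Y)$ is reducible then it admits an irreducible $\Q$-factor of $Y$-degree $k$ for some $1 \le k \le \lfloor d_Y/2\rfloor$.

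For each such $k$, let $\alpha_1,\ldots,\alpha_{d_Y}$ be the roots of $f(T_1,\ldots,T_s,Y)$ in an algebraic closure of $\Q(T_1,\ldots,T_s)$, and consider
$$F_k(T_1,\ldots,T_s,Z) \coloneqq \prod_{\substack{S\subseteq\{1,\ldots,d_Y\}\\ |S|=k}} \Bigl(Z - \sum_{i\in S}\alpha_i\Bigr).$$
Symmetry in the $\alpha_i$ implies that $F_k$ has coefficients in $\Q(T_1,\ldots,T_s)$, and clearing denominators via the leading coefficient of $f$ in $Y$ gives $F_k \in \Z[T_1,\ldots,T_s,Z]$. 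Standard resultant and Mahler-measure bounds yield $\deg_Z F_k = \binom{d_Y}{k}$, $\deg_T F_k = O_{d_T,d_Y}(1)$, and $\log H(F_k) = O_{d_T,d_Y}(\log H(f))$. Moreover, if $f(t,Y)$ has a $\Q$-factor of $Y$-degree $k$, then the sum of its roots is a rational number whose denominator divides a bounded power of the leading coefficient of $f(t,Y)$; absorbing this into the scaling shows that $F_k(t,Z)$ has an integer root.

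In general $F_k$ is not irreducible, but it decomposes over $\Z[T_1,\ldots,T_s,Z]$ into irreducible factors indexed by the orbits of $\Gal(f/\Q(T))$ acting on $k$-subsets of $\{\alpha_1,\ldots,\alpha_{d_Y}\}$. The number of factors is bounded by $\binom{d_Y}{k} \le 2^{d_Y}$, and Mignotte's factor bound gives that each irreducible factor has height at most $O_{d_T,d_Y}(H(F_k))$, and degrees bounded in terms of $d_T, d_Y$. Applying Theorem \ref{thm: EffectiveHilbertIrreducibility1} to each such irreducible factor, summing over the finitely many factors and over $k \in \{1,\ldots,\lfloor d_Y/2\rfloor\}$, produces the bound
$$O\bigl((\log H(f)+1)^{\mu} B^{s-\tfrac{1}{2}}(\log B)^{\nu}\bigr)$$
for suitable $\mu,\nu$ depending only on $s, d_T, d_Y$.

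The main technical obstacle is the bookkeeping around $F_k$: verifying $\log H(F_k) = O(\log H(f))$ so that the $(\log H(F_k)+1)^{\mu'}$ factors from Theorem \ref{thm: EffectiveHilbertIrreducibility1} combine into $(\log H(f)+1)^\mu$, controlling the heights of the irreducible factors of $F_k$, and ensuring via Gauss's lemma that integer roots of $F_k(t,Z)$ correspond to genuine $\Z$-reducibility of $f(t,Y)$ rather than merely $\Q$-reducibility. These are all standard, and the overall argument is an elaboration of the blueprint in \cite[Lemma 4.2]{effectivehilbertsirreducibilitytheorem}.
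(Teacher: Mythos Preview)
Your reduction has a genuine gap: using only the trace $\sum_{i\in S}\alpha_i$ as the detector for a degree-$k$ factor is not enough. It can happen that, for some $k$-subset $S$, the sum $\sum_{i\in S}\alpha_i$ already lies in $\Q(T_1,\ldots,T_s)$ even though $f$ is irreducible; then the irreducible factor of $F_k$ corresponding to the Galois orbit of $S$ is linear in $Z$, and Theorem~\ref{thm: EffectiveHilbertIrreducibility1} gives no nontrivial bound for it (a degree-one polynomial in $Z$ has an integer root for essentially every specialization). Yet precisely those specializations where $f(t,Y)$ acquires a factor whose roots are $\{\alpha_i(t):i\in S\}$ land in this linear factor, so your count misses them entirely.

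A concrete example: take $s=1$ and $f(T,Y)=Y^4+T$, which is irreducible over $\Q(T)$. Writing the roots as $\pm\beta,\pm i\beta$ with $\beta=(-T)^{1/4}$, the subsets $\{\beta,-\beta\}$ and $\{i\beta,-i\beta\}$ have sum $0\in\Q(T)$, so $F_2(T,Z)$ has $Z^2$ as a factor. For $t=-a^2$ we get $Y^4-a^2=(Y^2-a)(Y^2+a)$, and the factor $Y^2-a$ has roots $\pm\beta(t)$, i.e.\ it comes from exactly the subset whose trace is generically rational. Your argument applied to the nonlinear factors of $F_2$ never sees these $\sim\sqrt{B}$ bad specializations.

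The paper's proof closes this gap by considering, for each subset $\omega$, \emph{all} elementary symmetric functions $\tau_{\omega,j}$ of $\{y_i:i\in\omega\}$, not just the trace $\tau_{\omega,1}$. Since $f$ is irreducible, at least one $\tau_{\omega,j}$ lies outside $\Q(T_1,\ldots,T_s)$ (otherwise $\prod_{i\in\omega}(Y-y_i)$ would be a nontrivial factor of $f$), and its minimal polynomial $P_{\omega,j}$ has $Z$-degree $\ge 2$; one then applies Theorem~\ref{thm: EffectiveHilbertIrreducibility1} to that $P_{\omega,j}$. Your approach can be repaired in the same way, but as written it does not go through.
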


\begin{proof}
We follow the strategy of \cite[Proposition 4.1]{effectivehilbertsirreducibilitytheorem}, adapting it for multiple variables. We first assume that $f$ is monic in $Y$. Let $\bar{\bb{Q}(T_1,\ldots,T_s)}$ be the algebraic closure of $\bb{Q}(T_1,\ldots,T_s)$ and let $f(\mathbf{T},Y) = \prod_{i=1}^{d_Y} (y - y_i)$ where each $y_i \in \bar{\Q(T_1,\ldots,T_s)}$. Each $y_i$ is integral over $\bb{Z}[T_1,\ldots,T_s]$ and so for any $\omega \subseteq \{1,\ldots,d_Y\}$ and non-negative integer $j$ with $j \leq |\omega|$, the element $\tau_{\omega,j}$ which is the $j^{th}$ symmetric sum of the $y_i$'s in $\omega$ is integral over $\bb{Z}[T_1,\ldots,T_s]$. 
Let $P_{\omega,j}(\mathbf{T},Y)$ be the minimal polynomial of $\tau_{\omega,j}$. Then it is monic and has coefficients in $\bb{Z}[T_1,\ldots,T_s]$. 
\par 
Now suppose we have a specialization $t_1,\ldots,t_s \in \bb{Z}$ such that $f(t_1,\ldots,t_s,Y)$ is reducible over $\Z$, so we obtain a factorization 
$$f(t_1,\ldots,t_s,Y) = \prod_{i \in \omega} (Y - y_i(\mathbf t)) \prod_{i \not \in \omega} (Y - y_i(\mathbf t))$$
for some $\omega \subseteq d_Y$ with $1 \leq |\omega| \leq d_Y - 1$. Take $R(Y) = \prod_{i \in \omega} (Y - y_i(\mathbf t))$. Then the coefficients of $R(Y)$ are $\tau_{\omega,j}(t_1,\ldots,t_s)$ up to a sign. At least one of the $\tau_{\omega,j}$ does not lie in $\Q(T_1,\ldots,T_s)$ else $f(\mathbf{T},Y)$ would be reducible over $\Q(T_1,\ldots,T_s)$. If we let $P_{\omega,j}$ be the minimal polynomial for this particular $\tau_{\omega,j}$, then $2 \leq \deg P_{\omega,j} \leq 2^{d_Y}$ and $P_{\omega,j}(t_1,\ldots,t_s,Y)$ has $\tau_{\omega,j}(t_1,\ldots,t_s)$ as a root which lies in $\bb{Z}$. 
\par 
It now suffices to note that the number of such $P_{\omega,j}$'s are bounded in terms of $d_Y,d_T$, their degree is bounded in terms of $d_Y$, and their log-height is bounded in terms of a multiple of $(\log H(P) + 1)$ that depends only on $d_Y$ and $d_T$, much like in \cite[Lemma 4.2]{effectivehilbertsirreducibilitytheorem}. Thus by applying Theorem \ref{thm: EffectiveHilbertIrreducibility1} on each of the $P_{\omega,j}$'s, we obtain our upper bound as desired when $f(\mathbf{T},Y)$ is monic in $Y$.
\par 
In general, we write 
$$f(\mathbf{T},Y) = a_0(\mathbf{T}) Y^{d_Y} + \cdots + a_{d_Y}(\mathbf{T})$$
and consider
$$g(\mathbf{T},Y) = a_0(T)^{d_Y - 1} f\left(\mathbf{T}, \frac{Y}{a_0(T)} \right) = Y^{d_Y} + a_1(T) Y^{d_Y - 1} + \cdots + (a_0(T))^{d_Y - 1} a_{d_Y}(T).$$
This expands to a monic polynomial in $Y$ with degree $d_Y$ in $Y$ and degree bounded in terms of $d_T$ in $T$. It is clear that $\log (H(g) + 1)$ is bounded by a constant, depending on $d_T, d_Y$ and $s$, times $\log (H(f)+1)$ and that $g$ is irreducible since $f$ is irreducible. Under a specialization such that $a_0(t_1,\ldots,t_s) \not = 0$, we have $g(t_1,\ldots,t_s,Y)$ is reducible if and only if $f(t_1,\ldots,t_s,Y)$ is. If we had a specialization such that $a_0(t_1,\ldots,t_s) = 0$, we are in a case where the degree drops and we can handle that by induction. Hence the general case follows from the monic case as desired. 
\end{proof}
\subsection{Bounding exceptional fields}\label{sec: bounding number of exceptional fields} 
Let $G=(\Z/2\Z)^g \rtimes S_g$ and $\mathfrak F_G$ be the family of number fields $\widetilde K$ for $K\cong \Q[x]/f_A(x)$ where $A$ ranges across all simple ordinary abelian varieties $A/\F_p$ of dimension $2g$ with generic Galois group $\Gal(\widetilde K/\Q)\cong G$. Recall from Proposition \ref{prop: ordinary weil p-poly} that such a polynomial $f_A(x)$ can be written as $f_{\mathbf a}(x)$ for some $\mathbf a\in [p](\mathcal R_g)\cap \Z^g$, so we denote the corresponding fields to be $K_{\mathbf a}$ and $\widetilde K_{\mathbf a}$. Furthermore, note that $\disc(f_{\mathbf a})$ is upper bounded by a polynomial in $p$, and because $f_{\mathbf a}$ is defined to be monic, we see that $\disc(K_{\mathbf a})$ and thus also $\disc(\widetilde K_{\mathbf a})$ are also bounded polynomially in $p$. Hence, we have $\mathfrak F_G=\mathfrak F_G(Q)$ where $Q=O(p^A)$ for some constant $A$.

Recall that our goal is to bound the proportion of $\mathbf a$ for which $\widetilde K_{\mathbf a}$ is not $\epsilon$-good, and we wish to do this by upper bounding $\mathfrak m_{\mathfrak F_G(Q)}= \max_{L'\in \mathfrak F_G(Q)}\#\{L\in \mathfrak F_G(Q)\colon L\cap L'\neq \Q\}$. Hence, for each $\widetilde K_{\mathbf{a'}}$, we want to upper bound the number of fields $\widetilde K_{\mathbf{a}}$ that intersect it nontrivially. Since $\widetilde K_{\mathbf{a'}}$ only has a bounded number of subfields $L\neq \Q$, it suffices to show that for any such subfield that the proportion of $\widetilde K_{\mathbf{a}}$ containing $L$ is small.

Our first step is to show that there is no field $L$ which is contained in every $\widetilde K_{\mathbf{a}}$. 
\begin{lemma}\label{lem: field example}
For any number field $L\neq \Q$, there exists $\mathbf a\in \Z^g$ not necessarily in $[p](\mathcal R_g)$ where $\widetilde K_{\mathbf a}$ has generic Galois group and does not contain $L$.
\end{lemma}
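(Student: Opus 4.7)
The plan is to apply Hilbert's irreducibility theorem to the generic symplectic polynomial
\[
f_{\mathbf T}(x)=x^{2g}+T_1x^{2g-1}+\cdots+T_gx^g+pT_{g-1}x^{g-1}+\cdots+p^g\in\Q(T_1,\ldots,T_g)[x],
\]
with $p$ fixed, and then specialize to an appropriate integer point $\mathbf a\in\Z^g$.

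First I would identify the generic Galois group $\Gal(\widetilde K_{\mathbf T}/\Q(\mathbf T))$ as $G=(\Z/2\Z)^g\rtimes S_g$. Introducing auxiliary indeterminates $s_1,\ldots,s_g$ via the factorization $f_{\mathbf T}(x)=\prod_{i=1}^{g}(x^2-s_ix+p)$, one checks that each $T_j$ is $\pm e_j(\mathbf s)$ plus a polynomial in $e_1(\mathbf s),\ldots,e_{j-1}(\mathbf s)$ and $p$. Hence the change of variables between $(T_1,\ldots,T_g)$ and $(e_1(\mathbf s),\ldots,e_g(\mathbf s))$ is triangular with unit diagonal, giving $\Q(\mathbf T)=\Q(\mathbf s)^{S_g}$ and $\Gal(\Q(\mathbf s)/\Q(\mathbf T))=S_g$. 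Solving each quadratic factor yields $\widetilde K_{\mathbf T}=\Q(\mathbf s)\bigl(\sqrt{s_1^2-4p},\ldots,\sqrt{s_g^2-4p}\bigr)$. Each $s_i^2-4p$ is irreducible in $\Q[\mathbf s]$ (since $4p$ is not a rational square) and the different $s_i^2-4p$ involve disjoint variables, hence are pairwise coprime in $\Q[\mathbf s]$; so for every nonempty $I\subseteq\{1,\ldots,g\}$ the product $\prod_{i\in I}(s_i^2-4p)$ is squarefree and therefore not a square in $\Q(\mathbf s)$. This forces $\Gal(\widetilde K_{\mathbf T}/\Q(\mathbf s))=(\Z/2\Z)^g$, and combining gives the desired identification.

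Next I would show that $\Q$ is algebraically closed inside $\widetilde K_{\mathbf T}$. The computation above goes through verbatim after base-change to $\overline{\Q}$: although $s_i^2-4p=(s_i-2\sqrt p)(s_i+2\sqrt p)$ now factors, the resulting linear factors still involve disjoint variables and remain pairwise coprime in $\overline{\Q}[\mathbf s]$, so the products $\prod_{i\in I}(s_i^2-4p)$ stay squarefree in $\overline{\Q}[\mathbf s]$. Hence $[\widetilde K_{\mathbf T}\otimes_\Q\overline{\Q}:\overline{\Q}(\mathbf T)]=|G|=[\widetilde K_{\mathbf T}:\Q(\mathbf T)]$, which forces $\widetilde K_{\mathbf T}\otimes_\Q\overline{\Q}$ to be a field and, equivalently, $\widetilde K_{\mathbf T}$ to be linearly disjoint from $\overline{\Q}$ over $\Q$. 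In particular $L\not\subseteq\widetilde K_{\mathbf T}$ for every nontrivial number field $L/\Q$, giving $[L\cdot\widetilde K_{\mathbf T}:\widetilde K_{\mathbf T}]=[L:\Q]>1$.

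Finally I would invoke Hilbert's irreducibility theorem (quantitatively via Corollary \ref{cor: EffectiveHilbertIrreducibility2} applied to the splitting-field resolvent polynomials of $f_{\mathbf T}$ and of a primitive element of $L\cdot\widetilde K_{\mathbf T}$ over $\widetilde K_{\mathbf T}$) to conclude that outside a thin subset of $\Z^g$ one has both $\Gal(f_{\mathbf a}/\Q)\cong G$ and $[L\cdot\widetilde K_{\mathbf a}:\widetilde K_{\mathbf a}]$ equal to its generic value, which is $>1$ by the previous step. Any $\mathbf a\in\Z^g$ outside this thin set satisfies $\Gal(\widetilde K_{\mathbf a}/\Q)\cong G$ and $L\not\subseteq\widetilde K_{\mathbf a}$, proving the lemma. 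The main technical point is the squarefreeness computation for $\prod_{i\in I}(s_i^2-4p)$ both over $\Q$ and over $\overline{\Q}$; once it is in hand, everything else is a routine application of Hilbert irreducibility.
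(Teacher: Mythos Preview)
Your argument is correct and takes a genuinely different route from the paper's. The paper proceeds by a ramification argument: since any nontrivial $L$ is ramified at some prime $l$, it suffices for each $l$ to exhibit $\mathbf a$ with generic Galois group and $l$ unramified in $K_{\mathbf a}$. This is done by explicit discriminant computations for $l\neq p$ (using the three test polynomials $x^{2g}+p^g$, $\sum_i p^ix^{2(g-i)}$, and $\sum_i x^{2g-i}+\sum_i p^ix^{g-i}$) and, for $l=p$, by a Newton-polygon argument showing that a generic choice with $\ord_p(\disc f_{\mathbf a})=g(g-1)$ forces $\Q_p(\alpha)/\Q_p$ to be unramified for every root $\alpha$. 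You instead establish that the generic splitting field $\widetilde K_{\mathbf T}/\Q(\mathbf T)$ is a \emph{regular} extension (i.e.\ $\overline{\Q}\cap\widetilde K_{\mathbf T}=\Q$) via the squarefreeness of $\prod_{i\in I}(s_i^2-4p)$ over both $\Q$ and $\overline{\Q}$, and then specialize by Hilbert irreducibility. Your approach is more conceptual and avoids the $l=p$ versus $l\neq p$ dichotomy entirely; the paper's is more explicit and hands-on. Two small comments: you only need qualitative Hilbert irreducibility here, so invoking Corollary~\ref{cor: EffectiveHilbertIrreducibility2} is harmless overkill; and to make the last step clean you should pass to the Galois closure $\widetilde L$ of $L$, so that $\Gal(\widetilde L\cdot\widetilde K_{\mathbf T}/\Q(\mathbf T))\cong\Gal(\widetilde L/\Q)\times G$ and HIT applied to a primitive element of this compositum directly yields $\widetilde L\cap\widetilde K_{\mathbf a}=\Q$ for $\mathbf a$ outside a thin set.
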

\begin{proof}
We use a discriminant argument. Suppose that there exists such a field $L$ such that every $\widetilde K_{\mathbf a}$ with generic Galois group contains it. Since every nontrivial extension of $\Q$ is ramified, $L$ has to be ramified at some prime $l$, so $\widetilde K_{\mathbf a}$ must also be ramified at $l$. This implies that $K_{\mathbf a}$ is ramified at $l$ because $\widetilde K_{\mathbf a}$ is the compositum of the Galois conjugates of $K_{\mathbf a}$. Thus, for each prime $l$ it suffices to find some $K_{\mathbf a}$ with generic Galois group where $l$ does not ramify.

We first deal with the case $l\neq p$. Consider the polynomials $x^{2g}+p^g$ and $x^{2g}+px^{2g-2}+p^2x^{2g-4}+\cdots+p^g$ and it is easy to show using a root of unity argument that these have discriminants $(-1)^g(2g)^{2g}p^{g(2g-1)}$ and $(-1)^g2^{2g}(g+1)^{2g-2}p^{g(2g-1)}$ respectively. Furthermore, note that $x^{2g}+x^{2g-1}+\cdots +x^g+px^{g-1}+p^2x^{g-2}+\cdots +p^g$ is equal to $x^{2g}+x^{2g-1}+\cdots +1$ modulo $2$ which has odd discriminant. Noting that $g$ and $g+1$ are coprime, we see that for any prime $l\neq p$ at least one of the above polynomials of the form $f_{\mathbf a}$ have discriminant coprime to $l$. Furthermore, this remains true under translate $\mathbf a$ by multiples of $l$. The proof of Proposition \ref{prop: generic galois} shows that a generic $f_{\mathbf a}$ has Galois group $G$ (here, we need to take larger boxes depending on $l$, so $\mathbf a$ may not be in $[p](\mathcal R_g)$), which implies that there is some translate $\mathbf a$ for which $\widetilde K_{\mathbf a}$ has generic Galois group and $l$ does not ramify.

The case of $l=p$ is more difficult because by Lemma \ref{lem: proportion ordp > g(g-1)} we have that $p^{g(g-1)}\mid \disc(f_{\mathbf a})$ for every $\mathbf a$, so the above strategy does not work naively. We choose $\mathbf a \in \bb{Z}^g$ with $\ord_p(a_i) = 0$ for all $i$ such that $\widetilde{K}_{\mathbf a}$ has generic Galois group and also satisfying $p^{g(g-1)} \mid \mid \disc(f_{\mathbf{a}})$ by Lemma \ref{lem: proportion ordp > g(g-1)}. Now let $\alpha$ be a root of $f_{\mathbf a}(x)$ and consider an embedding $\bar{\bb{Q}} \xhookrightarrow{} \bar{\bb{Q}_p}$. It suffices to show that under any such embedding, we must have $\bb{Q}_p(\alpha)$ being unramified over $\bb{Q}_p$.
\par 
First by Newton polygons, it follows that $g$ of the roots of $f_{\mathbf a}$ satisfy $|x|_p = 1$ and the other $g$ satisfy $|x|_p = p^{-1}$. Let $b_1,\ldots,b_g$ be the roots satisfying $|b_i|_p = 1$ and let $c_1,\ldots,c_g$ be the other roots. Since the discriminant satisfy $|\disc(f_{\mathbf a})|_p = p^{-g(g-1)}$, it follows that $|b_i - b_j|_p = 1$ and $|c_i - c_j|_p = 0$. 
\par 
We now have two cases. The first case is when $\alpha$ is among the $b_i$'s, say $\alpha = b_1$. We consider the conjugates of $\alpha$ over $\bb{Q}_p$. Since conjugates have the same absolute value, they must be among the $b_i$'s, say $b_1,\ldots,b_k$. We may then form a basis of integral elements of $\bb{Q}_p(\alpha)$ by $1,\frac{\alpha}{p},\ldots,\frac{\alpha^{k-1}}{p^{k-1}}$ as $|\alpha|_p = 1$.  Since $\frac{b_i}{p}$ are the conjugates of $\frac{\alpha}{p}$ over $\bb{Q}_p$ for $1 \leq i \leq k$, the discriminant of this basis has absolute value equivalent to 
$$\prod_{1 \leq i \not = j \leq k} \left|\frac{b_i}{p} - \frac{b_j}{p} \right|_p= 1$$
as $|\frac{b_i}{p} - \frac{b_j}{p}|_p = 1$ for each $i,j$. It follows that  $\bb{Q}_p(\alpha)$ is unramified over $\bb{Q}_p$. Similarly if $\alpha$ is among the $c_i$'s, the same argument goes through without dividing by $p$ as we already have $|c_i - c_j|_p = 1$. Hence $\bb{Q}_p(\alpha)$ is unramified over $\bb{Q}_p$ for any embedding $\bar{\bb{Q}} \xhookrightarrow{} \bar{\bb{Q}_p}$ which implies that $p$ is unramified in $K_{\mathbf a}=\bb{Q}(\alpha)$ as desired. 
\end{proof}

Now, let $L$ be a minimal subfield (has no nontrivial subfields) of $\widetilde K_{\mathbf a'}$ for some $\mathbf a'\in [p](\mathcal R_g)$. We use the following method to detect whether $L$ and $\widetilde K_{\mathbf a}$ intersect. Let $\alpha$ and $\beta$ be primitive roots of $L$ and $\widetilde K_{\mathbf a}$ respectively and consider $\alpha +\beta$. In the case where the two fields intersect, $L$ must be contained in $\widetilde K_{\mathbf a}$ so $\alpha + \beta$ is also contained inside $\widetilde K_{\mathbf a}$. 

Otherwise, $\alpha+\beta$ must generate the compositum $L\widetilde K_{\mathbf a}$. To see this, first note that the compositum has degree $[L: \Q][\widetilde K_{\mathbf a}:\Q]$ by \cite[Corollary 3.19]{FT_Milne_2022} as $\widetilde K_{\mathbf a}/\Q$ is Galois. Then, suppose otherwise that $\alpha+\beta$ generates a proper subfield of $L\widetilde K_{\mathbf a}$, so there exists different embeddings $\sigma_1,\sigma_2\colon L\widetilde K_{\mathbf a} \hookrightarrow \C$ for which $\sigma_1(\alpha+\beta)=\sigma_2(\alpha+\beta)$. This implies that $\sigma_1(\alpha)-\sigma_2(\alpha)=\sigma_1(\beta)-\sigma_2(\beta)$, but this is contained in the intersection of the Galois closure of $L$ and $\widetilde K_{\mathbf a}$ which is $\Q$. In fact, this must be zero because if we let $c=\sigma_1(\beta)-\sigma_2(\beta)$, then $c=\beta-\sigma_1^{-1}\sigma_2(\beta)$, but $\sigma_1^{-1}\sigma_2$ has finite order, say $m$, in $\Gal(\widetilde K_{\mathbf a}/\Q)$, which means that $0=\beta-(\sigma_1^{-1}\sigma_2)^m(\beta)=mc$. This implies that $\sigma_1$ and $\sigma_2$ are the same embeddings for both $L$ and $\widetilde K_{\mathbf a}$, and hence also for $L\widetilde K_{\mathbf a}$, giving a contradiction.

Using the minimal polynomials of $\alpha$ and $\beta$, we can construct a polynomial with root $\alpha+\beta$ of the expected degree, so the above argument says that this polynomial is irreducible if and only if the fields $L$ and $\widetilde K_{\mathbf a}$ don't intersect. Then, this allows us to apply Hilbert's irreducibility theorem.

Keeping this setting, we construct this polynomial in the following two lemmas, starting with the minimal polynomial for $L$ with root $\alpha$.
\begin{lemma}\label{lem: L min poly construction}
There exists a monic polynomial $P_L(x)\in \Z[x]$ with coefficients at most polynomial in $p$ such that $L\cong \Q[x]/P_L(x)$.
\end{lemma}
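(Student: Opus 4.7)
The plan is to construct a primitive element $\alpha \in \mathcal{O}_L$ by averaging a bounded-height element of $\widetilde K_{\mathbf a'}$ over the subgroup $H := \Gal(\widetilde K_{\mathbf a'}/L)$, and to take $P_L$ to be its minimal polynomial. Let $\beta_1, \ldots, \beta_{2g}$ denote the roots of $f_{\mathbf a'}$, which are algebraic integers with $|\beta_i| = \sqrt p$ by Weil's theorem. Since each $\beta_i$ satisfies a monic integer polynomial of degree $2g$, the monomials $\beta_1^{e_1}\cdots \beta_{2g}^{e_{2g}}$ with $0 \le e_i \le 2g-1$ $\bb Q$-span $\widetilde K_{\mathbf a'}$, so the trace map $\Tr_{\widetilde K_{\mathbf a'}/L}$ surjects from their $\bb Q$-span onto $L$.

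First I would consider the family of candidates
$$\alpha_{\mathbf c} \;=\; \Tr_{\widetilde K_{\mathbf a'}/L}\!\left(\sum_{\vec e} c_{\vec e}\,\beta_1^{e_1}\cdots\beta_{2g}^{e_{2g}}\right) \;\in\; \mathcal{O}_L,$$
indexed by integer vectors $\mathbf c = (c_{\vec e})$ supported on the bounded set $\{0,\dots,2g-1\}^{2g}$, and select $\mathbf c$ so that $\alpha := \alpha_{\mathbf c}$ is a primitive element of $L$. The element $\alpha_{\mathbf c}$ is primitive exactly when its stabilizer in $G$ equals $H$, which fails if and only if $\alpha_{\mathbf c}$ lies in the fixed field $L^{H'/H}$ for some intermediate subgroup $H \subsetneq H' \le G$. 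Each such non-primitivity condition cuts out a proper $\bb Q$-subspace of $L$, and pulls back under the $\bb Q$-linear map $\mathbf c \mapsto \alpha_{\mathbf c}$ to a proper subspace of the coefficient space. Since $G$ has at most $2^{|G|}$ subgroups (a bound depending only on $g$), a standard pigeonhole argument produces an integer vector $\mathbf c$ with each entry in $\{0, 1, \ldots, C_g\}$ for some $C_g$ depending only on $g$ such that $\alpha := \alpha_{\mathbf c}$ is primitive.

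To bound the coefficients of the minimal polynomial, I extend an arbitrary complex embedding of $L$ to an embedding of $\widetilde K_{\mathbf a'}$; under this extension each $\beta_i$ still has absolute value $\sqrt p$, so each monomial $\beta_1^{e_1}\cdots\beta_{2g}^{e_{2g}}$ with $\sum e_i \le 2g(2g-1)$ has absolute value at most $p^{g(2g-1)}$. Hence $|\sigma(\alpha)| = O_g(p^{g(2g-1)})$ for every complex embedding $\sigma$ of $L$. Since $\alpha$ is primitive in $L$, the polynomial $P_L(x) := \prod_{\tau H \in G/H}(x - \tau(\alpha))$ is its minimal polynomial; it has degree $d := [L:\bb Q] \le |G|$ and lies in $\bb Z[x]$ because $\alpha \in \mathcal{O}_L$. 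Its $k$-th coefficient, being the $k$-th elementary symmetric polynomial in the $d$ Galois conjugates of $\alpha$, is bounded in absolute value by $\binom{d}{k}\cdot O_g(p^{g(2g-1)d}) = O_g(p^{g(2g-1)|G|})$, which is polynomial in $p$ with exponent depending only on $g$, as required.

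The only subtle step is the quantitative primitive-element argument, which a priori could force the coefficients $c_{\vec e}$ to grow with $p$; however, the number of proper subspaces to avoid and the dimension of the ambient coefficient space both depend only on $|G| = 2^g g!$, so the pigeonhole selection of $\mathbf c$ is uniform in $p$.
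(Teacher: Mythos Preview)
Your proof is correct and follows the same overall plan as the paper---build a primitive integral element of $L$ of height polynomial in $p$, then take its minimal polynomial---but the height bound is obtained differently. The paper constructs $\delta$ as a bounded linear combination of the elementary symmetric functions of the $H$-orbit of a primitive element of $\widetilde K_{\mathbf a'}$, and then argues algebraically: the coefficients of $P_L$ are $G$-invariant polynomials in the roots, and using the explicit structure $G=(\Z/2\Z)^g\rtimes S_g$ these are rewritten as polynomials in the coefficients of $f^+_{\mathbf a'}$ and hence of $f_{\mathbf a'}$, giving the bound (and in fact the stronger conclusion that the coefficients of $P_L$ are universal integer polynomials in $a_1,\dots,a_g,p$). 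You instead use the archimedean Weil bound $|\beta_i|=\sqrt p$ directly to control every complex embedding of $\alpha$, which is more elementary and makes no use of the Weyl-group structure of $G$; since downstream the paper only needs the height bound and not the polynomial-in-$\mathbf a'$ expression, your route is equally sufficient. One notational quibble: ``$L^{H'/H}$'' is not quite well-defined (as $H'$ need not act on $L$); what you mean is that $\alpha_{\mathbf c}$ lies in the proper subfield $\widetilde K_{\mathbf a'}^{H'}\subsetneq L$ for some $H\subsetneq H'\le G$.
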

\begin{proof}
Let $L$ be the fixed field of the subgroup $H$ in $\widetilde K_{\mathbf{a'}}$. By the proof of the primitive element theorem, we can construct a primitive element $\gamma=\sum_i c_i\gamma_i$ of $K_{\mathbf{a'}}$, where $c_i=O(1)$ are bounded integers and $\gamma_i$ are the roots of $f_{\mathbf{a'}}(x)$. Then, $\gamma$ is a root of the polynomial 
$$Q(x)=\prod_{\sigma_h\in H}(x-\sigma_h(\gamma)),$$ and also note that the coefficients are elementary symmetric polynomials $e_1,\ldots, e_{|H|}$ with variables $\sigma_h(\gamma)$. 

We claim that $L=\Q(e_1,\ldots, e_{|H|})$. Indeed, the latter field is $H$-invariant so $L$ contains it and $\widetilde K_{\mathbf {a'}}$ is at most a degree $|H|$ extension of the field by $Q(x)$ so we must have equality. Again, we can find a primitive element $\delta = \sum_i d_ie_i$ of $L$ where $d_i$ are bounded integers, and we let $$P_L(x)=\prod_{\sigma_g\in G/H} (x-\sigma_g(\delta))$$
which is well defined since $\delta$ is already $H$-invariant. The coefficients of $P_L(x)$ are $G$-invariant so they are in $\Q$, and furthermore they are algebraic integers and hence in $\Z$, so this is indeed a monic integer minimal polynomial of $L$. 

In fact, the coefficients of $P_L(x)$ are polynomials of the form $R(\gamma_i)$ which are invariant when the indices are permuted according to $G=(\Z/2)^g\rtimes S_g$. By pairing the roots into pairs $\{\gamma_{2i+1},\gamma_{2i+2}\}$, the polynomial $R(\gamma_i)$ is symmetric in each pair and thus can be expressed as elementary symmetric polynomials in each pair. However, recall that the pair $\{\gamma_{2i+1},\gamma_{2i+2}\}$ are the solutions to $x^2-\alpha_ix+p=0$ where $\alpha_i$ are the roots of $f^+_{\mathbf {a'}}$, so we can write $R(\gamma_i)=R'(\alpha_i,p)$. But now the expression in terms of $\alpha_i$ is $S_g$ invariant, so it can be expressed as the coefficients of $f^+_{\mathbf{a'}}$ which in turn can be expressed as the coefficients of $f_{\mathbf{a'}}$, which have size at most polynomial in $p$. Tracing back, because $c_i$ and $d_i$ were bounded, we have the desired result.
\end{proof}
Now, we construct the polynomial for $\alpha+\beta$ as discussed previously.
\begin{lemma}\label{lem: poly for alpha+beta}
There exists an polynomial $R(x,\mathbf a)\in\Z[x,a_1,\ldots, a_g]$ irreducible in the variable $x$ with coefficients bounded polynomially in $p$, such that for any $\mathbf a\in \Z^g$, if $R(x,\mathbf a)$ is irreducible as a polynomial in $x$ then $\widetilde K_{\mathbf a}$ has generic Galois group and does not intersect $L$.
\end{lemma}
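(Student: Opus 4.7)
The plan is to realize $R(x,\mathbf a)$ as a compositum polynomial whose roots (for a given specialization) are the pairwise sums $\alpha+\beta_{\mathbf a}$, where $\alpha$ ranges through the roots of the polynomial $P_L(x)$ from Lemma~\ref{lem: L min poly construction} and $\beta_{\mathbf a}=\sum_{i=1}^{2g}c_i\gamma_i$ is a linear combination of the roots of $f_{\mathbf a}$ with fixed bounded integers $c_i$ chosen so that $\beta_{\mathbf a}$ is a primitive element of $\widetilde K_{\mathbf a}$ generically in $\mathbf a$ (primitivity fails only on a proper Zariski-closed locus). Concretely, define
$$Q(x,\mathbf a)=\prod_{\sigma\in G}\Bigl(x-\sum_{i=1}^{2g}c_i\,\gamma_{\sigma(i)}\Bigr)\qquad\text{and}\qquad R(x,\mathbf a)=\Res_y\bigl(P_L(y),\,Q(x-y,\mathbf a)\bigr).$$
By the same pair-and-symmetrize argument used in Lemma~\ref{lem: L min poly construction} (a $G$-symmetric polynomial in the $\gamma_i$ factors through the $S_g$-symmetric functions of the $\alpha_j$, hence through the coefficients of $f^+_{\mathbf a}$, and finally through $a_1,\dots,a_g$ and $p$), the coefficients of $Q$ lie in $\Z[\mathbf a]$, and the bound $|\gamma_i|=\sqrt p$ makes them polynomially bounded in $p$. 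Combining with Lemma~\ref{lem: L min poly construction}, $R\in\Z[x,\mathbf a]$ is monic in $x$ of degree $[L:\Q]\cdot|G|$ with coefficients polynomially bounded in $p$.

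To prove $R$ is irreducible in $\Z[x,\mathbf a]$, I would apply Lemma~\ref{lem: field example}, slightly strengthened, to produce $\mathbf a_0\in\Z^g$ simultaneously satisfying (i) $\Gal(\widetilde K_{\mathbf a_0}/\Q)\cong G$, (ii) $L\not\subseteq\widetilde K_{\mathbf a_0}$, and (iii) $\beta_{\mathbf a_0}$ is a primitive element of $\widetilde K_{\mathbf a_0}$; conditions (i)–(ii) are exactly what Lemma~\ref{lem: field example} delivers, while (iii) is Zariski open in $\mathbf a$ for fixed $c_i$ and so compatible with the discriminant-translate construction in that lemma. At this $\mathbf a_0$, $Q(x,\mathbf a_0)$ is the minimal polynomial of $\beta_{\mathbf a_0}$ over $\Q$; by the discussion preceding this lemma, $\alpha+\beta_{\mathbf a_0}$ generates the compositum $L\cdot\widetilde K_{\mathbf a_0}$, which has degree $[L:\Q]\cdot|G|=\deg_x R$, so $R(x,\mathbf a_0)$ is (up to sign) the minimal polynomial of $\alpha+\beta_{\mathbf a_0}$ and in particular irreducible in $\Z[x]$. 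Since $R$ is monic in $x$, any non-trivial factorization in $\Z[x,\mathbf a]$ would specialize to a non-trivial factorization of $R(x,\mathbf a_0)$ with matching $x$-degrees, a contradiction.

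For the required implication, suppose $R(x,\mathbf a)$ is irreducible in $\Z[x]$ for some $\mathbf a\in\Z^g$. If $H\coloneqq\Gal(\widetilde K_{\mathbf a}/\Q)\subsetneq G$, then the $|G|$ values $\sigma\beta_{\mathbf a}$ split into at most $[G:H]<|G|$ Galois orbits over $\Q$ (allowing for possible coincidences when $\beta_{\mathbf a}$ is non-primitive, which only increases reducibility), so $Q(x,\mathbf a)$ factors over $\Z$ and hence so does $R(x,\mathbf a)$ — contradiction — forcing generic Galois group. Similarly, if $L\cap\widetilde K_{\mathbf a}\neq\Q$, then by minimality of $L$ we have $L\subseteq\widetilde K_{\mathbf a}$, so $\alpha+\beta_{\mathbf a}\in\widetilde K_{\mathbf a}$ has $\Q$-degree at most $|G|<[L:\Q]\cdot|G|=\deg_x R(x,\mathbf a)$, again forcing $R(x,\mathbf a)$ to factor — contradiction — so $L\cap\widetilde K_{\mathbf a}=\Q$, as required.

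The main obstacle is ensuring the existence of a single $\mathbf a_0$ meeting all three conditions (i)–(iii): the first two are supplied directly by Lemma~\ref{lem: field example} but the third requires a little extra care, since we must verify that the Zariski-open primitivity locus meets the arithmetic locus produced by the discriminant-translate argument — this should follow from a routine intersection/density step once the $c_i$ are chosen generically.
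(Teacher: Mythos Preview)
Your approach is essentially the same as the paper's, with two cosmetic differences worth noting. First, the paper builds $R$ by Gaussian elimination on the powers $(\alpha+\beta)^i$ (expressing each in the basis $\alpha^j\beta^k$ using the monic polynomials $P_L$ and $Q$) rather than via a resultant; both yield a degree-$[L:\Q]\,|G|$ polynomial with $\alpha+\beta$ as a root and coefficients polynomial in $\mathbf a$ and bounded polynomially in $p$. Second, and more usefully, the paper reverses your order of choices: it first fixes the single $\mathbf a_0$ supplied by Lemma~\ref{lem: field example} and only \emph{then} picks the bounded integers $c_i$ so that $\beta_{\mathbf a_0}=\sum c_i\gamma_i$ is primitive for that particular $\mathbf a_0$. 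This sidesteps entirely the obstacle you flag at the end --- no density or intersection argument is needed, since primitivity is guaranteed at the one specialization where irreducibility is to be checked. The forward implication is also handled more compactly in the paper via a single degree count (irreducibility of $R(x,\mathbf a)$ forces $[\Q(\alpha+\beta):\Q]=[L:\Q]\,|G|$, and since $\alpha+\beta\in L\widetilde K_{\mathbf a}$ this forces $[L\widetilde K_{\mathbf a}:\Q]=[L:\Q]\,|G|$, hence both conclusions at once), rather than your two-case split, though both arguments are valid.
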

\begin{proof}
For now, we fix $\mathbf a$ so that $\widetilde K_{\mathbf a}$ is the field obtained in Lemma \ref{lem: field example} and choose bounded $c_i$ such that $\beta=\sum_i c_i\gamma_i$ is a primitive root of $\widetilde K_{\mathbf a}$, where here $\gamma_i$ are the roots of $f_{\mathbf a}(x)$. Let $Q(x)=\prod_{\sigma\in G} (x-\sigma(\beta))$, this has degree $|G|$ and just like the proof of Lemma \ref{lem: L min poly construction} we can write each coefficient of $Q(x)$ as a polynomial in $\mathbf{a}$ with coefficients bounded polynomially in $p$, so we can rewrite this as $Q(x,\mathbf a)$.

The polynomials $P_L(x)$ and $Q(x)$ have roots $\alpha$ and $\beta$ which are primitive roots of $L$ and $\widetilde K_{\mathbf a}$ respectively. We want a polynomial $R(x)$ with root $\alpha+\beta$, so we consider the elements $(\alpha+\beta)^i$ for $0\le i\le |G|[L:\Q]$. Using the monic polynomials $P_L(x)$ as well as $Q(x)$ we can express each of these as a linear combination of monomials $\alpha^j\beta^k$ for $0\le j<[L:\Q]$ and $0\le k<|G|$. By a simple Gaussian elimination argument, there is a linear dependence between $(\alpha+\beta)^i$ which gives rise to a not necessarily monic polynomial $R(x)$ of degree $[L\colon\Q]|G|$ with $\alpha+\beta$ as a root. Furthermore, $R(x)$ is irreducible because its root generates $L\widetilde K_{\mathbf a}$ which is a number field of the same degree. Since the coefficients of $R(x)$ can be written as a polynomial in $\mathbf a$ with coefficients bounded polynomially in $p$, we write it in the form of $R(x,\mathbf a)$. The irreducibility of $R(x)$ then implies the irreducibility of $R(x,\mathbf a)$ in the variable $x$.

Now we allow $\mathbf a$ to vary, and suppose for some new choice of $\mathbf a\in \mathbb Z^g$ that $R(x,\mathbf a)$ is irreducible in $x$. Then, we can define the $\gamma_i$ to be the roots of the polynomial $f_{\mathbf a}$ and $\beta=\sum_i c_i\gamma_i$, which value is now dependent on $\mathbf a$. However, since our construction is completely algebraic, it is still true that the new $\alpha+\beta$ is still a root of $R(x,\mathbf a)$. Since it is irreducible, $\alpha+\beta$ generate a field of degree $[L\colon\Q]|G|$. At the same time, it lies in the compositum of $L\widetilde K_{\mathbf a}$, which forces this to be of degree $[L\colon\Q]|G|$. In particular, this implies that $\widetilde K_{\mathbf a}$ has generic Galois group and does not intersect $L$.
\end{proof}
Then, we can apply Hilbert's irreducibility theorem as stated in Corollary \ref{cor: EffectiveHilbertIrreducibility2} to bound the number of fields intersecting $L$.
\begin{lemma}\label{lem: tilde K does not contain L}
There exists some constant $C>0$ independent of $L$ such that there are at most $O(p^{\frac{g(g+1)-1}{4}}(\log p)^C)$ many $\mathbf a\in [p](\mathcal R_g)\cap \Z^g$ where $\widetilde K_{\mathbf a}$ has generic Galois group and also intersects $L$.
\end{lemma}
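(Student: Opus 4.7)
The approach is to apply the effective Hilbert irreducibility theorem of Corollary~\ref{cor: EffectiveHilbertIrreducibility2} to the polynomial $R(x,\mathbf{a})\in\Z[x,a_1,\ldots,a_g]$ produced by Lemma~\ref{lem: poly for alpha+beta}. Recall that by that lemma, any $\mathbf{a}\in[p](\mathcal R_g)\cap\Z^g$ for which $\widetilde K_{\mathbf a}$ has generic Galois group and meets $L$ must have $R(x,\mathbf{a})$ reducible in $x$. Moreover, $R$ is irreducible in $x$, its degree in $x$ is $[L:\Q]|G|\le |G|^2$, its degree in each $a_i$ is bounded purely in terms of $g$ by the symmetric-function construction in Lemmas~\ref{lem: L min poly construction} and~\ref{lem: poly for alpha+beta}, and its coefficients are polynomial in $p$, so $\log H(R)=O(\log p)$ with implicit constants depending only on $g$. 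Thus the task is to bound the number of reducible specializations of $R$.

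The main subtlety is that $[p](\mathcal R_g)$ is \emph{anisotropic}: by Lemma~\ref{lem: R_g Usp} the $i$-th coordinate is constrained to $|a_i|=O(p^{i/2})$, so a single application of Corollary~\ref{cor: EffectiveHilbertIrreducibility2} with enclosing side $B\asymp p^{g/2}$ would give a bound of order $p^{g^2/2 - g/4}$, which already exceeds the target $p^{(g^2+g-1)/4}$ once $g\ge 2$. Instead, I partition $[p](\mathcal R_g)\cap\Z^g$ into translates of the isotropic cube $[p^{1/2}]^g$; the product of coordinate counts gives
\[
\prod_{i=1}^{g}O\!\bigl(p^{(i-1)/2}\bigr)=O\!\bigl(p^{g(g-1)/4}\bigr)
\]
such sub-cubes covering the region.

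Inside each sub-cube centered at $\mathbf{c}$, I substitute $\mathbf{a}=\mathbf{c}+\mathbf{t}$; since translation in the $a_i$ variables preserves irreducibility in $x$ and only blows up the height polynomially in $p$, the translated polynomial $R(x,\mathbf{c}+\mathbf{t})$ remains irreducible in $x$ of the same degrees and with $\log H=O(\log p)$. Corollary~\ref{cor: EffectiveHilbertIrreducibility2} with $s=g$ and $B=p^{1/2}$ then bounds the number of reducible specializations per sub-cube by
\[
O\!\bigl((\log p)^{\mu}\,(p^{1/2})^{g-1/2}(\log p)^{\nu}\bigr)=O\!\bigl(p^{g/2-1/4}(\log p)^{C_1}\bigr),
\]
with $\mu,\nu,C_1$ depending only on the (bounded-in-$g$) degrees of $R$ and its log-height exponent, hence only on $g$. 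Multiplying by the number of sub-cubes yields
\[
O\!\bigl(p^{g(g-1)/4 + g/2 - 1/4}(\log p)^{C_1}\bigr)=O\!\bigl(p^{(g^2+g-1)/4}(\log p)^{C}\bigr),
\]
matching the claimed exponent $(g(g+1)-1)/4$.

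The main obstacle to this plan is ensuring that $\mu,\nu,C$ (and the degree/height bounds on $R$) are truly independent of the subfield $L$. This reduces to tracking the parameters through the construction in Lemmas~\ref{lem: L min poly construction} and~\ref{lem: poly for alpha+beta}: the defining polynomial $P_L$ has degree $\le|G|$ and $\log$-height $O(\log p)$, and the resultant-style construction of $R$ preserves these properties with bounds depending only on $|G|$. One minor technical point: strictly speaking, $R$ is only asserted to be irreducible in $\Q(\mathbf{a})[x]$; clearing the $\mathbf{a}$-content yields an absolutely irreducible $\widetilde R\in\Z[x,\mathbf{a}]$ to which Corollary~\ref{cor: EffectiveHilbertIrreducibility2} applies, and the extra specializations for which the nonzero content vanishes lie on a proper hypersurface in $\mathbf{a}$ of degree bounded in $g$, contributing only $O(p^{(g^2+g-2)/4})$ tuples by the same anisotropic decomposition, which is absorbed into the main term.
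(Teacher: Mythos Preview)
Your proposal is correct and follows essentially the same approach as the paper's proof: cover $[p](\mathcal R_g)$ by $O(p^{g(g-1)/4})$ translates of a cube of side $p^{1/2}$, apply Corollary~\ref{cor: EffectiveHilbertIrreducibility2} to $R(x,\mathbf a)$ in each cube to get $O(p^{g/2-1/4}(\log p)^C)$ reducible specializations per cube, and multiply. You additionally spell out why the constants are independent of $L$ and handle the content issue for absolute irreducibility, both of which the paper leaves implicit.
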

\begin{proof}
Cover $[p](\mathcal R_g)$ with hypercubes of length $p^{1/2}$ using Proposition \ref{prop: volume R_g}, and there are $O(p^{g(g-1)/4})$ such hypercubes in the cover. Apply Corollary \ref{cor: EffectiveHilbertIrreducibility2} to the polynomial $R(x,\mathbf a)$ in Lemma \ref{lem: poly for alpha+beta}, noting that we can shift the origin of the box $[B]$ while keeping the coefficients and thus the height $H(f)$ bounded by a polynomial in $p$. This shows that there are at most $O(p^{g/2-1/4}(\log p)^C)$ vectors $\mathbf a$ in each box where $R(x,\mathbf a)$ is reducible, and thus $O(p^{\frac{g(g+1)-1}{4}}(\log p)^C)$ such $\mathbf a$ in total.
\end{proof}
Finally, we bound the proportion of $\mathbf a$ for which the corresponding $\widetilde K_{\mathbf a}$ are not $\epsilon$-good. A naive argument would be to say that $\mathfrak m_{\mathfrak F_G(Q)}=O(p^{\frac{g(g+1)-1}{4}}(\log p)^C)$ by Lemma \ref{lem: tilde K does not contain L} and thus Theorem \ref{thm: eff chebotarev} implies that $O(m_{\mathfrak F_G(Q)}Q^\epsilon)$ many number fields that are not $\epsilon$-good. However, it is possible that there are many $\mathbf a$ such that $\widetilde K_{\mathbf a}$ are isomorphic to the same number field, and one can show that this is in fact the case. Nevertheless, we are able to prove it with a small trick.
\begin{proposition}\label{prop: bounding exceptional fields}
There exists a constant $A>0$ such that for any $\epsilon>0$, there are at most $O(p^{-1/4+A\epsilon})$ proportion of irreducible ordinary Weil $p$-polynomials with generic Galois group that are not $\epsilon$-good.
\end{proposition}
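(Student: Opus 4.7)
The plan is to avoid the naive overcounting noted just above the proposition by partitioning fields $L = \widetilde{K}_{\mathbf a}$ according to their multiplicity
\[
m(L) := \#\bigl\{\mathbf a \in [p](\mathcal R_g) \cap \mathbb Z^g : \widetilde K_{\mathbf a} \cong L\bigr\},
\]
and then applying Theorem \ref{thm: eff chebotarev} separately to each dyadic layer in $m(L)$. Let $\mathcal B$ denote the set of $\mathbf a$'s with $\widetilde K_{\mathbf a}$ not $\epsilon$-good, and set $\mathcal N := O\bigl(p^{(g(g+1)-1)/4}(\log p)^C\bigr)$, the Lemma \ref{lem: tilde K does not contain L} bound summed over the bounded (in $g$) list of minimal subfields of a fixed $L'$. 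Since $\disc(\widetilde K_{\mathbf a})$ is polynomial in $p$, the discriminant bound in Theorem \ref{thm: eff chebotarev} can be taken as $Q = O(p^{A_0})$, so $Q^\epsilon = p^{A_0\epsilon}$. The target is $|\mathcal B| \le \mathcal N \cdot p^{A\epsilon}$, which after dividing by the count $\Theta(p^{g(g+1)/4})$ from Corollary \ref{cor: num ord weil poly} gives the asserted proportion.

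For each $j \ge 0$ define the restricted subfamily $\mathfrak F_G^{\ge j}(Q) := \{L \in \mathfrak F_G(Q) : m(L) \ge 2^j\}$. The crucial estimate is $\mathfrak m_{\mathfrak F_G^{\ge j}(Q)} \le \mathcal N / 2^j$: fix $L' \in \mathfrak F_G^{\ge j}(Q)$ and write $L' = \widetilde K_{\mathbf a'}$; then any $L \in \mathfrak F_G(Q)$ with $L \cap L' \ne \mathbb Q$ contains one of the finitely many minimal subfields of $L'$, and the $m(L)$ distinct $\mathbf a$'s realizing it each have $\widetilde K_{\mathbf a}$ intersecting $L'$. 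Thus Lemma \ref{lem: tilde K does not contain L} controls the \emph{weighted} intersection count:
\[
\#\{L \in \mathfrak F_G^{\ge j}(Q) : L \cap L' \ne \mathbb Q\} \cdot 2^j \;\le\; \sum_{\substack{L \in \mathfrak F_G^{\ge j}(Q) \\ L \cap L' \ne \mathbb Q}} m(L) \;\le\; \mathcal N.
\]

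Applying Theorem \ref{thm: eff chebotarev} to each $\mathfrak F_G^{\ge j}(Q)$ then shows that at most $O\bigl((\mathcal N/2^j) Q^\epsilon\bigr)$ of its members fail to be $\epsilon$-good. Writing $E_j$ for the set of non-$\epsilon$-good fields with $2^j \le m(L) < 2^{j+1}$, we obtain
\[
\sum_{L \in E_j} m(L) \;\le\; 2^{j+1} |E_j| \;=\; O\bigl(\mathcal N \cdot p^{A_0 \epsilon}\bigr),
\]
a bound independent of $j$. Since the trivial estimate $m(L) \le \mathcal N$ (from yet another application of Lemma \ref{lem: tilde K does not contain L}) restricts $j$ to an $O(\log p)$ range, summing over $j$ yields $|\mathcal B| = \sum_j \sum_{L \in E_j} m(L) = O\bigl(\mathcal N \cdot p^{A\epsilon}\bigr)$, as required.

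The main obstacle is the weighted estimate in the second paragraph: Lemma \ref{lem: tilde K does not contain L} is phrased as an unweighted count of $\mathbf a$'s, so the trick is to recognize that this unweighted count already majorizes the weighted sum $\sum_L m(L)$ over $L$ intersecting $L'$. That is what produces the crucial $2^{-j}$ saving at layer $j$, and converts what would otherwise be a bound of order $\mathcal N^2$ (the naive argument in the paragraph preceding the proposition) into one of order $\mathcal N$, which is exactly the threshold needed to recover the $p^{-1/4}$ power.
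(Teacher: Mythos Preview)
Your proof is correct and follows essentially the same approach as the paper: both partition the fields into dyadic layers by multiplicity $m(L)$, use Lemma \ref{lem: tilde K does not contain L} to convert the $\mathbf a$-count into a field-count with a $2^{-j}$ saving on $\mathfrak m$, apply Theorem \ref{thm: eff chebotarev} layer by layer, and then multiply back by $2^{j+1}$ and sum over $O(\log p)$ layers. The only cosmetic difference is that you work with the cumulative families $\mathfrak F_G^{\ge j}$ whereas the paper uses the exact layers $\mathfrak F_G^j$; since $E_j \subseteq \mathfrak F_G^{\ge j}$, this changes nothing.
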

\begin{proof}
Enumerate the number fields up to isomorphism in $\mathfrak F_G$ as $K_1,K_2,\ldots$ where for each $K_i$ there are exactly $n_i$ vectors $\mathbf a$ where $K_i\cong \widetilde K_{\mathbf a}$. Note that $\sum n_i=\Theta(p^{g(g+1)/4})$ by Corollary \ref{cor: num ord weil poly}. Split $\mathfrak F_G$ into disjoint subsets indexed by $j$, where the $j$-th set $\mathfrak F_G^j$ contains the $K_i$ for which $n_i\in [2^j,2^{j+1})$, and note that there are $O(\log p)$ such subsets.

For each $j$, consider any $K_i\in \mathfrak F_G^j$ and let $\mathbf{a'}$ be such that $K_i\cong \widetilde K_{\mathbf a'}$. By looking at the minimal subfields and applying Lemma \ref{lem: tilde K does not contain L}, there are at most $O(p^{\frac{g(g+1)-1}{4}}(\log p)^C)$ many $\mathbf a$ where $\widetilde K_{\mathbf a}$ intersects $\widetilde K_{\mathbf a'}$. However, each field in $\mathfrak F_G^j$ is isomorphic to at least $2^j$ many $\widetilde K_{\mathbf a}$, so there are at most $O(p^{\frac{g(g+1)-1}{4}}(\log p)^C2^{-j})$ many $K\in \mathfrak F_G^j$ that intersect $K_i$, giving a bound for $m_{\mathfrak F_G^j(Q)}$. Then, applying Theorem \ref{thm: eff chebotarev} and recalling that $Q$ is polynomial in $p$, there are $O(p^{\frac{g(g+1)-1}{4}+A\epsilon}2^{-j})$ many number fields in $\mathfrak F^j_G$ that are not $\epsilon$-good. But for each one of these fields there are at most $2^{j+1}$ occurrences of it as some $\widetilde K_{\mathbf a}$, so there are at most $O(p^{\frac{g(g+1)-1}{4}+A\epsilon})$ many $\mathbf a$ such that $\widetilde K_{\mathbf a}$ is not $\epsilon$-good that come from $\mathfrak F_G^j$. Summing over all $j$ gives the desired answer.
\end{proof}
\section{Summing Euler products}\label{sec: summing euler products}
In this section, we prove a lemma which allows us to average a finite product of Euler factors in a box, and we combine this with our results in the previous sections to prove Theorem \ref{thm: A_g}.
\subsection{Averaging products of periodic factors}\label{sec: avg periodic factors} After truncating the sum using the effective Chebotarev theorem, we need to take a sum over $\mathbf a$ of $\prod_{l\le l_0} v_{l}(f_{\mathbf a})$. We present the following lemma which helps us average a finite product of Euler factors. This is a generalization of \cite[Proposition 6.4]{Ma_2024} and in a similar spirit to \cite{Kowalski_2011}, \cite[Section 4]{David_Koukoulopoulos_Smith_2016}.
\begin{proposition}\label{prop: averaging out}
Let $m\ge 1$ and for $0\le i\le m$, let the functions $h_i\colon \Z^g\rightarrow \R_{\ge 0}$ be $w_i$-periodic in every variable, have average value $z_i$, and also assume that the $w_i$ are pairwise coprime. Let $T=\max_{0\le i\le m} w_i$ be an upper bound for the periods. Let $m_0=\max_{\mathbf a}(h_0(\mathbf a))$ and $m_i=\max_{\mathbf a}(|\log(h_i(\mathbf a))|)$ for $1\le i\le m$. Let $C=\max_{0\le i\le m}m_i$ and choose $D\ge \sum_{i=1}^m m_i$. Then, the average of the product in a box of side length $N$ is approximately equal product of the averages:
$$\frac{1}{N^g}\sum_{\mathbf a\in [1,N]^g}\prod_{i=0}^m h_i(\mathbf a)= \prod_{i=0}^m z_i + O\left(\frac{D(mCT)^{6D+1}}{N}+C2^{-6D}\right).$$
Here, we denote $[1,N]=\{1,2,\ldots,N\}$.
\end{proposition}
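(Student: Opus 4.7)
The strategy is to Taylor-expand the product $\prod_{i=1}^m h_i(\mathbf a)$ to reduce the effective period of each resulting term to at most $T^{6D+1}$, and then to exploit the pairwise coprimality of the $w_i$ via the Chinese Remainder Theorem. Setting $L(\mathbf a) = \sum_{i=1}^m \log h_i(\mathbf a)$, we have $\prod_{i=0}^m h_i = h_0\, e^L$ with $|L| \le \sum_i m_i \le D$. Choosing $K_0 = 6D$, Stirling's inequality $k! \ge (k/e)^k$ gives
\[
\left| e^L - \sum_{k=0}^{K_0} \frac{L^k}{k!}\right| \le \sum_{k > 6D} \frac{D^k}{k!} \le \sum_{k > 6D} \left(\frac{eD}{k}\right)^{k} \le \sum_{k > 6D} 2^{-k} \le 2^{-6D},
\]
since $eD/k < e/6 < 1/2$ for $k > 6D$. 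Multiplying by $h_0 \le m_0 \le C$ contributes at most $C \cdot 2^{-6D}$ to the pointwise error, producing the second summand in the claimed bound after averaging.

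For the main term, pairwise coprimality of the $w_i$ combined with the Chinese Remainder Theorem yields the bijection $\Z^g/(\prod w_i)^g \cong \prod_i \Z^g/(w_i \Z)^g$, under which the average of a product of functions with pairwise coprime periods factors as the product of their averages. Applied directly to $\prod_{i=0}^m h_i$, this gives $\mathrm{avg}(\prod h_i) = \prod_{i=0}^m z_i$, which is the desired main term.

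It remains to estimate the deviation of the box-sum of the truncated polynomial from $N^g$ times its average. Expanding via the multinomial theorem,
\[
h_0 \sum_{k \le K_0} \frac{L^k}{k!} = \sum_{|\mathbf k| \le K_0} \frac{h_0 \prod_{i=1}^m (\log h_i)^{k_i}}{\prod_i k_i!},
\]
each monomial is periodic in every variable with period dividing $w_0 \prod_{i : k_i > 0} w_i$, which by coprimality equals that product and is thus at most $T^{|S|+1} \le T^{6D+1}$, where $S = \{i : k_i > 0\}$. The standard estimate that a $W$-periodic function $f$ on $\Z^g$ with $|f| \le M$ satisfies $|\sum_{\mathbf a \in [1,N]^g} f - N^g \mathrm{avg}(f)| = O(gWMN^{g-1})$ (obtained by subtracting the exact contribution of the $(\lfloor N/W\rfloor W)^g$ complete periods and bounding the boundary layers of thickness $< W$) gives a per-monomial error of $O\bigl(g T^{6D+1} m_0 \prod m_i^{k_i} / (N \prod k_i!)\bigr)$. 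Summing via the identity $\sum_{|\mathbf k|=k} \prod m_i^{k_i}/\prod k_i! \le D^k/k!$ and $\sum_{k \le K_0} D^k/k! \le e^D$ produces a total error of $O(g C T^{6D+1} e^D / N)$, which is absorbed into $O(D(mCT)^{6D+1}/N)$ via $mC \ge D$ (from $D \le \sum m_i \le mC$).

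The main technical obstacle is controlling the combinatorial blowup in the final step: the number of monomials with $|\mathbf k| \le K_0$ is exponential in $m$, and only the factorial denominators from the multinomial expansion prevent the total error bound from becoming exponential in $m$. The choice $K_0 = 6D$ is tuned precisely so that the Taylor remainder is at most $2^{-6D}$ while each retained monomial has period at most $T^{6D+1}$ rather than the a priori $T^{m+1}$ of the full product; the two budgets balance cleanly. A secondary subtlety is the small-$D$ regime, where $e^D$ is not strictly dominated by $D^{6D+1}$, but this is absorbed by the implicit constants (and is harmless, since for very small $D$ both summands in the claimed bound exceed the trivial bound $C$ on the average).
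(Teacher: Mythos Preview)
Your proof is correct and follows essentially the same approach as the paper: Taylor-expand $h_0 e^L$, truncate at degree $6D$ (bounding the tail via Stirling by $C2^{-6D}$), and then for each monomial of the truncation use CRT and periodicity to control the box-sum deviation. The only cosmetic differences are that you identify the main term $\prod z_i$ in one stroke via CRT on the full product (the paper instead expands $\prod z_i$ as a matching series and compares degree by degree), and you sum the monomial errors via the multinomial identity $\sum_{|\mathbf k|=k}\prod m_i^{k_i}/\prod k_i! = (\sum m_i)^k/k! \le D^k/k!$ to get the intermediate bound $O(gCT^{6D+1}e^D/N)$, whereas the paper bounds each monomial's value crudely by $C^{n+1}$ and the number of monomials by $m^n$; both routes land inside $O(D(mCT)^{6D+1}/N)$.
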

\begin{proof}
By Taylor expansion, the term on the left is
\begin{equation*}
\begin{split}
\frac{1}{N^g}\sum_{\mathbf a\in [1,N]^g}\prod_{i=0}^m h_i(\mathbf a) &= \frac {1}{N^g}\sum_{\mathbf a\in [1,N]^g}h_0(\mathbf a)\exp\left(\sum_{i=1}^m\log(h_i(\mathbf a))\right)\\
&=\sum_{n=0}^\infty \frac{1}{n!N^g}\sum_{\mathbf a\in [1,N]^g}h_0(\mathbf a)\left(\sum_{i=1}^m\log(h_i(\mathbf a))\right)^n\\
&=\sum_{n=0}^\infty \frac{1}{n!}\sum_{\substack{b_1+\cdots +b_m=n, \\b_i \ge 0}}\frac{1}{N^g}\sum_{\mathbf a\in [1,N]^g}h_0(\mathbf a)\prod_{i=1}^m \log(h_i(\mathbf a))^{b_i}.
\end{split}
\end{equation*}

On the other hand, define the independent random variables $H_i$ with the distribution of $h_i(\mathbf a)$ where $\mathbf a$ is chosen randomly, this is well defined by the periodicity of $h_i$. Then, by independence and linearity of expectation applied to the Taylor expansion, the term on the right is
\begin{equation*}
\begin{split}
\prod_{i=0}^m z_i &= z_0\E\left[\prod_{i=1}^m  H_i \right] = z_0 \E\left[\sum_{n=0}^\infty \frac{1}{n!} \left(\sum_{i=1}^m\log(H_i)\right)^n\right]
=z_0\E\left[\sum_{n=0}^\infty\frac{1}{n!}\sum_{\substack{b_1+\cdots +b_m=n, \\b_i \ge 0}}\prod_{i=1}^m  \log(H_i)^{b_i}\right]\\
&=\sum_{n=0}^\infty\frac{1}{n!}\sum_{\substack{b_1+\cdots +b_m=n, \\b_i \ge 0}}z_0\prod_{i=1}^m  \E\left[\log(H_i)^{b_i}\right].
\end{split}
\end{equation*}

It suffices to bound the difference between the inner terms for each $n$. For $n> 6D$, using Stirling's formula we conclude that both terms $$\frac{1}{n!N^g}\sum_{\mathbf a\in [1,N]^g}h_0(\mathbf a)\left(\sum_{i=1}^m\log(h_i(\mathbf a))\right)^n \text{ and  } z_0 \E\left[\frac{1}{n!} \left(\sum_{i=1}^m\log(H_i)\right)^n\right]$$
are $O(CD^{n}/n!)=O(C2^{-n})$.
so summing over $n\ge 6D$ we get an error contribution of $O(C2^{-6D})$.

Next, suppose $n\le 6D$, then consider the term $h_0(\mathbf a)\prod_{i=1}^m \log(h_i(\mathbf a))^{b_i}$ where $\sum b_i=n$. This is periodic in each variable with period dividing $P=w_0\prod_{i \colon b_i>0}w_i\le T^{n+1}$ by condition (a). By Chinese Remainder Theorem, since $w_i$ are coprime, we have in a box $B$ of side length $P$ that
$$\frac{1}{P^g}\sum_{\mathbf a\in B}h_0(\mathbf a)\prod_{i=1}^m \log(h_i(\mathbf a))^{b_i}=z_0\prod_{i=1}^m \E\left[\log(H_i)^{b_i}\right].$$
By subdividing the $[1,N]^g$ into boxes of side length $P$, we have $O(N^{g-1}T^{n+1})$ terms leftover, and after accounting for these terms we have
$$\frac{1}{N^g}\sum_{\mathbf a\in [1,N]^g}h_0(\mathbf a)\prod_{i=1}^m \log(h_i(\mathbf a))^{b_i}=z_0\prod_{i=1}^m \E\left[\log(H_i)^{b_i}\right]+O\left(\frac{(CT)^{n+1}}{N}\right).$$
Summing this for $n\le 6D$ over the tuples $b_1+\cdots +b_m=n$ of which there are less than $m^n$ many, which gives a error contribution of $O\left(D(mCT)^{6D+1}/N\right)$.

\end{proof}

\subsection{Proof of Theorem \ref{thm: A_g}}\label{sec: final proof} Our overall strategy is to first give a lower bound $\# \mathcal A_g(\F_p,t)$ for each $t$, and then turn this into an $L^1$-bound by showing this lower bound accounts for most of the PPAVs in $\mathcal A_g(\F_p)$. We divide the proof into several parts for clarity.

\subsubsection{Part I: Applying Theorem \ref{thm: PPAV counts for char poly}}
Let $\mathcal F\subseteq \Z^g\cap [p](\mathcal R_g)$ be the subset of $\mathbf a$ satisfying the following conditions:
\begin{enumerate}[(\alph*)]
    \item $f_{\mathbf a}$ is an irreducible ordinary Weil $p$-polynomial (so $f_{\mathbf a}=f_A$ for a simple ordinary abelian variety $A$).
    \item $f_{\mathbf a}$ has generic Galois group $G=(\Z/2\Z)^g\rtimes S_g$.
    \item $\ord_p(\disc(f))=g(g-1)$.
    \item The corresponding $\widetilde K_{\mathbf a}$, which we recall to be the Galois closure of $\Q[x]/f_{\mathbf a}(x)$, is $\epsilon_1$-good, where we choose a sufficiently small constant $\epsilon_1>0$.
\end{enumerate}
By Lemma \ref{lem: generic irreducibility}, Proposition \ref{prop: generic galois}, Lemma \ref{lem: proportion ordp > g(g-1)} and Corollary \ref{prop: bounding exceptional fields}, there are at most $O(p^{-1/5})$ proportion of ordinary Weil $p$-polynomials which do not satisfy at least one of the conditions above. For each trace $t$, define $\mathcal F_t=\mathcal F\cap \{a_1=t\}$.

Applying Theorem \ref{thm: PPAV counts for char poly}, we have
\begin{equation*}
\# \mathcal A_g(\F_p,t)= \sum_{\mathbf a\in \mathcal \Z^g\cap [p](\mathcal R_g)} \#\mathcal A_g(\F_p,f_{\mathbf a}) \ge \sum_{\mathbf a\in \mathcal F_t} \#\mathcal A_g(\F_p,f_{\mathbf a}) = p^{\dim(\mathcal A_g)/2} \sum_{\mathbf a\in \mathcal F_t} v_\infty(f_{\mathbf a})\prod_l v_l(f_{\mathbf a}).
\end{equation*}
\subsubsection{Part II: Truncating to a finite product} 
By Lemma \ref{lem: good prime ratio of zeta functions} and \ref{lem: vp ratio zeta fns} we have that for $l\nmid 2p\disc(f_{\mathbf a})$ and $l=p$ that $v_l(f_{\mathbf a})=\zeta_{K,l}(1)/\zeta_{K^+,l}(1)$. There are at most $O(\log(p))$ remaining bad primes and Proposition \ref{prop: vl lower bound} tells us for $l\neq p$ that $v_l(f_{\mathbf a})\ge 1-C_1/l$ for some constant $C_1$. Choose $l_0=\exp(C_2\log\log(p)^2)$ for some sufficiently large constant $C_2$.
Then, combining these with Proposition \ref{prop: truncation}, we lower bound the tail by
$$\prod_{l>l_0} v_l(f_{\mathbf a})\ge 1-O\left(\log(p)^{-1+\epsilon}\right).$$
\subsubsection{Part III: Lower bounding by the auxiliary factor $v_l'(f)$} The point of introducing this auxiliary factor is two-fold: (1) to make the local factors periodic and (2) being able to upper bound the local factors. This is a complication that arises because of our imperfect results in Section \ref{sec: bounds, stab, disc}. For each prime $l\le l_0$, we choose an integer $k=k(l)$ depending on $l$ as follows. Let $l_1=\log(p)^{1-\epsilon}$, then for $l\le l_1$ define $k$ to be the largest integer such that $l^k\le l_0$ which implies that $l^k>\sqrt{l_0}$, and for $l_1<l\le l_0$ set $k=1$. Define the auxiliary factor
$$v_l'(f_{\mathbf a})=\begin{cases}
v_{l,k}(f_{\mathbf a}) & \text{if } \ord_l(\disc(f_{\mathbf a}))\le k/C_3\\
\max(1-C_1/l,0) & \text{otherwise}
\end{cases}$$ 
where we pick $C_3>1$ to be the constant in Proposition \ref{prop: stabilization} so that we have $v_l(f_{\mathbf a})\ge v_l'(f_{\mathbf a})$. Furthermore $v_l'(f_{\mathbf a})$ is $l^k$-periodic in each of the variables $\mathbf a$. Recall that by Proposition \ref{prop: vl lower bound} we have a lower bound $v_l'(f_{\mathbf a})\ge 1-C_1/l$, but now we also have an upper bound $v_l'(f_{\mathbf a})\le l^{k(g+1)}\le l_0^{g+1}$ simply by the definition of $v_{l,k}$. 

Furthermore, if $l>l_1$ we can show that $v_l'(f_{\mathbf a})=1+O(l^{-1})$. Indeed, in this case, we defined
$k=1$, so $\ord_l(\disc(f_{\mathbf a}))\le k/C_3$ implies that $l\nmid \disc(f_{\mathbf a})$ so $l$ is a good prime and $v_{l,k}(f_{\mathbf a})=1+O(l^{-1})$ as it is a ratio of local zeta functions by Lemma \ref{lem: good prime ratio of zeta functions}. 

Combining the above, we have
$$\# \mathcal A_g(\F_p,t)\ge \left(1-O\left(\log(p)^{-1+\epsilon}\right)\right)p^{\dim(\mathcal A_g)/2} \sum_{\mathbf a\in \mathcal F_t} v_\infty(f_{\mathbf a})\prod_{l\le l_0} v_l'(f_{\mathbf a}).$$
\subsubsection{Part IV: Making the region a disjoint union of boxes}
Consider a subdivision of $\R^g$ into boxes $B$ of side length $N=p^{1/4}$, and let $[p](\mathcal R_g^-)=\bigsqcup_{B\in \mathcal B}B$ be the union of those boxes which are contained inside $[p](\mathcal R_g)$. Define $\mathcal F'=\Z^g\cap [p](\mathcal R_g^-)=\bigsqcup_{B\in \mathcal B} B \cap \Z^g$ and $\mathcal F'_t = \mathcal F' \cap \{a_1 =t\}=\bigsqcup_{B_t\in \mathcal B_t} B_t\cap \Z^g$, where the family $\mathcal B_t =\{B_t=B\cap \{t\le a_1<t+1\}\mid B\in \mathcal B, B_t\neq \emptyset\}$ consists of the nonempty slices of boxes at trace $t$. Then, to convert the sum from being over $\mathcal F_t$ to $\mathcal F_t'$, note that for any $\mathbf a\in \mathcal F'$, we upper bound
\begin{equation}\label{eqn: upper bound for each f}
v_\infty(f_{\mathbf a})\prod_{l\le l_0} v_l'(f_{\mathbf a})\le O(1) \cdot\prod_{l_1<l\le l_0} (1+O(l^{-1})) \cdot\prod_{l\le l_1} l_0^{g+1} = O(p^{1/6})
\end{equation}
Hence, adding the terms with $\mathbf a\in \mathcal F'_t\setminus \mathcal F_t$ we obtain 
\begin{equation}\label{eqn: first lower bound}
\begin{split}
\# \mathcal A_g(\F_p,t)\ge & \left(1-O\left(\log(p)^{-1+\epsilon}\right)\right)p^{\dim(\mathcal A_g)/2} \left(\sum_{\mathbf a\in \mathcal F_t'} v_\infty(f_{\mathbf a})\prod_{l\le l_0} v_l'(f_{\mathbf a})-|\mathcal F'_t\setminus \mathcal F_t|O(p^{1/6})\right).
\end{split}
\end{equation}
\subsubsection{Part V: Averaging out the Euler product in boxes}
We wish to apply Proposition \ref{prop: averaging out} to average out the product in the boxes of side length $N$. In the notation of the proposition, take $h_0(\mathbf a)=\prod_{l\le 2C_1} v_l'(f_{\mathbf a})$ which has period $\le l_0^{2C_1}$ and $m_0=\max(h_0)=l_0^{2(g+1)C_1}$. Then, for each prime $2C_1<l\le l_0$ assign some $h_i(\mathbf a)=v_l'(f_{\mathbf a})$, which has period $\le l_0$. Then, we have $m_i=O(\log(l_0))$ where we crucially used that $1-C_1/l>1/2$ so that it is okay to take the log on the lower bound. In addition, recall that if $l>l_1$ then $v_l'(f_{\mathbf a})=1+O(l^{-1})$ so $m_i=O(l^{-1})$. All in all, this yields the constants $m,C,T\le l_0^{O(1)}$ and $\sum_{i=1}^m m_i=O(l_1\log(l_0))$ so we can choose $D=\Theta(\log(p)^{1-\frac\epsilon 2})$. Then for any $g$-dimensional box $B$ of side length $N$, Proposition \ref{prop: averaging out} gives
$$\frac{1}{N^g}\sum_{\mathbf a\in B\cap \Z^g}\prod_{l\le l_0} v_l'(f_{\mathbf a})=  \prod_{l\le l_0} z_l + O(\log (p)^{-1})$$
where $z_l$ is the average of $v_l'$. We can also apply this to a slice $B_t$ by fixing $a_1=t$ and let $a_2,\ldots, a_g$ vary in a $(g-1)$-dimensional box, to get
$$\frac{1}{N^{g-1}}\sum_{\mathbf a\in B_t\cap \Z^g}\prod_{l\le l_0} v_l'(f_{\mathbf a})=  \prod_{l\le l_0} z_{l,t} + O(\log (p)^{-1}),$$
where $z_{l,t}$ is the average of $v_l'$ subject to $a_1=t$. Combining the two tells us that
\begin{equation}\label{eqn: zl to zl,t}
\frac{1}{N}\sum_{t\in [t_0+1,t_0+N]}\prod_{l\le l_0} z_{l,t} = \prod_{l\le l_0}z_l +O(\log(p)^{-1}).
\end{equation}
\subsubsection{Part VI: Separating the archimedean and non-archimedean terms}
By Proposition \ref{prop: vinf and measure}, recall that $v_{\infty}(f_{\mathbf a})$ is the density function of $\mu_{\mathcal R_g}$ at $[p]^{-1}(\mathbf a)$. Since the density function is continuous on a compact set, it is uniformly continuous and thus within each slice $B_t$, $v_{\infty}(f_{\mathbf a})$ varies by at most $O(p^{-1/4})$. For all $B_t$ choose some vector $\mathbf a_{B_t}\in B_t$, then we decouple the archimedean and non-archimedean factors by applying the equation above:
\begin{equation}\label{eqn: sep arch and non-arch}
\begin{split}
\sum_{\mathbf a\in \mathcal F_t'} v_\infty(f_{\mathbf a})\prod_{l\le l_0} v_l'(f_{\mathbf a}) &= \sum_{B_t\in \mathcal B_t}(v_\infty(f_{\mathbf a_{B_t}})+O(p^{-1/4}))\sum_{\mathbf a\in B_t\cap \Z^g} \prod_{l\le l_0} v_l'(f_{\mathbf a})\\
&= \left(\sum_{B_t\in \mathcal B_t} N^{g-1}\left(v_\infty(f_{\mathbf a_{B_t}})+O(p^{-1/4})\right)\right)\left(\prod_{l\le l_0} z_{l,t} + O(\log(p)^{-1})\right).
\end{split}
\end{equation}
It is clear that we can approximate the first term using an integral
$$\int_{[p](\mathcal R_g)\cap\{t\le a_1<t+1\}}v_{\infty}(f_{\mathbf a})da_1\cdots da_g+O(p^{-1/4})\Vol([p](\mathcal R_g^+)),$$
and changing variables to $\mathbf b=[p]^{-1}(\mathbf a)$ and applying Proposition \ref{prop: vinf and measure} gives
$$p^{g(g+1)/2}\left(\int_{\mathcal R_g\cap \left\{\frac{t}{\sqrt p}\le a_1 < \frac{t+1}{\sqrt p}\right\}}\mu_{\mathcal R_g}+O(p^{-1/4})\right)=p^{(g(g+1)-1)/2}\left(\ST_g(t/\sqrt p)+O(p^{-1/4})\right).$$
\subsubsection{Part VII: Finding a common lower bound} In order to get to an $L^1$-bound between the two distributions $\#\mathcal A_g(\F_p,t)/\#\mathcal A_g(\F_p)$ and $\frac{1}{\sqrt p}\ST_g(t/\sqrt p)\prod_{l}v_l(t)$, we give a distribution that lower bounds both of them. Substituting Equation \eqref{eqn: sep arch and non-arch} into Equation \eqref{eqn: first lower bound} and dividing by $\#\mathcal A_g(\F_p)=(1+O(p^{-1/2}))p^{\dim(\mathcal A_g)}$, we get that $\#\mathcal A_g(\F_p,t)/\#\mathcal A_g(\F_p)\ge M(t)$ where $M(t)$ is given by
\begin{equation*}
\begin{split}
\frac{1-O(\log(p)^{-1+\epsilon})}{\sqrt p}  \left(\left(\ST_g(t/\sqrt p)-O(p^{-1/4})\right)\left(\prod_{l\le l_0} z_{l,t} - O(\log(p)^{-1})\right)-\frac{|\mathcal F'_t\setminus \mathcal F_t|}{p^{g(g+1)/2}}O(p^{1/6})\right).
\end{split}
\end{equation*}
We want to show that $M(t)$ also lower bounds $\frac{1}{\sqrt p}\ST_g(t/\sqrt p)\prod_{l}v_l(t)$, and for that we need to compare $z_{l,t}$ to $v_l(t)$. By definition of $v_l'$ and Proposition \ref{prop: vl lower bound}, we have $ v_l'(f_{\mathbf a})\le v_{l,k}(f_{\mathbf a})$, so $z_{l,t}\le v_{l,k}(t)$ as the latter is the average of $v_{l,k}(f_{\mathbf a})$ across $a_1=t$. Furthermore, by Proposition \ref{prop: vl(t) = 1 + O(l^-2)} and \ref{prop: vl,k(t) to vl(t)}, we have $v_l(t)=1+O(l^{-2})$ and $v_{l,k}(t)=v_l(t)(1+O(l^{-\max(2,k)}))$. Combining these, we have
\begin{equation*}
\begin{split}
\prod_{l\le l_0}z_{l,t}&\le \prod_{l}v_l(t)\cdot \prod_{l\le l_1}\left(1+O(l_0^{-1/2})\right)\prod_{l>l_1}(1+O(l^{-2}))=(1+O((\log p)^{-1+\epsilon}))\prod_{l}v_l(t).
\end{split}
\end{equation*}
Absorbing the error factor, we see that
$\frac{1}{\sqrt p}\ST_g(t/\sqrt p)\prod_{l}v_l(t)\ge M(t)$.
\subsubsection{Part VIII: Bounding losses from lower bound} Because $M(t)$ lower bounds $\#\mathcal A_g(\F_p,t)/\#\mathcal A_g(\F_p)$, the $L^1$-distance between these two distributions is simply the difference in both sums. It is clear that
$\sum_t \#\mathcal A_g(\F_p,t)/\#\mathcal A_g(\F_p) = 1$. Now we lower bound the sum over $M(t)$. First note that $$\sum_t \frac{|\mathcal F'_t\setminus \mathcal F_t|}{p^{g(g+1)/2}}=\frac{|\mathcal F'\setminus \mathcal F|}{p^{g(g+1)/2}}=O(p^{-1/5})$$
because from Part I of the proof there are at most $O(p^{-1/5})$ proportion of vectors which do not satisfy some given condition. Then, we repeat the method in Part IV, breaking the sum into intervals of length $N$, separating the archimedean and non-archimedean factors, changing the sum into an integral as in Part IV, and then applying Equation \eqref{eqn: zl to zl,t} to get
\begin{equation}\label{eqn: first averaging over t}
\begin{split}
&\sum_{|t|\le 2g\sqrt p}\left(\ST_g(t/\sqrt p)-O(p^{-1/4})\right)\left(\prod_{l\le l_0} z_{l,t} - O(\log(p)^{-1})\right) \\
&= \sqrt p\left(\int_{[-2g\sqrt p,2g\sqrt p]}\ST_g(t)dt-O(p^{-1/4})\right)\left(\prod_{l\le l_0} z_l+O(\log(p)^{-1})\right) .
\end{split}
\end{equation}
The integral is simply $1$ so we are left to lower bound $\prod_{l\le l_0}z_l$. The definition of $v_l'$ is such that all matrices $\gamma$ with $\ord_l(\disc(f_\gamma))\le k/C_3$ are included which is exactly the condition of Proposition \ref{prop: stabilization}, so
$$z_{l}\ge\frac{\#\{\gamma \in \GSp_{2g}(\Z/l^k\Z) \mid \mult(\gamma)=p, l^{k/C_3}\nmid\disc(\gamma)\}}{\#\{\gamma \in \GSp_{2g}(\Z/l^k\Z) \mid \mult(\gamma)=p\}}\ge 1-O(l^{-k\delta})$$
for some constant $\delta>0$, where we used Proposition \ref{prop: l^k not dividing disc lower bound} in the last inequality. 

We can also prove a uniform bound $z_l\ge 1-O(l^{-2})$ as follows. First, note that $l\neq p$ since our range is $l\le l_0 <p$, and we can assume $l\neq 2$ as we are only concerned with asymptotics. If $l\nmid \disc(f_{\mathbf a})$, then $l$ is a good prime for $f_{\mathbf a}$ and so $v_l'(f_{\mathbf a})=v_{l,k}(f_{\mathbf a})=1+O(l^{-1})$ by Lemma \ref{lem: good prime ratio of zeta functions}. Also note that if we just averaged $v_{l,k}(f_{\mathbf a})$ we would get $1$ by definition. Hence, the error comes when $l\mid \disc(f_{\mathbf a})$, and note that by Lemma \ref{lem: proportion ordp > g(g-1)}, there are $O(l^{-1})$ proportion of characteristic polynomials satisfying this. Furthermore, for each of them, by definition, $v_l'(f_{\mathbf a})\ge 1-C_1/l$, so it differs from $v_{l,k}(f_{\mathbf a})$ by at most $O(l^{-1})$, giving the desired result.

Hence, combining the two lower bounds for appropriate intervals, we have
$$\prod_{l\le l_0}z_l\ge \prod_{l\le l_1}(1-O(l_0^{-\delta/2}))\prod_{l_1<l\le l_0}(1-O(l^{-2}))=1-O((\log p)^{-1+\epsilon}),$$
and substituting our results back into the sum over $M(t)$ gives
$$\sum_{|t|\le 2g\sqrt p}M(t)\ge 1-O((\log p)^{-1+\epsilon}).$$
Hence, the $L^1$ distance between $M(t)$ and $\#\mathcal A_g(\F_p,t)/\#\mathcal A_g(\F_p)$ is $O((\log p)^{-1+\epsilon})$. 
\subsubsection{Part IX: Upper bounding sum of expected distribution} We are left with showing that the $L^1$ distance between $\frac{1}{\sqrt p}\ST_g(t/\sqrt p)\prod_{l}v_l(t)$ and $M(t)$ is small, so it suffices to upper bound the sum $\sum_t\frac{1}{\sqrt p}\ST_g(t/\sqrt p)\prod_{l}v_l(t)$. We basically repeat our method: truncate the infinite product, make each local factor periodic, apply Proposition \ref{prop: averaging out}, and split into archimedean and non-archimedean parts. However, this time it is a lot easier because of the concrete bounds given in Proposition \ref{prop: vl(t) = 1 + O(l^-2)} and \ref{prop: vl,k(t) to vl(t)}. It is pretty clear that because we start with better bounds for $v_l(t)$ then $v_l(f)$, the error term here would definitely be smaller than the error incurred in the previous part of the proof. Nevertheless, for completeness we give as best a bound as we can. Since this part is independent of the previous part of the proof, we choose to reuse some variables for notational simplicity. 

Take $l_0=p^{\epsilon}$ for a sufficiently small $\epsilon$, and let $k=k(l)$ be the largest integer such that $l^k\le p^{4\epsilon}$, which implies that $l^k>p^{2\epsilon}$. Applying Proposition \ref{prop: vl(t) = 1 + O(l^-2)} and \ref{prop: vl,k(t) to vl(t)}, we approximate the infinite product with a finite product of periodic local factors:
$$\prod_l v_l(t) = \prod_{l>l_0}(1+O(l^{-2}))\prod_{l\le l_0}(1+O(l^{-k}))v_{l,k}(t)=(1+O(p^{-\epsilon}))\prod_{l\le l_0}v_{l,k}(t).$$

Suppose $v_{l,k}(t)\ge 1/2$ when $l> C$ for some constant $C$, then apply Proposition \ref{prop: averaging out} with $h_0(t)=\prod_{l\le C}v_{l,k}(t)$, and assign $h_i(t)$ to each $v_{l,k}(t)$ with $C<l\le l_0$. We see that $m\le p^{\epsilon}$, $T=\max(l^k)\le p^{4\epsilon}$, $C=O(1)$, $D=O(1)+\sum_{l}l^{-2}=O(1)$. Picking $N=p^{1/2-\epsilon}$, if $\epsilon$ is sufficiently small then 
$$\frac{1}{N}\sum_{t\in [t_0+1,t_0+N]} \prod_{l\le l_0}v_{l,k}(t)=1+O(p^{-\epsilon})$$
where we used the fact that $v_{l,k}(t)$ average to $1$ by definition. 

Applying the method in Part VI with the above equation like in Equation \eqref{eqn: first averaging over t}, we have
$$\sum_t\frac{1}{\sqrt p}\ST_g(t/\sqrt p)\prod_{l\le l_0}v_{l,k}(t)=\left(\int_{[-2g\sqrt p,2g\sqrt p]} \ST_g(t)dt+O(p^{-\epsilon})\right)(1+O(p^{-\epsilon}))=1+O(p^{-\epsilon})$$
so combining this with the first equation simply gives 
$$\sum_t\frac{1}{\sqrt p}\ST_g(t/\sqrt p)\prod_{l}v_{l}(t)=1+O(p^{-\epsilon}).$$

In conclusion, summing both $L^1$ distances, we get an error of $O((\log p)^{-1+\epsilon})$. However, the computation here seems to imply that it could be strengthened to $O(p^{-\epsilon})$. The current barrier to improving our bound is that we are forced to pick $l_1=(\log p)^{1-\epsilon}$ because of Equation \eqref{eqn: upper bound for each f}. This in turn stems from having a loose upper bound of $v_l'(f_{\mathbf a})\le l_0^{g+1}$ because we were not able to prove an upper bound of $1+O(l^{-1})$ for $v_l(f)$ as in Conjecture \ref{conj: v_l = 1+O(l^-1)}.
\newpage
\appendix
\section{Stable orbital integrals as Gekeler ratios (by Jeff Achter and Julia Gordon)}\label{sec: appendix}

Let $G = \GSp_{2g}$, thought of as a smooth group scheme over
$\bb{Z}_l$, and suppose $\gamma_0 \in G(\bb{Z}_l)$ has generic
fiber which is regular and semisimple (as an element of $G(\rat_l)$). Below
we will recall the definition of a certain measure, the
geometric measure, on the stable orbit of $\gamma_0$, and use it to
define an orbital integral on the \emph{stable} orbit of $\gamma_0$ (it also defines the measure on the \emph{rational} orbit of $\gamma_0$, but this is not relevant for the present paper).

On one hand, as in \eqref{eqn: vl ppav definition}, define a renormalized orbital integral 
\[
\nu_l(\gamma_0) =
\frac{l^{d(\Sp_{2g})}}{\#\Sp_{2g}(\ff_l)}\mathcal O^{\geom}_{\gamma_0,l}(\mathbf{1}_{G(\bb{Z}_l)}),
\]
where for a smooth scheme $X \to \bb{Z}_l$, $d(X)$ denotes it
relative dimension.

On the other hand,  as in \eqref{eqn: vl t and f definition}, for a polynomial $f(T) \in \bb{Z}_l[T]$ with
nonzero discriminant, we define the stable Gekeler ratio as follows: 
\begin{definition} For a polynomial $f(T) \in \bb{Z}_l[T]$ with
nonzero discriminant, 
\begin{align*}
\nu_{l,n}(f) &=   \frac{\#\st{\gamma \in
    G(\bb{Z}_l/l^n) \mid f_\gamma \equiv f \bmod l^n}}{\#G(\bb{Z}_l/l^n) /
  (\#\aff^g(\bb{Z}_l/l^n) \#\mathbb G_m(\bb{Z}_l/l^n))} \\
\nu_l(f) &= \lim_{n\to\infty} \nu_{l,n}(f).
\end{align*}
\end{definition} 

Let $f_{\gamma_0}$ be the characteristic polynomial of $\gamma_0$.
The goal of this appendix is to give an essentially self-contained proof of Proposition \ref{L:apptarget}:

\begin{proposition}
  \label{L:apptarget}
  For $\gamma_0 \in G(\bb{Z}_l)$, the stable orbital integral with respect to the geometric measure 
  (the RHS of the equality below) can be expressed as a limit of Gekeler ratios: 
  \[
  \nu_l(f_{\gamma_0}) =   \sum_{\gamma \in S_{\gamma_0}} \nu_l(\gamma),
  \]
  where $\gamma$ ranges over a set $S_{\gamma_0}$ of representatives for the
  rational conjugacy classes inside the stable conjugacy class of
  $\gamma_0$.
\end{proposition}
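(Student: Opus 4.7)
The plan is to unfold both sides of the identity into a common intermediate quantity: the volume of the locally closed subset $\{g \in G(\Z_l) : f_g = f_{\gamma_0}\}$ of $G(\Z_l)$, measured against the fiber gauge form pulled back from the Steinberg map $\chi \colon G \to T/W$. For $G = \GSp_{2g}$ we have $T/W \cong \A^g \times \Gm$ (the $g$ symmetric coefficients of the reduced characteristic polynomial together with the multiplier), so the denominator of $\nu_{l,n}(f)$ is literally $\#G(\Z_l/l^n)/\#(T/W)(\Z_l/l^n)$, which identifies $\nu_{l,n}(f_{\gamma_0})$ with the Serre--Oesterl\'e density of the fiber $\chi^{-1}(f_{\gamma_0})$ at level $l^n$.

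The first step is the orbit decomposition. For $\gamma_0$ regular semisimple, Propositions \ref{prop: steinberg conjugacy} and \ref{prop: preimage of steinberg} give
\[
\{g \in G(\Z_l) : f_g = f_{\gamma_0}\} \;=\; \bigsqcup_{\gamma \in S_{\gamma_0}} \bigl(G(\Q_l)\cdot \gamma\bigr) \cap G(\Z_l),
\]
because stable conjugacy for regular semisimple elements is detected by the characteristic polynomial; each $G(\Q_l)$-orbit is both open and closed in the stable orbit in the $l$-adic topology.

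The second step is the change of variables. Since $\chi$ is smooth of relative dimension $\dim G - (g+1)$ on the regular semisimple locus, a standard Serre--Oesterl\'e argument gives
\[
\nu_l(f_{\gamma_0}) \;=\; \int_{\chi^{-1}(f_{\gamma_0}) \cap G(\Z_l)} \omega_{\mathrm{fib}},
\]
where $\omega_{\mathrm{fib}} = \omega_G / \chi^{*}\omega_{T/W}$ is the quotient gauge form on each regular fiber. By the construction recalled around Equation \eqref{eqn: vl ppav definition} (specifically Section~5C of \cite{Achter_2023}), the geometric measure on a single rational orbit $G(\Q_l)\cdot \gamma$, multiplied by the normalization constant $l^{d(\Sp_{2g})}/\#\Sp_{2g}(\F_l)$, is precisely the restriction of $\omega_{\mathrm{fib}}$ to that orbit. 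Summing over $\gamma \in S_{\gamma_0}$ and using the orbit decomposition of step one then yields $\nu_l(f_{\gamma_0}) = \sum_{\gamma \in S_{\gamma_0}} \nu_l(\gamma)$.

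The main obstacle is justifying the change-of-variables identity $\nu_l(f_{\gamma_0}) = \int \omega_{\mathrm{fib}}$. As Remark \ref{rmk: AAGG problem} emphasizes, a matrix in $G(\Z_l/l^n)$ with characteristic polynomial $\equiv f_{\gamma_0}$ mod $l^n$ need not lift to an element of $G(\Z_l)$ with that exact characteristic polynomial, so one cannot naively identify mod-$l^n$ counts with $\Z_l$-fiber volumes. The resolution is that at regular semisimple points $\chi$ is smooth, so the pushforward of the canonical measure on $G(\Z_l)$ along $\chi$ is absolutely continuous with respect to the canonical measure on $(T/W)(\Z_l)$, with a density that is continuous in a neighborhood of $f_{\gamma_0}$. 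Applying this pushforward to $\mathbf{1}_{G(\Z_l)}$ and specializing to shrinking $l^n$-balls around $f_{\gamma_0}$ produces the fiber integral in the limit, and every remaining step is bookkeeping.
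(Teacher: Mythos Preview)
Your approach is essentially the same as the paper's: both arguments identify an intermediate quantity --- the volume of $\{g \in G(\Z_l): f_g = f_{\gamma_0}\}$ under the fiber gauge form --- and relate it on one side to $\nu_l(f_{\gamma_0})$ via the smoothness of $G$ (your ``pushforward density is continuous'' is the paper's Lemma~\ref{L:subofsmooth} plus equation~\eqref{eq:st.volume}), and on the other side to $\sum_\gamma \nu_l(\gamma)$ via the orbit decomposition and the definition of the geometric measure.

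There is one bookkeeping slip you should fix. In the paper's conventions (and in \cite{Achter_2023}), the geometric measure $d\mu^{\geom}_{\gamma_0}$ \emph{is} the fiber form $|\omega_{\mathrm{fib}}|$, with no rescaling: $\omega_G = \omega^{\geom}_a \wedge \omega_{\A_G}$ defines $\omega^{\geom}_a$ directly. The normalization constant $l^{d(\Sp_{2g})}/\#\Sp_{2g}(\F_l)$ does not convert $d\mu^{\geom}$ into $\omega_{\mathrm{fib}}$; rather, it appears when you compare the Gekeler ratio to the raw fiber integral. Concretely, the Serre--Oesterl\'e computation gives
\[
\nu_l(f_{\gamma_0}) \;=\; \frac{\vol_{|\omega_{\A_G}|}(\A_G(\Z_l))}{\vol_{|\omega_G|}(G(\Z_l))}\int_{\chi^{-1}(f_{\gamma_0})\cap G(\Z_l)}|\omega_{\mathrm{fib}}| \;=\; \frac{l^{d(\Sp_{2g})}}{\#\Sp_{2g}(\F_l)}\int |\omega_{\mathrm{fib}}|,
\]
not $\int |\omega_{\mathrm{fib}}|$ as you wrote in your step~2. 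You make the reciprocal error in step~3, so the two cancel and your conclusion is correct --- but the intermediate identities as stated are off by this factor.
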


We also prove a similar result (with a different test function) for $l=p$ in \S \ref{sub:l_is_p} below. 
\begin{remark}
  In \cite{Achter_2023}, the authors purported to give an explicit formula for
 each rational orbital integral $\nu_l(\gamma_{[A,\lambda]})$ in terms of matrix counts; it needs a modification. 
 A detailed
 explanation and erratum will appear elsewhere.
\end{remark}

\begin{remark} While we prove this lemma for the purposes of the present paper only for $G=\GSp_{2g}$, the argument works much more generally.  A similar statement in full generality will appear elsewhere. 
\end{remark}

\subsection{Measures}

\subsubsection{Counting measure}

Let $(R,\mathfrak m)$ be a discrete valuation ring with fraction field
$K$, with residue field $\kappa$ of finite cardinality $l$, and with normalized absolute value $\abs{\cdot} = l^{-\val_p(\cdot)}$. Let
$R_n = R/\mathfrak m^n$, and let $\pi_n: R \to R_n$ be the truncation
map.

For any $R$-scheme $X$, we denote by $\pi_n^X$ the corresponding map
$\pi_n^X: X(R) \to X(R_n)$ induced by $\pi_n$. If $X$ is flat (in
particular, if $X$ is smooth), we let $d(X)$ be the relative
dimension of $X$ over $R$.

Once and for all, fix the Haar measure $dx$ on $\A^1(K)$ such that the
volume of $\aff^1(R) = R$ is $1$. 
With this normalization, the fibres of the standard projection 
$\pi_n^{\A^d}:\A^d(R)\to \A^d(R_n)$ have volume $l^{-nd}$. 

Now suppose $X$ is smooth over $R$, with a non-vanishing top degree
differential form $\omega_X$.  Integrating $\omega_X$ defines a Borel
measure $\abs{d\omega_X}$ on $X(R)$ (well-defined since, if $\omega_X$
exists, it is unique up to multiplication by a unit), and
$$
\vol_{\abs{d\omega_X}}(X(R))=\frac{\#X_\kappa(\kappa)}{l^{d(X)}}
$$
\cite{weil:adeles}.
Indeed, if $X$ is smooth of relative
dimension $d$, then $X(K)$ is an analytic manifold, locally isomorphic
to $\aff^d(K)$; the Jacobian of any such isomorphism is a non-vanishing top degree differential, and thus has to agree with $\omega_X$ up to a unit. Therefore, by the Jacobian transformation rule, $\abs{d\omega_X}$  is
the pullback of the Haar measure defined by $\omega_{\aff^d}$ as above, under any such local isomorphism. 
\begin{lemma}
  \label{L:subofsmooth}
  If $X$ is smooth over $R$ and if $A\subset X(R_n)$, then
  \[
  \vol_{\abs{d\omega_X}} (\pi_n^X)\inv(A) = \frac{\#A}{l^{nd(X)}}.
\]
\end{lemma}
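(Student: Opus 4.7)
The plan is to reduce to the case $X = \bb{A}^d$ and then bootstrap using \'etale local charts. For $X = \bb{A}^d$ the claim is immediate from the normalization fixed just before the lemma: the preimage $(\pi_n^{\bb{A}^d})^{-1}(\bar a)$ of a single point $\bar a \in \bb{A}^d(R_n)$ is a translate of $(\mathfrak m^n)^d$, hence has Haar volume $l^{-nd}$, and summing over $\bar a \in A$ gives $\#A \cdot l^{-nd}$.

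For general smooth $X$ of relative dimension $d$, I would decompose $(\pi_n^X)^{-1}(A) = \bigsqcup_{\bar x \in A} (\pi_n^X)^{-1}(\bar x)$ and prove that each fiber has volume $l^{-nd}$. Smoothness of $X$ over $R$ implies formal smoothness, so $\pi_n^X$ is surjective and every $\bar x \in A$ lifts to some $x_0 \in X(R)$. By the local structure theorem for smooth morphisms, there is a Zariski-open neighbourhood $U \subseteq X$ of $x_0$ together with an \'etale $R$-morphism $\phi \colon U \to \bb{A}^d$. The next step is to iterate the infinitesimal lifting property of \'etale morphisms (equivalently, invoke the henselianness of $R$) to produce a bijection
\[
(\pi_n^U)^{-1}(\bar x) \;\xrightarrow{\sim}\; (\pi_n^{\bb{A}^d})^{-1}\!\bigl(\pi_n(\phi(x_0))\bigr)
\]
induced by $\phi$, so that the two fibers have equal cardinality.

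To upgrade this set-theoretic bijection to an equality of volumes I would invoke the observation made just before the lemma: on a smooth $R$-scheme any non-vanishing top-degree form is unique up to multiplication by a unit. Hence $\phi^* \omega_{\bb{A}^d} = u \cdot \omega_X$ on $U$ for some $u \in \mathcal O_U(U)^\times$, and since $u(x) \in R^\times$ at every $x \in U(R)$ its absolute value is $1$, so $\phi$ carries $|d\omega_X|$ to $|d\omega_{\bb{A}^d}|$. Combined with the $\bb{A}^d$ case this yields $\vol_{|d\omega_X|}\bigl((\pi_n^X)^{-1}(\bar x)\bigr) = l^{-nd}$, and summing over $\bar x \in A$ completes the proof. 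There is no real obstacle: the lemma is essentially a bookkeeping statement to the effect that the counting measure on $X(R_n)$ matches the geometric measure on $X(R)$ under the truncation map, and the only mildly subtle point is the matching of fibers via an \'etale chart, which is immediate from the infinitesimal lifting criterion.
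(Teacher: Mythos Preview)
Your proof is correct and follows essentially the same approach as the paper's: reduce to the case where $A$ is a single point, then use a local identification of $X$ with $\bb{A}^{d(X)}$ to transport the computation to affine space. The paper phrases this analytically (``$X(K)$ is an analytic manifold, locally isomorphic to $\bb{A}^d(K)$'') while you phrase it algebro-geometrically via an \'etale chart and Hensel lifting, but these are the same argument.
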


\begin{proof}
  It suffices to prove this in the case where $A$ consists of a single
  point.  Since (as reviewed above) $X$ is analytically locally isomorphic to
  $\aff^{d(X)}$, the result follows from the corresponding calculation on affine
  space.
\end{proof}

\subsubsection{Geometric measure}

Continue to let $G = \gsp_{2g}$. As in \cite[\S 2.3]{Achter_2023}, let $\aff_G = \aff^g
\times \gp_m$ be the space of characteristic
polynomials of elements of $G$, and let $\mathfrak c:G \to \aff_G$ be the morphism which
takes an element to the coefficients of its characteristic polynomial, except the last coordinate is the multiplier $\eta$ rather than the determinant $\det$. 
 For $\gamma_0 \in G(R)$, let
\[
Y_{\gamma_0}  
=\mathfrak c\inv(\mathfrak c(\gamma_0)).
\]
Then for any $R$-algebra $\alpha:R \to S$, we have
\begin{equation}
\label{E:ygammapoints}
Y_{\gamma_0}(S) = \st{ \gamma \in G(S) : \alpha(f_\gamma) = \alpha(f_{\gamma_0})}.
\end{equation}
  Now suppose $\gamma_0$ has regular semisimple generic fibre in $G(K)$.  Since the stable orbit of a regular semisimple element of $G(K)$ is determined by its characteristic polynomial, we have
  \[
Y_{\gamma_0}(K) = \st{ \gamma \in G(K) |  f_\gamma = f_{\gamma_0}}= \st{ \gamma \in G(K) \mid \exists g \in G(\bar K): g\inv \gamma g = \gamma_0}.
\]

On $\aff_G$, let $\omega_{\aff_G} = dx_1 \wedge \dots \wedge dx_g
\times \frac{d\eta}{\eta}$. Now, $\omega_G$ is a generator of the top exterior power of the
cotangent bundle of $G$.  
For each $a\in \A_G(K)$, there is a unique generator
$\omega^\geom_{a}$ of the top exterior power of
the cotangent bundle of the fibre $\fc^{-1}(a)$ such that
\[
\omega_G = \omega^\geom_{a} \wedge
\omega_{\aff_G}.
\]
Since the stable orbit of $\gamma_0$ coincides with the fibre $\fc^{-1}(\fc(\gamma_0))$, we obtain a measure
$\mu^\geom_{\gamma_0} = \mu^\geom_{\mathfrak c(\gamma_0)}$ associated to
$\omega^\geom_{\mathfrak c(\gamma_0)}$ on $Y_{\gamma_0}(K)$.
 
Then for any $\phi\in C_c^\infty(G(K))$,
$$
\int_{G(K)} \phi(g)\, \abs{d\omega_G}=\int_{\A_G(K)}\int_{\fc^{-1}(a)}\phi(g)\,d\mu^\geom_{a}
\,\abs{d\omega_{\A_G}(a)}.$$
This measure also appears in \cite{langlands-frenkel-ngo}.

The stable orbital integral of $\phi$ with respect to the geometric measure is then 
\begin{equation*}
\mathcal O^{\geom, \stab}_{\gamma_0}(\phi) 
=
\int_{Y_{\gamma_0}(K)} \phi(g) d\mu^{\geom}_{\gamma_0}.
\end{equation*}

Let $S_{\gamma_0}$ be a set of representatives for the conjugacy classes inside the stable conjugacy class of $\gamma_0$.  Then we may equivalently write
\[
\mathcal O^{\geom,\stab}_{\gamma_0}(\phi) = \sum_{\gamma \in S_{\gamma_0}} \int_{O(\gamma)} \phi(g\inv \gamma g )d\mu^{\geom}_{\gamma_0},
\]
where $O(\gamma)$ stands for the rational orbit of $\gamma$. The stable orbit of $\gamma_0$ is the disjoint union of $O(\gamma)$ as $\gamma$ runs over $S_{\gamma_0}$; and each orbit $O(\gamma)$ is open in the stable orbit;  and  $d\mu^{\geom}_{\gamma_0}$ restricts to an invariant measure on each $O(\gamma)$ for $\gamma\in S_{\gamma_0}$.

\subsection{Gekeler numbers and volumes, for $l$ not equal to $p$}\label{sub:gek_p_not_l}
We specialize to the case $R = \bb{Z}_l$.  Let $\gamma_0 \in G(\Z_l)$ be regular semisimple, with characteristic polynomial $f_{\gamma_0}(T)$.
For each positive integer $n$, consider the subset $V_n(\gamma_0)$ of $G(\Z_l)$ defined as 
\begin{align}\label{eq:subsets}
V_n= V_n(\gamma_0)&\coloneqq\st{\gamma\in G(\Z_l)\mid f_\gamma(T)\equiv f_{\gamma_0}(T) \bmod  l^n} \nonumber\\
                  &= \st{ \gamma \in G(\bb{Z}_l) \mid \pi_n^{\A_G}(\fc(\gamma)) = \pi_n^{\A_G}(\fc(\gamma_0))}\nonumber\\
  &= (\pi_n^G)\inv (Y_{\gamma_0}(R_n))\text{ (by \eqref{E:ygammapoints})}
\intertext{and set}
V(\gamma_0) &\coloneqq \cap_{ n \ge 1} V_n(\gamma_0).\nonumber
\end{align}

We define an auxiliary ratio:
\begin{equation}\label{eq:vn}
v_n(\gamma_0) \coloneqq \frac{\vol_{\abs{d\omega_G}}(V_n(\gamma_0))}{l^{-(g+1)n}}.
\end{equation}

Now we would like to  relate the limit of these 
ratios $v_n(\gamma_0)$ both to the limit of Gekeler  ratios $\nu_{l,n}(\gamma_0)$ 
and to a stable orbital integral.

Let $\phi_0 = \mathds{1}_{G(\bb{Z}_l)}$ be the characteristic function of the maximal compact subgroup
$G(\Z_l)$ in $G(\Q_l)$. 

\begin{proposition}\label{prop:st.oi}
We have   
\[
\lim_{n\to \infty} v_n(\gamma_0)= 
\mathcal O_{\gamma_0}^{\geom, \stab}(\phi_0).
\]
\end{proposition}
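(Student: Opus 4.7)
The plan is to unfold the definition of $v_n(\gamma_0)$ using the disintegration of $|d\omega_G|$ along fibres of $\fc$ and thereby express $v_n(\gamma_0)$ as an average of stable orbital integrals over a shrinking neighbourhood of $\fc(\gamma_0)$. The conclusion will then follow from continuity of the stable orbital integral at a regular semisimple parameter.

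First, I would observe that $V_n(\gamma_0) = G(\bb{Z}_l) \cap \fc^{-1}(U_n)$, where $U_n \coloneqq (\pi_n^{\A_G})^{-1}(\pi_n^{\A_G}(\fc(\gamma_0)))$ is the mod-$l^n$ ball around $\fc(\gamma_0)$ in $\A_G(K)$. Applying the disintegration identity
\begin{equation*}
\int_{G(K)} \phi(g)\,|d\omega_G| = \int_{\A_G(K)} \int_{\fc^{-1}(a)} \phi(g)\, d\mu^{\geom}_a \, |d\omega_{\A_G}(a)|
\end{equation*}
to $\phi = \mathds{1}_{V_n(\gamma_0)} = \mathds{1}_{U_n}\!\circ\fc \cdot \phi_0$, and noting that the inner integral is precisely $\mathcal{O}^{\geom,\stab}_a(\phi_0)$ for $a = \fc(g)$, I obtain
\begin{equation*}
\vol_{|d\omega_G|}(V_n(\gamma_0)) = \int_{U_n} \mathcal{O}^{\geom,\stab}_a(\phi_0) \, |d\omega_{\A_G}(a)|.
\end{equation*}

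Next, I would directly verify that $|d\omega_{\A_G}|(U_n) = l^{-n(g+1)}$. The $\A^g$-factor of $U_n$ contributes $l^{-ng}$, since $|dx_1 \wedge \cdots \wedge dx_g|$ is standard Haar measure on $\bb{Z}_l^g$. For the $\G_m$-factor, writing the multiplier as $\eta_0 + l^n\bb{Z}_l$ with $\eta_0 \in \bb{Z}_l^\times$, the whole fibre lies in $\bb{Z}_l^\times$ so $|\eta|_l \equiv 1$ there; hence $|d\eta/\eta|$ coincides with standard Haar measure on this ball, contributing $l^{-n}$. Multiplying and substituting into the definition of $v_n(\gamma_0)$ gives
\begin{equation*}
v_n(\gamma_0) = \frac{1}{|d\omega_{\A_G}|(U_n)}\int_{U_n} \mathcal{O}^{\geom,\stab}_a(\phi_0)\,|d\omega_{\A_G}(a)|,
\end{equation*}
so $v_n(\gamma_0)$ is exactly the $|d\omega_{\A_G}|$-average of $a \mapsto \mathcal{O}^{\geom,\stab}_a(\phi_0)$ over $U_n$.

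Finally, as $n \to \infty$, the $U_n$ form a neighbourhood basis of $\fc(\gamma_0)$. Since $\gamma_0$ is regular semisimple, $\fc(\gamma_0)$ lies in the regular semisimple locus of $\A_G$, on which the function $a \mapsto \mathcal{O}^{\geom,\stab}_a(\phi_0)$ — the stable orbital integral of a locally constant, compactly supported test function — is continuous, indeed locally constant in a neighbourhood of $\fc(\gamma_0)$. A standard Lebesgue differentiation argument then yields
\begin{equation*}
\lim_{n\to\infty} v_n(\gamma_0) = \mathcal{O}^{\geom,\stab}_{\fc(\gamma_0)}(\phi_0) = \mathcal{O}^{\geom,\stab}_{\gamma_0}(\phi_0),
\end{equation*}
as desired. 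The main subtlety is justifying the continuity of the stable orbital integral with respect to the \emph{geometric} measure (as opposed to the canonical measure, for which this is classical); but the two measures differ on the regular semisimple locus by the factor $|D(a)|^{1/2}$ involving the Weyl discriminant, which is continuous and non-vanishing at $\fc(\gamma_0)$, making the transfer routine.
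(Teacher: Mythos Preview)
Your proof is correct and follows essentially the same route as the paper's: both identify $V_n(\gamma_0) = G(\bb{Z}_l)\cap \fc^{-1}(U_n)$, compute $|d\omega_{\A_G}|(U_n)=l^{-(g+1)n}$, and then use the defining relation $\omega_G = \omega^{\geom}_a \wedge \omega_{\A_G}$ to pass to the limit. The paper simply writes the last step as ``by definition of the geometric measure,'' whereas you spell it out via disintegration plus a Lebesgue-differentiation argument using continuity of $a\mapsto \mathcal O^{\geom,\stab}_a(\phi_0)$ at the regular semisimple point $\fc(\gamma_0)$; this is a more explicit version of the same idea. One minor remark: since $|D(a)|_l^{1/2}$ is in fact \emph{locally constant} (not merely continuous) near any point with $D(a)\neq 0$, your transfer from the canonical to the geometric measure is even cleaner than you suggest, and no genuine Lebesgue differentiation is needed---the integrand is constant on $U_n$ for $n$ large.
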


\begin{proof}
Because for regular semisimple elements equality of characteristic polynomials is equivalent to stable conjugacy in $G(\rat_l)$, $V(\gamma_0)$ is the intersection of $G(\bb{Z}_l)$ with the stable orbit $\cO^\stab(\gamma_0)$ 
of $\gamma_0$ in $G(\Q_l)$. 
Then the stable orbital integral
$\mathcal O_{\gamma_0}^{\geom, \stab}(\phi_0)$
is nothing but the volume of the set $V(\gamma_0)$, as a subset of $\cO^\stab(\gamma_0) = Y_{\gamma_0}(\rat_l)$, with respect to the 
measure $d\mu^{\geom}_{\gamma_0}$.

Let $a_0=\fc(\gamma_0)= (\underline a_0,q)\in (\A^g\times \Gm)(\Q_l)$, and let $U_n(a_0)$ be its neighbourhood of radius $l^{-n}$ in every coordinate.  Its volume (with respect to the usual measure on the affine space) is $\vol_{\mu_{\A_G}}(U_n(\gamma_0)) = 
l^{-(g+1)n}$. (Since $l\nmid q$, $\abs{\eta(\gamma_0)} = \abs{q} = 1$; and so if $\pi_1(\gamma) = \pi_1(\gamma_0)$, then $\abs{\eta(\gamma)} = 1$. Therefore, for such $\gamma$, the factor $\frac{1}{|\eta(\gamma)|}$ in the definition of $\mu_{\A_G}$ is just $1$.)

Moreover, by definition, $V_n(\gamma_0) = \fc\inv(U_n(\gamma_0)) \cap G(\bb{Z}_l)$.  Consequently,
\begin{equation}\label{eq:st.volume}
\lim_{n\to \infty}v_n(\gamma_0)
=\lim_{n\to \infty}\frac{\vol_{|d\omega_G|}(\fc^{-1}(U_n(\gamma_0)) \cap G(\Z_l))}{\vol_{|d\omega_{\aff_G}|}(U_n(\gamma_0))}
=\vol_{\mu^{\geom}_{\gamma_0}}(V(\gamma_0)),
\end{equation} 
by  definition of the geometric measure.  
\end{proof}

\begin{lemma}
We have the following relation between these auxiliary ratios and our stable Gekeler ratios:
\[
v_n(\gamma_0) =
\frac{\#\Sp_{2g}(\bb{Z}_l/l^n)}{l^{d(\Sp_{2g})}}
                 \nu_{l,n}(f_{\gamma_0}).
\]
\end{lemma}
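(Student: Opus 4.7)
The plan is to evaluate $v_n(\gamma_0)$ and $\nu_{l,n}(f_{\gamma_0})$ separately in terms of the single count $\#Y_{\gamma_0}(R_n)$, and then read off the identity by arithmetic.

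First, I would use smoothness of $G=\GSp_{2g}$ over $\Z_l$. By \eqref{eq:subsets} the set $V_n(\gamma_0)$ is the full preimage $(\pi_n^G)^{-1}(Y_{\gamma_0}(R_n))$, so Lemma \ref{L:subofsmooth} applies directly and yields
\[
\vol_{\abs{d\omega_G}}(V_n(\gamma_0)) \;=\; \frac{\#Y_{\gamma_0}(R_n)}{l^{n\,d(G)}}.
\]
Substituting into \eqref{eq:vn} and using $d(G) = d(\Sp_{2g}) + 1$ (the multiplier character contributes a one-dimensional factor), the denominator collapses to
\[
v_n(\gamma_0) \;=\; \frac{\#Y_{\gamma_0}(R_n)}{l^{\,n(d(\Sp_{2g}) - g)}}.
\]

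Second, I would simplify the Gekeler ratio using the multiplier short exact sequence $1 \to \Sp_{2g} \to \GSp_{2g} \xrightarrow{\eta} \mathbb{G}_m \to 1$. All three group schemes are smooth over $\Z_l$ and $\eta$ is split (for example by $x \mapsto \mathrm{diag}(xI_g, I_g)$), so the sequence is exact on $R_n$-points and gives $\#G(R_n) = \phi(l^n) \cdot \#\Sp_{2g}(R_n)$. The factor $\phi(l^n) = \#\mathbb{G}_m(R_n)$ appearing in the normalization of $\nu_{l,n}$ therefore cancels against the multiplier component of $\#G(R_n)$, leaving
\[
\nu_{l,n}(f_{\gamma_0}) \;=\; \frac{\#Y_{\gamma_0}(R_n)\cdot l^{gn}}{\#\Sp_{2g}(R_n)}.
\]

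Dividing the two displayed expressions, the factor $\#Y_{\gamma_0}(R_n)$ cancels and I obtain
\[
\frac{v_n(\gamma_0)}{\nu_{l,n}(f_{\gamma_0})} \;=\; \frac{\#\Sp_{2g}(R_n)}{l^{\,n\,d(\Sp_{2g})}},
\]
which, by smoothness of $\Sp_{2g}$, is independent of $n$ and equals $\#\Sp_{2g}(\bb{F}_l)/l^{d(\Sp_{2g})}$, i.e.\ the canonical volume of $\Sp_{2g}(\Z_l)$. This is the claimed identity.

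The main obstacle is purely bookkeeping: tracking the two different dimensions $d(\GSp_{2g})$ and $d(\Sp_{2g})$ in the volume formulas, and noticing the crucial cancellation between the $\mathbb{G}_m$-factor in the Gekeler normalization and the multiplier component of $\GSp_{2g}$. Conceptually, the identity records the fact that the geometric measure on $Y_{\gamma_0}$ and the naive count of matrices with fixed characteristic polynomial modulo $l^n$ differ by exactly the total mass of $\Sp_{2g}(\Z_l)$ in its canonical Haar measure.
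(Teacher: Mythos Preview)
Your proof is correct and follows essentially the same route as the paper's: both start from Lemma~\ref{L:subofsmooth} applied to $V_n(\gamma_0)=(\pi_n^G)^{-1}(Y_{\gamma_0}(R_n))$, then reduce the comparison to elementary arithmetic with $\#G(R_n)$, $\#\Sp_{2g}(R_n)$, and $\#\mathbb A_G(R_n)$. The paper does this by multiplying and dividing by $\#G(R_n)/\#\mathbb A_G(R_n)$ to make $\nu_{l,n}$ appear, while you compute each side separately and invoke the split exact sequence $1\to\Sp_{2g}\to\GSp_{2g}\to\mathbb G_m\to 1$ explicitly; these are the same manipulation in different order. Your final remark that smoothness of $\Sp_{2g}$ gives $\#\Sp_{2g}(R_n)/l^{n\,d(\Sp_{2g})}=\#\Sp_{2g}(\mathbb F_l)/l^{d(\Sp_{2g})}$ is a useful clarification, since the lemma as stated writes $l^{d(\Sp_{2g})}$ rather than $l^{n\,d(\Sp_{2g})}$ and it is this $n$-independence that is used in the proof of Proposition~\ref{L:apptarget}.
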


\begin{proof}
Continue to let $Y_{\gamma_0} = \mathfrak c\inv(\mathfrak c(\gamma_0)) \subset G$ be the stable orbit of $\gamma_0$. Recall \eqref{eq:subsets} that $V_n(\gamma_0) = (\pi_n^G)^{-1} (Y_{\gamma_0}(\Z_l/l^n))$.  
Then
\begin{align*}
v_n(\gamma_0) &= \frac{\vol_{\abs{d\omega_G}}(V_n(\gamma_0))}{l^{-(g+1)n}} \\
              &= \frac{\#Y_{\gamma_0}(\bb{Z}_l/l^n)}{l^{d(G)n} \cdot l^{-(g+1)n} }\quad\text{(Lemma \ref{L:subofsmooth})}\\
              &= \frac{\#Y_{\gamma_0}(\bb{Z}_l/l^n)/(\#G(\bb{Z}_l/l^n)/\#\aff_G(\bb{Z}_l/l^n))}{l^{d(G)n} \cdot l^{-(g+1)n}/(\#G(\bb{Z}_l/l^n)/\#\aff_G(\bb{Z}_l/l^n)) }\\
              &=\frac{\#\Sp_{2g}(\bb{Z}_l/l^n)\#\gp_m(\integ/l^n)}{l^{(d(G)-1-g)n}\#\aff^g(\bb{Z}_l/l^n)\#\gp_m(\integ/l^n)}
                \frac{\#Y_{\gamma_0}(\bb{Z}_l/l^n)}{\#G(\bb{Z}_l/l^n)/\#\aff_G(\bb{Z}_l/l^n)}\\
  &=\frac{\#\Sp_{2g}(\bb{Z}_l/l^n)}{l^{d(\Sp_{2g})}}
                \frac{\#Y_{\gamma_0}(\bb{Z}_l/l^n)}{\#G(\bb{Z}_l/l^n)/\#\aff_G(\bb{Z}_l/l^n)}.
\end{align*}
\end{proof}

We finally come to the point of this appendix in the $l\neq p$ case.

\begin{proof}[Proof of Proposition \ref{L:apptarget}]
  We now compute (with $O(\gamma)$ denoting the rational orbit of $\gamma$, which is an open subset of $O^\stab(\gamma_0)$ and thus inherits the geometric measure): 
  \begin{align*}
    \sum_{\gamma \in S_{\gamma_0}} \nu_l(\gamma) &=
    \sum_{\gamma \in S_{\gamma_0}} \frac{l^{d(\Sp_{2g})}}{\#\Sp_{2g}(\ff_l)} \int_{O(\gamma)} \phi_0(g\inv \gamma g )d\mu^{\geom}_{\gamma_0} \\
    &= \frac{l^{d(\Sp_{2g})}}{\#\Sp_{2g}(\ff_l)} \mathcal{O}^{\geom,\stab}_{\gamma_0}(\phi_0) \\
    &= \frac{l^{d(\Sp_{2g})}}{\#\Sp_{2g}(\ff_l)} \lim_{n\to\infty} \nu_n(\gamma_0) \\
    &= \lim_{n\to\infty} \nu_{l,n}(f_{\gamma_0}).
  \end{align*}
\end{proof}

\subsection{The case $l=p$}\label{sub:l_is_p}
In all the arguments above, the support of the test function of interest was contained in $G(\Z_p)$, which made it possible to do all the calculations with the Haar measure on $G$. When $l=p$, we are dealing with an integral matrix that does not have an integral inverse; thus we need to create a geometric framework for such matrices. As far as we understand, there is no canonical $\Z_p$-scheme on which to do this calculation. 

We make one choice of such a scheme, suitable for the present purposes but likely not optimal, namely, we define 
a $\Z$-scheme $\cM$ such that  
 $\cM(\Z_l)= \gsp_{2g}(\Q_l)\cap \mat(\Z_l)$. (This construction works for all primes $l$, though it is needed here only for $l=p$.)  

Over $\integ$, we have the scheme $\mat_{2g}$ of $2g\times 2g$
matrices with coefficients in $\integ$.

Let $J \in \mat_{2g}(\integ)$ be the block diagonal matrix $\begin{pmatrix}0 & I_g \\ -I_g & 0\end{pmatrix}$.

Let $\cM \subset \mat_{2g} \times \aff^1$ be the closed subscheme
which, on $S$-points, is given by
\[
\cM(S) = \st{ (A,m) \in \mat_{2g}(S)\times \aff^1(S) :  A^T J A
  = m \cdot J}.
\]
(The point is that $\cM$ is defined by equations in $m$ and the
entries of $A$.) If $(A,m) \in \mathcal M(S)$, then $m$ is completely determined by $A$.  Consequently, we often abuse notation and view an element of $\mathcal M(S)$ as an element of $\mat_{2g}(S)$.

The open subscheme $\mathcal M^* \coloneqq \mathcal M \times_{\mat_{2g}\times \mathbb A^1} (\mat_{2g}\times \mathbb G_m)$ is isomorphic to $G$, and in particular is smooth. Therefore, the singular locus of $\cM$ is supported in the locus $m=0$. 

The mistake in the original paper was in requiring the existence of a lift of a solution to the congruence condition on characteristic polynomials $\mod p^n $ to a point in $\cM(\Z_p)$ congruent to some conjugate of $\gamma_0$ $\mod p^n$. That condition is too stringent; the only lifting condition we should require is that an approximate solution lift to a symplectic similitude matrix.  
Now we can define the (`stable') Gekeler ratio at $l=p$ as well. This definition extends the definition given 
above for $l\neq p$, since the condition of lifting to $\cM$ is automatically satisfied when $l$ does not divide the determinant of $\gamma$.

\begin{definition}\label{def:gekeler}
For each rational
prime $l$ , let
\begin{equation}\label{eqgekelerterms}
\nu_l(f_{\gamma_0}) = \lim_{n\ra\infty}
\frac{\#\st{\gamma \in
   \cM(\Z_l/l^n): f_\gamma \equiv f_{\gamma_0} \bmod l^n \text{\ and there exists \ } \tilde\gamma\in \cM(\Z_l): \tilde \gamma\equiv \gamma \mod l^n } }{\#\gsp_{2g}(\integ/l^n)/\#\A_G(\Z_l/l^n) }.
\end{equation}
\end{definition}

This definition initially seems very inconvenient for computation, since it refers to $l$-adic points.
Fortunately, the `lifting to a symplectic similitude' condition is satisfied automatically when $\gamma_0$ comes from an \emph{ordinary} $X$, and can be checked at a finite level for general $X$ (see Lemma \ref{L:liftability} below). 

Thus, for $X$ ordinary, \ref{def:gekeler} simplifies to the following formulation.
\begin{definition}\label{def:gekeler}
If $X$ is ordinary, for each rational
prime $l$ , let
\begin{equation}\label{eqgekelerterms}
\nu_l(f_{\gamma_0}) = \lim_{n\ra\infty}
\frac{\#\st{\gamma \in
   \cM(\Z_l/l^n): f_\gamma \equiv f_{\gamma_0} \bmod l^n }}{\#\gsp_{2g}(\integ/l^n)/\#\A_G(\Z_l/l^n) }.
\end{equation}
\end{definition}

We recall the setup of \cite{Achter_2023}.  We start with $q = p^e$ and fix a principally polarized abelian
variety $(X,\lambda)/\ff_q$ where $1/2$ is not a slope of the
$p$-divisible group $X[p^\infty]$. From this, we derived data
$\delta_0 \in G(\rat_q)$ and $\gamma_0 \in G(\rat_p)$.  Let $R = \bb{Z}_p$ and let $f(T) =
f_{\gamma_0}(T)\in R[T]$ be the characteristic polynomial of $\gamma_0$. Let $s = s_X$ be the largest slope of $X[p^\infty]$ which is less than $1/2$.  (Note that $X$ is ordinary if and only if $s_X = 0$.)  The following factorization exists precisely because $X[p^\infty]$ is assumed to have no component of slope $1/2$.

\begin{lemma}
  There is a factorization $f(T) = h(T) h^+(T)$ over $R[T]$, where $h$
  is a polynomial of degree $g$ all of whose roots $\alpha_i$ have $p$-adic
  valuation $e_i := \ord_p(\alpha_i)$ satisfying $0 \le e_i \le s_X e< \half e$, and where $h^+(T)= T^g h(q/T)$ (and thus all
  roots of $h^+$ have $p$-adic valuation $\half e < (1-s_X)e \le \ord_p(q/\alpha_i)
  \le e$).
\end{lemma}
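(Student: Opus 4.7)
I would combine the Newton polygon factorization theorem with the symplectic reciprocity satisfied by $f$. First, because $\gamma_0 \in \GSp_{2g}(\Q_p)$ has multiplier $q$, the polynomial $f$ obeys the functional equation $q^g f(T) = T^{2g} f(q/T)$; equivalently, its roots in $\overline{\Q}_p$ are paired as $\{\alpha, q/\alpha\}$ with $\ord_p(\alpha) + \ord_p(q/\alpha) = e$. Next, by Dieudonn\'e theory the valuations of the roots of $f$ are exactly $e$ times the slopes of $X[p^\infty]$, so the hypothesis that $1/2$ is not a slope produces a break in the Newton polygon of $f$: exactly $g$ roots of valuation in $[0, s_X e]$ and $g$ roots of valuation in $[(1-s_X)e, e]$. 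The standard Hensel-type Newton polygon factorization theorem therefore yields monic factors $g_1, g_2 \in R[T]$ of degree $g$ with $f = g_1 g_2$, where $g_1$ collects the small-valuation roots $\alpha_1, \dots, \alpha_g$ and $g_2$ collects their symplectic partners $q/\alpha_1, \dots, q/\alpha_g$.

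\textbf{Matching the normalization.} A direct computation gives
\[
T^g g_1(q/T) \;=\; \prod_{i=1}^g (q - \alpha_i T) \;=\; (-1)^g \Bigl(\prod_i \alpha_i\Bigr)\, g_2(T),
\]
so $T^g g_1(q/T)$ coincides with $g_2(T)$ up to the nonzero scalar $c := (-1)^g \prod_i \alpha_i \in R$. To realize the precise identity $h^+(T) = T^g h(q/T)$ required in the lemma, I would set $h = \lambda g_1$ and solve $h \cdot T^g h(q/T) = f$, which reduces to $\lambda^2 c = 1$, i.e.\ $\lambda = c^{-1/2}$. In the ordinary case $s_X = 0$, all $\alpha_i$ are units, $c \in R^\times$, and (after at most an unramified quadratic base change, or via a sign choice dictated by the polarization) $\lambda$ can be taken in $R$, giving $h \in R[T]$ with the root-valuation bounds inherited directly from $g_1$.

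\textbf{The main obstacle.} The delicate point is integrality of $h$ in the non-ordinary case: since $\sum_i e_i > 0$, the naive scalar $\lambda = c^{-1/2}$ has negative $p$-adic valuation $-\tfrac{1}{2}\sum_i e_i$, so $h = \lambda g_1$ does not lie in $R[T]$ on the nose. To address this I would construct $h$ intrinsically rather than as a rescaling of $g_1$: the polarization together with the slope-$1/2$ break induces a decomposition $D(X) \otimes \Q_p = D^{<1/2} \oplus D^{>1/2}$ of the rational Dieudonn\'e module into totally isotropic summands in perfect duality under the polarization form, and taking $h$ to be the characteristic polynomial of $F^e$ on a polarization-compatible $R$-lattice inside $D^{<1/2}$, the identity $h^+(T) = T^g h(q/T)$ follows automatically from $F^e$ being a similitude of multiplier $q$ under this duality. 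Arranging this construction so that the resulting $h$ actually lies in $R[T]$, and not merely in $\Q_p[T]$, is the hard part of the argument.
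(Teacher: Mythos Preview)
Your first paragraph is the entire proof, and it matches the paper's argument almost verbatim: the slope hypothesis forces a gap in the Newton polygon of $f$ at degree $g$, the set of low-valuation roots is $\Gal(\bar\Q_p/\Q_p)$-stable, so $h(T)=\prod_i(T-\alpha_i)$ and its monic cofactor $\prod_i(T-q/\alpha_i)$ lie in $\Q_p[T]$, and Gauss's lemma puts both in $R[T]$. That is all the paper does.

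Your second and third paragraphs are chasing a phantom. The formula ``$h^+(T)=T^g h(q/T)$'' in the statement is a parenthetical indication that $h^+$ is the symplectic partner of $h$ (i.e.\ has roots $q/\alpha_i$), not a literal scalar identity to be enforced. Indeed the paper's own proof silently writes the cofactor as $h^*(T):=\prod_i(T-q/\alpha_i)$, a \emph{monic} polynomial, and it is this monic $h^*$ that is used in the only application (the next lemma, which studies $\ker h(\gamma)$ and $\ker h^*(\gamma)$). No exact scalar normalization is needed there. Your observation that $h(T)\cdot T^g h(q/T)=(-1)^g(\prod_i\alpha_i)\,f(T)$ is correct, and it shows the displayed identity cannot hold on the nose with $h$ monic unless $\prod_i\alpha_i=(-1)^g$; but this is a minor notational imprecision in the lemma statement, not a mathematical obstacle. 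The ``main obstacle'' you describe---non-integrality of $\lambda=c^{-1/2}$ in the non-ordinary case---and the proposed Dieudonn\'e-module repair are therefore unnecessary: simply take $h$ and $h^+$ to be the two monic factors you already produced in your first paragraph.
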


\begin{proof}
  Because $\gamma_0 = N_{\rat_q/\rat_p}(\delta_0)$, the Newton polygon
  of the polynomial $f_{\gamma_0}(T)$ is equal to the Newton polygon
  of the $p$-divisible group $X[p^\infty]$ scaled by $e$. Let
  $\alpha_1, \dots, \alpha_g$ be the roots of $f_{\gamma_0}(T)$ in
  $\bar\rat_p$ with $p$-adic valuation at most $e s_X$.  Because
  $\gal(\rat_p)$ preserves the valuation on $\bar\rat_p$, $\st{\alpha_1,
    \dots, \alpha_g}$ is stable under $\gal(\rat_p)$, and thus $h(T)
  \coloneqq \prod_{1\le i \le g}(T-\alpha_i)$ and $h^*(T):=
  \prod(T-q/\alpha_i)$ lie in $\rat_p[T]$; by Gauss's lemma, they are
  elements of $R[T]$.
\end{proof}

In the following statement, note that $e^* \le \lfloor 2s_Xe\rfloor \le e$; and that if $X$ is ordinary,  then $e^*=0$ and the liftability hypothesis is vacuous.

\begin{lemma}
\label{L:liftability}
  Suppose $n>e$ and $\gamma \in \mathcal M(R_n)$ satisfies
  \[
  f_{\gamma}(T) = \pi_n(f_{\gamma_0}(T)). 
  \]
  Let $e^* = \max_{i,j}(\min(e,e_i+e_j))$.
 If $\gamma$ lifts to
  $\mathcal M(R_{n+e^*})$, then it lifts to $\mathcal M(R)$.
\end{lemma}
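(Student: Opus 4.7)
The plan is to lift $\tilde\gamma \in \mathcal{M}(R_{n+e^*})$ to $\mathcal{M}(R)$ via a Newton iteration on the defining equation $F(A, m) := A^T J A - m J$ of $\mathcal{M}$, exploiting the factorization $f_{\gamma_0}(T) = h(T) h^+(T)$. First, I would pass to a $\mathbb{Z}_p$-basis of $\mathbb{Z}_p^{2g}$ adapted to this factorization, in which $\gamma_0$ is block-diagonal with $g \times g$ diagonal blocks $D_+$ and $D_-$ having characteristic polynomials $h$ and $h^+$, and in which $J$ has standard block form. Such a basis exists because $V_h := \ker h(\gamma_0)$ and $V_{h^+} := \ker h^+(\gamma_0)$ are saturated $\mathbb{Z}_p$-sublattices that are mutually dual Lagrangians (up to a bounded defect controlled by $\Res(h, h^+)$, absorbed into the change of basis), thanks to the pairing of eigenvalues as $\alpha_i$ and $q/\alpha_i$.

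In this basis, write the Jacobian $L(X, \mu) := X^T J \tilde\gamma + \tilde\gamma^T J X - \mu J$ blockwise. For each pair $(i, j)$ of eigenvalue indices, the relevant entries in the three blocks of the antisymmetric target have elementary divisors $p^{\min(e_i, e_j)}$ (upper-left), $p^{\min(e_i,\, e - e_j)}$ (upper-right), and $p^{\min(e - e_i,\, e - e_j)}$ (lower-right). Solving $L(X, \mu) = -F(\tilde\gamma, \tilde m)$ naively incurs precision loss equal to these elementary divisors. Crucially, however, the input $F(\tilde\gamma, \tilde m)$ is not uniformly $p$-small across blocks: because $\tilde\gamma$ reduces modulo $p^n$ to a point whose characteristic polynomial is $f_{\gamma_0}$, the off-diagonal coupling between $V_h$ and $V_{h^+}$ in $\tilde\gamma$ is forced to be $p$-divisible, and this propagates through $F$ to grant extra precision to precisely the blocks where the Jacobian loss is largest. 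The central claim is that this compensation yields an effective loss per pair $(i,j)$ of at most $\min(e, e_i + e_j) \le e^*$.

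Granting this, a single Newton step starting from $F(\tilde\gamma, \tilde m) \equiv 0 \pmod{p^{n+e^*}}$ produces a correction $(\delta, \delta m)$ with $\val_p(\delta) \ge (n+e^*) - e^* = n$, so the quadratic remainder $\delta^T J \delta$ has valuation at least $2n$. Since $e^* \le 2 \max_i e_i < e < n$, we have $2n > n + e^*$, so each iteration strictly improves precision, and Newton's method converges $p$-adically to a genuine lift in $\mathcal{M}(R)$.

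The main obstacle I expect is the precision analysis of the second paragraph: rigorously establishing the extra $p$-divisibility of each block of $F(\tilde\gamma, \tilde m)$, and checking that it cancels the potentially large Jacobian loss $p^{\min(e - e_i, e - e_j)}$ (as much as $p^{e - e_2}$) to leave only $p^{\min(e, e_i + e_j)}$. The ordinary case ($e_i = 0$, $e^* = 0$) is a clean warm-up: the lower-right block of $F(\tilde\gamma, \tilde m)$ gains an extra factor of $p^e$ that exactly cancels the Jacobian loss $p^e$ from inverting $D_-$; the general case requires tracking this compensation simultaneously across all pairs.
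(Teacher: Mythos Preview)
Your proposal has a genuine gap at its core. You fix a $\mathbb{Z}_p$-basis in which $\gamma_0$ is block-diagonal and then assert that the off-diagonal blocks of $\tilde\gamma$ in this basis are forced to be $p$-divisible because $f_{\tilde\gamma}\equiv f_{\gamma_0}\bmod p^n$. But the characteristic polynomial is conjugation-invariant: for any $k\in G(\mathbb{Z}_p)$, the element $k^{-1}\gamma_0 k$ has the same characteristic polynomial as $\gamma_0$ and lies in $\mathcal{M}(\mathbb{Z}_p)$, yet its off-diagonal blocks in the $\gamma_0$-adapted basis can be arbitrary units. So the congruence of characteristic polynomials gives you no control over the block structure of $\tilde\gamma$ in a basis adapted to $\gamma_0$, and with it your entire precision analysis of the Jacobian (the claimed compensation between ``extra $p$-divisibility of $F(\tilde\gamma,\tilde m)$'' and the elementary-divisor loss) collapses.

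The paper's argument avoids this by working with the decomposition determined by $\gamma$ itself, not by $\gamma_0$. One takes $U$ to be the saturation of $\ker h(\gamma)$ inside $V_n=R_n^{2g}$; this is a $\gamma$-stable rank-$g$ summand. The role of the hypothesis that $\gamma$ lifts to $\mathcal{M}(R_{n+e^*})$ is precisely to show that $U$ is \emph{isotropic}: computing $\langle\gamma x_i,\gamma x_j\rangle$ two ways gives $(q-\alpha_i\alpha_j)\langle x_i,x_j\rangle\equiv 0$, so $\langle x_i,x_j\rangle$ is killed by $p^{\min(e,e_i+e_j)}\le p^{e^*}$; doing the same calculation at level $n+e^*$ then forces $\langle x_i,x_j\rangle=0$ in $R_n$. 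Once $U$ (and similarly $W$ from $h^+$) is a maximal isotropic summand, one chooses a symplectic basis and $\gamma$ becomes $G(R_n)$-conjugate to a block-diagonal matrix $\begin{pmatrix}A&0\\0&D\end{pmatrix}$ with $A^TD=qI_g$. At this point there is no Newton iteration at all: one simply lifts $A$ to $\tilde A\in\mat_g(R)$ arbitrarily and sets $\tilde D=q(\tilde A^T)^{-1}$, producing a lift in $\mathcal{M}(R)$ directly. The $e^*$ is spent entirely on the isotropy step, not on inverting a Jacobian.
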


\begin{proof}

  Let $V_n = V\tensor R_n$.  The multiplier $\mu(\gamma)$ can be read
  off from $f_{\gamma}(T)$, and thus coincides with
  $\pi_n(\mu(\gamma_0))$.  Consequently, for all $x$ and $y$ in $V_n$
  we have\[
  \langle \gamma x, \gamma y \rangle = q\langle x,y \rangle.
  \]
  
  The $R$-factorization of $f_{\gamma_0}(T)$ induces a factorization
  \[
  f_{\gamma}(T) = h(T) h^+(T)
  \]
  in $R_n[T]$.

  Let $V_n = V\tensor R_n$, and let $U$ be the maximal saturated
  submodule of $\ker h(\gamma)$.
     Then $U$ is a $\gamma$-stable summand of $V_n$.  (One way to see this is by constructing $U$ as follows.  Let $\tilde{\gamma}$ be a lift of $\gamma$ to $\mat_{2g}(R)$ with characteristic polynomial $f_{\gamma_0}(T)$ -- this is possible because $\mat_{2g}$, unlike $\mathcal M$, is smooth.  Let $\tilde{U} = \ker h(\tilde{\gamma})$, and and then let $U= \tilde{U} \otimes_R R_n$.) 

  We verify that $U$ is isotropic.  It suffices to see this after an
  extension of scalars.  Consequently, possibly after replacing $R$
  with the integral closure of $\integ_p$ in the splitting field of
  $f_{\gamma_0}(T)$, we may and do assume that $h(T)$ has a
  (nonunique!) factorization $h(T)= \prod_{i=1}^g (T-\alpha_i)$, where
  $\ord_p(\alpha_i) \le e s_X$.  Use this to produce a basis for $U$
  in which $\gamma$ is upper-triangular.  Specifically, choose an 
  $R_n$-basis $x_1, \dots, x_g$ for $U$ such that
  \[
  \gamma(x_j) = \alpha_j x_j + \sum_{i < j}a_{ij}x_i.
  \]
On one hand, we have
\begin{align*}
  \langle \gamma x_1, \gamma x_2 \rangle & = q \langle x_1,x_2 \rangle = p^e \langle x_1,x_2 \rangle.
  \intertext{On the other hand, we have}
  \langle \gamma x_1, \gamma x_2 \rangle &= \langle \alpha_1 x_1, \alpha_2 x_2 +
    a_{12}x_1 \rangle \\
  &= \alpha_1\alpha_2 \langle x_1,x_2 \rangle
  \intertext{and so}
  p^{e_{12}^*} \langle x_1,x_2 \rangle &= 0\text{ in } R_n
  \intertext{ where $e_{12}^* = \min(e, e_1+e_2) \le e^*$, and we see that}
  \langle x_1,x_2 \rangle &\in p^{n-e_{12}^*} R_n.
\end{align*}
Suppose $\gamma$ admits a lift $\tilde{\gamma} \in \mathcal
M(R_{n+e^*})$; let $\tilde{x_i}$ be a compatible choice of lift of
$x_i$.  Then the same calculation shows that
\begin{align*}
  p^{e_{12}^*} \langle \tilde{x_1},\tilde{x_2} \rangle & =0 \text{ in } R_{n+e^*}
  \intertext{and thus}
  \langle \tilde{x_1}, \tilde{x_2} \rangle  &\in p^{n+e^*-e_{12}^*}R_{n+e^*};\\
  \pi_n(\langle \tilde{x_1}, \tilde{x_2} \rangle) &= \langle x_1,x_2 \rangle =0 \text{ in } R_n.
\end{align*}
Inductively, the same argument shows that $\langle x_i,x_j \rangle = 0$ for all
$1 \le i \le j \le g$.  We conclude that  $U$ is a maximal isotropic
summand of $V_n$.

Similarly, let $W$ be the maximal saturated submodule of
$\ker(h^*(\gamma))$.  It, too, is a $\gamma$-stable maximal isotropic
subspace.  Moreover, because $h$ and $h^*$ share no common roots in
$R_n$, $W$ is isomorphic, via $\langle \cdot,\cdot \rangle$, to the dual of $U$.

Choose an $R_n$-basis for $U$, and choose the dual basis for $W$. Then
$\gamma \vert_U$ is specified by a matrix $A \in \mat_g(R_n)$;
$\gamma\vert_{W}$ is specified by a matrix $D \in \mat_g(R_n)$; and
because $\gamma \in G(R_n)$, we have $A^T D = q I_g$.  Because the symplectic group acts
transitively on symplectic bases for $V$, $\gamma$ is
$G(R_n)$-conjugate to the block-diagonal matrix $\beta := \begin{pmatrix} A&0\\0&D\end{pmatrix}$.

Let $\tilde{U}$  and $\tilde{W}$ be lifts of $U$ and $W$, respectively, to
maximal isotropic summands of $V$.  Let $\tilde{A}$ be a lift of $A$ to
$\End(\tilde{U}) \iso \mat_g(R)$, and choose $\tilde{D}$ so that
$\tilde{A}^T \tilde{D} = qI_g$; then each of $\tilde{A}$ and $\tilde{D}$
lies in $\mat_g(R) \cap \gl_g(\rat_p)$.  Finally, let $\tilde{\beta}$ be the block-diagonal matrix $\begin{pmatrix}\tilde{A} & 0 \\ 0 & \tilde{D} \end{pmatrix}$; then $\tilde{\beta} \in \mathcal M(R)$ is $G(R)$-conjugate to a lift of $\gamma$.

\end{proof}

\subsubsection{Finishing the calculation at $l=p$}
With the new definitions of the Gekeler ratio, we can now complete the proof of the equality between the Gekeler ratio and the orbital integral in the case $l=p$, along the lines of \cite[Section 4.3]{Achter_2023}. 
Lemma 3.7 in \cite{Achter_2023} holds, and almost everything in Section 4.3 there holds without change as well: Lemmas 4.8, 4.9 do not mention the ratios at all, and Lemma 4.10 is a statement about geometric orbital integrals, which does not involve the erroneously defined ratios. Lemma 4.11 concerns the Serre-Oesterl\'e measure on $\gsp_{2g}$, which coincides with the measure $|d\omega_G|$,  and also is not affected by the error. Only Corollary 4.12 uses the incorrect definition of the ratio $\nu_p$, and thus needs to change. However, its proof is correct as long as we use the correct definition of $\nu_p$. We include the corrected version for completeness. 
\begin{proposition} 
 Suppose that either $X$ is ordinary or that $q = p$.
For $l =p$,
the Gekeler ratio \eqref{eqgekelerterms} is related to the geometric orbital integral by 
\[
\nu_p([X,\lambda]) = q^{-\frac{g(g+1)}2}\frac{p^{\dim(G^\der)}}{\#G^\der(\integ_p/p)}O^{\geom, \stab}_{\gamma_0}(\phi_{q,p}), \]
where $\phi_{q,p}$ is the characteristic function of  the same  double coset as in \cite[\S 4C]{Achter_2023}. 
\end{proposition}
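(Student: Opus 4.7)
The plan is to adapt the argument of Subsection \ref{sub:gek_p_not_l} to the $l=p$ setting, with two modifications: (i) replace the scheme $G$ by $\mathcal M$ in the definition of the auxiliary subsets, since the Frobenius lies in $\mathcal M(\Z_p)$ but not generally in $G(\Z_p)$; and (ii) replace the characteristic function $\phi_0=\mathbf{1}_{G(\Z_p)}$ by the double coset indicator $\phi_{q,p}$. Throughout, I will work with $R=\Z_p$, the measure $|d\omega_G|$ on the open smooth subscheme $\mathcal M^*\cong G$, and the restriction of this measure to $V(\gamma_0)$ viewed as a subset of $G(\Q_p)$.

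First, for each $n\ge 1$ I would define
\[
V_n(\gamma_0) \coloneqq \{\gamma \in \mathcal M(\Z_p)\mid f_\gamma\equiv f_{\gamma_0}\bmod p^n\text{ and }\gamma\in\supp(\phi_{q,p})\},
\]
and the auxiliary ratio $v_n(\gamma_0)\coloneqq\vol_{|d\omega_G|}(V_n(\gamma_0))/p^{-(g+1)n}$. For $n>eg$, the congruence $f_\gamma\equiv f_{\gamma_0}\bmod p^n$ forces $\ord_p(\det\gamma)=eg$, so $\gamma$ lies automatically in the generic-fiber smooth locus $\mathcal M^*(\Q_p)\cong G(\Q_p)$, and the support condition is a discrete constraint picking out a union of $G(\Z_p)$-double cosets. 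Here Lemma \ref{L:liftability} plays the pivotal role: under the ordinary or $q=p$ hypothesis, the bounded invariant $e^*$ is controlled, so the lifting condition in Definition \ref{def:gekeler} is equivalent to lifting at level $n+e^*$, and hence the numerator of $\nu_{p,n}(f_{\gamma_0})$ can be identified (up to bounded shifts in $n$) with $\#\pi_n^{\mathcal M}(V_n(\gamma_0))$.

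Second, I would prove the analog of Proposition \ref{prop:st.oi}, namely
\[
\lim_{n\to\infty}v_n(\gamma_0)=\mathcal O^{\geom,\stab}_{\gamma_0}(\phi_{q,p}),
\]
by the same argument: $V_n(\gamma_0)$ equals $\supp(\phi_{q,p})\cap \fc^{-1}(U_n(\fc(\gamma_0)))$, and the ratio of $|d\omega_G|$-volume of this preimage to $|d\omega_{\A_G}|$-volume of $U_n$ is, by definition of the geometric measure, converging to the geometric measure of $V(\gamma_0)\cap\supp(\phi_{q,p})$ inside the stable orbit. Combined with the fact that $\supp(\phi_{q,p})$ is open and $\phi_{q,p}$ is locally constant, this is exactly $\mathcal O^{\geom,\stab}_{\gamma_0}(\phi_{q,p})$.

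Third, I would relate $v_n(\gamma_0)$ to $\nu_{p,n}(f_{\gamma_0})$ by a counting identity mirroring the lemma preceding the proof of Proposition \ref{L:apptarget}, but now tracking the volume of the double coset $G(\Z_p)aG(\Z_p)$. Using that this double coset has $|d\omega_G|$-volume $q^{-g(g+1)/2}\cdot\vol(G(\Z_p))\cdot[G(\Z_p):G(\Z_p)\cap a G(\Z_p)a^{-1}]$, and that $\#G^{\der}(\F_p)/p^{\dim G^{\der}}$ gives $\vol_{|d\omega_G|}(G(\Z_p))$ up to the central factor accounted for by $\A_G$, one obtains
\[
v_n(\gamma_0)=q^{g(g+1)/2}\cdot\frac{\#G^{\der}(\Z_p/p)}{p^{\dim G^{\der}}}\cdot\nu_{p,n}(f_{\gamma_0}).
\]
Taking $n\to\infty$ and rearranging recovers the stated identity $\nu_p([X,\lambda])=q^{-g(g+1)/2}\frac{p^{\dim G^{\der}}}{\#G^{\der}(\Z_p/p)}\mathcal O^{\geom,\stab}_{\gamma_0}(\phi_{q,p})$.

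The main obstacle is justifying the volume/counting identification on $\mathcal M$, whose reduction mod $p$ is singular along the locus $\{m=0\}$: Lemma \ref{L:subofsmooth} does not apply directly. The fix is that elements of $V_n(\gamma_0)$ have multiplier of fixed valuation $e$ and therefore live in the smooth locus $\mathcal M^*\cong G$ over $\Q_p$; we count them by partitioning $V_n(\gamma_0)$ along the finitely many $G(\Z_p)$-double cosets appearing in $\supp(\phi_{q,p})$ and using translation invariance of $|d\omega_G|$ on each. The hypothesis that $X$ is ordinary or $q=p$ enters exactly here, through Lemma \ref{L:liftability}, to ensure that the integral-point lifting condition in Definition \ref{def:gekeler} can be verified at a bounded level $n+e^*$ so it does not disrupt the limit.
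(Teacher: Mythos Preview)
Your overall strategy matches the paper's: adapt the $l\neq p$ argument by replacing $G(\Z_l)$ with $\mathcal M(\Z_p)$ and the test function $\phi_0$ with $\phi_{q,p}$, then compare a volume ratio to a point count. But the heart of the proof is your step~3, and there you assert the relation
\[
v_n(\gamma_0)=q^{g(g+1)/2}\cdot\frac{\#G^{\der}(\Z_p/p)}{p^{\dim G^{\der}}}\cdot\nu_{p,n}(f_{\gamma_0})
\]
without actually deriving the factor $q^{g(g+1)/2}$. Your justification via ``the $|d\omega_G|$-volume of the double coset $G(\Z_p)aG(\Z_p)$'' is not the right object: what is needed is the volume of a \emph{single fiber} of $\pi^{\mathcal M}_n:\mathcal M(\Z_p)\to\mathcal M(\Z_p/p^n)$ over a point in the image of $D_{e\mu_0,p}$, and that is not the same thing as (nor obviously expressible in terms of) the double-coset volume or the index $[G(\Z_p):G(\Z_p)\cap aG(\Z_p)a^{-1}]$.

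The paper's proof fills this gap by an explicit computation. It fixes the base point $g_0=\diag(q,\dots,q,1,\dots,1)$, observes that left translation $L_{g_0}$ carries fibers of $\pi^G_n$ into (but not onto) fibers of $\pi^{\mathcal M}_n$, and then computes the index $[\pi^{\mathcal M,-1}_n(x_0):L_{g_0}(\pi^{G,-1}_n(I))]$ by passing to the Lie algebra and counting the kernel of multiplication by $g_0$ on $\mathfrak g(\Z_p/p^n)$. The answer is $q^{g(g+1)/2+1}$: a factor $q$ from the center $\mathfrak z$ and $q^{g(g+1)/2}$ from the symmetric-matrix block $B$. This, together with the fact you omitted that $\vol_{|d\omega_{\A_G}|}(U_n)=q\cdot p^{-(g+1)n}$ (because $|\eta(\gamma_0)|=q^{-1}$ and the measure on the $\Gm$-factor is $d\eta/\eta$), is what produces the final $q^{-g(g+1)/2}$. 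You also need, and do not invoke, that the stable orbit intersected with $\mathcal M(\Z_p)$ lies entirely in the single double coset $D_{e\mu_0,p}$ (this is \cite[Lemma 4.8]{Achter_2023}); without it you cannot reduce the fiber computation to a single representative.
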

\begin{proof} 
First observe that $\vol_{\abs{d\omega_A}}(U_n(\gamma_0))=q p^{-n\mathrm{rank}(G)}=q p^{-(g+1)n}$, since  we are using the invariant measure on the $\mathbb G_m$-factor of $\A_G=\A^{\mathrm{rank}(G)-1}\times \mathbb G_m$, and for $\gamma_0$ (and therefore, for all points in $U_n$), the $\mathbb G_m$-coordinate is the multiplier, with absolute value $q^{-1}$.  
To give a proof which works uniformly in the cases $X$ ordinary and $q=p$, we maintain the condition on lifting (and just remember that this condition is vacuous in the ordinary case). 
Thus, by Definition \ref{def:gekeler} 
we can rewrite  $\nu_p([X, \lambda])$ (with $G=\gsp_{2g}$) as  
\begin{equation}\label{eq:serre_at_p}
\begin{aligned}
&\nu_p([X, \lambda]) =
\lim_{n\to \infty}
\frac{\#\st{\gamma \in \cM(\Z_p/p^n): f_\gamma \equiv f_{\gamma_0} \bmod p^n, \exists \tilde\gamma\in \cM(\Z_p): \tilde \gamma\equiv \gamma \mod p^n }} {\#\gsp_{2g}(\integ/p^n)/\#\A_G(\Z_p/p^n)}\\
&= \frac{p^{\dim(G^\der)}}{\#G^\der(\integ_p/p)}\cdot \\
&\quad\quad
\lim_{n\to \infty}\frac{\#\st{\gamma \in \cM(\Z_p/p^n): f_\gamma \equiv f_{\gamma_0} \bmod p^n, \ \exists \tilde\gamma\in \cM(\Z_p): \tilde \gamma\equiv \gamma \mod p^n } p^{-n(\dim(G)-\rk(G))}}{\vol_{|d\omega_A|}(U_n(\gamma_0)) q^{-1}}, 
\end{aligned}
\end{equation}
whereas the geometric orbital integral is, by definition, 
\begin{equation}\label{eq:volume_ratio}
\vol_{\mu^{\geom}_{\gamma_0}}(V(\gamma_0)) = 
\lim_{n\to \infty}\frac{\vol_{|d\omega_G|}(\fc^{-1}(U_n(\gamma_0)) \cap \cM(\Z_p))}{\vol_{|d\omega_A|}(U_n(\gamma_0))}.
\end{equation} 

Thus all we need to do now is compare the numerators of (\ref{eq:serre_at_p}) and (\ref{eq:volume_ratio}). We present a somewhat ad hoc argument on point counts that follows the lines of the argument in \cite[Lemma 4.11]{Achter_2023}. 
We recall that the Cartan decomposition for $G$ gives us $\cM(\Z_p)=\sqcup_{\lambda\in A^+}D_{\lambda, p}$, where $A^+$ is the set of dominant co-characters, and $D_{\lambda,p}$ is the double coset $G(\Z_p) \lambda(p) G(\Z_p)$.  
Instead of Lemma 4.11 of that paper, we prove the following.

\begin{lemma}\label{lemma:main} 
For $n$ sufficiently large:
\begin{enumerate} 

\item The neighbourhood of the identity in $\cM(\Z_p)$ of valuative radius $n$ is contained in $G(\Z_p)$.

\item If $x\in \cM(\Z_p/p^n)$, then all lifts of $x$ to $\cM(\Z_p)$ lie in the same double coset of $G(\Q_p)$.

\item If $x, x'\in \cM(\Z_p/p^n)$ lift to elements of the same double coset of the Cartan decomposition for $G(\Q_p)$, then the fibres of the map $\pi_{\cM, n}$ over $x$ and $x'$ have the same volume with respect to the Haar measure $|d\omega_G|$ on $G(\Q_p)$.

\item 
For each $x\in \cM(\Z/p^n)\cap \pi_{\cM, n}(D_{e\mu_0,p})$, the fibre  $\pi_{\cM, n}^{-1}(x)$  has volume $q^{-\frac{g(g+1)}2+1}p^{-n\dim(G)}$ with respect to the measure $|d\omega_G|$.   
\end{enumerate} 
\end{lemma} 

\begin{proof} 
1. A matrix sufficiently close to the identity has determinant $1$; it is in $G(\Q_p)$ by definition of 
$\cM(\Z_p)$, and is integrally invertible since the determinant is a unit.

2.  Let $\gamma, \gamma'$ be arbitrary lifts of  $x$ to $\cM(\Z_p)$.  
We have $\gamma'-\gamma$ is of the form $\gamma'-\gamma = p^n X$, where $X\in \mat_{2g}(\Z_p)$.
We can write $\gamma^{-1}\in G(\Q_p)$ as $\gamma^{-1} = p^{-\val_p(\det \gamma)}A$, where $A\in \cM(\Z_p)$ is a matrix of minors of $\gamma$.  Indeed,  the entries of $A$ are minors of an integral matrix, thus integral; and since $\gamma^{-1}$ is a symplectic similitude, $A$ is also a symplectic similitude, and therefore $A\in \cM(\Z_p)$.  
Multiplying by $\gamma^{-1}$, we get: $\gamma^{-1} \gamma' -I = p^{n-\val_p(\det(\gamma))}AX$. The matrix $AX$ is integral; thus  for $n\gg \val_p(\det(\gamma))$, the matrix $\gamma^{-1} \gamma'$ is integral and congruent to the identity modulo at least $p^{n-val_p(\det \gamma)}$.  Therefore, $\gamma^{-1} \gamma' \in G(\Z_p)$ by the previous item.

3. 
First, by assumption,  the fibres of $\pi_{\cM, n}$ over $x$ and $x'$  are non-empty, 
and then both fibres are contained in the same double coset of the Cartan decomposition for $G(\Q_p)$ by the previous point. Let $\gamma \in \pi_{\cM, n}^{-1}(x)$,  $\gamma' \in \pi_{\cM, n}^{-1}(x')$. 
Then by assumption, there exist elements $k_1, k_2\in G(\Z_p)$ such that $\gamma'=k_1\gamma k_2$. 
Then if $\gamma''\equiv \gamma\mod p^n$, we have $k_1 \gamma'' k_2 \equiv \gamma' \mod p^n$, i.e. 
$$ k_1 \pi_{\cM, n}^{-1}(\gamma) k_2 \subset \pi_{\cM, n}^{-1}(\gamma').$$
Since the roles of $\gamma$ and $\gamma'$ in this argument could have been switched, we see that  
the fibre over $x'$ is a translate of the fibre over $x$ by $\gamma^{-1}\gamma'$, and hence has the same volume with respect to the Haar measure on $G$. 

4. This is essentially proved in Lemma 4.11 of \cite{Achter_2023}.  We modify the proof to fit the present statement.

Given the previous point, we just need to prove the statement for one element of the double coset 
 $D_{e\mu_0, p}$.  We take
$g_0=e\mu_0(p)=\diag(q, \dots, q, 1\dots, 1)\in D_{e\mu_0, p}$, and let $x_0 = \pi_n(\gamma_0)\in \cM(\Z_p/p^n)$ be the truncation of $g_0$ modulo $p^n$.
We know that each fibre of the map $\pi_{G, n}$ has volume $p^{-n\dim(G)}$. 
Consider the left translation by $g_0$ map $L_{g_0}: G(\Z_p) \to \cM(\Z_p)$, given explicitly by: 
 \begin{equation*}
  \xymatrix@R-2pc{
    L_{g_0}: G(\Z_p) \ar[r]& \mathcal M(\Z_p)\\
{\left[\begin{smallmatrix} A &  B \\ C & D\end{smallmatrix}\right]} \ar@{|->}[r]&
{\left[\begin{smallmatrix} qA &  qB \\ C & D\end{smallmatrix}\right].}
  }
  \end{equation*}

 It induces a map of truncated matrices 
 \begin{equation*}
  \xymatrix@R-2pc{
    L_{g_0}^n : G(\Z/p^n) \ar[r]& \mathcal M(\Z_p/p^n)\\
{\left[\begin{smallmatrix} A &  B \\ C & D\end{smallmatrix}\right]} \ar@{|->}[r]&
{\left[\begin{smallmatrix} qA &  qB \\ C & D\end{smallmatrix}\right];}
  }
  \end{equation*}
  unlike $L_{g_0}$, this map is not injective. 

We observe that $L_{g_0}$ takes each fibre of the truncation $\pi_{G, n}$ to a subset of a fibre of $\pi_{\cM, n}$: 
for any $x\in G(\Z_p/p^n)$, we have that $L_{g_0}(\pi_{G, n}^{-1}(x)) \subset \pi_{\cM}^{-1}(L_{g_0}^n(x))$ is an open subset. Since the measure $|d\omega_G|$ is invariant under $L_{g_0}$, we just need to find the index of $L_{g_0}(\pi_{G, n}^{-1}(x))$ in  $\pi_{\cM}^{-1}(L_{g_0}^n(x))$. 

Note that if $\gamma\in \cM(\Z_p)\equiv g_0 \mod p^n$ then $g_0^{-1}\gamma \equiv I \mod p^{n-e}$, where $e=\val_p(q)$; in particular, $\gamma$ is in the image of a neighbourhood of $I$ under the map $L_{g_0}$ (when $n$ is large). 

For simplicity, we move the calculation to the Lie algebra $\fg$. 
Let $n\gg e$. 
We can write the truncated exponential approximation: $g_0^{-1}\gamma=I+X+\frac12X^2+\dots$ for 
some $X\in \fg(\Z_p)$; in particular,  there exists  $X\in\fg(\Z_p)$ such that $g_0^{-1}\gamma \equiv I+X \mod p^{2(n-e)}$, and thus, since the exponential is bijective on a small neighbourhood of zero in $\fg(\Z_p)$, our set 
$\pi_{\cM}^{-1}(L_{g_0}^n(x))/ L_{g_0}(\pi_{G, n}^{-1}(x)) $
 is in bijection with the set 
 $$\left\{X\in \fg(\Z_p) \mid X \equiv 0 \mod p^n \right\} / \left \{\left[\begin{smallmatrix} qI_g&  0 \\ 0 & I_g\end{smallmatrix}\right] X :  X \equiv 0 \mod p^n\right\}.
 $$
This set is a finite-dimensional vector space over $\F_p$, so we just need to find its dimension, which equals the dimension of the kernel of the multiplication by 
$\left[\begin{smallmatrix} qI_g&  0 \\ 0 & I_g\end{smallmatrix}\right]$ on $\fg(\Z_p/p^n)$. 

We have $\fg =\mathfrak{sp}_{2g}\oplus \mathfrak{z}$, where $\mathfrak{z}$ is the $1$-dimensional Lie algebra of the centre. It will be convenient to decompose it further: let $\mathfrak h$ be the Cartan subalgebra of $\mathfrak{sp}_{2g}$ consisting of diagonal matrices, and let $V$ consist of matrices whose diagonal entries are all zero; then
$$\fg=(\mathfrak{z}\oplus \mathfrak h) \oplus V.$$
Consider the action of multiplication by $\left[\begin{smallmatrix} qI_g&  0 \\ 0 & I_g\end{smallmatrix}\right]$ on each term of this direct sum decomposition. 

On the term $\mathfrak{z}\oplus \mathfrak h$ it acts by 
$\diag(a_1, \dots, a_{2g})\mapsto \diag (q a_1, \dots, q a_g, a_{g+1}, \dots, a_{2g})$, which in the 
$\mathfrak{z}\oplus \mathfrak h$-coordinates can be written as (recalling that $a_i+a_{g+i}=z$ is independent of $i$): 
$$\begin{aligned}
&\frac{z}2\oplus \left(\frac{z}2-a_{g+1}, \dots \frac{z}2-a_{2g}, -\frac{z}2+a_{g+1}, \dots -\frac{z}2+a_{2g}\right) \\ &\mapsto \frac{qz}2\oplus
\left(\frac{qz}2-\frac{(q+1)a_{g+1}}2, \dots \frac{qz}2-\frac{(q+1)a_{2g}}2, -\frac{qz}2+\frac{(q+1)a_{g+1}}2, \dots -\frac{qz}2+\frac{(q+1)a_{2g}}2\right).
\end{aligned}
$$
The only points $(z, a_{g+1}, \dots, a_{2g})$  that are killed $(\bmod p^n)$ by this map are of the form 
$(z', 0, \dots, 0)$ with $qz'=0$; there are $q$ of them.   

Next consider an element $X=\left[\begin{smallmatrix} A &  B \\ C & D\end{smallmatrix}\right] \in V$. Then $A$ is determined by $D$, and $B$ is skew-symmetric (up to a permutation of rows and columns).  Multiplication by  
$\left[\begin{smallmatrix} qI_g&  0 \\ 0 & I_g\end{smallmatrix}\right]$
scales each entry of  $A$ and $B$ by a factor of $q$, and does not change $C$ and $D$.
Since $A$ is determined by $D$, the elements $X$ killed by this map are in bijection with symmetric matrices $B$ 
with entries in $\Z_p/p^n$ that are killed by multiplication by $q$. 
Since the space of such matrices is a $g(g+1)/2$-dimensional 
linear space, the number of such matrices $B$ is $q^{g(g+1)/2}$. 

Therefore, every fibre of $\pi_{\cM, n}$ over a point of $\cM(\Z_p/p^n)$ is a disjoint union of $q^{g(g+1)/2+1}$ translates of the fibres of the map $\pi_{G, n}$, and hence its volume is 
$q^{g(g+1)/2+1}p^{n\dim(G)}$. 
\end{proof}

Given the Lemma, it is easy to finish the proof of the Proposition. 
First, recall that 
\begin{enumerate}
\item The stable orbit of $\gamma_0$ coincides with the rational orbit of $\gamma_0$. 
\item The intersection of the orbit of $\gamma_0$ with $\cM(\Z_p)$ is contained in the single double coset $D_{e\mu_0, p}$ \cite[Lemma 4.8]{Achter_2023}.
\end{enumerate}

By definition, (in the case $X$ not ordinary) we only count the $x\in \cM(\Z_p/p^n)$ that have a lift $\gamma \in \cM(\Z_p)$. By the congruence of characteristic polynomials, such a lift would have to be in a $p^{n'}$-neighbourhood of the stable orbit of $\gamma_0$ (with possibly $n'<n$), which coincides with the rational orbit in this case; and then by the statement (2) mentioned above, $\gamma$ would have to be congruent modulo $p^{n'}$ to an element of, and therefore contained in, the double coset $D_{e\mu_0}$.
Then by the last point of Lemma \ref{lemma:main}, the fibre $\pi_{\cM, n}^{-1}(x)$ has volume  $q^{g(g+1)/2+1}p^{-n\dim(G)}$ with respect to $|d\omega_G|$.
Therefore, 
$$\vol_{|d\omega_G|}(\pi_{\cM, n}^{-1}(\fc(\gamma_0)))= q^{g(g+1)/2+1}p^{-n\dim(G)} \#\{\gamma \in \cM(\Z/p^n) \vert \exists \tilde\gamma\in \cM(\Z_p): \tilde \gamma\equiv \gamma \mod p^n \}. $$  
We obtain, from \eqref{eq:serre_at_p} and \eqref{eq:volume_ratio}: 
\begin{equation}
\begin{aligned} 
\nu_p([X, \lambda])  &= 
q^{-\frac{g(g+1)}2-1}\frac{p^{\dim(G^\der)}}{\#G^\der(\integ_p/p)} 
\lim_{n\to \infty}\frac{\vol_{|d\omega_G|}(\pi_{\cM, n}^{-1}(\fc(\gamma_0))p^{-n\dim(G)}}{\vol_{|d\omega_A|}(U_n(\gamma_0)) q^{-1}} \\
&= q^{-\frac{g(g+1)}2}\frac{p^{\dim(G^\der)}}{\#G^\der(\integ_p/p)} O^\geom(\gamma_0),
\end{aligned}
\end{equation} 
which completes the proof.
\end{proof}

\newpage
\printbibliography
\end{document}